\documentclass[11pt,reqno]{amsart}
\usepackage[margin=2.5cm]{geometry}

\usepackage{amsmath, amsfonts, amssymb, amscd, amsthm, amsbsy, amscd}
\usepackage{bbm, epsf, calc, enumerate, verbatim, cite, setspace, bm}
\usepackage{standalone}

\usepackage{graphicx}
\usepackage{enumitem}
\usepackage[usenames,dvipsnames]{color}
\usepackage{mathrsfs, upgreek}
\usepackage{thmtools}
\usepackage{thm-restate}
\usepackage[margin=1cm]{caption}
\usepackage{mathtools}
\usepackage{accents}
\usepackage{wrapfig}
\usepackage{multirow}
\usepackage{float}
\usepackage{blkarray}
\usepackage{tikz}
\usepackage{yfonts}
\usetikzlibrary{arrows.meta}
\graphicspath{{simulations/}}
\usepackage{bm}
\usepackage{stmaryrd}
\usepackage{booktabs}
\usepackage{longtable}
\usepackage{subcaption}
\usepackage{trimclip}
\usepackage[colorlinks=true, linkcolor=blue, citecolor=red, pdfborder={0 0 0},pagebackref,hypertexnames=false]{hyperref} %last option fixed some faulty links
\usepackage{cleveref}

\usetikzlibrary{arrows,matrix,positioning}
\usetikzlibrary{decorations.pathreplacing,decorations.pathmorphing,calc}

\let\oldtocsubsection=\tocsubsection
\renewcommand{\tocsubsection}[2]{\hspace*{1.0cm}\oldtocsubsection{#1}{#2}}
\let\oldtocsubsubsection=\tocsubsubsection
\renewcommand{\tocsubsubsection}[2]{\hspace*{1.8cm}\oldtocsubsubsection{#1}{#2}}

\theoremstyle{plain}
 \newtheorem{theorem}{Theorem}[section]
 \newtheorem{lemma}[theorem]{Lemma}
 \newtheorem{prop}[theorem]{Proposition}
 \newtheorem{cor}[theorem]{Corollary}
 
 \newtheorem{fact}[theorem]{Fact}

\theoremstyle{definition}
 \newtheorem{defn}[theorem]{Definition}

 \newtheorem{claim}[theorem]{Claim}

\theoremstyle{remark}
 \newtheorem{remark}[theorem]{Remark}

\numberwithin{equation}{section}
\numberwithin{theorem}{section}



\newcommand\nc\newcommand
\nc\dmo\DeclareMathOperator
\nc\bs\boldsymbol
\nc{\labbr}[1]{{\sc\lowercase{#1}}}
\nc{\abbr}[1]{{\sc\uppercase{#1}}}
\nc{\ignore}[1]{}

\nc{\cf}{cf.\ }
\nc{\ie}{ i.e. \ }
\nc{\eg}{e.g.\ }

\nc{\overeq}[1]{\stackrel{\text{\tiny #1}}{=}}
\nc{\overleq}[1]{\stackrel{\text{\tiny #1}}{\le}}

\nc{\filldi}{{\hfill$\Diamond$}}

\dmo\per{per}

\renewcommand*{\backref}[1]{\ifx#1\relax \else Page #1 \fi}
\renewcommand*{\backrefalt}[4]{
  \ifcase #1 \footnotesize{(Not cited.)}
  \or        \footnotesize{(Cited on page~#2.)}
  \else      \footnotesize{(Cited on pages~#2.)}
  \fi
}
\DeclareRobustCommand*{\fat}[1]{{\accentset{(\!\trimbox{0pt 1.1ex}{\ensuremath{\string^}}\!)}{#1}}}

\nc{\red}[1]{{\color{red} #1}}
\nc{\blue}[1]{{\color{blue} #1}}
\nc{\green}[1]{{\color{green} #1}}
\nc{\cyan}[1]{{\color{cyan} #1}}
\definecolor{purple}{rgb}{0.9,0,0.8}
\nc{\purple}[1]{{\color{purple} #1}}
\definecolor{gray}{rgb}{0.5,0.5,0.5}
\nc{\gray}[1]{{\color{gray} #1}}
\nc{\note}[1]{{\blue{\textup{[#1]}}}}

\nc{\al}{\alpha} \nc{\Al}{\Alpha}
\nc{\be}{\beta} \nc{\Be}{\Beta}
\nc{\gam}{\gamma} \nc{\Gam}{\Gamma}
\nc{\eps}{\varepsilon}
\nc{\epp}{\epsilon}
\nc{\vep}{\varepsilon}
\nc{\de}{\delta} \nc{\De}{\Delta}
\nc{\lam}{\lambda} \nc{\Lam}{\Lambda}
\nc{\ka}{\kappa} \nc{\Ka}{\Kappa}
\nc{\vpi}{\varpi} \nc{\vph}{\varphi}
\nc{\om}{\omega} \nc{\Om}{\Omega}
\nc{\si}{\sigma} \nc{\Si}{\Sigma}

\nc{\A}{\mathbb{A}}
\nc{\B}{\mathbb{B}}
\nc{\C}{\mathbb{C}}
\nc{\D}{\mathbb{D}}
\nc{\E}{\mathbb{E}}
\nc{\F}{\mathbb{F}}
\nc{\G}{\mathbb{G}}

\nc{\bI}{\mathbb{I}}
\nc{\J}{\mathbb{J}}
\nc{\K}{\mathbb{K}}

\nc{\M}{\mathbb{M}}
\nc{\N}{\mathbb{N}}
\nc{\bO}{\mathbb{O}}

	\renewcommand{\P}{\mathbb{P}}
\nc{\Q}{\mathbb{Q}}
\nc{\R}{\mathbb{R}}
\nc{\bS}{\mathbb{S}}
\nc{\T}{\mathbb{T}}
\nc{\U}{\mathbb{U}}
\nc{\V}{\mathbb{V}}
\nc{\W}{\mathbb{W}}
\nc{\X}{\mathbb{X}}
\nc{\Y}{\mathbb{Y}}
\nc{\Z}{\mathbb{Z}}

\nc{\cA}{\mathcal{A}}
\nc{\cB}{\mathcal{B}}
\nc{\cC}{\mathcal{C}}
\nc{\cD}{\mathcal{D}}
\nc{\cE}{\mathcal{E}}
\nc{\cF}{\mathcal{F}}
\nc{\cG}{\mathcal{G}}
\nc{\cH}{\mathcal{H}}
\nc{\cI}{\mathcal{I}}
\nc{\cJ}{\mathcal{J}}
\nc{\cK}{\mathcal{K}}
\nc{\cL}{\mathcal{L}}
\nc{\cM}{\mathcal{M}}
\nc{\cN}{\mathcal{N}}
\nc{\cO}{\mathcal{O}}
\nc{\cP}{\mathcal{P}}
\nc{\cQ}{\mathcal{Q}}
\nc{\cR}{\mathcal{R}}
\nc{\cS}{\mathcal{S}}
\nc{\cT}{\mathcal{T}}
\nc{\cU}{\mathcal{U}}
\nc{\cV}{\mathcal{V}}
\nc{\cW}{\mathcal{W}}
\nc{\cX}{\mathcal{X}}
\nc{\cY}{\mathcal{Y}}
\nc{\cZ}{\mathcal{Z}}
\nc{\co}{\scriptsize$\cO$\normalsize}

\nc\notni{\not\owns}
\def\({\left(}
\def\){\right)}
\nc{\ii}{\mathrm{i}}

\nc{\ls}{\lesssim}
\nc{\gs}{\gtrsim}
\nc\lllog{\stackrel{\text{\tiny log}}{\ll}}
\nc\gglog{\stackrel{\text{\tiny log}}{\gg}}

\nc{\cond}{ middle|\,}
\dmo{\I}{{I}}
\dmo{\II}{{II}}
\dmo{\argmin}{arg\,min}
\dmo{\argmax}{arg\,max}
\dmo{\area}{area}
\dmo{\diag}{diag}
\dmo{\diam}{diam}
\dmo{\esssup}{ess\,sup}
\dmo{\im}{im}

\dmo{\loc}{loc}
\dmo{\sgn}{sgn}
\dmo{\supp}{supp}
\dmo{\vol}{vol}
\dmo{\dist}{{dist}}

\dmo{\real}{Re}

\DeclareFontFamily{U}{mathx}{\hyphenchar\font45}
\DeclareFontShape{U}{mathx}{m}{n}{
      <5> <6> <7> <8> <9> <10>
      <10.95> <12> <14.4> <17.28> <20.74> <24.88>
      mathx10
      }{}
\DeclareSymbolFont{mathx}{U}{mathx}{m}{n}
\DeclareFontSubstitution{U}{mathx}{m}{n}
\DeclareMathAccent{\widecheck}{0}{mathx}{"71}
\DeclareMathAccent{\wideparen}{0}{mathx}{"75}
\nc{\ol}{\overline}
\renewcommand{\t}{\tilde}
\nc{\ul}{\underline}
\nc{\wt}{\widetilde}
\nc{\wh}{\widehat}
\nc{\wch}{\widecheck}

\nc{\decto}{\downarrow}
\nc{\incto}{\uparrow}
\nc{\la}{\leftarrow}
\nc{\ra}{\rightarrow}
\nc{\La}{\Leftarrow}
\nc{\Ra}{\Rightarrow}
\nc{\lla}{\longleftarrow}
\nc{\lra}{\longrightarrow}
\nc{\Lla}{\Longleftarrow}
\nc{\Lra}{\Longrightarrow}
\nc{\iffa}{\Leftrightarrow}
\nc{\hra}{\hookrightarrow}
\nc{\inj}{\hookrightarrow}
\nc{\surj}{\twoheadrightarrow}
\nc{\bij}{\leftrightarrow}
\nc{\longbij}{\longleftrightarrow}
\nc{\dirlim}{\lim_{\lra}}
\nc{\invlim}{\lim_{\lla}}
\nc{\cra}{\stackrel{\sim}{\ra}}

\dmo{\adj}{adj}
\dmo{\Hom}{Hom}
\dmo{\proj}{proj}
\dmo{\tr}{tr}
\dmo{\Tr}{Tr}
\nc{\Span}{\operatorname{span}}
\def \top {\mathsf{T}}

\def \op {\mathrm{op}}
\dmo{\rank}{rank}

\dmo{\Var}{Var}
\dmo{\Corr}{Corr}
\dmo{\Cov}{Cov}
\dmo{\1}{\mathbf{1}}
\dmo{\ind}{\mathbbm{1}}
\nc{\eqd}{\stackrel{\text{\tiny $d$}}{=}}
\nc{\dto}{\stackrel{d}{\to}}

\nc{\iid}{\abbr{iid}}
\dmo{\Ber}{Ber}
\dmo{\Bin}{Bin}
\dmo{\Poi}{Poi}
\nc{\ER}{Erd\H{o}s--R\'enyi }

\dmo{\Loe}{\preceq}
\dmo{\Roe}{\succeq}

\nc{\HH}{{\mathbb{H}}}
\nc{\rateb}{{\lam}}
\nc{\rateon}{{\nu}}
\nc{\rateoff}{{\mu}}
\nc{\data}{\text{data}}
\nc{\spec}{\text{spec}}
\nc{\sph}{\text{sph}}
\nc{\prj}{\text{proj}}
\nc{\bfx}{\mathbf{x}}
\nc{\tree}{\mathbb{T}}
\nc{\treen}{\tree_2^{(n)}}

\nc{\Dom}{{D}}
\nc{\rd}{{\star}}
\dmo{\road}{{road}}
\dmo{\field}{{field}}

\nc\bx{{\bs x}}
\nc{\ubx}{{\ul{\bx}}}
\nc{\hbx}{{\hat{\bx}}}
\nc{\hx}{{\hat x}}
\nc{\ua}{{\uparrow}}
\nc{\da}{{\downarrow}}
\nc{\raiz}{{\emptyset}}
\nc{\Unif}{\text{Unif}}
\nc\eigv{\mathbf{w}}

\nc{\alg}{G} 
\nc{\Xalg}{X_\alg} 

\nc{\nick}[1]{{\color{blue} #1}}
\nc{\Nick}[1]{{\emph{\nick{[#1]}}}}
\nc{\NICK}[1]{{\bfseries\emph{\nick{[#1]}}}}

\newcommand\sof\newcommand
\sof{\sofia}[1]{{\color{RedViolet} #1}}
\sof{\Sof}[1]{{\bfseries\emph{[#1]}}}

\newcommand{\norm}[1]{\| #1 \|}
\newcommand{\ip}[2]{\left\langle #1, #2 \right\rangle}

\nc\Prob\P
\nc\mc\mathcal

\definecolor{strictfill}{HTML}{E1F5EE}
\definecolor{strictline}{HTML}{0F6E56}
\definecolor{stricttext}{HTML}{04342C}
\definecolor{newfill}{HTML}{FAECE7}
\definecolor{newline}{HTML}{993C1D}
\definecolor{newtext}{HTML}{4A1B0C}
\definecolor{neutralfill}{HTML}{F1EFE8}
\definecolor{neutralline}{HTML}{5F5E5A}
\definecolor{neutraltext}{HTML}{2C2C2A}

\definecolor{neutralfill}{RGB}{245,245,245}
\definecolor{strictfill}{RGB}{220,235,250}
\definecolor{newfill}{RGB}{255,225,225}
\definecolor{paramfill}{RGB}{224,236,247}
\definecolor{paramline}{RGB}{32,84,150}
 
\tikzset{
  neutral/.style    = {draw=black, fill=neutralfill, rounded corners, align=center, 
                       font=\small, inner sep=4pt, line width=0.6pt},
  strict/.style     = {draw=black, fill=strictfill, rounded corners, align=center, 
                       font=\small, inner sep=4pt, line width=0.6pt},
  new/.style        = {draw=red!70, fill=newfill, rounded corners, align=center, 
                       font=\small, inner sep=4pt, line width=0.6pt},
  param/.style      = {draw=paramline, fill=paramfill, rounded corners, align=center,
                       font=\scriptsize, text width=3.2cm, inner sep=3pt, line width=.6pt},
  arr/.style        = {->,>=stealth, draw=black, line width=0.7pt},
  arrnew/.style     = {->,>=stealth, draw=red!70, line width=0.7pt},
  arrparam/.style   = {->,>=stealth, draw=paramline, line width=0.7pt},
  samelink/.style   = {<->,>=stealth, dashed, draw=black!45, line width=.5pt},
  newline/.style    = {draw=red!70, fill=red!30}
}

\title[Spectral Recovery of 
Point Clouds from Noisy Geometric Graphs]{Spectral Recovery of 
Point Clouds\\ from Noisy Geometric Graphs}

\author[T.\ Brailovskaya]{Tatiana Brailovskaya$^\dagger$}
\address{Department of Mathematics, Duke University, 120 Science Dr, Durham, NC 27710, USA}\email{ }

\author[N.\ A.\ Cook]{Nicholas A.\ Cook$^{\star}$}
\thanks{${}^\star$${}^\ddagger$Supported in part by NSF grant DMS-2553253}

\author[S.\ Poinelli]{Sofia Poinelli$^\ddagger$}

\email{${}^\dagger$tatiana.brailovskaya@duke.edu, ${}^\star$nicholas.cook@duke.edu,
${}^\ddagger$sofia.poinelli@duke.edu}

\email{ }

\date{\today}

\begin{document}

\begin{abstract}
We study the problem of recovering low-dimensional latent geometry from a random geometric graph generated by noisy, high-dimensional data. 
Specifically, we analyze the performance of a spectral embedding algorithm on the Signal+Noise Graph Model, in which vertices are associated to points perturbed by Gaussian noise, and edges are included for pairs whose inner product exceeds a specified alignment threshold. In the high-dimensional regime where the number $n$ of points and the ambient dimension $d$ both tend to infinity, we show that under a spectral gap condition, the top eigenvectors and eigenvalues of the graph's adjacency matrix can be used to approximately recover the point cloud up to an orthogonal transformation.
We illustrate our results on point clouds sampled from nested spheres and high-dimensional sinusoid curves.
\end{abstract}

\maketitle

\setcounter{tocdepth}{1}
\tableofcontents

\section{Introduction} 

Recovering low-dimensional geometric structure from high-dimensional, noisy data is a fundamental problem in modern data science. Large datasets are often hypothesized to lie near low-dimensional manifolds embedded in a high-dimensional ambient space, but observations are typically corrupted by noise. Some standard approaches to this problem build an affinity or kernel matrix from the data and extract coordinates via spectral decomposition, as in classical Multidimensional Scaling (MDS), kernel PCA, Laplacian eigenmaps, and diffusion maps. These methods generally require the kernel to be continuous and the data to be densely sampled. 

The vast majority of analysis of these algorithms has been under the \emph{manifold hypothesis}, that the data lies exactly on the low-dimensional manifold, i.e.\ there is no high-dimensional noise. 
In this setting, affinity matrices -- which may be viewed as Laplacian matrices for weighted graphs with the data points as vertices -- have been shown to converge, in various senses, to the Laplace–Beltrami operator of the manifold, at rates depending on the intrinsic dimension \cite{Sin06,GK06,DWW21}. 
In the high-dimensional additive-noise setting, much less is known. 
The works \cite{EKW16, DW23} quantify how adding noise to a signal affects the spectrum of a kernel or graph Laplacian matrix. 
The work \cite{ding2021learning} analyzes an embedding algorithm constructed from the spectral decomposition of a kernel matrix, with kernel function satisfying some regularity hypotheses. 

A separate body of work has studied \emph{latent space} random graph models, in which vertices $i$ are associated to random elements $y_i$ of a set $\cL$ of ``labels'' (or ``latent features''). Conditional on the labels $(y_1,\dots, y_n)$, each pair $\{i,j\}$ is included as an edge independently with probability $q(y_i,y_j)$, for a fixed \emph{affinity} function $q:\cL^2\to[0,1]$. A central problem is to determine conditions under which the labels $y_i$ can be (approximately) recovered from an observation of the graph, with high probability. The most well-studied examples are Stochastic Block Models (SBM), in which $\cL$ is a set of fixed cardinality $k$, inducing a random partition of the vertex set $[n]$ into $k$ \emph{communities}. In certain parameter ranges there is an \emph{information-computation gap}, where recovery of the communities is possible, but cannot be achieved by any polynomial-time algorithm. We refer to the \cite{Abbe2018} for a thorough review of these and related results for SBMs. 

Random geometric graphs (RGGs) are a special class of latent space models in which $\cL$ is equipped with a metric $\rho$, and $q$ takes the form $q(x,y) = 1_{\rho(x,y)\le r}$ for a threshold parameter $r$; when $\cL$ has a Hilbert space structure one may alternatively take $q$ of the form $q(x,y)=1_{\langle x,y\rangle\ge t}$. Classically, RGGs have been studied for the case that $\cL$ is a Euclidean space of fixed dimension \cite{Penrose-RGG}. The influential work \cite{BDER} studied the following hypothesis testing problem for the case that the dimension $d$ grows with the number of vertices $n$, and the latent points $y_i$ are drawn independently and uniformly from the unit sphere $\cS^{d-1}$: for what range of $d=d(n)$ is an RGG distinguishable with high probability from an Erd\H os--R\'enyi graph of the same expected edge density $p$? Results from \cite{BDER} motivated further progress and a general conjecture in \cite{LMSY} that the models are distinguishable for $d=o( (nh(p))^3)$ and indistinguishable when $d=\omega((nh(p))^3)$, where $h(p)$ is the binary entropy function. A proof in the affirmative for the case $d>n$ has recently appeared in \cite{DMSWX}.

In \cite{LiSchramm2024} Li and Schramm introduce the Gaussian Mixture Block Model (GMBM), a random geometric graph in which labels $y_i$ are sampled independently from a mixture of two Gaussians on $\R^d$ centered at $\pm\mu e_1$ (on the first coordinate axis).
Decomposing $y_i=x_i+z_i$ for $x_i$ uniform in $\{+\mu e_1,-\mu e_1\}$ and  $z_i$ independent Gaussian noise, we can view $x_i$ as a clean signal and $z_i$ as noise. They establish bounds on the signal-to-noise ratio (SNR) $\mu$ under which (a) the noisy data $(y_i)_{i=1}^n$ and (b) the uncorrupted labels $(x_i)_{i=1}^n$ can be recovered with high probability from the graph. Their recovery algorithm is based on the spectral decomposition of the graph adjacency matrix. It was recently shown in \cite{fernandez2026spectral} that exact recovery of the labels $x_i$ can be achieved with high probability using a semidefinite program.

In this work, we extend the problem of high-dimensional data recovery from a geometric graph to a more general data model.
We introduce the Signal+Noise Graph Model (SNGM) -- see \Cref{def:SNGM} -- in which edges are determined by thresholding inner products for points $y_i=x_i+z_i$ obtained by perturbing an arbitrary random or deterministic point cloud $(x_i)_{i=1}^n$ by independent Gaussian noise $(z_i)_{i=1}^n$. 
Our main result, \Cref{thm:main}, provides conditions under which an algorithm based on the spectral decomposition of the graph adjacency matrix succeeds in approximately recovering the clean point cloud $(x_i)_{i=1}^n$ up to a global orthogonal transformation. 
We apply this result in \Cref{thm:numerically_low_rk_denoising} to point clouds sampled from a distribution having numerically low-rank population covariance matrix, verifying that natural geometric objects, such as spheres, high-dimensional sinusoid curves with well-separated  frequencies, satisfy the required regularity conditions.

This construction generalizes the GMBM framework of Li and Schramm from a two-component mixture to a numerically low-rank geometric signal. 
 The resulting adjacency matrix in the GMBM setup is effectively a non-linear, discontinuous, spiked kernel random matrix.
 From this perspective, our work connects the study of high-dimensional RGGs to the broader theory of spiked random matrix models. In the null setting where there is no underlying signal, the spectrum of kernel random matrices generated from isotropic data have been analyzed in \cite{elkaroui2010information,cheng2013spectrum,lu2022equivalence,DLMY23}, with limiting spectral distributions described by deterministic laws. Approximate recovery of isotropic data was studied in \cite{EMP22,Araya20,MWX26}, and the recent work \cite{MaMu26} considers anisotropic Gaussian data $y_i\sim N(0, \Sigma)$.
 In the spiked setting, \cite{guionnet2023spectral} studies non-linear Wigner spiked models with continuous kernels, identifying signal-to-noise thresholds for eigenvector alignment. 

Apart from \cite{LiSchramm2024}, the work most closely related to the present work is \cite{ding2021learning}, which studies an embedding algorithm for noisy data based on the spectral decomposition of a kernel matrix. 
The main differences are: 
(i) \cite{ding2021learning} considers a smooth kernel function with a scaling that yields a dense kernel matrix, whereas our kernel function is a discontinuous indicator yielding a  potentially very sparse matrix; 
(ii) their data model takes the clean data $x_i$ to be i.i.d.\ samples from a continuous distribution on a low-dimensional sub-manifold, whereas our results apply to full-rank deterministic point clouds;
(iii) on the other hand, while we restrict to Gaussian noise, \cite{ding2021learning} considers general isotropic sub-Gaussian noise. 

Our framework relaxes the strict low-rank assumption common in the spiked random matrix literature. Rather than requiring the clean signal data to lie exactly in a $k$-dimensional subspace we allow that the principal components grow with $n, d$, provided the tail singular values decay sufficiently fast and no individual point dominates the cloud's structure.

\subsection{Notation}\label{sec:notation}

An index of notation is provided in Appendix \ref{app:notation}.

\emph{Asymptotic notation.} $C,c, C_0$ etc.\ denote positive absolute constants. For two sequences $a_n, b_n$, we write $\omega(a_n) = b_n$ or $a_n = o(b_n)$ if $\lim_{n\to\infty} |a_n/b_n| = 0$. We use $\tilde{\omega}(a_n)= b_n$ and $a_n \lllog b_n$ to indicate that $b_n \ge a_n \log ^Cn$ for a sufficiently large constant $C$. 
We write $a=O(b)$ or $a\lesssim b$ to mean there exists some constant $C>0$ such that $|a|\leq C\cdot b$, and $a_n\lessapprox b_n$ if  $|a_n|\leq b_n\cdot \log^C n$. We also write $a_n=\Theta(b_n)$ to indicate that there exists 
$n_0\in \Z_{\geq 0}$ such that $a_n\ls b_n\ls a_n$ for all $n>n_0$.
Similarly, we use $a_n\asymp b_n$ to indicate that 
$a_n\ls b_n\ls a_n$
for all $n\in \Z_{\geq 0}$.

\emph{Linear algebra.}
We denote the unit sphere in $\R^m$ by $\mathcal{S}^{m-1}$.
For a 
symmetric matrix $B \in \R^{n \times n}$, we denote its eigenvalues by $
\lambda_1(B)\geq \dots \geq \lambda_n(B)$. 
The singular values of a rectangular matrix $B$ are denoted $\sigma_i(B):=\lambda_i(B^\top B)^{1/2}$. 
We denote the spectral operator norm by $\norm{B}_{\op}$ and the Frobenius norm by $\norm{B}_F$. For any matrix $B$, let $\Pi_{B}^{(k)}$ denote the orthogonal projection matrix onto the subspace spanned by the top $k$ eigenvectors of $BB^\top$.
We use the notation $\odot$ to denote the Hadamard (entrywise) product between two matrices of the same dimension. The operator $\diag$ is context dependent, for a matrix $B \in \R^{n \times n}$, $\diag(B)$, it zeros out the off-diagonal entries of $B$, whereas for scalars $a_1, \dots, a_n\in \R$, $\diag(a_1,\dots, a_n)$ it creates a diagonal matrix.

\subsection{AI usage disclosure}

Artificial intelligence tools were used only to aid in the creation of graphics for the figures. The authors take full responsibility for all content of the paper.

\section{Model setup and results}

\subsection{The Model}

To study the extent to which high-dimensional data can be recovered from binary information on pairwise alignment of data points after corruption by additive noise, we introduce to the following general random graph model.

\begin{defn}[Signal+Noise Graph Model (SNGM)]\label{def:SNGM} 
Given a `clean' point cloud $\cX=\{x_1,\dots, x_n\}\subset \R^d$ and
edge-density parameter $p\in [0,1]$,
the Signal+Noise Graph Model (SNGM) is a distribution over 
$n$-vertex graphs $G\sim \mathcal{G}_{n,d}(p,\cX)$ 
generated as follows:
\begin{itemize}
    \item Independently sample 
    Gaussian vectors $z_1,\dots,z_n\sim \mathcal{N}(0,\frac{1}
    {d}I_d)$, 
    and set $y_i:=x_i+z_i$.
    \item Let $V(G)=[n]$, and add an edge $\{i,j\}$ to $E(G)$ if and only if $\langle y_i,y_j\rangle\geq \tau$, where
    $\tau(p,n,d,\cX)$ is 
    taken such that
    \begin{equation}
        \label{def:taup}
        \frac{1}{\binom{n}{2}}\sum_{i<j}\Prob\left(\langle y_i,y_j\rangle\geq \tau\right)=p\,.
    \end{equation}
\end{itemize}
We denote by $X,Y,Z$ the $n\times d$ matrices with rows $x_i^\top, y_i^\top,z_i^\top$, respectively, and $\Sigma:= \frac1nX^\top X$ the sample covariance matrix for the clean point cloud. 
\end{defn}

\begin{remark}[GMBM as a special case of the SNGM]
The GMBM of \cite{LiSchramm2024} is recovered by taking
$x_i$ i.i.d.\ and uniform from $\{-\mu e_1, \mu e_1\}$, independent of the noise $Z$, so that $\Sigma =\mu^2 e_1 e_1^\top$ is rank one
and the noisy data $y_i$ encodes a symmetric and centered two-community structure. 
\end{remark}

\begin{remark}
    [Colored noise]
    \label{rmk:color}
    Our results below admit straightforward extensions to the more general setup that $z_i$ are iid centered Gaussian vectors with invertible covariance matrix $\frac1dC$.
    Indeed, our main result \Cref{thm:main} extends as written to allow any $C$ satisfying $\|C\|_\op,\|C^{-1}\|_\op =O(1)$, by multiplying the data $X$ on the right by $C^{-1/2}$ to reduce to the white noise case $C=I_d$.
\end{remark}

\begin{remark}
In the context of latent space graph models, the vectors that dictate the edge formation ($y_i$ in this case) would traditionally be referred to as \emph{latent vectors}. However, to clearly distinguish the underlying clean point cloud $\cX$ from the noisy samples $\{y_i\}_{i=1}^n$ in our model, we will refer to $x_i$ and $y_i$ as \emph{clean} and \emph{noisy} vectors (or points), respectively.
\end{remark}

\begin{figure}
    \centering
    \includegraphics[width=0.4\linewidth]{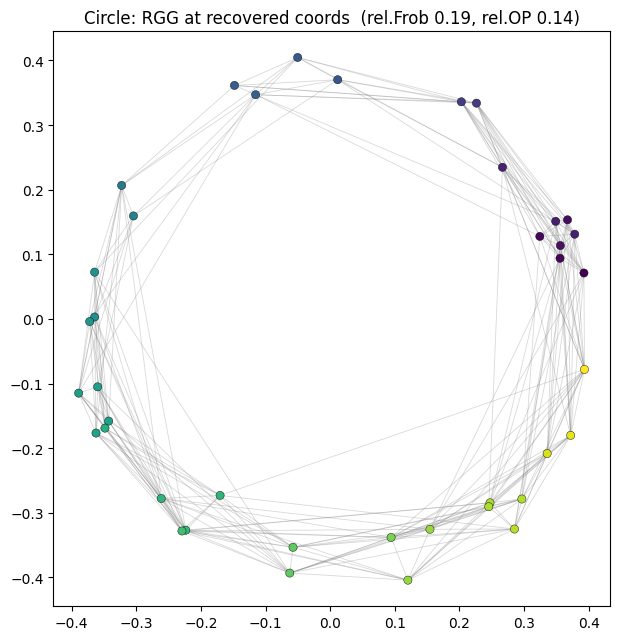}
    \caption{A 40-vertex subgraph of a SNGM on $n=3000$ vertices with density parameter $p=0.1$. The clean point cloud $\cX=\{x_i\}_{i=1}^n$ consists of iid uniform samples from the origin-centered circle of radius $r=\sqrt{0.17}\approx 0.41$ in a 2-dimensional subspace of $\R^d$ for $d=500$, to which $d$-dimensional Gaussian noise is added.
    In the figure the vertices are embedded in $\R^2$ using the spectral algorithm from \Cref{sec:algorithm} applied to the full $n\times n$ SNGM adjacency matrix.}
    \label{fig:SNGM subgraph}
\end{figure}

\subsection{Spectral embedding algorithm}
\label{sec:algorithm}

We analyze the following spectral algorithm to approximately recover a point cloud (up to a global orthogonal transformation) from a geometric graph $G$ generated from noisy measurements of a point cloud $\cX=\{x_1,\dots, x_n\}\subset \R^d$. The algorithm returns a point cloud $\cX^\alg=\{x_1^\alg,\dots, x_n^\alg\}\subset \R^k$ that one hopes retains approximately the same metric structure as $\cX$.\\

\noindent\fbox{%
  \begin{minipage}{\dimexpr\textwidth-2\fboxsep-2\fboxrule}
    \emph{Inputs:} A graph $G$ over vertex set $[n]$ and integers $k,d\in[n]$ with $k\le d$. 
\begin{enumerate}
    \item Compute the edge density $\hat p=\hat p(G):= \binom{n}{2}^{-1}|E(G)|$.
    \item 
    Compute eigenpairs $\{\eta_i, \mathbf{w}_i\}^{k}_{i=1}$ of the $n\times n$ adjacency matrix $A$, where\\ $A_{ij}=\1[\{i,j\}\in E(G)]$, and eigenvalues are labeled $\eta_0\geq\dots\geq \eta_{n-1}$.
    
    \item \emph{Output:} rows $x_1^\alg,\dots, x_n^\alg\in\R^k$ of the $n\times k$ embedding matrix  
    \begin{equation}
        \label{def:embedding}
        \Xalg:= 
        \sqrt{\alpha(\hat p)} \mathbf W_k D_k^{1/2}
    \end{equation}
    where $\mathbf W_k=[\mathbf{w}_1,\dots, \mathbf{w}_k]$, 
    $D_k:= 
    (\max(\eta_i,0))_{i=1}^k$,
    and we define a scaling function
    \begin{equation}
        \label{def:scaling}
        \alpha:(0,1)\to\R^+,\qquad 
        \alpha(q):= \sqrt{\frac{2\pi}{d-k}}\exp\bigg( \frac{d-k}{2d} \Psi^{-1}(q)^2\bigg) 
    \end{equation}
    with $\Psi(t):= \frac1{\sqrt{2\pi}}\int_t^\infty e^{-s^2/2}ds$ the complementary distribution function of a Gaussian.
\end{enumerate}
  \end{minipage}%
}

\quad\\

Note that since the input graph for the algorithm depends on the data only through inner products, one can only hope to recover the point cloud up to a global orthogonal transformation. Hence, we will evaluate the performance of the approximation through recovery of the Gram matrix $XX^\top = (\langle x_i,x_j\rangle)_{i,j=1}^n$.

The parameter $k$ is an estimate for the effective rank of the clean data matrix $X$.
The above algorithm treats $k$ as an input, but if it is unknown then it can be selected in step 2 based on a gap in the spectrum of $A$.

\subsection{Main result} 
\label{sec:result}

Our main technical result is the following theorem, showing that the spectral embedding algorithm of \Cref{sec:algorithm} performs well on the SNGM (\Cref{def:SNGM}) under suitable hypotheses.
Recall our conventions for asymptotic notation from \Cref{sec:notation}.

\begin{theorem}
\label{thm:main}
Suppose that $n,d\in \Z_+$, $p\ge0$ satisfy
\begin{equation}  
    \omega(\log^{16}n)\le d<n\,,\qquad
\omega\bigg(\frac{\log^{18}n}n\bigg)\le p\le 0.49\,. 
\label{assu:density}
\end{equation}
Let $X\in \R^{n\times d}$ such that
\begin{equation}
    \frac1n\|X\|_F^2 =\Tr \Sigma \leq \frac1{\sqrt{d} \log^2 n}     \label{assu:trace}
\end{equation}
with rows $x_1^\top,\dots, x_n^\top$ satisfying
\begin{equation}    \label{assu:no-outliers}
    \max_{1\le i\le n}\|x_i\|_2^2 \lesssim
    \frac 1n \|X\|_F^2\,.
\end{equation}
Denoting by $\mu_1\ge\cdots\ge\mu_d\ge0$ the singular values of $\frac1{\sqrt{n}}X$, assume 
\begin{align}
    \mu_k &\ge \omega(d^{-1/2})
    \qquad \text{and} \qquad
\sum_{i> k}\mu_i^2= o\left(\min(d^{-1}, p^3)\right),
    \label{assu:gap-gen}
\end{align}
for some integer $k$.
If $G\sim \mathcal{G}_{n,d}(p,
\mathcal X)$, then for $\Xalg$ as in \eqref{def:embedding}, with 
probability $1-O(n^{-10})$,
\begin{equation}
    \label{bd:main}
    \norm{XX^\top-\Xalg \Xalg^\top}_\op\lessapprox \cE_\op(X):= \frac{n\mu_1}{\sqrt{d}} +\frac{\mu_1^2}{\mu_k^2} \left(  n\sqrt{d}\mu_1^2\Tr\Sigma +  \sqrt{\frac{n}{dp}} 
    + \frac{n}{p\sqrt{d}} 
    \Big( \sum_{i>k}\mu_i^2\Big)^{1/4}
    \right)\,.
\end{equation}
\end{theorem}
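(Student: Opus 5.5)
We prove \eqref{bd:main} by a linearization-plus-perturbation argument. We compare $A$ with a low-rank signal matrix proportional to $XX^\top$, and then apply Davis--Kahan/Weyl-type bounds to the top-$k$ eigenspace.

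\emph{Step 1: conditional edge probabilities.} Write $\langle y_i,y_j\rangle=\langle z_i,z_j\rangle+\langle x_i,z_j\rangle+\langle z_i,x_j\rangle+\langle x_i,x_j\rangle$. Conditionally on the noise projected onto the top-$k$ right singular space of $X$, the bulk term $\langle z_i,z_j\rangle$ restricted to the orthogonal complement (dimension $d-k$) is approximately $N(0,(d-k)/d^2)$. This is where the factor $d-k$ in $\alpha$ comes from. By a Gaussian tail expansion at level $\tau\approx\Psi^{-1}(p)/\sqrt d$, a shift $s$ of the threshold changes the probability by approximately $s\cdot\phi(\Psi^{-1}(p))\sqrt{d-k}\cdot\sqrt{d}/d$. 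That is,
\[
\P(A_{ij}=1\mid \text{signal})\approx p+\alpha(p)^{-1}\big(\langle x_i,x_j\rangle+\langle x_i,z_j\rangle+\langle z_i,x_j\rangle+\text{cross terms}\big)+\text{quadratic error}.
\]
The quadratic remainder is controlled by $\Psi^{-1}(p)^2 d\,(\text{shift})^2\lesssim \log n\cdot d\|x_i\|^2\|x_j\|^2$ etc. This is where \eqref{assu:trace} and \eqref{assu:no-outliers} enter. They produce the term $n\sqrt d\mu_1^2\Tr\Sigma$ after multiplying by $\alpha\approx\sqrt{d}\,p^{-1}$-type factors, which cancel appropriately. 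I also need $\hat p=p(1+O(\sqrt{\log n}/n\sqrt p))$, so that $\alpha(\hat p)/\alpha(p)=1+o(1)$ with a quantified error, using \eqref{assu:density}.

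\emph{Step 2: decomposition of $A$.} Write
\[
\alpha(p)A=\alpha(p)p\,\mathbf 1\mathbf 1^\top+XX^\top+(XZ^\top+ZX^\top)+\alpha(p)(A-\E[A\mid Z_{\parallel}])+R,
\]
where $Z_\parallel$ is the noise in the top-$k$ directions and $R$ collects the nonlinear and tail contributions. The tail contribution comes from $\sum_{i>k}\mu_i^2$. I bound it via a Lipschitz/Hölder estimate of the indicator's smoothed response, which gives the $(\sum_{i>k}\mu_i^2)^{1/4}$ factor. The mean-zero fluctuation is treated as follows. Conditionally on the top-$k$ noise, the entries are dependent only through the bulk Gram matrix. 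I would invoke a nonlinear kernel random matrix bound (Hermite expansion of the threshold function $\1[\cdot\ge\tau]$ applied to $\langle z_i,z_j\rangle$) giving $\|A-\E A\|_\op\lessapprox\sqrt{np}+np/\sqrt d$. Multiplied by $\alpha\asymp \sqrt{\log(1/p)}\,p^{-1}d^{-1/2}$, this yields $\sqrt{n/(dp)}$ and $n/\sqrt d$-scale terms. The cross term satisfies $\|XZ^\top\|_\op\lessapprox \sqrt n\mu_1(\sqrt{n/d}+1)$, giving $n\mu_1/\sqrt d$.

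\emph{Step 3: removing the mean and projecting.} The all-ones component produces one large eigenvalue $\approx np$. This needs care: the algorithm labels eigenvalues from $\eta_0$ and uses $\eta_1,\dots,\eta_k$, so the top one is discarded. I must show the ones-direction is nearly orthogonal to the column space of $X$, or else absorb the overlap into the error. Then $\Xalg\Xalg^\top=\alpha\,\mathbf W_kD_k\mathbf W_k^\top$ is the best rank-$k$ approximation of $\alpha A$ minus the ones component. The standard bound $\|XX^\top-\text{rank-}k\text{ trunc}(XX^\top+E)\|_\op\lesssim\|E\|_\op+\|XX^\top-\Pi^{(k)}XX^\top\|$, combined with Davis--Kahan with gap $n\mu_k^2\gg\|E\|$, gives the prefactor $\mu_1^2/\mu_k^2$. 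Here $\mu_k\ge\omega(d^{-1/2})$ ensures the gap dominates. The tail $n\sum_{i>k}\mu_i^2$ is negligible by \eqref{assu:gap-gen}.

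\emph{Main obstacle.} The hardest step is the operator-norm control of the centered nonlinear kernel matrix with a discontinuous kernel in the sparse regime $p\ge\log^{18}n/n$. This requires a Hermite/polynomial approximation of the threshold at the scale $\sqrt d$, together with a moment-method or matrix-concentration bound for the conditionally dependent entries. The polylog requirements $d\ge\log^{16}n$ and $p\ge \log^{18}n/n$ appear to be tuned to it. I would first try the conditional-independence trick of Li--Schramm: condition on a low-dimensional projection, and then use an entrywise expansion in the bulk inner products.
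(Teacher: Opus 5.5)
Your ingredients match the paper's. You linearize the edge indicator after conditioning on the top-$k$ block, which corresponds to the paper's degree-$0$/$1$ Gegenbauer terms. You use the Li--Schramm trace-method bound for the higher-order remainder; the paper imports it verbatim in Lemma~\ref{lemma:trace_method}, so your ``main obstacle'' is not where the proposal falls short. You use a smoothed indicator with an optimized band width, giving $\|A-\tilde A\|_\op\lesssim n(\sum_{i>k}\mu_i^2)^{1/4}+\log n$. And you treat the top eigenpair separately.

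The assembly differs. The paper compares $A$ with $p_0\zeta\zeta^\top+d_kp_1YY^\top$ and uses the rank-$(d+1)$ spectral truncation of $A$ to define $\hat Y\hat Y^\top$. It then returns to $XX^\top$ through $Y_kY_k^\top$ and Davis--Kahan (Lemma~\ref{lem:perturbation-low}), which is where the factor $\mu_1^2/\mu_k^2$ comes from. You instead put $XZ^\top+ZX^\top$ and the bulk degree-one term into $E$ and compare with $XX^\top$ directly. That is legitimate and shorter, but finish with Weyl, not Davis--Kahan. Write $[M]_k^+$ for the top-$k$ spectral truncation with eigenvalues clipped at $0$. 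Then $\|XX^\top-[XX^\top+E]_k^+\|_\op\le 2\|E\|_\op+n\mu_{k+1}^2$ with no gap needed. By contrast, your condition $n\mu_k^2\gg\|E\|_\op\gtrsim n\mu_1/\sqrt d$ is not implied by the hypotheses when $\mu_1\gg\mu_k$. Done this way, your route would not even incur the $\mu_1^2/\mu_k^2$ prefactor.

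Some bookkeeping corrections:
\begin{enumerate}
\item The bulk degree-one term contributes $\alpha\cdot np/\sqrt d\asymp n/d$ up to logarithms, not $n/\sqrt d$. An $n/\sqrt d$ error would exceed $\|XX^\top\|_\op=n\mu_1^2\le n/(\sqrt d\log^2 n)$ and destroy the argument. The correct $n/d$ is absorbed by $n\mu_1/\sqrt d$.
\item Your entrywise bound on the quadratic remainder only gives $n\sqrt d(\Tr\Sigma)^2$ up to logs, which can exceed the target by a factor $k$. To reach $n\sqrt d\mu_1^2\Tr\Sigma$ you need the Schur-product bound $\|(aa^\top)^{\odot 2}\|_\op\le\|aa^\top\|_\op\max_i\|a_i\|_2^2$ for the PSD Hadamard square, as the paper uses.
\item The rate $\hat p=p(1+O(\sqrt{\log n}/(n\sqrt p)))$ is too optimistic, because edges sharing a vertex are correlated through $z_i$; the paper proves $|\hat p-p|\lesssim\sqrt{p\log n/n}$. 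Also, the slope must be taken at the calibrated $\tau$, not at $\Psi^{-1}(p)/\sqrt d$ (Lemma~\ref{lem:p1}). Both of these are harmless.
\end{enumerate}

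The genuine gap is Step 3: the fallback ``absorb the overlap into the error'' cannot work. The algorithm discards $\eta_0\mathbf w_0\mathbf w_0^\top$ with $\mathbf w_0\approx\mathbf 1/\sqrt n$, so the part of $XX^\top$ along $\mathbf 1$ is discarded with it. This costs about $\|XX^\top\mathbf 1\|_2/\sqrt n\le\mu_1\|X^\top\mathbf 1\|_2$, which nothing in $\cE_\op(X)$ controls.

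For a counterexample, take $x_i\equiv x$ with $\|x\|_2^2=d^{-3/4}$, $d=\sqrt n$ and $p=1/4$. All hypotheses hold with $k=1$. But the common shift $\|x\|_2^2$ of every inner product is absorbed by the calibrated threshold $\tau$, and the signal $\|x\|_2^2\mathbf 1\mathbf 1^\top$ lies in the discarded direction. Hence $\|XX^\top-\Xalg\Xalg^\top\|_\op\ge(1-o(1))\,nd^{-3/4}$, while $\cE_\op(X)\asymp nd^{-7/8}$, so \eqref{bd:main} fails.

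So near-orthogonality of $\mathbf 1$ to the column space of $X$ has to be \emph{assumed}; it does not follow from the stated hypotheses. A sufficient form is $\|X^\top\mathbf 1\|_2\lesssim n/\sqrt d$, which keeps the discarded piece below $n\mu_1/\sqrt d$.

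The paper supplies this step through Lemmas~\ref{lemma:projection_of_data_matrix} and~\ref{lem:eigenvector_diff}. However, its bound $|(\zeta^\top Y)_j|\lesssim\sqrt{n\log n(1/d+\Tr\Sigma)}$ for $j\le k$ treats the deterministic sum $\sum_i x_{ij}$ as a centered Gaussian fluctuation. That tacitly assumes $\|X^\top\mathbf 1\|_2^2=O(n\log^{O(1)}n)$. This holds for i.i.d.\ samples from a mean-zero $\nu$ (spheres, the GMBM) but is not among the theorem's hypotheses.

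With such a centering assumption added, your Step 3 goes through as in the paper. The top eigenvalue of $B_{\mathrm{proj}}=p_0\zeta\zeta^\top+d_kp_1\Pi^\perp_\zeta YY^\top\Pi^\perp_\zeta$ is $\Theta(np)$, and the next one is $O(np\sqrt{d\log n}\,\Tr\Sigma)=o(np)$. Davis--Kahan and Weyl then give $\|\eta_0\mathbf w_0\mathbf w_0^\top-p_0\zeta\zeta^\top\|_\op=O(\|A-B_{\mathrm{proj}}\|_\op)$.
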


We will first prove \Cref{thm:main} in the special case that the clean data matrix $X$ is of rank $k$, in which case 
\begin{equation}
    \label{bd:rank-k}
    \cE_\op(X)=\frac{n\mu_1}{\sqrt{d}} +\frac{\mu_1^2}{\mu_k^2} \left(  n\sqrt{d}\mu_1^2\Tr\Sigma +  \sqrt{\frac{n}{dp}} 
    \right)\,.
\end{equation}
See \Cref{thm:lowrank}.
Since $\|XX^\top\|_\op = n\mu_1^2$, from \eqref{bd:rank-k} we see that 
in the rank-$k$ case, we guarantee a relative error $\|XX^\top -\Xalg \Xalg^\top\|_\op/\|XX^\top\|_\op = o(1)$ when, for instance, 
\[
d\lessapprox np \qquad \text{and} \qquad  \mu_k \gglog \frac1{\sqrt{d}} \gglog \frac{\mu_1^2}{\mu_k^2} \Tr \Sigma ,
\]
where $a \gglog b $ means $a \ge b\cdot\log^Cn $ for a sufficiently large constant $C$. (One can relax one of the implicit polylogarithmic factors by tightening another.)

\begin{remark}[On the Implicit polylogarithmic Factors]
    See 
    \Cref{sec:overview}
    for refined bounds with explicit logarithmic factors, which we have not attempted to optimize. 
\end{remark}
\begin{remark}\label{rmk:k} Our hypotheses imply that $k=o(\sqrt{d}/\log^2 n)$. Indeed, from the 
bounds in \eqref{assu:trace}, \eqref{assu:gap-gen} it follows 
$\omega(\frac{k}{d})\leq k\mu^2_k\leq \Tr\Sigma\leq \frac{1}{\sqrt{d}\log^2 n}$. 
\end{remark}

\begin{remark}
    The upper bound of 0.49 in \eqref{assu:density} could be replaced by any fixed number less than $\frac12$.
\end{remark}

As a consequence of \Cref{thm:main} 
we obtain conditions under which the embedding $\Xalg$ recovers most of the dot products $\langle x_i,x_j\rangle$ with vanishing error. 

\begin{cor}
\label{cor: Frob cor1}
With $\cE_\op(X)$ as in \eqref{bd:main}, let   
\begin{align}
    \cE_F(X) &:= \sqrt{k}\cE_\op(X) + n 
    \Big( \sum_{i>k}\mu_i^4\Big)^{1/2}
%    \label{def:error-frob}
    \notag\\
    &=n\sqrt
       {\frac{k}{d}}\Bigg[\mu_1
+ \frac{\mu_1^2}{\mu_k^2}
 \bigg(d\mu_1^2\Tr\Sigma 
 +\sqrt{\frac1{np}} +\frac1p 
 \Big( \sum_{i>k}\mu_i^2\Big)^{1/4}
 \bigg)\Bigg]+ n
  \Big( \sum_{i>k}\mu_i^4\Big)^{1/2}.   
 \notag
\end{align}
Under the same assumptions as \Cref{thm:main}, and the same event under which \eqref{bd:main} holds, we have
\begin{equation}
    \label{eqn:frob_cor}
    \norm{XX^\top-\Xalg \Xalg^\top}_F\lessapprox \cE_F(X)\,.
\end{equation}
Moreover, for any $\varepsilon \in(0,1]$ and a sufficiently large constant $C>0$, and $x_i^\alg$ the $i$-th rows of $\Xalg$
\begin{equation}\label{eqn:inner_prod_recover}
    \vert\{(i,j): i\neq j, |\langle x_i,x_j\rangle -\langle x_i^\alg,x_j^\alg\rangle |\geq 
    \frac{\log^C n}{n\sqrt{\eps}}\cE_F (X)\}\lvert\leq n^2\varepsilon,
\end{equation}
and 
\begin{equation}\label{eqn:norm_recovery}
    \Bigg\lvert \bigg\{i\in [n]: |\|x_i\|^2_2-\|x_i^\alg\|^2_2|\geq \frac{\log^Cn } {\eps n} \Big(2k\cE_\op(X)+n
    \sum_{i>k}\mu_i^2 
    \Big)\bigg\} \Bigg\rvert \le \varepsilon n.
\end{equation}
In particular, 
\begin{equation}\label{eqn:norm_recovery2}
    \Bigg\lvert \bigg\{i\in [n]: |\|x_i\|_2-\|x_i^\alg\|_2|\geq \frac{\log^{C/2} n }{\sqrt{\eps n}} \Big(2k\cE_\op(X)+n 
    \sum_{i>k}\mu_i^2
    \Big)^{1/2} \bigg\} \Bigg\rvert \le \varepsilon n\,.
\end{equation}
\end{cor}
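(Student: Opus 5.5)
Everything will be deduced from the operator-norm bound \eqref{bd:main} of \Cref{thm:main}, working on the event where it holds. The tool is a split of $XX^\top$ into its best rank-$k$ part and a tail. Write $X_k := \Pi^{(k)}_X X$ for the rank-$k$ truncation of $X$ by its top $k$ singular directions, and $X_{>k}:=X-X_k$. Then $X_kX_k^\top$ has rank at most $k$, and
\[
\|X_{>k}X_{>k}^\top\|_F = n\Big(\sum_{i>k}\mu_i^4\Big)^{1/2},\qquad
\Tr(X_{>k}X_{>k}^\top) = n\sum_{i>k}\mu_i^2 .
\]
Since $XX^\top = X_kX_k^\top + X_{>k}X_{>k}^\top$ (the cross terms vanish by orthogonality of the singular subspaces), the difference splits as
\[
XX^\top - \Xalg\Xalg^\top = \big(X_kX_k^\top - \Xalg\Xalg^\top\big) + X_{>k}X_{>k}^\top .
\]
The first bracket has rank at most $2k$, because $\Xalg$ has $k$ columns.

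\textbf{Frobenius bound \eqref{eqn:frob_cor}.} For a matrix $M$ of rank at most $2k$ we have $\|M\|_F\le \sqrt{2k}\|M\|_\op$. Apply this to the first bracket, and bound its operator norm by the triangle inequality:
\[
\|X_kX_k^\top-\Xalg\Xalg^\top\|_\op \le \|XX^\top-\Xalg\Xalg^\top\|_\op + \|X_{>k}X_{>k}^\top\|_\op .
\]
The first term is $\lessapprox \cE_\op(X)$ by \eqref{bd:main}. The second is $n\mu_{k+1}^2$. This is dominated by $n(\sum_{i>k}\mu_i^4)^{1/2}$, and multiplying it by $\sqrt{2k}$ still stays within $\cE_F$ up to constants. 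To justify this, I would either absorb the $\sqrt k$ loss into the $\lessapprox$ using the polylog room, or note that $\sqrt{k}\,n\mu_{k+1}^2 \le \sqrt k\, n\sum_{i>k}\mu_i^2$ and compare with the term $n\sqrt{k/d}\,\frac1p(\sum_{i>k}\mu_i^2)^{1/4}$ inside $\cE_F$, using $\sum_{i>k}\mu_i^2=o(\min(d^{-1},p^3))$. Adding back $\|X_{>k}X_{>k}^\top\|_F$ gives \eqref{eqn:frob_cor}. This bookkeeping of the tail term's operator norm against the stated $\cE_F$ is the one place where care is needed, and I expect it to be the main (mild) obstacle.

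\textbf{Inner products \eqref{eqn:inner_prod_recover}.} This is Markov/Chebyshev counting. Let $E=XX^\top-\Xalg\Xalg^\top$. Then the number of off-diagonal pairs with $|E_{ij}|\ge t$ is at most $\|E\|_F^2/t^2$. Choose $t=\frac{\log^C n}{n\sqrt\eps}\cE_F(X)$ with $C$ larger than the implicit polylog in \eqref{eqn:frob_cor}. The count is then at most $n^2\eps$.

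\textbf{Norms \eqref{eqn:norm_recovery} and \eqref{eqn:norm_recovery2}.} For the diagonal entries I would use the trace norm rather than the Frobenius norm, which gives the linear dependence. We have $\sum_i|E_{ii}|\le \|E\|_{S_1}$, the nuclear norm. The rank-$\le 2k$ bracket has nuclear norm at most $2k$ times its operator norm, hence $\lessapprox 2k\cE_\op(X)$, with the $n\mu_{k+1}^2$ term absorbed as above. The tail $X_{>k}X_{>k}^\top$ is positive semidefinite, so its nuclear norm is its trace, $n\sum_{i>k}\mu_i^2$. Markov's inequality on $\sum_i|E_{ii}|$ at threshold $\frac{\log^Cn}{\eps n}(\cdot)$ then yields \eqref{eqn:norm_recovery}.

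Finally, \eqref{eqn:norm_recovery2} follows from \eqref{eqn:norm_recovery}. For $a,b\ge0$ we have $|a-b|\le\sqrt{|a^2-b^2|}$, applied with $a=\|x_i\|_2$ and $b=\|x_i^\alg\|_2$. Replacing $\eps$ by itself in the threshold gives the $\sqrt{\eps n}$ form, since $\sqrt{1/(\eps n)}$ is exactly the square root of the prefactor.
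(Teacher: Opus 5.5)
Your proposal is correct and follows essentially the paper's proof. Both use the split $XX^\top = X_kX_k^\top + X_{k+1:d}X_{k+1:d}^\top$, rank-$2k$ Frobenius/trace bounds, Markov counting, and $(|a|-|b|)^2\le|a^2-b^2|$, and your bound $\sum_i|E_{ii}|\le\|E\|_{S_1}$ is exactly the paper's signed-diagonal trace argument. For the $n\mu_{k+1}^2$ term, your second justification works but the first (absorbing $\sqrt{k}$ into polylogs) does not, since $k$ may be polynomial in $d$; the paper's route is simpler: $\mu_{k+1}^2\le\sum_{i>k}\mu_i^2=o(d^{-1})=o(\mu_1 d^{-1/2})$, so $n\mu_{k+1}^2\le\cE_\op(X)$ is absorbed into $\sqrt{k}\,\cE_\op(X)$ (resp.\ $2k\,\cE_\op(X)$).
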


\begin{remark}[Relative Recovery] \label{rmk:norms_recovery}
    If one wishes to obtain a relative error statement, implied by \eqref{eqn:inner_prod_recover},\eqref{eqn:norm_recovery}, \eqref{eqn:norm_recovery2}, this would only be feasible for inner products and norms that are large enough compared to the additive errors in the aforementioned equations. We make such a statement in \Cref{cor:relative_recovery} for ``good" sets: samples having large enough inner products and norms.
\end{remark}

\begin{remark}[Error terms]
 The bound \eqref{bd:main} is a sum of four contributions. The first term, $n\mu_1/\sqrt{d}$, is the \textit{concentration error}: it measures how well the top-$k$ eigenspace of the noisy Gram matrix $YY^\top$ aligns with that of $XX^\top$ once the $d-k$ orthogonal noise coordinates are projected away (Lemma \ref{lem:projection-low}). 
 The second term $n(\mu_1^{4}/\mu_k^{2})\sqrt d\Tr\Sigma$,
 is the cost of approximating the discontinuous edge indicator by its low-degree Gegenbauer expansion. It grows with the signal magnitude $\Tr\Sigma$ and quantifies the breakdown of the first-order spherical-harmonic approximation when the signal is too large. The third contribution $\mu_1^2/\mu_k^2\sqrt{n/(dp)}$ is the \textit{density term}:
 if the graph is too sparse there is too little information to recover the geometry. Relative to $\|XX^\top\|_\op=n\mu_1^2$, this density term is $\mu_k^{-2}(ndp)^{-1/2}$, which is $o(1)$, provided 
$\omega(1)\le d\le o(np)$ 
 up to polylogarithms. The fourth term $n\mu_1^2/\mu_{k}^2 
 ( \sum_{i>k}\mu_i^2)^{1/4}
 (p\sqrt{d})^{-1}$ is the contribution of the tail end of the spectrum of the point-cloud, it quantifies the cost of estimating data that is not strictly low-dimensional. Moreover, our methods approximate the adjacency matrix of a SNGM by slightly perturbed spherical data. The multiplicative factor of $\mu_1^2/\mu_k^2$ can be interpreted as the penalty for having a highly anisotropic point-cloud. Specifically it quantifies how well the assumption \eqref{assu:no-outliers} is met.
\end{remark}

\begin{remark}[Sufficient Conditions for Vanishing Error]\label{rmk:vanishing_error}
Under a strengthening of the hypotheses in \Cref{thm:main}, the right hand side of the relative operator norm  error is $o(1)$ provided that:
\begin{itemize}
         \item[$(i)_\op$] $(\mu_1/\mu_k)^{2}\Tr\Sigma=o(d^{-1/2})$,
         \item[$(ii)_\op$] 
         $np=\omega\big(\frac{1}{\mu_k^{4}d}\big)$.
     \end{itemize}
    
   $\cE_F$ from \Cref{cor: Frob cor1} is $o(1)$ provided that:
         \begin{itemize}
         \item[$(i)_F$] $\mu_k^2\sqrt{d}=\omega(\mu_1)$ ,
         \item[$(ii)_F$] $(\mu_1/\mu_k)^{4}\Tr\Sigma=o(d^{-1/2})$,
         \item[$(iii)_F$] 
         $np=\omega\big(\frac{\mu_1^{4}}{\mu_k^{8}d}\big)$.
     \end{itemize}
     The regime $\mu_1/\mu_k =O(1)$, satisfies conditions $(i)_F$, $(ii)_F$ and $(i)_\op$ via $\Tr\Sigma\lesssim 1/(\sqrt{d}\log^2 n)$ and \eqref{assu:gap-gen}.
     In the hardest case of a critically small, well-conditioned signal ($\mu_1/\mu_k=O(1)$, $\mu_k^{2}\asymp 1/d$) this means we must have $np=\omega(d)$. In other words, the average degree must dominate the ambient dimension. Under $(i)_\op$--$(ii)_\op$  the spectral embedding recovers $XX^{T}$ up to rotation with relative operator norm error $o(1)$ and Frobenius norm respectively with $(i)_F$--$(iii)_F$.
     Moreover, in \Cref{thm:main}, if one wishes for the truncation terms in terms of $\sum_{i>k}\mu_i^2$ to be $o(1)$, note that the right-hand side of \eqref{thm:lowrank} (the $\mu_{k+1}=0$ case) agrees with \eqref{thm:main} up to an additive factor, thus it suffices to impose the stronger tail decay $
     \sum_{i>k}\mu_i^2 \leq o( \mu_k^8p^4d^2)$. We choose a weaker hypothesis \eqref{assu:gap-gen} that keeps these term explicit.
\end{remark}

\begin{remark}[On the assumptions in \Cref{thm:main}]
\label{rmk: on the conditions}
\quad\\
$\bullet$ $\Tr\Sigma\leq 1/\sqrt{d}\log^2 n$:
    While it appears counter-intuitive to cap the strength of the signal $\Tr(\Sigma)$, in general \emph{some} upper bound is necessary for recovery. Indeed, if the points are too separated in latent space, then the SNGM may be disconnected, in which case it contains no information about the relative position of points in latent space between vertices in distinct disconnected components. We have made no attempt to prove that this is the optimal upper bound. Moreover the proof techniques employed in this paper, rely on representing the adjacency matrix of the SNGM by low-order spherical harmonics, which is a better approximation when the noise and signal are comparable in magnitude. However, this approximation fails for larger signal-to-noise ratios (SNR)- one may expect higher-order harmonics to contribute in the larger SNR regime. Note moreover, that the proof of \Cref{thm:main} shows that one can relax this assumption to $\Tr\Sigma\lesssim \frac{1}{\sqrt{d}\log^2 n}$, but to avoid overcrowding the error with constants, we keep `$\leq$'. 

$\bullet$
\emph{No Outliers:}  
 The spherical approximation is why we also have the assumption \eqref{assu:no-outliers}, which we refer to as the `no outlier' condition on the clean vectors, in the top $k$ eigenvector directions. We want the most important eigenvectors to be not too far from a sphere in $k$ dimensions for the Gegenbauer approximation to be `good'. Hence the `no-outlier' condition on the projected point cloud. Intuitively, by restricting the maximum squared norm of any projected point to be at most a multiple of the average of the norms, we prevent extreme outliers from distorting the underlying projected point cloud.

\end{remark}

\begin{remark}[More general kernel functions]
Finally, we note that our arguments apply to more general kernel matrices than the graph adjacency matrix $A$ (for which the kernel function is an indicator) -- see \eqref{phi-expand}. We limit the presentation to the step kernel function for concreteness. 
\end{remark}

\subsection{Applications to randomly sampled point clouds}\label{subsec:random_sampling}

\Cref{def:SNGM} and \Cref{thm:main} are stated for a fixed set $\cX$ of points in $\R^d$.
In this section we apply
our results 
to data sampled from continuous manifolds. 

We begin by proving a generic result for i.i.d. data $\cX = \{x_1, \ldots, x_n\}$ sampled from a distribution $\nu$ in $\R^d$ with compact support. We do not directly work with assumptions on a manifold, nor do we assume anything further about the distribution $\nu$ that we sample from, but rather we consider $\Sigma^{\infty}$, the ``true'' data covariance matrix for $x_i$. To this end, we define the population covariance analogue of $\Sigma= \frac{1}{n} X^\top X$ in \Cref{def:SNGM},
\begin{defn}[Population 
distribution]\label{eq:sigmaMdef}
For a distribution $\nu$ in $\R^d$ with compact support, we define the population covariance matrix $\Sigma^{\infty}$ by
    $$\Sigma^{\infty}:= \E_{\bfx \sim \nu}[\bfx\bfx^\top]\in \R^{d\times d}.$$
We denote the eigenvalues and eigenvectors of $\Sigma^{\infty}$ as $(\lambda_i, u_i)_{i=1}^d$, where $\lambda_1\geq \dots\geq \lambda_d$.
\end{defn}

In order to apply \Cref{thm:main} to $\cX$, we observe that for large $n$,
\begin{equation}
\label{eq:sigma_to_sigmaf}
    \Sigma = \frac{1}{n}X^\top X = \frac{1}{n}\sum_{i=1}^n x_ix_i^{\top} \approx \Sigma^{\infty}
\end{equation}
by the law of large numbers (the above display will be made precise in \Cref{lem: discrete trace limit}). Therefore, we can translate our conditions on $\Sigma$ in \Cref{thm:main} or \Cref{thm:lowrank} to those on $\Sigma^{\infty}$.

 We begin by considering the setting where $\Sigma^{\infty}$ is numerically low rank and an easily applicable Corollary  (\Cref{thm:numerically_low_rk_denoising} and \Cref{thm:manifold_denoising_approx_diag}). We then apply each of these variants to concrete examples that are commonplace in the manifold learning literature. The proofs can be found in \Cref{sec: manifold denoising}.

\begin{theorem}[Denoising with numerically low-rank $\Sigma^{\infty}$]\label{thm:numerically_low_rk_denoising}
    Let $n,d,p$ be as in \Cref{thm:main}.
    Let $X$ be the matrix whose rows are given by the point cloud $\cX = \{x_1, \ldots, x_n\}$ sampled independently from a 
    distribution $\nu$ in $\R^d$, with $\omega(\log^{16}n) \leq d < n$. 
    Moreover, suppose
    \begin{equation}
\label{assu:trace_num_low_rank}
        \Tr \Sigma^{\infty} \lesssim \frac{1}{\sqrt{d}\log^2 n} \qquad\text{and}\qquad \Tr \Sigma^{\infty} = o\bigg(\frac{np^3}{\log n}\bigg),
    \end{equation}
    \begin{equation}
    \label{assu:no_outliers_num_low_rank}
\esssup_{\bfx\sim\nu}\|\bfx\|_2^{2} \lesssim \Tr \Sigma^{\infty}
,
    \end{equation}
    and  
    that there exists $1\leq k \leq d$ 
    such that
    \begin{equation}
\label{assu:spec_gap_num_low_rank}
       \lambda_k= \omega(d^{-1})\qquad \text{and}\qquad \sum_{j=k+1}^d \lambda_j = o\left(\min(d^{-1}, p^3)\right).
    \end{equation}
    Let $G \sim \mathcal{G}_{n,d}(p, \cX)$. 
    For the spectral algorithm of Section \ref{sec:algorithm},  
    with probability
    $1-O(n^{-10})$
    :
\begin{align}\label{eqn:diag_manifold_error_rate_OP}
    \|XX^\top- \Xalg \Xalg^\top\|_\op
    \lessapprox&
    n\lambda_1\left(\frac{1}{\sqrt{d\lambda_1}} +\frac{\lambda_1}{\lambda_k}\sqrt{d}\Tr\Sigma^{\infty} + \frac{1}{\lambda_k\sqrt{d}}\left(\sqrt{\frac{1}{np}}+\frac{(\sum_{j=k+1}^d \lambda_j )^{1/4}}{p}\right)\right)
\end{align}
and 
\begin{equation}
    \begin{aligned}
        &\|XX^\top- \Xalg \Xalg^\top\|_F\\
    &\lessapprox
   \sqrt{k}n\lambda_1\left(\frac{1}{\sqrt{d\lambda_1}} +\frac{\lambda_1}{\lambda_k}\sqrt{d}\Tr\Sigma^{\infty} + \frac{1}{\lambda_k\sqrt{d}}\left(\sqrt{\frac{1}{np}}+\frac{(\sum_{j=k+1}^d \lambda_j )^{1/4}}{p}\right)\right) + \sqrt {\lambda_{k+1}}\sqrt{\sum_{j>k}\lambda_j}.
    \end{aligned}
\end{equation}
\end{theorem}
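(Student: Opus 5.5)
The plan is to reduce \Cref{thm:numerically_low_rk_denoising} to \Cref{thm:main} and \Cref{cor: Frob cor1}. We condition on $\cX$ and work on a high-probability event over the sampling on which the sample covariance $\Sigma=\frac1nX^\top X$ is close enough to $\Sigma^{\infty}$ that every hypothesis of \Cref{thm:main} holds. Since $\cX$ is independent of the noise, the conclusion of \Cref{thm:main} then holds with probability $1-O(n^{-10})$ conditionally, and hence unconditionally once we add the probability of the bad sampling event. The remaining work is to translate each $\mu_i^2=\lambda_i(\Sigma)$ into the corresponding $\lambda_i$ of $\Sigma^\infty$.

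\emph{Step 1: concentration of $\Sigma$.} The summands $x_ix_i^\top$ are i.i.d. with $\|x_ix_i^\top\|_\op\le R^2:=\esssup\|\bfx\|^2\lesssim\Tr\Sigma^\infty$ by \eqref{assu:no_outliers_num_low_rank}. Matrix Bernstein then gives
\[\|\Sigma-\Sigma^\infty\|_\op\lesssim\sqrt{\tfrac{R^2\lambda_1\log n}{n}}+\tfrac{R^2\log n}{n}\]
with probability $1-O(n^{-11})$. Scalar Bernstein gives $|\Tr\Sigma-\Tr\Sigma^\infty|\lesssim R\sqrt{\Tr\Sigma^\infty\log n/n}+R^2\log n/n$, so $\Tr\Sigma\asymp\Tr\Sigma^\infty$. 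These two bounds give \eqref{assu:trace}, up to the constant permitted in \Cref{rmk: on the conditions}. They also give \eqref{assu:no-outliers}, since $\max_i\|x_i\|^2\le R^2\lesssim\Tr\Sigma^\infty\asymp\Tr\Sigma$; this last bound holds deterministically on the support.

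\emph{Step 2: spectral conditions.} By Weyl's inequality, $\mu_i^2=\lambda_i+O(\|\Sigma-\Sigma^\infty\|_\op)$. To get $\mu_k^2=\omega(d^{-1})$ and $\mu_k\asymp\sqrt{\lambda_k}$, $\mu_1\asymp\sqrt{\lambda_1}$, it suffices that $\|\Sigma-\Sigma^\infty\|_\op=o(\lambda_k)$. This holds because $R^2\log n/n\lesssim\Tr\Sigma^\infty\log n/n\le 1/n\ll 1/d\ll\lambda_k$, using $d<n$ and the polylog room.

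The tail condition is the main obstacle, because Weyl's inequality alone does not control $\sum_{i>k}\mu_i^2$ at the required scale. I would instead write
\[\sum_{i>k}\mu_i^2\le\Tr\big((I-P)\Sigma(I-P)\big),\]
where $P$ projects onto $u_1,\dots,u_k$. This holds by the Ky Fan variational characterization. The right-hand side is an average of the i.i.d. scalars $\|(I-P)x_i\|^2$, which have mean $\sum_{j>k}\lambda_j$ and are bounded by $R^2$. Bernstein then gives
\[\sum_{i>k}\mu_i^2\le\sum_{j>k}\lambda_j+O\Big(\sqrt{R^2\textstyle\sum_{j>k}\lambda_j\log n/n}+R^2\log n/n\Big).\]
Each error term is $o(\min(d^{-1},p^3))$. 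For the $p^3$ part this uses exactly the second condition in \eqref{assu:trace_num_low_rank}, $\Tr\Sigma^\infty=o(np^3/\log n)$, which explains why that hypothesis appears. For the $d^{-1}$ part it uses $R^2\log n/n\ll 1/d$. This yields \eqref{assu:gap-gen}. The same bound also controls the truncation terms $(\sum_{i>k}\mu_i^2)^{1/4}$ by $(\sum_{j>k}\lambda_j)^{1/4}$, up to the additive $o(p^3)$, which is absorbed into the $\lessapprox$.

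\emph{Step 3: substitution.} Plugging $\mu_1^2\asymp\lambda_1$, $\mu_k^2\asymp\lambda_k$ and $\Tr\Sigma\asymp\Tr\Sigma^\infty$ into $\cE_\op(X)$ and factoring out $n\lambda_1$ gives \eqref{eqn:diag_manifold_error_rate_OP}. For example, $n\mu_1/\sqrt d=n\lambda_1/\sqrt{d\lambda_1}$, and
\[\tfrac{\mu_1^2}{\mu_k^2}\sqrt{\tfrac{n}{dp}}=n\lambda_1\cdot\tfrac{1}{\lambda_k\sqrt d}\sqrt{\tfrac1{np}}.\]
For the Frobenius bound, apply \Cref{cor: Frob cor1} and bound the extra term by
\[n\Big(\sum_{i>k}\mu_i^4\Big)^{1/2}\le n\,\mu_{k+1}\Big(\sum_{i>k}\mu_i^2\Big)^{1/2}.\]
Then $\mu_{k+1}^2\le\lambda_{k+1}+o(\cdot)$ by Weyl, and the tail sum is controlled by Step 2. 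This produces the stated $\sqrt{\lambda_{k+1}}\sqrt{\sum_{j>k}\lambda_j}$ term, up to the normalization by $n$ as written in the statement. Finally, a union bound over the sampling event and the event of \Cref{thm:main} gives the probability $1-O(n^{-10})$.
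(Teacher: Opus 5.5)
Your proposal is correct and follows the paper's route: verify the hypotheses of \Cref{thm:main} by concentrating $\Sigma$ around $\Sigma^\infty$, control the sample tail by a variational bound plus scalar Bernstein on the tail directions of $\Sigma^\infty$, and substitute. Your Ky Fan bound with the fixed population projection plays the role of the paper's Schur--Horn step, and it makes the paper's index-matching argument (that the $d-k$ smallest diagonal entries of $\Sigma$ sit at indices $k+1,\dots,d$) unnecessary.

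One caveat, which the paper shares when it asserts $\Tr\Sigma_{k+1:d}=(1+o(1))\Tr\Sigma^\infty_{k+1:d}$: an additive $O(\Tr\Sigma^\infty\log n/n)$ inside $(\cdot)^{1/4}$ is not a polylogarithmic factor. Strictly, your argument gives \eqref{eqn:diag_manifold_error_rate_OP} with $\sum_{j>k}\lambda_j$ replaced by $\sum_{j>k}\lambda_j+\Tr\Sigma^\infty\frac{\log n}{n}$, and this extra term need not be dominated by the other terms when $p$ is small and the population tail is tiny.
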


\begin{remark}
The second bound in \eqref{assu:trace_num_low_rank} 
follows automatically from the first bound when 
$d=o(n^2p^6\log^2 n)$. 
\end{remark}

In practice, verifying assumption \eqref{assu:spec_gap_num_low_rank} in \Cref{thm:numerically_low_rk_denoising} can be challenging. However, if $\Sigma^{\infty}$ is approximately diagonal, the diagonal entries of $\Sigma^{\infty}$ serve as proxies for eigenvalues of $\Sigma^{\infty}$ thanks to matrix perturbation inequalities.  The corollary below handles this case explicitly, allowing for an arbitrary perturbation of order $O(1/d)$. We will illustrate the utility of this corollary in an application to a high-dimensional sinusoid curve in \Cref{ex:cosines}.

\begin{cor}[Denoising with approximately diagonal $\Sigma^{\infty}$]
\label{thm:manifold_denoising_approx_diag}
    Let $X$ be the matrix whose rows are given by the point cloud $\cX = \{x_1, \ldots, x_n\}$ sampled from a compactly supported distribution $\nu$ in $\R^d$, with $\omega(\log^{16}n) \leq d < n$ and $\Sigma^{\infty} = \text{\emph{Diag}}(\Sigma^{\infty}) + E$ where $\norm{E}_{\text{\emph{op}}} \lesssim \frac{1}{d}$. Without loss of generality assume $\Sigma^{\infty}_{11} \geq \Sigma^{\infty}_{22} \geq \ldots \geq \Sigma^{\infty}_{dd}$. Moreover, suppose 
    \begin{equation}
    \label{assu:trace approx diag}
        \Tr \Sigma^{\infty} \lesssim \frac{1}{\sqrt{d}\log^2 n}\qquad\text{and}\qquad \Tr \Sigma^{\infty} = o\bigg(\frac{np^3}{\log n}\bigg),
    \end{equation}
    \begin{equation}
    \label{assu:no spikes approx diag}
        \esssup_{x\sim \nu}\norm{\bfx}_2^2 = O(\Tr(\Sigma^{\infty})),
    \end{equation}
    and there exists $k$ such that
    \begin{equation}
    \label{assu:spec gap assum approx diag}
        \Sigma^{\infty}_{kk} = \omega(d^{-1})\qquad \text{and}\qquad
        \sum_{j=k+1}^d \Sigma^{\infty}_{jj} = o\left(\min(d^{-1}, p^3)\right).
    \end{equation}
    Let $G \sim \mathcal{G}_{n,d}(p, \cX)$ with $pn \geq \omega(\log^{18}(n))$. If we apply the spectral algorithm (Section \ref{sec:algorithm}) with $k$ as chosen above then with probability at least $1-O(n^{-10})$:

\begin{align}
\label{eqn:denoising_approx_diag}
        \|XX^\top- \Xalg \Xalg^\top\|_\op &
    \lessapprox 
    n\Sigma^{\infty}_{11}\Bigg(\frac{1}{\sqrt{d\Sigma^{\infty}_{11}}} +\frac{\Sigma^{\infty}_{11}}{\Sigma^{\infty}_{kk}}\sqrt{d}\sum_{j=1}^d \Sigma^{\infty}_{jj}
    + \frac{1}{\Sigma^{\infty}_{kk}\sqrt{d}}\left(\sqrt{\frac{1}{np}}+\frac{(\sum_{j=k+1}^d \Sigma^{\infty}_{jj} )^{1/4}}{p}\right)\Bigg).
\end{align}
\end{cor}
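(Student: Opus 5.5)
The plan is to derive \Cref{thm:manifold_denoising_approx_diag} directly from \Cref{thm:numerically_low_rk_denoising}. We verify that its hypotheses hold with the eigenvalues $\lambda_j$ of $\Sigma^\infty$ replaced by the sorted diagonal entries $\Sigma^\infty_{jj}$. We then check that the error bound \eqref{eqn:diag_manifold_error_rate_OP} turns into \eqref{eqn:denoising_approx_diag} up to constant factors.

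Several hypotheses carry over unchanged. Since $\Tr\Sigma^\infty=\sum_j\Sigma^\infty_{jj}$ and the trace is basis-independent, \eqref{assu:trace approx diag} is exactly \eqref{assu:trace_num_low_rank}. The no-outlier condition \eqref{assu:no spikes approx diag} is exactly \eqref{assu:no_outliers_num_low_rank}. The density condition $pn\ge\omega(\log^{18}n)$ matches \Cref{thm:main}.

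The main work is the spectral condition \eqref{assu:spec_gap_num_low_rank}.

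\emph{Top eigenvalues.} Weyl's inequality applied to $\Sigma^\infty=\mathrm{Diag}(\Sigma^\infty)+E$ gives $|\lambda_j-\Sigma^\infty_{jj}|\le\|E\|_\op\lesssim 1/d$ for every $j$. Since $\Sigma^\infty_{kk}=\omega(1/d)$, this yields $\lambda_j=\Sigma^\infty_{jj}(1+o(1))$ for all $j\le k$. In particular $\lambda_k=\omega(1/d)$, $\lambda_1\asymp\Sigma^\infty_{11}$, and $\lambda_1/\lambda_k\asymp\Sigma^\infty_{11}/\Sigma^\infty_{kk}$.

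\emph{Tail sum.} Here Weyl is too weak: summing the per-index error would cost $(d-k)/d$, which is not small. Instead we use the Ky Fan variational principle. Writing $P_k$ for the projection onto the first $k$ coordinate axes,
\[
\sum_{j\le k}\lambda_j\ \ge\ \Tr(P_k\Sigma^\infty P_k)\ =\ \sum_{j\le k}\Sigma^\infty_{jj}.
\]
Subtracting both sides from the common trace gives
\[
\sum_{j>k}\lambda_j\ \le\ \sum_{j>k}\Sigma^\infty_{jj}\ =\ o\big(\min(d^{-1},p^3)\big).
\]
This argument does not use the size of $E$ at all. The tail can only be smaller in the eigenbasis, and this is the key observation.

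With the hypotheses checked, \Cref{thm:numerically_low_rk_denoising} applies with probability $1-O(n^{-10})$. In its bound \eqref{eqn:diag_manifold_error_rate_OP} we make the following substitutions, all monotone in the right direction:
\begin{itemize}
\item replace $\lambda_1$ and $\lambda_k$ by $\Sigma^\infty_{11}$ and $\Sigma^\infty_{kk}$, at the cost of constant factors;
\item write $\Tr\Sigma^\infty$ as $\sum_j\Sigma^\infty_{jj}$;
\item bound $(\sum_{j>k}\lambda_j)^{1/4}$ by $(\sum_{j>k}\Sigma^\infty_{jj})^{1/4}$.
\end{itemize}
The prefactor $n\lambda_1$ and the term $1/\sqrt{d\lambda_1}$ combine to $n\sqrt{\lambda_1/d}$, which is comparable to $n\sqrt{\Sigma^\infty_{11}/d}$. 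This yields \eqref{eqn:denoising_approx_diag}.

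The only delicate point is the tail-sum comparison, which is handled by Ky Fan rather than Weyl. Everything else reduces to bookkeeping of constants absorbed into $\lessapprox$.
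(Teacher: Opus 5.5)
Your proposal is correct and follows the paper's proof. Both reduce to \Cref{thm:numerically_low_rk_denoising}, using Weyl's inequality to compare $\lambda_1,\lambda_k$ with $\Sigma^\infty_{11},\Sigma^\infty_{kk}$, and the majorization bound $\sum_{j>k}\lambda_j\le\sum_{j>k}\Sigma^\infty_{jj}$ for the tail. The paper cites the Schur--Horn theorem for that tail bound, while your Ky Fan argument is the standard proof of the same inequality.
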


\begin{remark}[Recovery for randomly sampled point clouds]
\label{rmk:sampling_recovery}
%\eqref{eqn:norm_recovery}, \eqref{eqn:inner_prod_recover} are deterministic consequences of the bounds \eqref{bd:main} and \eqref{eqn:frob_cor}.
Since these bounds are expressed through $\Sigma^{\infty}$ rather than $\Sigma$ and can be made more specific depending on their applications, it is more natural to state recovery guarantees on a case-to-case basis, such as in Propositions \ref{prop:sphere}, \ref{prop:nested_spheres}, \ref{prop:cosines}, and \ref{prop:cosines-inner}.
\end{remark}

\subsection{Examples}
\label{subsect:examples}

Here we apply the 
results of the previous subsection to signals sampled from a compact manifold, such as: spheres (\Cref{prop:sphere}) and their nested mixtures (\Cref{prop:nested_spheres}) and sinusoidal curves (\Cref{prop:cosines}).

\subsubsection{Sphere in $m$ dimensions}

\label{subsubsec:sphere}

Let $s_1, \ldots, s_n$ be i.i.d. samples from 
the sphere of radius $R$ in $\R^m$. Let $x_1, \ldots, x_n$ be the $d$-dimensional vectors obtained by extending each $s_i$ with $d-m$ zero entries.

\begin{prop}
\label{prop:sphere}
In the above setting, if $np = \omega(1)$, $\omega(\frac{m}{d})\le R^2 \lllog d^{-1/2}$, 
and $\omega(\log^{16}n) = d<n$, then 
for $\Xalg$ as in \eqref{def:embedding} with $k=m$,
with probability 
$1-O(n^{-10})$, 
\begin{equation}
\label{eq:sphere_err_bd}
    \frac{\norm{XX^{\top} - \Xalg   \Xalg^\top}_\op}{\norm{XX^{\top}}_\op} \lessapprox \frac{\sqrt{m}}{R\sqrt{d}}+ \sqrt{d}R^2+ \frac{m}{R^2\sqrt{npd}},
\end{equation}
and the same bound holds for $(\norm{XX^{\top} - \Xalg \Xalg^\top}_F)/\|XX^\top\|_F$.

Moreover, if additionally 
\begin{equation}\label{assu:radii}
    R^2 \sqrt{npd} = \tilde{\omega}(m) \qquad\text{and, }\qquad  \tilde{\omega}(\frac{m}{d}) = R^2 \lessapprox \frac{1}{\sqrt{d}},
\end{equation} then with probability at least $1-O(n^{-10})$ 
we have 
    \begin{equation}
    \label{eq:len preserved}
        |\{i: \| x_i^\alg \|_2 = (1+o(1))R\}| = (1-o(1))n\,,
    \end{equation}
and for any fixed $\varepsilon_0 \in (0,\frac{1}{8}]$,
    \begin{equation}
    \label{eq:inner prod preserved}
        \vert\{(i,j): i\neq j, \langle x_i^\alg, x_j^\alg \rangle = (1+o_{\varepsilon_0}(1))\langle x_i, x_j \rangle \}\vert \geq \bigg(1- 2\varepsilon_0 - \frac{1}{m} - o(1)\bigg)\bigg)n(n-1)\,.
    \end{equation}
\end{prop}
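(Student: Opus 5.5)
The plan is to reduce everything to the exact rank-$m$ case of \Cref{thm:main}, i.e.\ the bound \eqref{bd:rank-k} (equivalently \Cref{thm:lowrank}). Here $k=m$ and $\sum_{i>m}\mu_i^2=0$, so no tail terms appear. All hypotheses of that theorem are deterministic conditions on $X$, so I would first check them on a high-probability event for the sampling of $s_1,\dots,s_n$, and then condition on $X$.

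\emph{Step 1: verify the hypotheses.}
\begin{itemize}
\item Since $\|x_i\|_2^2=R^2$ for every $i$, we have $\Tr\Sigma=R^2$ exactly. The no-outlier condition \eqref{assu:no-outliers} therefore holds with constant $1$.
\item The trace condition \eqref{assu:trace} follows from $R^2\lllog d^{-1/2}$.
\item The population covariance is $\Sigma^\infty=\frac{R^2}{m}I_m\oplus 0_{d-m}$.
\item By matrix Bernstein (or matrix Chernoff) for the bounded i.i.d.\ summands $x_ix_i^\top$, with $\|x_ix_i^\top\|_\op=R^2$, with probability $1-O(n^{-10})$ we get
\[
\Big\|\Sigma-\tfrac{R^2}{m}I_m\Big\|_\op\lesssim R^2\sqrt{\tfrac{\log n}{n}}+R^2\tfrac{\log n}{n}.
\]
\item Since $m=o(R^2d)=o(\sqrt d)$ and $d<n$, this error is $o(R^2/m)$. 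Hence $\mu_i^2=(1+o(1))R^2/m$ for $i\le m$, so $\mu_1/\mu_m=1+o(1)$.
\item Consequently $\mu_m^2=\omega(1/d)$, using $R^2=\omega(m/d)$, so \eqref{assu:gap-gen} holds.
\end{itemize}
The density hypothesis of \Cref{thm:main} I would take as implicit in $np=\omega(1)$ (interpreted up to the polylogarithmic factors hidden in $\lessapprox$). If necessary I would state it explicitly as $np=\omega(\log^{18}n)$.

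\emph{Step 2: plug in.} Divide \eqref{bd:rank-k} by $\|XX^\top\|_\op=n\mu_1^2\asymp nR^2/m$. The three terms become:
\begin{itemize}
\item $\frac{n\mu_1}{\sqrt d}\big/ n\mu_1^2\asymp \frac{\sqrt m}{R\sqrt d}$;
\item $\sqrt d\,\mu_1^2\Tr\Sigma/\mu_1^2=\sqrt d R^2$;
\item $\sqrt{n/(dp)}\big/(nR^2/m)=\frac{m}{R^2\sqrt{npd}}$.
\end{itemize}
This gives \eqref{eq:sphere_err_bd}.

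For the Frobenius statement I would argue as follows:
\begin{itemize}
\item $XX^\top-\Xalg\Xalg^\top$ has rank at most $2m$, so its Frobenius norm is at most $\sqrt{2m}$ times its operator norm.
\item On the same event, $\|XX^\top\|_F=\big(n^2\Tr\Sigma^2\big)^{1/2}=(1+o(1))\,nR^2/\sqrt m$.
\item Therefore $\|XX^\top\|_F=(1+o(1))\sqrt m\,\|XX^\top\|_\op$.
\item The $\sqrt{2m}$ factor and this $\sqrt m$ ratio cancel up to a constant, so the relative Frobenius error is bounded by the same quantity.
\end{itemize}

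\emph{Step 3: norms and inner products.} Under \eqref{assu:radii}, each of the three terms in \eqref{eq:sphere_err_bd} is $o(1)$ even after multiplication by any fixed polylogarithm. Hence $\cE_\op(X)=o(nR^2/m)\cdot\log^{-C'}n$.

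For norms, apply \eqref{eqn:norm_recovery} with $\eps=\eps_n\to0$ chosen slowly. The threshold is then
\[
\frac{\log^Cn}{\eps n}\,2m\,\cE_\op=o(R^2),
\]
so all but $\eps_n n=o(n)$ indices satisfy $\|x_i^\alg\|_2^2=R^2+o(R^2)$, which gives \eqref{eq:len preserved}.

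For inner products, use $\cE_F\lesssim\sqrt m\,\cE_\op=o(nR^2/\sqrt m)$ in \eqref{eqn:inner_prod_recover} with $\eps=\eps_0$ fixed. Outside at most $\eps_0n^2$ pairs, the additive error is $\delta_nR^2/\sqrt m$ for some $\delta_n\to0$ (depending on $\eps_0$). Multiplicative accuracy $1+o(1)$ then holds on pairs with $|\langle s_i,s_j\rangle|\ge \eta_n R^2/\sqrt m$, for any $\eta_n\to0$ slowly with $\delta_n/\eta_n\to0$.

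It remains to count the pairs violating this lower bound. The statistic $\cos\theta_{ij}=\langle s_i,s_j\rangle/R^2$ has mean $0$ and variance $1/m$. It has a bounded density on the scale $1/\sqrt m$ for large $m$, and an explicit density $\propto(1-t^2)^{(m-3)/2}$ in all cases. Its small-ball probability $\P(|\cos\theta|<\eta/\sqrt m)$ is therefore $O(\eta)+O(1/m)$, where the $1/m$ term absorbs small-$m$ effects. I expect the bookkeeping to produce exactly the $\frac1m$ and $\eps_0$ terms in \eqref{eq:inner prod preserved}, after combining with the $\eps_0n^2$ exceptional set and a symmetrization accounting for ordered pairs. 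The empirical fraction of bad pairs is a U-statistic with bounded kernel, so Hoeffding's inequality for U-statistics gives concentration to within $o(1)$ with probability $1-O(n^{-10})$.

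The main obstacle is this last step. I need to get an anticoncentration bound for $\langle s_i,s_j\rangle$ that is uniform in $m$ and lines up with the stated constant $1-2\eps_0-\frac1m$, and I need to choose the slowly varying sequences $\eps_n,\delta_n,\eta_n$ consistently with the hidden polylogarithmic factors. The spectral concentration in Step 1 is routine by comparison.
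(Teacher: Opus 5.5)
Your proposal is correct. Steps~1--2 and the norm statement follow the paper's proof in substance. The paper routes through \Cref{thm:manifold_denoising_low_rank}, which is itself obtained from \Cref{thm:lowrank}, and uses Lemma~\ref{lem: discrete trace limit} to get $\lambda_i(XX^\top)=(1+o(1))nR^2/m$ for $i\le m$. It then uses the rank-$2m$ bound for the Frobenius ratio and applies \eqref{eqn:norm_recovery} with a vanishing $\varepsilon_n$. Your observation that $\Tr\Sigma=R^2$ holds exactly is a small simplification. Your caveat about the density hypothesis is fair: the paper's proof also silently needs $p$ in the range of \Cref{thm:lowrank}.

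The inner-product count is where you genuinely differ: the two small parameters play swapped roles.

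\emph{The paper's route.} It applies \eqref{eqn:inner_prod_recover} with $\varepsilon=\varepsilon_n^2\to0$, so the exceptional set is $o(n^2)$. It keeps the anticoncentration threshold fixed at $\varepsilon_0R^2/\sqrt m$. \Cref{lem:inner prod lower bd} then writes $s_i\stackrel{d}{=}Rg/\|g\|_2$ and controls $\|g\|_2$ by Gaussian concentration with $\delta=\sqrt{2\log m/m}$. This gives $\P(|\langle x_i,x_j\rangle|\le\varepsilon_0R^2/\sqrt m)\le 2\varepsilon_0+\frac1m$, followed by a row-wise Chernoff bound. That is exactly where the $2\varepsilon_0$ and $\frac1m$ in the statement come from.

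\emph{Your route.} You fix $\varepsilon=\varepsilon_0$ and let the threshold $\eta_nR^2/\sqrt m$ vanish. This works, and the obstacle you anticipate is not real: the statement is only a lower bound, so you need not reproduce its constants. The density of $\cos\theta_{ij}$ is $c_m(1-t^2)^{(m-3)/2}$ with $c_m\asymp\sqrt m$. It is $O(\sqrt m)$ on $|t|\le 1/\sqrt2$ for every $m\ge2$, including $m=2$. Hence $\P(|\cos\theta_{ij}|<\eta/\sqrt m)\le C\eta$ uniformly in $m$. With $\eta_n\to0$ your count becomes $(1-\varepsilon_0-o(1))n(n-1)$, which implies and improves the stated bound.

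You should, however, drop the extra $O(1/m)$ term. If you carry $C/m$ with an unspecified $C>1$, your bound $1-\varepsilon_0-C/m-o(1)$ no longer implies $1-2\varepsilon_0-\frac1m-o(1)$ when $m$ is bounded and $\varepsilon_0$ is small. Hoeffding for $U$-statistics is a fine substitute for the paper's row-wise Chernoff bound. No symmetrization is needed, since all counts are over ordered pairs with $i\ne j$.
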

\begin{remark}
    In the additional assumptions e.g \eqref{assu:radii}, the implicit polylog factors are chosen according to the hidden polylogs in \Cref{prop:sphere}, and are made more precise in the proofs of the statements.
\end{remark}

\subsubsection{Mixture of nested spheres}

Here we generalize the previous example to a mixture of nested spheres. More precisely, for $1 \leq j \leq s$, let $\mathbb{P}_j \sim \text{Unif}(R_j\mathcal{S}^{m-1})$ for $R_j > 0 $ and $w_j > 0$ such that $\sum_{j=1}^s w_j = 1$. Then,
$$x_i \sim \sum_{j=1}^s w_j \mathbb{P}_j,$$
 i.e.  with probability $w_j$ we draw $x_i$ from $\mathbb{P}_j$. For convenience, without loss of generality 
 we assume that $R_1 < R_2 < \ldots < R_s$.

\begin{prop}
\label{prop:nested_spheres}
Suppose $\cX$ is sampled from a mixture of nested spheres as described above. Let $R^2 \coloneqq \sum_{j=1}^s R_j^2 w_j$. If $np = \omega(1)$, $\omega(\frac{m}{d}) = R^2 \lesssim \frac{1}{\sqrt{d}\log^2 n}$, $R_s^2=O(R^2)$ and $\omega(\log^{16}n) = d<n$, then with probability at least 
\eqref{eq:sphere_err_bd} holds, and the same bound holds for the relative Frobenius norm error. 
\end{prop}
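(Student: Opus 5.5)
The plan is to derive \Cref{prop:nested_spheres} from \Cref{thm:numerically_low_rk_denoising} with $k=m$, in the same way as \Cref{prop:sphere}. The main work is computing the population covariance of the mixture.

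\emph{Step 1: the population covariance.} For $\bfx\sim\mathrm{Unif}(R_j\mathcal S^{m-1})$, padded with zeros to $\R^d$, we have $\E[\bfx\bfx^\top]=\frac{R_j^2}{m}P_m$, where $P_m$ is the projection onto the first $m$ coordinates. By linearity over the mixture,
\[
\Sigma^{\infty}=\sum_{j=1}^s w_j\frac{R_j^2}{m}P_m=\frac{R^2}{m}P_m .
\]
Hence $\lambda_1=\dots=\lambda_m=R^2/m$, $\lambda_j=0$ for $j>m$, and $\Tr\Sigma^{\infty}=R^2$. So the spectral picture is identical to a single sphere of radius $R$, and the mixture enters only through the outlier condition.

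\emph{Step 2: verify the hypotheses of \Cref{thm:numerically_low_rk_denoising}.}
\begin{itemize}
\item The first trace bound in \eqref{assu:trace_num_low_rank} is exactly the hypothesis $R^2\lesssim 1/(\sqrt d\log^2 n)$.
\item For the second trace bound, the tail sum vanishes, so I would either check that $\Tr\Sigma^{\infty}=o(np^3/\log n)$ follows from the standing assumptions (as in the proof of \Cref{prop:sphere}), or invoke \Cref{thm:lowrank} directly after Step 3.
\item The outlier condition \eqref{assu:no_outliers_num_low_rank} holds because $\esssup\|\bfx\|_2^2=R_s^2=O(R^2)$, by assumption.
\item For the gap condition \eqref{assu:spec_gap_num_low_rank}, $\lambda_m=R^2/m=\omega(d^{-1})$ follows from $R^2=\omega(m/d)$, and $\sum_{j>m}\lambda_j=0$.
\end{itemize}

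\emph{Step 3: the error bound.} Plug into \eqref{eqn:diag_manifold_error_rate_OP} with $\lambda_1/\lambda_k=1$ and zero tail. Dividing by $n\lambda_1$ gives a relative error of
\[
\sqrt{\frac{m}{R^2d}}+\sqrt d\,R^2+\frac{m}{R^2\sqrt{npd}} .
\]
This matches \eqref{eq:sphere_err_bd}, provided we justify that $\|XX^\top\|_\op\asymp n\lambda_1$. To do that, I would use \Cref{lem: discrete trace limit} (matrix concentration, e.g.\ matrix Bernstein with the bounded summands $\|x_i\|^2\le R_s^2$). It gives $\|\Sigma-\Sigma^\infty\|_\op\lesssim \sqrt{R_s^2\lambda_1\log n/n}+R_s^2\log n/n=o(R^2/m)$ with probability $1-O(n^{-10})$, since $m\ll\sqrt d\le\sqrt n$. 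This is the step where care is needed, and it is the main obstacle.

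For the Frobenius statement, I would use the Frobenius bound of \Cref{thm:numerically_low_rk_denoising}. Its error is $\sqrt m$ times the operator error, and $\|XX^\top\|_F\approx\sqrt m\, n R^2/m=\sqrt m\,\|XX^\top\|_\op$ by the same concentration estimate. The $\sqrt m$ factors therefore cancel in the relative error.
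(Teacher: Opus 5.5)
Your proposal is correct and is essentially the paper's argument: the paper observes that the mixture has the same $\Sigma^\infty=\frac{R^2}{m}P_m$ as a single sphere of radius $R$ and defers to the proof of \Cref{prop:sphere}. Your explicit use of $R_s^2=O(R^2)$, both for the no-outlier condition and for the concentration of $\Sigma$ around $\Sigma^\infty$, is exactly what makes that deferral valid.

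One thing needs fixing: the first option in your second bullet does not work. The condition $\Tr\Sigma^\infty=o(np^3/\log n)$ does not follow from the hypotheses when $p$ is small, and the proof of \Cref{prop:sphere} never checks it. You should therefore take your fallback and apply the exactly low-rank \Cref{thm:manifold_denoising_low_rank}, which is valid because all $x_i$ lie in an $m$-dimensional subspace, so $\mu_{m+1}=0$. Then obtain the Frobenius bound from the fact that $XX^\top-\Xalg\Xalg^\top$ has rank at most $2m$, which is what the paper does.
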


A natural question 
is whether the vectors in the embedding associated to distinct spheres remain well separated. For an appropriate choice of model parameters, the answer is ``yes'', as stated below.

\begin{prop}
\label{prop:nested_sphere_analysis}
   Consider $\cX$ as in \Cref{prop:nested_spheres}. Additionally, assume that for some constant $C_0 > 0$, $\frac{R}{C_0} \leq R_j \leq C_0R$ and $\frac{1}{C_0} \leq w_j$ for all $j \in [s]$. Let $\alpha = \min_{j\in[s]} (R_j - R_{j-1})$ with $R_0 = 0$ and $x_i^\alg$ denote the $i$th row of $\Xalg$. Moreover, assume $\tilde{\omega}(\frac{m}{d}) = R^2$, $\tilde \omega(R^2)=d^{-1/2}$ and $R^2 \sqrt{npd} = \tilde{\omega}(m)$. Then there exists $\beta >0$ such that if for some $\varepsilon_0 > 0$,
   \begin{equation}
\label{eq:separation bd}
       \alpha = \omega_{C_0}\bigg(\frac{R}{\varepsilon_0 \log^{\beta} n}\bigg)
   \end{equation}
   then
   \begin{equation}\label{eqn:spheres_ip_norms+recovery}
       \vert \{i: \vert \norm{x_i}_2 - \norm{ x_i^\alg}_2 \vert = o(\alpha)\text{\emph{ and }} \vert \norm{x_i}_2 - \norm{ x_i^\alg}_2 \vert = o_{C_0}(\|x_i\|_2)\} \vert \geq (1-\varepsilon_0)n,
   \end{equation}
   with probability at least $1-O(n^{-10})$.
\end{prop}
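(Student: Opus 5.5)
The plan is to derive \Cref{prop:nested_sphere_analysis} from the norm-recovery bound \eqref{eqn:norm_recovery2} of \Cref{cor: Frob cor1}, after checking that the nested-sphere cloud meets the hypotheses of \Cref{thm:main} with $k=m$. \Cref{prop:nested_spheres} already does most of this check. The population covariance is $\Sigma^\infty=\frac{R^2}{m}I_m\oplus 0$. Hence $\lambda_1=\lambda_m=R^2/m$ and $\sum_{j>m}\lambda_j=0$. The no-outlier condition \eqref{assu:no_outliers_num_low_rank} follows from $R_s^2=O(R^2)$, which is implied by $R_j\le C_0R$. By \Cref{lem: discrete trace limit} and matrix concentration, the sample covariance $\Sigma$ has $\mu_1^2\asymp\mu_m^2\asymp R^2/m$ with probability $1-O(n^{-10})$. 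We may also assume $n\gg m\log n$, since $m=o(\sqrt d)$ by \Cref{rmk:k}.

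Plugging these into \eqref{bd:main} with the tail term equal to zero gives
\[
\cE_\op(X)\lessapprox \frac{nR}{\sqrt{md}}+n\sqrt d\,\frac{R^4}{m}+\frac{m}{R^2}\sqrt{\frac{n}{dp}} .
\]
Each term is $o(nR^2/m)$ up to polylogarithmic factors under the three assumptions $R^2=\tilde\omega(m/d)$, $R^2=\tilde o(d^{-1/2})$ and $R^2\sqrt{npd}=\tilde\omega(m)$. Write $\delta$ for the largest relative error, so that $\cE_\op\lessapprox\delta\, nR^2/m$ with $\delta\le \log^{-\beta'}n$ for some $\beta'>0$; this is possible because the $\tilde\omega$ assumptions can absorb arbitrary fixed polylog powers. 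Then \eqref{eqn:norm_recovery2} with $\eps=\varepsilon_0$ and $2m\cE_\op\lessapprox\delta nR^2$ gives, for all but $\varepsilon_0 n$ indices,
\[
\big|\|x_i\|_2-\|x_i^\alg\|_2\big|\le \frac{\log^{C/2}n}{\sqrt{\varepsilon_0 n}}\,(\delta nR^2)^{1/2}\cdot\log^{C'}n=\frac{R\,\delta^{1/2}\log^{C''}n}{\sqrt{\varepsilon_0}} .
\]

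Finally, the exponent $\beta$ is chosen in terms of $\beta'$ and $C''$. Provided the tildes in the hypotheses hide a large enough polylog, $\delta^{1/2}\log^{C''}n\le \log^{-\beta}n$. The resulting error $R\log^{-\beta}n/\sqrt{\varepsilon_0}$ is at most $R/(\varepsilon_0\log^\beta n)$ when $\varepsilon_0\le1$. By \eqref{eq:separation bd} this is $o(\alpha)$. It is also $o(R)$, and $R\le C_0\|x_i\|_2$ because every $x_i$ lies on a sphere of radius $R_j\ge R/C_0$; so the error is $o_{C_0}(\|x_i\|_2)$. This gives \eqref{eqn:spheres_ip_norms+recovery} on the intersection of the covariance-concentration event and the event of \Cref{thm:main}, which has probability $1-O(n^{-10})$.

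The main obstacle is the polylog bookkeeping: $\beta$ has to be compatible with the implicit logarithmic factors in $\lessapprox$ and in the $\tilde\omega$ hypotheses. I would handle this by using the explicit constants from \Cref{sec:overview}. A secondary point is the dependence on $\varepsilon_0$: the bound above carries $\varepsilon_0^{-1/2}$, which is weaker than the $\varepsilon_0^{-1}$ allowed in \eqref{eq:separation bd}, so the required inequality holds.
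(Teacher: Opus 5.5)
Your argument is correct, but it takes a somewhat different route from the paper's proof.

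\textbf{The paper's route.} The paper partitions the indices by sphere membership $I_j$. It uses Bernstein's inequality and $w_j\ge 1/C_0$ to get $|I_j|\ge n/(2C_0)$. It then applies the squared-norm bound \eqref{eqn:norm_recovery} with $\varepsilon=\varepsilon_0/(2C_0)$. Since $\|x_i\|_2=R_j\ge R/C_0$ is known exactly, it converts $|\|x_i^\alg\|_2^2-R_j^2|\lesssim_{C_0} R^2/(\varepsilon_0\log^\beta n)$ into a norm bound by dividing by $\|x_i^\alg\|_2+R_j\ge R_j$.

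\textbf{Your route.} You apply the unsquared bound \eqref{eqn:norm_recovery2} to all $n$ indices at once. This never needs the class sizes, so the hypothesis $w_j\ge 1/C_0$ is not used at all. You use $R_j\ge R/C_0$ only for the final relative statement $o_{C_0}(\|x_i\|_2)$.

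\textbf{Trade-off.} Your route pays for the square root in $(|a|-|b|)^2\le|a^2-b^2|$. Your error is of order $R\varepsilon_n/\sqrt{\varepsilon_0}$, whereas the paper's is $R\varepsilon_n^2/\varepsilon_0$, with $\varepsilon_n$ as in \eqref{eqn:varepsilon_sphere}. So you obtain roughly half the exponent $\beta$. This is harmless, because the statement only asserts that some $\beta>0$ exists. The paper's route buys a sharper rate and a per-sphere guarantee: all but an $\varepsilon_0$-fraction of each $I_j$ is recovered. That is more than the global count in \eqref{eqn:spheres_ip_norms+recovery} requires. Your route is shorter and shows the weight assumption is unnecessary for the stated conclusion.

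\textbf{Slip in your $\cE_\op$ display.} The last term should be $\sqrt{n/(dp)}$, not $\frac{m}{R^2}\sqrt{n/(dp)}$, because the prefactor in \eqref{bd:main} is $\mu_1^2/\mu_m^2\asymp 1$. As written, that term need not be $o(nR^2/m)$ under $R^2\sqrt{npd}=\tilde\omega(m)$; it is off by a factor $m/R^2$, which can be as large as $d$. With the correct term, the relative error is exactly $m/(R^2\sqrt{npd})$ from \eqref{eq:sphere_err_bd}, and the rest of your argument goes through.

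\textbf{Minor point.} If $\varepsilon_0$ may depend on $n$, justify ``also $o(R)$'' as the paper does. Your error is $o(\alpha)$, and $\alpha\le R_1-R_0=R_1\le C_0R$.
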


\begin{remark}
    In words, the above result shows that under appropriate conditions, if $x_i$ is sampled from $R_j \mathcal{S}^{m-1}$ then $x_i^\alg$ lies much closer to $R_j \mathcal{S}^{m-1}$ than any other $R_{j'} \mathcal{S}^{m-1}$, meaning, the spectral embedding recovers the separation between distinct spheres by approximately preserving signal vector lengths.
\end{remark}

\subsubsection{Sinusoid curve}

\label{ex:cosines}
    Unlike the spheres and nested spheres, a sinusoid curve can be constructed such that it has a numerically low-rank population covariance matrix, rather than a strictly low rank one. Moreover, here we consider a convenient choice of curve parameters that make the population covariance matrix diagonal to leading order, which places this example in the setting
of \Cref{thm:manifold_denoising_approx_diag}.

\begin{prop}[Sinusoid  curve]\label{prop:cosines}
Suppose $\omega(\log^{16}n)= d<n$.
Let $\cX=\{x_1,\dots,x_n\}$ with $x_j=\big(a_i\cos(\omega_i T_j+\phi_i)\big)_{i=1}^d$, where
$T_1,\dots, T_n$ are independent and uniform in $[0,1]$, and the $a_i, \omega_i,\phi_i$ are deterministic. Suppose the amplitudes and frequencies satisfy
\begin{equation}\label{assu:cosines-amplitude}
  \sum_{i=1}^d a_i^2\lesssim \frac{1}{\sqrt d\log^2 n}, \qquad \sum_{i=1}^d a_i^2=o\bigg(\frac{np^3}{\log n}\bigg),\end{equation}
and, with $a_1\geq \dots\geq a_d\geq 0$, that there exists $1\leq k\leq d$ such that \begin{equation}\label{assu:cosines-tail}
      \sum_{j>k}^d a_j^2=o\bigg(\min(d^{-1}, p^3)\bigg),
  \end{equation}
  \begin{equation}\label{assu:cosines-gap}
  a_k^2
  =\omega(d^{-1}),
  \end{equation}
and, 
\begin{equation}\label{assu:cosines-freq}
 \Omega:=\min_{i\neq j}||\omega_i|-|\omega_j||\gtrsim  |a_1|^2d^2, \qquad
 \min_i |\omega_i|\gtrsim 1\,.
\end{equation}
Then for $G\sim\mathcal G_{n,d}(p,\cX)$ with $pn=\omega(\log^{18}n)$, applying the spectral algorithm with this choice of $k$
gives, with probability at least $1-O(n^{-10})$, 
\begin{equation}\label{eqn:cosines_ex_error}
            \|XX^\top- \Xalg \Xalg^\top\|_\op
    \lessapprox 
    n\frac{a_1}{\sqrt{d}} +n\frac{a_1^4}{a_k^2}\sqrt{d}\sum^d_{j=1}a_j^2 + n\frac{a_1^2}{a_k^2\sqrt{d}}\left(\sqrt{\frac{1}{np}}+\frac{(\sum_{j=k+1}^d a_j^2 )^{1/4}}{p}\right),
\end{equation}
and Frobenius norm error
\begin{equation}\label{eqn:cosines_ex_error_frob}
\begin{aligned}
    \|XX^\top- \Xalg \Xalg^\top\|_F
    &\lessapprox
\sqrt{k} \|XX^\top- \Xalg \Xalg^\top\|_\op +\frac{a_{k+1}}{a_k^2}\sqrt{\sum_{l>k}^d a_j^2}.
\end{aligned}
\end{equation}

Suppose additionally that
    \begin{equation}
        \label{assu:cosines-wellcond}
    a_1/a_k=O(1), \qquad k=\omega(1),
    \end{equation}
and that, 
\begin{equation}\label{assu:sinusoid_density_amp}
    \sum_{j=k+1}^d a_j^2=o\big(p^4/d^2\big),\qquad np\geq\omega(d).
\end{equation}
Then on the same event on which \eqref{eqn:cosines_ex_error} holds, we have that with probability at least $1-O(n^{-10})$,
\begin{equation}\label{eqn:norms-recovery-sinusoid-eq}
    \big|\big\{i\in[n]: \|x_i^\alg\|_2=(1+o(1))\|x_i\|_2\big\}\big|=(1-o(1))n.
\end{equation}
\end{prop}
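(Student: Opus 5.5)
The plan is to reduce to \Cref{thm:manifold_denoising_approx_diag} by computing the population covariance $\Sigma^{\infty}$ of the sinusoid curve and showing it is diagonal up to an $O(1/d)$ operator-norm perturbation, with diagonal entries $\Sigma^{\infty}_{ii}=a_i^2/2+O(a_i^2/|\omega_i|)$.

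\emph{Computing $\Sigma^{\infty}$.} With $T\sim\Unif[0,1]$,
\[
\Sigma^\infty_{ij}=a_ia_j\,\E[\cos(\omega_iT+\phi_i)\cos(\omega_jT+\phi_j)]
=\tfrac{a_ia_j}{2}\,\E\big[\cos((\omega_i-\omega_j)T+\phi_i-\phi_j)+\cos((\omega_i+\omega_j)T+\phi_i+\phi_j)\big].
\]
Since $|\int_0^1\cos(\theta T+\psi)\,dT|\le 2/|\theta|$, the following bounds hold.
\begin{itemize}
\item For $i\ne j$, both $|\omega_i-\omega_j|$ and $|\omega_i+\omega_j|$ are at least $\Omega$. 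Hence $|\Sigma^\infty_{ij}|\lesssim a_ia_j/\Omega\le a_1^2/\Omega\lesssim d^{-2}$ by \eqref{assu:cosines-freq}.
\item The off-diagonal part $E$ therefore satisfies $\|E\|_\op\le\|E\|_F\lesssim d\cdot d^{-2}=d^{-1}$.
\item For the diagonal, $\Sigma^\infty_{ii}=a_i^2/2+O(a_i^2/|\omega_i|)\asymp a_i^2$, using $\min|\omega_i|\gtrsim1$.
\end{itemize}
If the constant is not favourable, I would fold the $O(a_i^2/|\omega_i|)$ correction into $E$ as well; it is at most $a_1^2\lesssim d^{-1/2}$, which is too big in general. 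So it is better to keep it on the diagonal and use only $\Sigma^\infty_{ii}\asymp a_i^2$. The ordering of the diagonal entries may differ from that of the $a_i$ by constants, but all the relevant quantities ($\Sigma^\infty_{11}$, $\Sigma^\infty_{kk}$ with the same index set, tail sums) are comparable up to constants. To make the tail sum defined by the same $k$ coordinates, I would reorder if needed, or simply apply the corollary with the index set $\{1,\dots,k\}$.

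\emph{Verifying the hypotheses of \Cref{thm:manifold_denoising_approx_diag}.}
\begin{itemize}
\item $\Tr\Sigma^\infty\asymp\sum a_i^2$, so \eqref{assu:trace approx diag} follows from \eqref{assu:cosines-amplitude}.
\item $\|x\|_2^2\le\sum a_i^2\lesssim\Tr\Sigma^\infty$, which gives \eqref{assu:no spikes approx diag}.
\item \eqref{assu:spec gap assum approx diag} follows from \eqref{assu:cosines-tail} and \eqref{assu:cosines-gap}.
\end{itemize}
Substituting $\Sigma^\infty_{11}\asymp a_1^2$, $\Sigma^\infty_{kk}\asymp a_k^2$ into \eqref{eqn:denoising_approx_diag} yields \eqref{eqn:cosines_ex_error}. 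The Frobenius bound \eqref{eqn:cosines_ex_error_frob} follows the same way from the Frobenius part of \Cref{thm:numerically_low_rk_denoising}. There I would bound $\lambda_{k+1}$ and $\sum_{j>k}\lambda_j$ by Weyl's inequality and by $\sum_{j>k}\Sigma^\infty_{jj}$ plus the $E$ contribution, the latter being controlled since the cross terms are $O(d^{-2})$ entrywise.

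\emph{Norm recovery.} Apply \eqref{eqn:norm_recovery2} of \Cref{cor: Frob cor1}, with $\Sigma\approx\Sigma^\infty$ made precise by \Cref{lem: discrete trace limit}. We need the additive error $\frac{\log^{C/2}n}{\sqrt{\eps n}}(2k\cE_\op+n\sum_{i>k}\mu_i^2)^{1/2}$ to be $o(\min_i\|x_i\|_2)$ for all but $\eps n$ indices, with $\eps\to0$ slowly.

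The main obstacle is that $\|x_i\|_2^2=\sum_l a_l^2\cos^2(\omega_lT_i+\phi_l)$ could be small for some $T_i$. I would show it concentrates around $\tfrac12\sum a_l^2$ for most $i$. Under $a_1/a_k=O(1)$ and $k=\omega(1)$, the top $k$ terms have comparable weight. Their variance in $T$ is $\sum_{l,l'}a_l^2a_{l'}^2\Cov(\cos^2,\cos^2)$, where the off-diagonal covariances are $O(1/\Omega)$ by the same integral bound. This gives relative variance $O(1/k)+o(1)=o(1)$, so Chebyshev yields $\|x_i\|_2^2=(1+o(1))\tfrac12\sum a_l^2$ for $(1-o(1))n$ indices.

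It then remains to check $k\cE_\op/n+\sum_{i>k}\mu_i^2=o(\eps\sum a_l^2)\asymp o(\eps k a_k^2)$. Using $a_1/a_k=O(1)$, the terms reduce to the following.
\begin{itemize}
\item $ka_1/\sqrt d=o(ka_k^2)$, since $a_k^2=\omega(1/d)$.
\item $ka_1^2\sqrt d\sum a_j^2=o(ka_k^2)$, by the trace bound.
\item $k/(\sqrt{d}\sqrt{np})=o(ka_k^2)$, via $np=\omega(d)$.
\item $k(\sum_{j>k}a_j^2)^{1/4}/(p\sqrt d)=o(ka_k^2)$, precisely by \eqref{assu:sinusoid_density_amp}.
\item The tail term $\sum_{j>k}a_j^2=o(1/d)=o(ka_k^2)$.
\end{itemize}
The polylog factors are absorbed by the implicit $\tilde\omega$ slack.
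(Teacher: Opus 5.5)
Your proposal follows the paper's proof essentially step for step. You write $\Sigma^{\infty}$ as a diagonal part with entries $\asymp a_i^2$ plus an off-diagonal part of operator norm $O(1/d)$, using the Frobenius norm where the paper uses the Schur test, and then verify the hypotheses of \Cref{thm:manifold_denoising_approx_diag}. For norm recovery you combine a second-moment/Chebyshev lower bound on $\|x_i\|_2^2$ (the paper's \Cref{lemma:sinusoid-norms}) with the norm-recovery bound of \Cref{cor: Frob cor1}, which the paper routes through \Cref{cor:relative_recovery}.

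One step is missing: a Chernoff bound over the i.i.d.\ $T_i$, needed to turn the per-index Chebyshev estimate into a count of good indices that holds with probability $1-O(n^{-10})$. Also, exactly as in the paper's own argument, absorbing the $\log^C n$ factors at the end requires polylogarithmic slack (e.g.\ $a_k^2 \ge d^{-1}\log^{C}n$). The stated hypotheses, written with plain $\omega(\cdot)$, do not literally provide this.
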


\section{Numerical Illustrations} 
\label{sec:experiments}

We empirically illustrate the performance of the spectral algorithm on various curves and surfaces,
spanning the assumptions of \Cref{thm:lowrank} and \Cref{thm:numerically_low_rk_denoising}:
 three line segments meeting at the
origin, the Lissajous curve, and the unit circle. We specify $\Tr(\Sigma) $ as well as $p$ according to each curve, but we keep $n=3000$ and $d=500$. 
\begin{remark}[On the Numerical Implementation]\label{rmk:implicit_consts_logs}
    The numerical experiments presented in this section are intended primarily as qualitative proofs of concept to motivate the theoretical framework. Developing an optimized, practical implementation of this spectral algorithm falls outside the scope of the present work. As established in our main results, the theoretical error bounds contain several implicit constants and polylogarithmic factors that we haven't optimized for in our proofs. Consequently, identifying the optimal hyper-parameters (e.g., the optimal edge density $p$ or the exact ambient dimension scaling) to minimize the empirical error in finite-sample regimes is a non-trivial task, which depends on the data distribution. A comprehensive empirical study dedicated to fine-tuning these parameters for practical manifold recovery may require additional work.
\end{remark}

For each geometric object example we report three figures: a visualization of the clean
signal, the noisy observation, and the recovered points from our algorithm,  the
empirical spectral distribution (ESD) of the adjacency matrix with the
signal eigenvalues highlighted. 
While the spectral algorithm only recovers points up to rotation, we have chosen an optimal orientation in $\R^k$ for visual convenience.

\subsection{Three line segments}

\begin{figure}[h]
    \centering
     \includegraphics[width=0.75\linewidth]{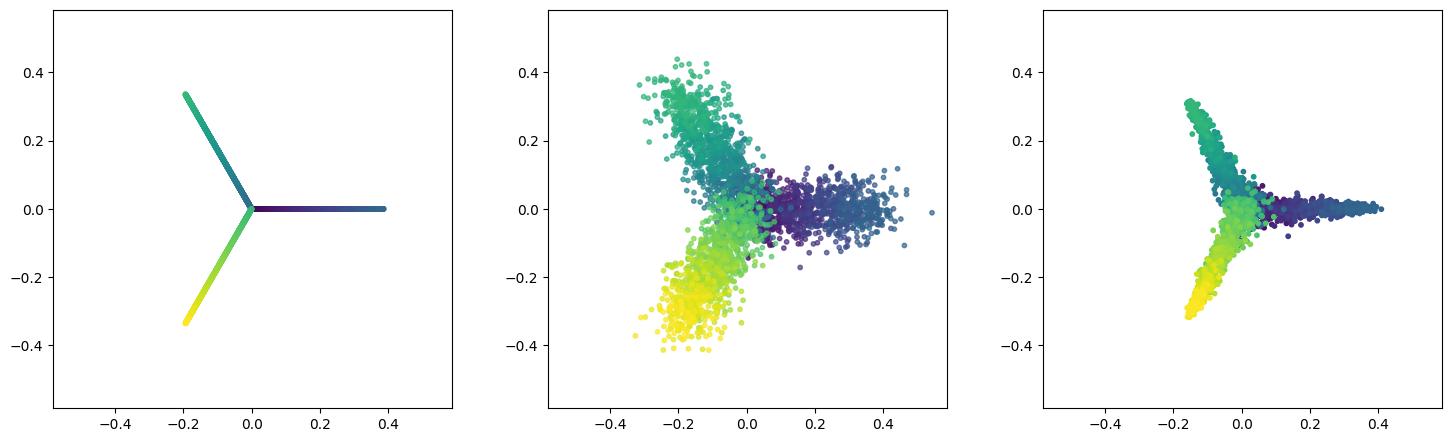}\\[2pt]
    \begin{subfigure}[H]{0.75\linewidth}
         \includegraphics[width=\linewidth]{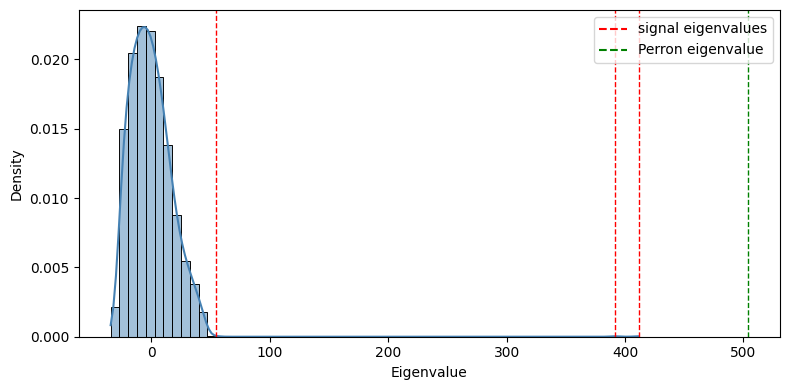}
    \end{subfigure}\\
    \caption{Three line segments meeting at the origin with $(n,d,p)=(3000, 500, 0.1)$, rescaled so that $\Tr\Sigma=0.05$. The spectral algorithm results in $\hat p=0.16$, giving $\hat\tau=\Psi^{-1}(\hat p)/\sqrt d\approx0.045$, with Relative Operator Norm Error:  0.242, 
Relative Frobenius Norm Error: 0.340. Top Left: The clean signal $X$ Top Center: noisy data
$Y = X + Z \in \mathbb{R}^{500}$ with independently sampled
$z_i \sim \mathcal{N}(0, I_d/d)$, projected onto the first 3 coordinates. Top Right: Embedding $\Xalg$ with $k=2$, recovered from Spectral Algorithm. Bottom: the ESD of the adjacency matrix (contains two outlier eigenvalues corresponding to the signal rank).}
    \label{fig:three_line_seg}
\end{figure}

We test the algorithm on data sampled independently and uniformly from three unit-length line
 segments joined at the
origin, in a common subspace of dimension 2, with angular separation $120^\circ$. 
These uniform samples represent our clean data $x_i$, to which we add Gaussian noise 
 of dimension $500$ and variance $1/500$. Though the union of these segments is not a smooth manifold (due to the singularity at the origin), the uniform measure on it has population covariance matrix is rank two, so we run the algorithm with $k=2$. See \Cref{fig:three_line_seg}.

\begin{remark}(On the collapse towards the origin)
    Because the SNGM only depends on whether $\langle y_i,y_j\rangle$ exceeding the threshold $\tau$, a data point $x_i$ with small enough norm relative to $\sqrt{\Tr\Sigma}$ may end up being `disconnected' from the rest of the graph. The graph will carry no directional information about such points, so the spectral algorithm will return a point close to the origin in the embedding. This is consistent with  \Cref{rmk:norms_recovery} (specifically \Cref{cor:relative_recovery}), as the bound is uninformative for small inner products near the origin.
\end{remark}

\subsection{Lissajous Curve}

\begin{figure}[h]
    \centering
     \includegraphics[width=0.75\linewidth]{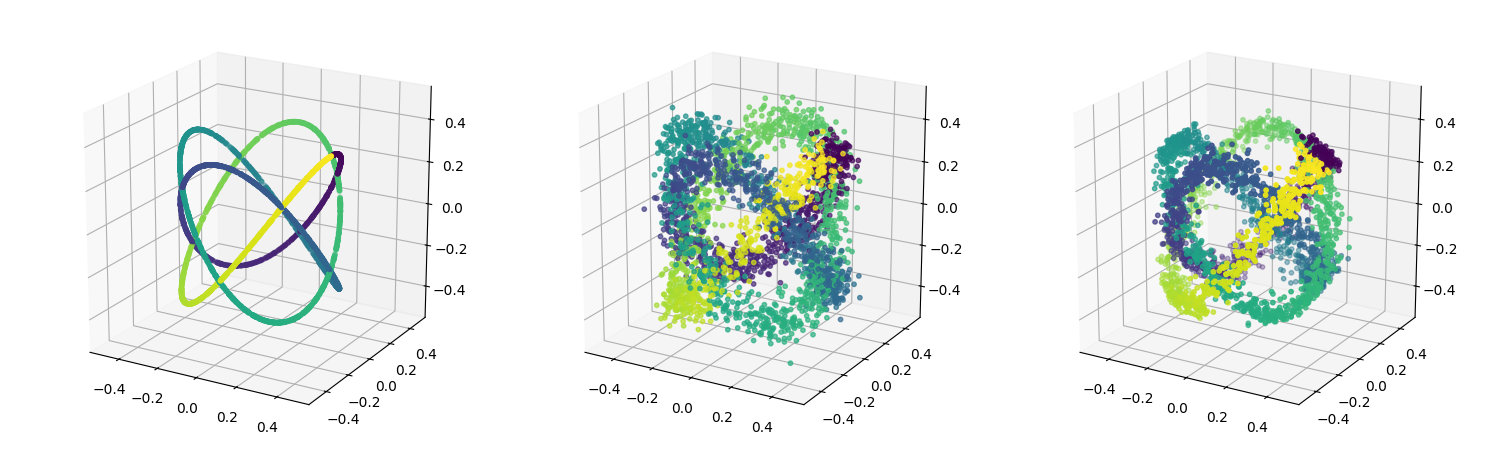}\\[2pt]
    \begin{subfigure}[H]{0.75\linewidth}
         \includegraphics[width=\linewidth]{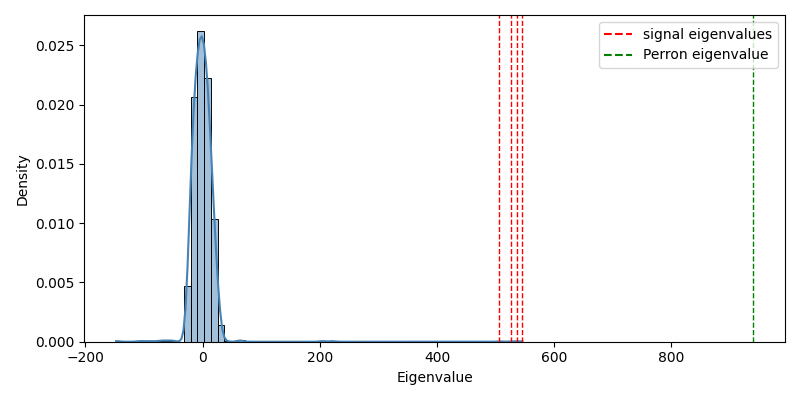}
    \end{subfigure}\\
    \caption{Lissajous curve in $\R^4$, with $(n,d,p)=(3000, 500, 0.08)$, rescaled so that $\Tr\Sigma=0.25$. The spectral algorithm results in $\hat p=0.22$ with $\hat\tau=\Psi^{-1}(\hat p)/\sqrt d\approx0.020$ relative Frobenius norm error=0.220, relative operator norm error=0.2182. Top Left: first 3 coordinates of the clean signal (colored by $t$). Bottom: the adjacency matrix ESD with four outlier eigenvalues.}
    \label{fig:lissajous}
\end{figure}

We sample uniformly from the so-called Lissajous curve, parametrized with $k=4$ by
$$F:[0,1]\to\R^{500},\qquad F(t)=(\cos(6\pi t), \sin(6\pi t), \cos(8\pi t), \sin(8\pi t), 0, \dots, 0) \,,$$
with data scaled so that $\Tr\Sigma=0.25$. For visual purposes, we project onto the first 3 coordinates, and we color it by $t\in[0,1]$ (the color of a data point $y_i$ corresponds to the position of the clean data point $x_i$). See \Cref{fig:lissajous}.

\subsection{Circle with Varying SNR}
\begin{figure}[h]
    \centering
    \includegraphics[width=1\linewidth]{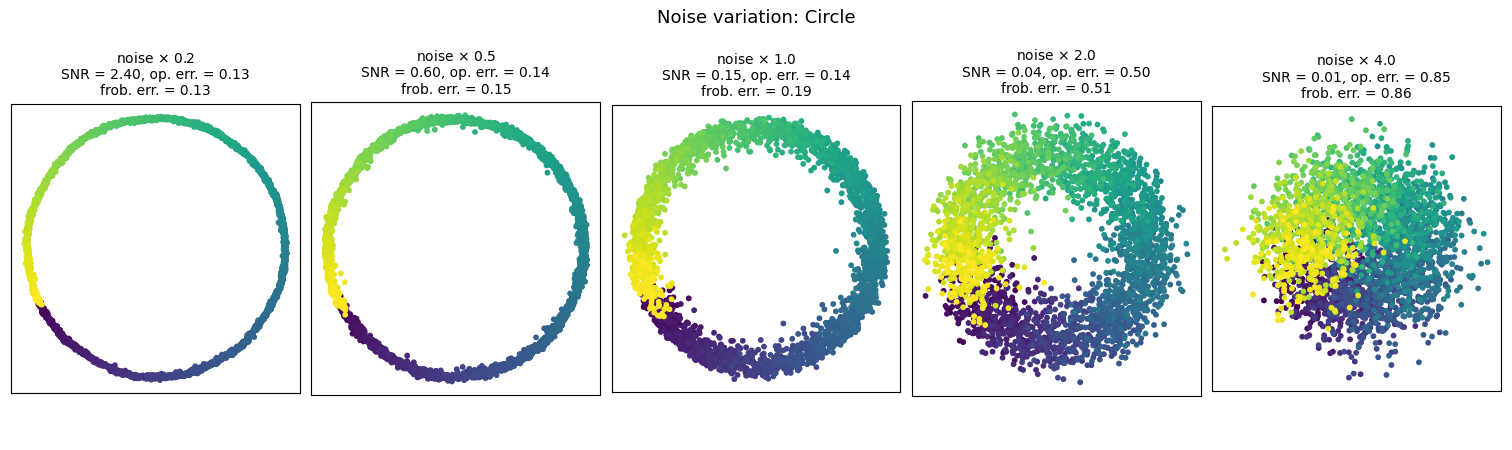}
    \caption{Recovered circle with respect to increasing (from left to right) noise scaling in units of $1/d$ for fixed $n=3000,d=500, p=0.1, \Tr\Sigma=0.15$.}
    \label{fig:circle_noise_seq}
\end{figure}

\Cref{fig:circle_noise_seq} shows the recovered circle as the standard deviation of the noise is scaled by a factor of $s$ i.e. $z_i\sim \cN(\frac{s^2}{d} I_d))$, while the other parameters $(n=3000,d=500, p=0.1, \Tr\Sigma=0.15)$ remain fixed. We define the signal-to-noise (SNR) to be $\text{SNR}:=\Tr\Sigma/s^2$.
The relative error value $0.13-0.19$ holds more or less steady while
$\mathrm{SNR}>0.15$ and then increases gradually toward $1$ from panel 4-5 from the left, once the noise is doubled and quadrupled. This shift illustrates
a Baik--Ben Arous--P\'ech\'e (BBP) phase transition \cite{Baiketal2005} where the signal
eigenvalues detach from the bulk (see for instance, the ESD panels of \Cref{fig:three_line_seg}--\ref{fig:lissajous}). We clearly see that the ``spike'' caused by the signal must exceed a critical value (in our case of order $1/d=1/500$) in order to separate from the bulk. The fourth and fifth panels from the left show the
transition: the circle is still visible at $\mathrm{SNR}=0.04$ but is difficult to tell apart from noise by $\mathrm{SNR}=0.01$, placing the critical SNR for this specific sequences between roughly $0.01$ and $0.04$. 
\section{Proof overviews}
\label{sec:overview}

For clarity of presentation, we will first establish \Cref{thm:main} in the special case that the clean data matrix $X$ is of rank $k$, i.e. $\mu_i=0$ for $k+1\le i\le d$. We state this result as \Cref{thm:lowrank} below, which we note makes a milder assumption on the density $p$. The extension to the full-rank setting is carried out in \Cref{sec: numerically_low_rk}.

\begin{theorem}
[Spectral recovery for low-rank point clouds]
\label{thm:lowrank}
Suppose that $n,d\in \Z_+$, $p\ge0$
satisfy
\begin{equation}    \label{assu:density-lowrank}
    \omega(\log^{16}n)\le d<n\,,\qquad
    \omega\bigg(\frac{\log^2  n}{n}\bigg)\le p\le \frac12-\epsilon 
    \,
\end{equation}
for some fixed $\epsilon>0$. 
Let $X\in \R^{n\times d}$ 
satisfy \eqref{assu:trace} and \eqref{assu:no-outliers}, and
assume 
\begin{align}
    \mu_k &\ge \omega(d^{-1/2})
    \qquad \text{and} \qquad
    \mu_{k+1}=0
    \label{assu:gap}
\end{align}
for some integer $k$.
If $G\sim \mathcal{G}_{n,d}(p,
\mathcal X)$, then for $\Xalg$ as in \eqref{def:embedding}, with 
probability $1-O(n^{-10})$,
\begin{equation}
    \label{eqn:lowrank_cor}
    \norm{XX^\top-\Xalg \Xalg^\top}_\op\lessapprox \frac{n\mu_1}{\sqrt{d}} +\frac{\mu_1^2}{\mu_k^2} \left(  n\sqrt{d}\mu_1^2\Tr\Sigma +  \sqrt{\frac{n}{dp}} \right)\,.
\end{equation}
\end{theorem}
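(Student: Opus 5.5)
Our plan is to compare the adjacency matrix $A$ with a low-rank "linearized" matrix built from the noisy Gram matrix, and then to transfer eigenvector information via Davis--Kahan. We write $A_{ij}=\1[\langle y_i,y_j\rangle\ge\tau]$ and decompose each noisy vector along the signal subspace. Let $P$ be the projection onto the $k$-dimensional row space of $X$, and write $y_i=(x_i+Pz_i)+P^\perp z_i$. Conditionally on $u_i:=x_i+Pz_i$, the inner products $\langle P^\perp z_i,P^\perp z_j\rangle$ behave like those of vectors that are nearly uniform on a sphere of radius $\approx\sqrt{(d-k)/d}$. The edge indicator is then $\varphi(\langle u_i,u_j\rangle+\langle P^\perp z_i,P^\perp z_j\rangle)$ with $\varphi=\1[\cdot\ge\tau]$. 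We expand $\varphi$ in Gegenbauer/Hermite polynomials of the high-dimensional spherical part, as in \eqref{phi-expand}. The zeroth-order term gives $p\mathbf 1\mathbf 1^\top$ plus a diagonal-type correction. The first-order term contributes $\varphi'$-type coefficient times $\langle u_i,u_j\rangle$. The relevant Gaussian density at the threshold is $\frac{1}{\sqrt{2\pi}}e^{-\frac{d-k}{2d}\Psi^{-1}(p)^2}\cdot\sqrt{d/(d-k)}$-ish, which is precisely $1/\alpha(p)$. Hence
\[
A \approx p\mathbf 1\mathbf 1^\top + \alpha(p)^{-1}\big(XX^\top + \text{cross terms } XZ^\top P+PZX^\top + \text{projected noise}\big) + R_{\ge2} + (A-\E[A\mid Y]) .
\]

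We will control each piece in operator norm, which we expect to produce the terms of \eqref{eqn:lowrank_cor}. (i) The Bernoulli-type fluctuation of the thresholded matrix around its conditional expectation given the signal components is a random kernel matrix with approximately independent entries. Its norm should be $\lessapprox\sqrt{np}$, and after multiplying by $\alpha(p)\approx\sqrt{p}/\sqrt d$ up to logs (using $\Psi^{-1}(p)^2\approx2\log(1/p)$), this gives the $\sqrt{n/(dp)}$ term. (ii) The cross terms and the projected-noise Gram matrix $PZZ^\top P$ have norm $\lessapprox n\mu_1/\sqrt d+n/d$; these account for the concentration error $n\mu_1/\sqrt d$, as in the statement of Lemma~\ref{lem:projection-low}. (iii) The higher-order harmonics $R_{\ge2}$ involve $\langle u_i,u_j\rangle^2$ and beyond. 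Using \eqref{assu:trace} and the no-outlier bound \eqref{assu:no-outliers}, their contribution relative to the linear term is of size $\sqrt d\,\Tr\Sigma\cdot\mu_1^2 n$. The low-degree Hadamard powers of the rank-$k$ Gram matrix are bounded via $\|B\odot B\|_\op\le\max_i B_{ii}\|B\|_\op$. We also need to check that $\hat p$ concentrates around $p$ well enough that $\alpha(\hat p)/\alpha(p)=1+o(1)$ with sufficient precision; this is a simple Chernoff bound plus the smoothness of $\Psi^{-1}$.

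Finally, we remove the rank-one $p\mathbf 1\mathbf 1^\top$ term and the mean of the diagonal corrections. Note that $\mathbf 1$ need not be orthogonal to the column space of $X$. We handle this by viewing $\alpha A$ as $XX^\top$ plus a rank-one term plus a perturbation $E$ with the above bound, and by noting that the rank-one eigenvalue $\alpha pn\gg$ the signal. We will need that the top eigenvalue $\eta_0$ is indexed separately, which is why the algorithm uses $\eta_1,\dots,\eta_k$. We then apply Weyl and Davis--Kahan with gap $n\mu_k^2$ to pass from $\|\alpha A - (\alpha p\mathbf 1\mathbf 1^\top+XX^\top)\|_\op\le\mathcal E$ to $\|\Xalg\Xalg^\top-XX^\top\|_\op\lesssim \mathcal E\cdot\mu_1^2/\mu_k^2$. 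This explains the anisotropy factor multiplying the last two terms. The main obstacle is step (iii) together with the interaction between $\mathbf 1\mathbf 1^\top$ and the signal. A rigorous Gegenbauer expansion of a discontinuous step kernel with non-spherical perturbations needs careful truncation; we would first try conditioning on the norms $\|y_i\|$ and applying existing bounds for spherical kernel random matrices, absorbing norm fluctuations into the threshold.
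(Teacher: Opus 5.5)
Your step (i) is not true, and repairing it is the main idea missing from your plan. (Note that $A-\E[A\mid Y]=0$, since $A$ is a function of $Y$; you mean the fluctuation given the signal components $u_i$.) Given the $u_i$ and the norms $\|P^\perp z_i\|_2$, that fluctuation contains the degree-one harmonic of the orthogonal noise. In the paper's notation this is $A^{(1)}=d_kP_1\odot VV^\top$, essentially $d_kp_1$ times the off-diagonal Gram matrix of the directions of the $P^\perp z_i$.

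This term has conditional mean zero, but $\|A^{(1)}\|_{\op}\asymp d_kp_1\|VV^\top-I\|_{\op}\asymp np_1\asymp np\tau$; it carries about $d$ outlier eigenvalues. That exceeds $\sqrt{np}$ as soon as $np\tau^2\gg1$, i.e.\ roughly $np\gg d$. The theorem must cover this regime, and the paper identifies it as necessary when $\mu_k^2\asymp 1/d$. The spherical kernel bound you would import (\Cref{lemma:trace_method}, from Li--Schramm) controls only $A-(A^{(0)}+A^{(1)})$, by $\max\{np\tau^2,\sqrt{np}\}\log^9 n$.

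This is why the paper keeps the \emph{whole} noisy Gram matrix in the linear part. Its target is $B_{\data}=p_0\zeta\zeta^\top+d_kp_1YY^\top$, of rank $d+1$, compared with the best rank-$(d+1)$ approximation $B_{\spec}$ of $A$; this is why $\hat Y\hat Y^\top$ is built from $d$ eigenvectors. The comparison bounds $\|YY^\top-\hat Y\hat Y^\top\|_{\op}$, rather than comparing $\alpha A$ directly with $XX^\top$.

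Your last step then does not give \eqref{eqn:lowrank_cor} either. Your $\mathcal E$ contains the cross terms ($\approx n\mu_1/\sqrt d$) and, once (i) is corrected, the rescaled orthogonal-noise Gram ($\approx n/d$). Davis--Kahan against $XX^\top$ multiplies all of $\mathcal E$ by $\mu_1^2/\mu_k^2$, not just the last two terms. The paper keeps the noise out of the perturbation: it applies Davis--Kahan only between $\hat Y\hat Y^\top$ and $YY^\top$ (gap $\gtrsim n\mu_k^2$ by \Cref{lem:eval_conc}). It compares $Y_kY_k^\top$ with $XX^\top$ separately via Eckart--Young, using $\|X-Y_k\|_{\op}\le 2\|Z\|_{\op}$. (Weyl plus Eckart--Young, which lose only a factor $2$ against a low-rank target, would avoid the anisotropy factor altogether.)

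On $\1$ versus $\mathrm{col}(X)$, your concern is justified but your remedy is not. Whatever part of $XX^\top$ lies along $\1$ is absorbed into $\eta_0\mathbf w_0\mathbf w_0^\top$, which $\Xalg$ discards. Take $x_i\equiv x$ (so $k=1$); then $XX^\top=\|x\|_2^2\1\1^\top$, and testing at $\1/\sqrt n$ gives $\|XX^\top-\Xalg\Xalg^\top\|_{\op}\ge n\|x\|_2^2-\alpha(\hat p)\eta_1$. Here $\alpha(\hat p)\eta_1\lessapprox n\|x\|_2/\sqrt d$, because $\eta_1\le\min_c\|A-c\1\1^\top\|_{\op}$ and the dominant deviation of $A$ from a multiple of $\1\1^\top$ is the degree heterogeneity caused by $\langle x,z_i\rangle$. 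With $\|x\|_2^2=d^{-0.8}$, $n=d^2$ and $p$ fixed, the left side is $\approx nd^{-0.8}$ while the right side of \eqref{eqn:lowrank_cor} is $\lessapprox nd^{-0.9}$.

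The paper's corresponding step (\Cref{lemma:projection_of_data_matrix}) bounds $(\zeta^\top Y)_j$, $j\le k$, as though $\sum_i x_{ij}$ were a centered fluctuation. That is, it tacitly uses $|\1^\top Xe_j|\lesssim\sqrt{n\log n\,\Tr\Sigma}$, so an approximate-centering hypothesis is needed on either route.

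Minor points:
\begin{itemize}
\item $\alpha(p)\asymp 1/(d_kp_1)\asymp 1/(p\sqrt{d\log(1/p)})$, not $\sqrt{p/d}$.
\item Because edges are dependent, concentration of $\hat p$ is not a plain Chernoff bound; the paper conditions on $z_i$ degree by degree (\Cref{lem:pHat_to_p}).
\item You also need $\sqrt d\,\tau=\Psi^{-1}(p)+\tilde O(\sqrt d\,\Tr\Sigma)$ (\Cref{lemma:tau_gaussian_quant}), since $\alpha$ is defined through $\Psi^{-1}(\hat p)$ rather than $\tau$.
\end{itemize}
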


The proof of \Cref{thm:lowrank} breaks into three comparisons, accomplished by \Cref{lem:p1}, \Cref{prop:XhatX} and \Cref{prop:YY_diff} below, the proofs of which are deferred to subsequent sections.

As a first step, it will be convenient for the analysis to replace the scaling factor $\alpha(\hat p)$ in \eqref{def:embedding} with a quantity that is based on the model parameter $p$ (or equivalently, the inner product threshold $\tau$ from \eqref{def:taup}) rather than the estimated edge density $\hat p$. We can do this because the latter concentrates around the former. 
Recalling $\tau$ from \eqref{def:taup}, set
\begin{equation}
\label{def:p1.overview}
         p_1 := \E u_1\1(u_1\ge \tau)\,,
     \end{equation}
for $u=(u_1,\dots,u_{d-k})$ uniform on $\cS^{d-k-1}$. 
(See \eqref{eqn:p1_computation} for a more cumbersome explicit formula.)
With $\mathbf W_k, D_k$ as in \Cref{sec:algorithm}, set
\begin{equation}
    \label{def:hatX}
    \hat X:= \frac1{\sqrt{(d-k)p_1}} \mathbf W_k D_k^{1/2}\,.
\end{equation}
Noting that the scaling parameter $\alpha(\hat p)$ from \eqref{def:scaling} can be expressed as
\begin{equation}
    \label{def:hatp1}
    \alpha(\hat p)= \frac{1}{(d-k)\hat p_1} \qquad \text{for }\quad 
    \hat p_1:= \frac{1}{\sqrt{2\pi (d-k)}}\,e^{-(d-k)\hat \tau^2/2}
\end{equation}
we see that
\begin{equation}    \label{XG-hatX}
    X_GX_G^\top = \frac{p_1}{\hat p_1} \hat X\hat X^\top.
\end{equation}
The following lemma thus allows us to pass from $X_G$ to $\hat X$. 

\begin{lemma}
    \label{lem:p1}
    Under the assumptions of \Cref{thm:main}, with probability $1-O(n^{-10})$,
\[
   \frac{\hat p_1}{p_1}  = 1+ O\Big( \sqrt{\log(1/p)} \Big( \cE_1 + \sqrt{\tfrac{\log n}{np}}\Big) \Big) 
\]
where $\cE_1 = \cE_1(X):=\sqrt{d}\Tr \Sigma + (\Tr \Sigma)^{1/2}\log n + \frac{\log^2n}{\sqrt{d}}$.
\end{lemma}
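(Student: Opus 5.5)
The plan is to reduce the comparison $\hat p_1/p_1$ to a comparison of exponents, $t:=\sqrt d\,\tau$ against $\hat t:=\Psi^{-1}(\hat p)$. Here we read off from \eqref{def:scaling} and \eqref{def:hatp1} that $\sqrt d\,\hat\tau=\hat t$. Set $m:=d-k$; by \Cref{rmk:k}, $m=d(1-o(1))$. First I would evaluate $p_1$ explicitly. For $u$ uniform on $\cS^{m-1}$, integrating the marginal density of $u_1$ gives
\[
p_1=\frac{c_m}{m-1}(1-\tau^2)^{(m-1)/2},\qquad \frac{c_m}{m-1}=\frac{1}{\sqrt{2\pi m}}\big(1+O(1/m)\big).
\]
Since $\log(1-\tau^2)=-\tau^2+O(\tau^4)$, this yields
\[
p_1=\frac{1}{\sqrt{2\pi m}}\,e^{-m\tau^2/2}\big(1+O(m\tau^4+\tau^2+1/m)\big).
\]
Once we know $t=O(\sqrt{\log(1/p)})$ (step two below), the error $m\tau^4\lesssim \log^2(1/p)/d$ is absorbed into the $\frac{\log^2 n}{\sqrt d}$ part of $\cE_1$. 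Hence
\[
\frac{\hat p_1}{p_1}=\exp\Big(-\tfrac{m}{2d}(\hat t^2-t^2)\Big)\big(1+O(\cE_1)\big),
\]
and it remains to show $|\hat t^2-t^2|\lesssim \sqrt{\log(1/p)}\,\big(\cE_1+\sqrt{\log n/(np)}\big)$, which is $o(1)$ under the hypotheses.

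Second, I would relate $t$ to $p$ deterministically, aiming for $t=\Psi^{-1}(p)+O(\cE_1)$. For each pair $i<j$ write
\[
\langle y_i,y_j\rangle=\langle z_i,z_j\rangle+\langle x_i,z_j\rangle+\langle z_i,x_j\rangle+\langle x_i,x_j\rangle .
\]
Conditionally on $z_i$, the term $\langle z_i,z_j\rangle$ is exactly $N(0,\|z_i\|^2/d)$, and $\|z_i\|^2=1+O(\log n/\sqrt d)$ with overwhelming probability. The cross terms are Gaussian with variance $O(\Tr\Sigma/d)$ by \eqref{assu:no-outliers}, so they are $O\big((\Tr\Sigma)^{1/2}\log n/\sqrt d\big)$ with high probability. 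Finally $|\langle x_i,x_j\rangle|\lesssim\Tr\Sigma$, again by \eqref{assu:no-outliers}. So, off an event of probability $n^{-20}$, $\sqrt d\,\langle y_i,y_j\rangle$ equals a standard Gaussian rescaled by $1+O(\log n/\sqrt d)$ and shifted by $O(\cE_1)$.

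Gaussian tails are log-Lipschitz at level $t$ with constant $\approx t$ (Mills ratio), and $t$ stays bounded below because $p\le 0.49$. Hence each pair probability is $\Psi(t)\,e^{O(t\cE_1+t^2\log n/\sqrt d)}$. Averaging over pairs as in \eqref{def:taup} gives $p=\Psi(t)\big(1+O(\sqrt{\log(1/p)}\,\cE_1)\big)$, and inverting $\Psi$ (derivative ratio $\approx t$) gives $t=\Psi^{-1}(p)+O(\cE_1)$. I expect this to be the main obstacle. One has to be careful that the heterogeneous shifts $\langle x_i,x_j\rangle$ enter only through an average of exponential factors. One also needs the $\tau^2$-dependence of the variance scaling to be harmless in the regime where $t$ may be as large as $\sqrt{2\log n}$. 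I would handle both by bounding each pair probability above and below by $\Psi(t\mp C\cE_1)$ before averaging.

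Third, I would show concentration of $\hat p$: with probability $1-O(n^{-10})$, $\hat p=p\big(1+O(\sqrt{\log n/(np)}+\sqrt{\log(1/p)}\,\cE_1)\big)$. I would first condition on the norms $\|z_i\|$ and on the projections of $z_i$ onto $\mathrm{row}(X)$. The edges are still dependent, so I would use a U-statistic (Hoeffding-decomposition) Bernstein inequality for $\binom n2^{-1}\sum_{i<j}\1[\langle y_i,y_j\rangle\ge\tau]$ in the independent vectors $y_i$. The degenerate part has variance $\lesssim p/n^2$, which produces the $\sqrt{\log n/(np)}$ term. The linear part is governed by the fluctuation of the conditional edge probability $P(\text{edge}\mid y_i)$. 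By step two, that fluctuation is at most $p\cdot O(\sqrt{\log(1/p)}\,\cE_1)$, since it only moves through $\|z_i\|$ and the signal terms, and this is absorbed into the stated error.

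Combining the last two steps with the same Mills-ratio inversion yields
\[
\hat t-t=O\Big(\cE_1+\tfrac{1}{t}\sqrt{\tfrac{\log n}{np}}\Big).
\]
Then $\hat t^2-t^2=(\hat t+t)(\hat t-t)=O\big(\sqrt{\log(1/p)}\,(\cE_1+\sqrt{\log n/(np)})\big)$. Plugging this into the first step and expanding the exponential gives the lemma.
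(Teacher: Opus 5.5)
Your proposal is correct. Its core (step two) is exactly the paper's key lemma, \Cref{lemma:tau_gaussian_quant}: the same decomposition of $\langle y_i,y_j\rangle$, the same sandwich of each pair probability between $\Psi(t\pm C\cE_1)$ up to negligible additive errors, and the same inversion by monotonicity. The routes differ in the other steps.

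For the concentration of $\hat p$, the paper (\Cref{lem:pHat_to_p}) uses no U-statistic machinery. Given $z_i$, $\deg(i)$ is a sum of independent Bernoullis with mean $r_i(y_i)$. Also $R=\sum_i r_i(y_i)$ is a sum of independent terms with the crude bound $\Var(r_i(y_i))\le n\,\E r_i(y_i)$. Two scalar Bernstein bounds then give $|\hat p-p|\lesssim\sqrt{p\log n/n}$, with no $\cE_1$ term and no input from step two. Your Hoeffding-decomposition route needs a Bernstein-type inequality for degenerate U-statistics plus step two. In return, its degenerate part (variance $\lesssim p/n^2$) yields relative fluctuations of lower order than $\sqrt{\log n/(np)}$, so that term in your bound is an overestimate rather than what the decomposition produces.

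Also, once you condition on $\|z_i\|$ and the row-space projections, the linear Hoeffding term vanishes identically. This is because $\P(A_{ij}=1\mid v_i)=p_0^{ij}$ does not depend on the uniform direction $v_i$. What you call the linear part is really the gap between $\binom{n}{2}^{-1}\sum_{i<j}p_0^{ij}$ and $p$, which step two controls deterministically on the good event. With this conditioning, a degree-by-degree Bernstein bound already suffices.

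For the final comparison, the paper passes through the exact spherical formula evaluated at $\hat\tau$. It bounds the distance to $p_1$ by a mean-value estimate on $\int\xi f(\xi)\,d\xi$, then compares with $\hat p_1$ via Stirling. Your direct comparison of exponents via $\hat t^2-t^2$ is equivalent and cleaner. Your Mills-ratio inversion (with its factor $1/t$) even drops the $\sqrt{\log(1/p)}$ prefactor on the $\sqrt{\log n/(np)}$ term; the paper keeps it because it only uses $\Psi\lesssim|\Psi'|$.

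A minor correction: near $t=\Psi^{-1}(0.49)$, $|\Psi'(t)|/\Psi(t)\asymp 1+t$, not $\approx t$. Since $t$ is bounded below by a constant, this only affects constants.
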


We prove \Cref{lem:p1} in \Cref{sec:p1}.

Having reduced to comparing $XX^\top$ with $\hat X\hat X^\top$, our next proposition controls the difference between these in terms of the error for a spectral approximation $\hat Y$ to the noisy data matrix $Y$. 
Recalling the labeling $\eta_0\ge \cdots\ge \eta_{n-1}$ for the eigenvalues of $A$, with $(\mathbf w_j)_{j=0}^{n-1}$ denoting a corresponding orthonormal basis of eigenvectors, we set\footnote{Note that we in fact only define the Gram matrix $\hat Y\hat Y^\top$ and do not define $\hat Y\in \R^{n\times d}$ itself. While it can be shown that $\eta_d>0$ with high probability for $d=o(n)$, so that $\hat Y$ could be defined similarly to $\hat X$ in \eqref{def:hatX}, we only ever work with $\hat Y\hat Y^\top$,  with slight abuse of notation to remind the reader that it is an approximation for $YY^\top$.}
\begin{equation}
    \label{def:hatY}
    \hat Y\hat Y^\top := \frac1{(d-k)p_1}\mathbf W_d D_d \mathbf W_d^\top
\end{equation}
where $\mathbf W_d=[\mathbf{w}_1,\dots, \mathbf{w}_d]$ and $D_d:= \diag(\eta_1,\dots, \eta_d)$. 

\begin{prop}
\label{prop:XhatX}
    Under the assumptions of \Cref{thm:lowrank}, with probability $1-O(e^{-\sqrt{d}})$,
\begin{equation}
    \|XX^\top-\hat X\hat X^\top\|_\op \lesssim \frac{n\mu_1}{\sqrt{d}} +
    \frac{\mu_1^2}{\mu_k^2}\|YY^\top- \hat Y \hat Y ^\top\|_\op.
\end{equation}
\end{prop}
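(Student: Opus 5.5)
The argument combines a deterministic spectral-perturbation step with one probabilistic input about the noise. The central observation is that $\hat X\hat X^\top = \frac{1}{(d-k)p_1}\mathbf W_k D_k\mathbf W_k^\top$ is exactly the best rank-$k$ approximation, with negative parts truncated, of $\hat Y\hat Y^\top$. Likewise, $XX^\top$ should be close to the rank-$k$ part of $YY^\top$ once the noise is projected away. We therefore compare three objects:
\begin{itemize}
\item $XX^\top$;
\item $M:=\Pi^{(k)}_Y YY^\top\Pi^{(k)}_Y$, the top-$k$ spectral truncation of $YY^\top$;
\item $\hat X\hat X^\top$, the top-$k$ spectral truncation of $\hat Y\hat Y^\top$.
\end{itemize}

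\textbf{Step 1: $XX^\top$ versus the noisy rank-$k$ part.} Expand
\[
YY^\top = XX^\top + XZ^\top + ZX^\top + ZZ^\top .
\]
Write $X = U S V^\top$ with $V\in\R^{d\times k}$. Then $ZZ^\top = ZVV^\top Z^\top + ZV_\perp V_\perp^\top Z^\top$.

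The orthogonal part $ZV_\perp V_\perp^\top Z^\top$ is a Wishart-type matrix with $n\times(d-k)$ Gaussian entries of variance $1/d$. With probability $1-O(e^{-\sqrt d})$ it is within operator norm $\lesssim \sqrt{n/d}+1$ of a multiple of the identity restricted to a generic subspace. Its effect on the top-$k$ eigenspace is therefore small.

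The cross terms satisfy $\|XZ^\top\|_\op\lesssim \sqrt n\mu_1\cdot\sqrt{n/d}=n\mu_1/\sqrt d$, by Gaussian concentration since $ZV$ is $n\times k$ with $k\ll d$. The term $ZVV^\top Z^\top$ has norm $\lesssim n/d$, which is dominated by $n\mu_1/\sqrt d$ because $\mu_1\ge\mu_k\gg d^{-1/2}$.

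This is the content of the projection lemma referenced in the error-terms remark (\Cref{lem:projection-low}). It yields
\[
\|XX^\top - M\|_\op\lesssim n\mu_1/\sqrt d
\]
after accounting for the scalar shift of the noise bulk. I expect to verify either that this shift is already removed by the normalization in $\hat Y\hat Y^\top$, or that it is absorbed because the bulk eigenvalues are of order $n/d\lesssim n\mu_1/\sqrt d$.

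\textbf{Step 2: comparing top-$k$ truncations of $YY^\top$ and $\hat Y\hat Y^\top$.} Set $E:=YY^\top-\hat Y\hat Y^\top$. By Weyl, the eigenvalues move by at most $\|E\|_\op$. The signal eigenvalues of $YY^\top$ are $\approx n\mu_i^2$, and these are separated from the rest, which is of order $n/d$, by a gap $\gtrsim n\mu_k^2$ since $\mu_k^2\gg 1/d$.

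Davis--Kahan then gives
\[
\|\Pi^{(k)}_Y-\Pi^{(k)}_{\hat Y}\|_\op\lesssim \|E\|_\op/(n\mu_k^2).
\]
Decompose the difference of truncations as
\[
\Pi M_Y\Pi-\hat\Pi M_{\hat Y}\hat\Pi=(\Pi-\hat\Pi)M_Y\Pi+\hat\Pi M_Y(\Pi-\hat\Pi)+\hat\Pi(M_Y-M_{\hat Y})\hat\Pi .
\]
This is bounded by
\[
2\|YY^\top\|_\op\|\Pi-\hat\Pi\|_\op+\|E\|_\op \lesssim \frac{n\mu_1^2}{n\mu_k^2}\|E\|_\op+\|E\|_\op,
\]
which gives the factor $\mu_1^2/\mu_k^2$. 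Here $\|YY^\top\|$ restricted to the top space is $\lesssim n\mu_1^2$.

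Positive-part truncation in $D_k$ is harmless: on the event where $\|E\|_\op\ll n\mu_k^2$, the top $k$ eigenvalues are positive. If instead $\|E\|_\op\gtrsim n\mu_k^2$, the claimed bound is trivial, since both Gram matrices have norm $\lesssim n\mu_1^2+\|E\|_\op$, which is at most $(\mu_1^2/\mu_k^2)\|E\|_\op$ up to constants.

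\textbf{Main obstacle.} The delicate step is Step 1. Specifically, we must make sure that the $d-k$ noise directions do not create eigenvalues competing with the signal, and that the isotropic noise contribution $\approx \tfrac{d-k}{d}I_n$-type diagonal/bulk does not bias the top-$k$ block beyond $n\mu_1/\sqrt d$.

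I would handle this by conditioning on $ZV$ and using that $ZV_\perp$ is independent of it. Gaussian covariance concentration gives $\|ZV_\perp V_\perp^\top Z^\top-\tfrac{d-k}{d}I\|_\op\lesssim \sqrt{n/d}\cdot\sqrt{n/d}+\dots$, which is at most $n/d$. This is $o(n\mu_k^2)$, so it enters only through Davis--Kahan with the same $n\mu_1/\sqrt d$ size. The failure probability $O(e^{-\sqrt d})$ comes from these Gaussian tail bounds with deviation parameter $\sqrt d$.
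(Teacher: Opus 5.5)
Your overall architecture is the paper's ($XX^\top\to Y_kY_k^\top\to\hat X\hat X^\top$), and your Step 2 is exactly the three-term Davis--Kahan argument of \Cref{lem:perturbation-low}. The genuine gap is the spectral gap $\lambda_k(YY^\top)-\lambda_{k+1}(YY^\top)\gtrsim n\mu_k^2$ that Step 2 rests on. You only assert it, and the tools in your proposal do not give it under the hypotheses of \Cref{thm:lowrank}. Weyl applied to $YY^\top-XX^\top$ yields a gap only if $\mu_1\ll\sqrt d\,\mu_k^2$, since that norm is genuinely of order $n\mu_1/\sqrt d$ because of $XZ^\top$. Weyl applied to $Y^\top Y-X^\top X-\frac{n}{d}I_d$, whose norm is of order $\sqrt n\mu_1$, needs $\mu_1\ll\sqrt n\,\mu_k^2$. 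Neither follows from the assumptions: take $k=2$, $\mu_1^2\asymp d^{-1/2}\log^{-2}n$, $\mu_2^2=\log n/d$ and $d=n^{0.9}$. The missing idea is the Cauchy interlacing of \Cref{lem:eval_conc}. Compress $Y^\top Y$ to the row space of $X$, i.e.\ to $V^\top Y^\top YV$. There the cross term involves only the $k\times k$ Gaussian matrix $U^\top ZV$, whose norm is $\lesssim(\sqrt k+d^{1/4})/\sqrt d$ with probability $1-e^{-10\sqrt d}$. This gives $\lambda_i(Y^\top Y)\ge n\mu_i^2+\frac{n}{d}-O(\sqrt n\mu_1d^{-1/4})$ for $i\le k$. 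Compressing to the pure-noise block $YV_\perp=ZV_\perp$ gives $\lambda_{k+1}(Y^\top Y)\le\frac{n}{d}+O(\sqrt{n/d})$. Because $\mu_1\le d^{-1/4}/\log n$ and $d<n$, both errors are $o(n\mu_k^2)$. This step, not Step 1, is the source of the $O(e^{-\sqrt d})$ failure probability. It is also what your positivity remark about $D_k$ needs.

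Step 1 is also left unfinished, and the route you sketch would lose. The ``bulk shift'' is not an obstacle: $\|ZZ^\top\|_\op=\|Z\|_\op^2\lesssim n/d\le n\mu_1/\sqrt d$, so $\|YY^\top-XX^\top\|_\op\lesssim n\mu_1/\sqrt d$ outright. Your claim that $ZV_\perp V_\perp^\top Z^\top$ is close to $\frac{d-k}{d}I$ is false, since it is $n\times n$ of rank $d-k<n$, and it is not needed. What is missing is the passage to the truncation. Routing it through Davis--Kahan, as your last paragraph proposes, multiplies the perturbation by $\mu_1^2/\mu_k^2$ and does not give $O(n\mu_1/\sqrt d)$. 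Instead, use that $XX^\top$ has rank $k$. Then Weyl gives $\lambda_{k+1}(YY^\top)\le\|YY^\top-XX^\top\|_\op$, hence $\|Y_kY_k^\top-XX^\top\|_\op\le2\|YY^\top-XX^\top\|_\op$. \Cref{lem:projection-low} does the equivalent at the level of $Y$: by Eckart--Young $\|X-Y_k\|_\op\le2\|Z\|_\op$, and then $\|XX^\top-Y_kY_k^\top\|_\op\le(\|X\|_\op+\|Y_k\|_\op)\|X-Y_k\|_\op$.
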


We prove \Cref{prop:XhatX} in \Cref{sec:XhatX}, using concentration properties of the Gaussian noise and perturbation inequalities for eigenvalues and spectral projections.

\begin{prop}
\label{prop:YY_diff}
    Under the assumptions of \Cref{thm:lowrank}, with probability $1-O(n^{-10})$, 
    \begin{equation}    \label{bd:YY_diff}
        \big\|YY^\top-\hat{Y}\hat{Y}^\top\big\|_\op\lesssim  n\sqrt{d}\mu_1^2\Tr(\Sigma)\log^4(n)+\left(n\frac{\sqrt{\log(1/p)}}{d^{3/2}}+  \sqrt{\frac{n}{dp}}\right)\log^9 n +\log^4n.
    \end{equation}
\end{prop}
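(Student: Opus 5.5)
The plan is to linearize the adjacency matrix around the Gram matrix $YY^\top$ and then show that the rank-$d$ spectral truncation in \eqref{def:hatY} cannot move far from the linear part. Concretely, I will aim for a decomposition
\begin{equation*}
A = p\,\mathbf 1\mathbf 1^\top + (d-k)p_1\,\big(YY^\top-\diag(YY^\top)\big) + R_{\mathrm{sig}} + R_{\mathrm{hi}} + R_{\mathrm{diag}},
\end{equation*}
and then bound each remainder in operator norm after dividing by $(d-k)p_1$. The derivation comes from conditioning on the projections of the $y_i$ onto the $k$-dimensional row space of $X$ and on the norms of the orthogonal components $\Pi^\perp z_i$. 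The remaining randomness is then the directions of $\Pi^\perp z_i$, which are uniform on $\cS^{d-k-1}$; this is why $p_1=\E u_1\1(u_1\ge\tau)$ appears.

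Next I expand the step kernel $t\mapsto \1(t\ge\tau)$ in Gegenbauer polynomials of the inner products of these uniform directions. This gives the following pieces.
\begin{itemize}
\item The degree-$0$ coefficient is $p$, up to fluctuations coming from the random norms and the signal.
\item The degree-$1$ coefficient is $(d-k)p_1$ by definition of $p_1$.
\item Replacing the normalized inner product by $\langle y_i,y_j\rangle$ costs corrections of relative size $|\|y_i\|^2-1|\lesssim \sqrt{\log n/d}$ and $\|x_i\|^2\lesssim \Tr\Sigma$. The latter uses \eqref{assu:no-outliers}.
\item The signal enters the threshold via a shift by $\langle x_i,x_j\rangle+\langle x_i,z_j\rangle+\langle z_i,x_j\rangle$. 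Taylor expanding the kernel coefficients in this shift, the first-order term is absorbed into the linear term. The second-order term yields $R_{\mathrm{sig}}$, a Hadamard-type product of low-rank matrices. Its norm after rescaling should be $\lesssim n\sqrt d\,\mu_1^2\Tr\Sigma$ up to logs, since the second derivative of the Gaussian tail at $\sqrt d\tau$ contributes an extra factor $\sqrt d$.
\item $R_{\mathrm{diag}}$ is diagonal and bounded trivially, which accounts for the $\log^4 n$ term.
\end{itemize}

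The hardest step is bounding $R_{\mathrm{hi}}$, the sum of the degree $\ge 2$ harmonic parts. This is an $n\times n$ kernel random matrix with centered, nearly orthogonal entries. I will use the known approach for high-dimensional random geometric graphs (as in Li--Schramm and the kernel random matrix literature). The degree-$\ell$ component has the form $c_\ell\, Q_\ell$, where $Q_\ell$ has entries given by normalized Gegenbauer polynomials of the inner products. Its off-diagonal operator norm is of order $\sqrt n\,c_\ell$ plus $n\,c_\ell\, d^{-\ell/2}$. The second contribution is nontrivial only for $\ell=2$, where $Q_2\approx (ZZ^\top)^{\odot 2}$-type terms give the $n/d^{3/2}\cdot\sqrt{\log(1/p)}$ term once divided by $(d-k)p_1$. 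The first contribution, with $\sum_\ell c_\ell^2\le p$, gives $\sqrt{np}/(dp_1)\asymp\sqrt{n/(dp)}$ up to logs.

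I would obtain the $\ell$-uniform bound by a trace/moment method or matrix Bernstein after conditioning on the norms. Large $\ell$ would be truncated at $\ell\approx\log n$ with the tail controlled in Frobenius norm. Handling the coupling between the signal shift and the higher harmonics is where I expect the main difficulty. First I would try doing the expansion conditional on the signal-plus-norm data, treating the shifted coefficients $c_\ell(i,j)$ as bounded entrywise weights and using Schur-product bounds.

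Finally I pass from the decomposition to the truncation. Let $L:=(d-k)p_1 YY^\top$, which has rank at most $d$, and $E:=A-p\mathbf 1\mathbf 1^\top-L$. By Weyl, $\eta_0$ separates as $\approx np$ with eigenvector close to $\mathbf 1/\sqrt n$. The rank-$d$ truncation of $A-\eta_0\mathbf w_0\mathbf w_0^\top$ (which is $\mathbf W_dD_d\mathbf W_d^\top$) differs from $L$ by at most $2\|A-\eta_0\mathbf w_0\mathbf w_0^\top-L\|_\op$, by the Eckart--Young-type inequality $\|\mathrm{trunc}_d(M)-L\|\le 2\|M-L\|$ for $\rank L\le d$. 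The discrepancy between $\eta_0\mathbf w_0\mathbf w_0^\top$ and $p\mathbf 1\mathbf 1^\top$ is controlled by $\|E\|_\op$ plus the component of $L$ along $\mathbf 1$, via Davis--Kahan with gap $\approx np$. Dividing by $(d-k)p_1$ yields \eqref{bd:YY_diff}.
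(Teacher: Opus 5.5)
Your overall route is the paper's: condition on $(a_i,l_i)$, Gegenbauer-expand, compare the degree-$0$ and degree-$1$ parts with a PSD data matrix of rank at most $d+1$, use the Li--Schramm trace bound for degrees $\ge 2$, then Weyl/Eckart--Young for the rank-$d$ truncation and Davis--Kahan for the top eigenpair. The gap is the rank-one reference $p\1\1^\top$, and it breaks the bound.

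Let $l_i=\|w_i\|_2$ be the norm of the noise component orthogonal to the row space of $X$, put $L_i=l_i-1$ and $\mathbf L=(L_1,\dots,L_n)^\top$. The conditional edge probability $p_0^{ij}$ depends on the norms through $\tau^{ij}=(\tau-\langle a_i,a_j\rangle)/(l_il_j)$. To first order,
\[
p_0^{ij}\approx p_0+d_kp_1\langle a_i,a_j\rangle+d_kp_1\tau\,(L_i+L_j).
\]
Your linear part $d_kp_1YY^\top$ absorbs the middle term, but not the last. That term is the rank-two matrix $d_kp_1\tau(\mathbf L\1^\top+\1\mathbf L^\top)$. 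It is first order (so not in $R_{\mathrm{sig}}$), not a higher harmonic and not diagonal, so it lands in your $E$.

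Its size: $d_kp_1\tau\asymp p\log(1/p)$, $\|\mathbf L\|_2\asymp\sqrt{n/d}$ and $|\sum_iL_i|=o(n/\sqrt d)$, so its operator norm is of order $np\log(1/p)/\sqrt d$. This is real degree heterogeneity: vertices with longer noise vectors have more neighbours. Accordingly $\mathbf w_0$ sits at distance of order $\log(1/p)/\sqrt d$ from $\1/\sqrt n$.

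Your last step bounds $\|A-\eta_0\mathbf w_0\mathbf w_0^\top-d_kp_1YY^\top\|_\op$ by $\|E\|_\op+\|\eta_0\mathbf w_0\mathbf w_0^\top-p\1\1^\top\|_\op$, and both summands are of this order. They cancel inside the difference, but the triangle inequality cannot see that. After dividing by $d_kp_1\asymp p\sqrt{d\log(1/p)}$ you are left with an error of order $n\sqrt{\log(1/p)}/d$. The right side of \eqref{bd:YY_diff} does not dominate this. Take $p=1/10$, $d=n^{1/2}$, $k=1$ and a balanced cloud $x_i=\pm\mu e_1$ with $\mu^2=\log^{C}n/d$. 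All hypotheses of \Cref{thm:lowrank} hold, the right side is $n^{1/4}$ times a polylogarithm, and $n/d=n^{1/2}$.

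The missing idea is a data-dependent rank-one term. The paper uses $B_\data=p_0\zeta\zeta^\top+d_kp_1YY^\top$ with $\zeta_i=1+L_i\,d_kp_1\tau/p_0$. Then
\[
p_0\zeta_i\zeta_j=p_0+d_kp_1\tau(L_i+L_j)+L_iL_j\,d_k^2p_1^2\tau^2/p_0
\]
cancels the rank-two term exactly. What remains is only the $\mathbf L\mathbf L^\top$ term ($f_2$ in the proof of \Cref{lemma:Bsph_Bdata_diff}), of norm $O(np\tau^4d\log n)$. After rescaling that is $O(n\tau^3\log n)$, which fits inside the $n\sqrt{\log(1/p)}\log^9 n/d^{3/2}$ term.

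Correspondingly, Davis--Kahan must compare $\mathbf w_0$ with $\zeta/\|\zeta\|_2$; the gap is still of order $np$. Your ``component of the linear part along $\1$'' must become the component of $YY^\top$ along $\zeta$, which \Cref{lemma:projection_of_data_matrix} bounds by $O(\log^4 n)$. That projection, not the diagonal, is where the $\log^4 n$ term comes from. With these changes the rest of your outline matches the paper's argument.
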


We prove this proposition in \Cref{sec:YY_diff}. 
\Cref{prop:YY_diff} extends Theorems 2.1 and 2.3 from \cite{LiSchramm2024}, which cover the case $k=1$. We closely follow their approach based on expanding the inner product threshold functions $A_{ij} = \1(\langle y_i,y_j\rangle\ge \tau)$ in the basis of Gegenbauer polynomials. See \Cref{sec:YY_diff} for a discussion of the proof ideas.

\begin{proof}
    [Proof of \Cref{thm:lowrank}]
    Combining \eqref{XG-hatX}, \Cref{lem:p1} and Propositions \ref{prop:XhatX} and \ref{prop:YY_diff}, we have
    \begin{align*}
        \|XX^\top - \Xalg\Xalg^\top\|_\op 
        &\le \|XX^\top - \hat X\hat X^\top\|_\op 
        + \Big|1-\frac{p_1}{\hat p_1}\Big| \|\hat X\hat X^\top\|_\op \\
        &\le \Big|1-\frac{p_1}{\hat p_1}\Big| \|XX^\top\|_\op + \Big( 1+ \Big|1-\frac{p_1}{\hat p_1}\Big|\Big) \|XX^\top - \hat X\hat X^\top\|_\op\\
        &\ls \sqrt{\log(1/p)} \Big( \cE_1 + \sqrt{\tfrac{\log n}{np}}\Big) \|XX^\top\|_\op
        + \|XX^\top - \hat X\hat X^\top\|_\op\\
        &\ls \sqrt{\log(1/p)} \Big( \cE_1 + \sqrt{\tfrac{\log n}{np}}\Big)n\mu_1^2 
        + \frac{n\mu_1}{\sqrt{d}} + \frac{\mu_1^2}{\mu_k^2} \| YY^\top - \hat Y\hat Y^\top\|_\op\\
        &\lessapprox n\mu_1^2\bigg(\sqrt{d}\Tr\Sigma  + \frac1{\sqrt{np} }\bigg) 
        + \frac{n\mu_1}{\sqrt{d}} + \frac{\mu_1^2}{\mu_k^2} \bigg( n\mu_1^2 \sqrt{d}\Tr \Sigma + \sqrt{\frac n{dp}} \bigg)\\
        &\lessapprox 
        \frac{n\mu_1}{\sqrt{d}} + \frac{\mu_1^2}{\mu_k^2} \bigg( n\mu_1^2 \sqrt{d}\Tr \Sigma + \sqrt{\frac n{dp}} \bigg)
    \end{align*}
    where in the penultimate line we noted that our assumptions \eqref{assu:density}--\eqref{assu:no-outliers} and $\mu_k =\omega(d^{-1/2})$ imply that the error expression $\cE_1$ from \Cref{lem:p1} is  $\wt O(\sqrt{d}\Tr\Sigma)$, and that the right hand side of \eqref{bd:YY_diff} is $\wt O( n\sqrt{d}\mu_1^2\Tr\Sigma + \sqrt{\frac{n}{dp}}))$, and in the final line we noted that  the term $\mu_1^2\sqrt{n/p}$ is dominated by the term $\sqrt{n/dp}$ since $\mu_1^2\le \Tr\Sigma \ls d^{-1/2}$, by \eqref{assu:trace}.
\end{proof}
\section{Proof of \Cref{prop:XhatX}}\label{sec:XhatX}
We will compare $\hat X\hat X^\top$ with the clean data matrix $XX^\top$ through an intermediary. 
Let
\begin{equation}
    Y_k=\Pi^{(k)}_{Y}Y
\end{equation}
denote the projection of the noisy data matrix $Y$ onto the subspace spanned by the eigenvectors corresponding to the largest $k$ eigenvalues of $YY^\top$.

\Cref{prop:XhatX} follows immediately from the next two lemmas.

\begin{lemma}
[Clean data to projected noisy data] 
\label{lem:projection-low} With hypotheses as in \Cref{thm:lowrank}, we have with probability $1-O(e^{-d})$ that
\[\norm{XX^\top-Y_kY_k^\top}_\op\lesssim n\mu_1\sqrt{\frac{1}{d}}.\]
\end{lemma}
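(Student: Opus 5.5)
The approach is to avoid eigenvector perturbation theory (Davis--Kahan), which would give a bound that is too weak. Instead I use the fact that $Y_k=\Pi^{(k)}_Y Y$ is a best rank-$k$ approximation of $Y$, together with one bound on the Gaussian noise. Write $\Pi:=\Pi^{(k)}_Y$, so that $Y_kY_k^\top=\Pi YY^\top\Pi$.

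The only probabilistic input is the standard Gaussian operator norm bound. Since $\sqrt d\,Z$ has i.i.d.\ standard Gaussian entries,
\[
\|Z\|_\op\le \frac{\sqrt n+\sqrt d+2\sqrt d}{\sqrt d}\lesssim\sqrt{\frac nd}
\]
with probability $1-O(e^{-d})$, using $d<n$. Everything below is deterministic on this event. We also use $\|X\|_\op=\sqrt n\,\mu_1$ and $\mu_1\ge\mu_k=\omega(d^{-1/2})$ from \eqref{assu:gap}.

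The first step expands
\[
\Pi YY^\top\Pi=\Pi XX^\top\Pi+\Pi(XZ^\top+ZX^\top)\Pi+\Pi ZZ^\top\Pi.
\]
The cross terms have norm at most $2\|X\|_\op\|Z\|_\op\lesssim \sqrt n\mu_1\sqrt{n/d}=n\mu_1/\sqrt d$. The last term has norm at most $\|Z\|_\op^2\lesssim n/d\le n\mu_1/\sqrt d$, because $\mu_1\ge d^{-1/2}$. It therefore remains to bound $\|XX^\top-\Pi XX^\top\Pi\|_\op$.

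The second step is the key one, and is where one might naively get stuck. A sin-$\Theta$ bound on $\Pi$ versus the projection onto the column space of $X$ costs a factor $\mu_1^2/\mu_k^2$, which is too much. Instead I would write
\[
\|XX^\top-\Pi XX^\top\Pi\|_\op\le 2\|(I-\Pi)X\|_\op\|X\|_\op+\|(I-\Pi)X\|_\op^2
\]
and bound $(I-\Pi)X$ directly. Since $(I-\Pi)X=(I-\Pi)Y-(I-\Pi)Z$, Eckart--Young and Weyl's inequality for singular values give
\[
\|(I-\Pi)Y\|_\op=\sigma_{k+1}(Y)\le\sigma_{k+1}(X)+\|Z\|_\op=\|Z\|_\op,
\]
because $X$ has rank $k$. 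Hence $\|(I-\Pi)X\|_\op\le 2\|Z\|_\op\lesssim\sqrt{n/d}$.

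Plugging this in gives a bound $\lesssim\sqrt n\mu_1\sqrt{n/d}+n/d\lesssim n\mu_1/\sqrt d$, which together with the first step completes the proof. The gap condition $\mu_k=\omega(d^{-1/2})$ enters only through $\mu_1\ge d^{-1/2}$, which absorbs the $n/d$ terms.
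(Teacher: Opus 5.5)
Your proof is correct and follows essentially the paper's route: the best rank-$k$ (Eckart--Young) property of $Y_k$, together with $\|Z\|_\op\lesssim\sqrt{n/d}$, reduces everything to an $O(\|X\|_\op\|Z\|_\op+\|Z\|_\op^2)$ bound, with $\mu_1\ge d^{-1/2}$ absorbing the $n/d$ term. The paper writes $XX^\top-Y_kY_k^\top=X(X-Y_k)^\top+(X-Y_k)Y_k^\top$ and bounds $\|X-Y_k\|_\op\le 2\|Z\|_\op$, whereas you expand $\Pi YY^\top\Pi$ and bound $\|(I-\Pi)X\|_\op\le 2\|Z\|_\op$; this is the same argument organized slightly differently.
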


\begin{lemma}[Projected noisy data to embedding] 
\label{lem:perturbation-low} 
With hypotheses as in \Cref{thm:lowrank}, we have that with probability $1-O(e^{-\sqrt{d}})$ that
\begin{equation}
\|Y_kY_k^\top-\hat X\hat X^\top\|_\op\ls 
    \frac{\mu_1^2}{\mu_k^2}\|YY^\top- \hat Y \hat Y ^\top\|_\op\,.
\end{equation}
\end{lemma}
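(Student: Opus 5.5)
The plan is to view the statement as a perturbation bound for rank-$k$ truncations. Set $M:=YY^\top$, $N:=\hat Y\hat Y^\top$ and $E:=M-N$.

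\textbf{Reduction to rank-$k$ truncations.} $Y_kY_k^\top=\Pi^{(k)}_Y YY^\top\Pi^{(k)}_Y=P_MMP_M$, where $P_M$ is the projection onto the top-$k$ eigenspace of $M$. Since $(d-k)p_1>0$, the top $k$ eigenvectors of $N$ are $\mathbf w_1,\dots,\mathbf w_k$, with eigenvalues $\eta_i/((d-k)p_1)$. Hence, as long as $\eta_k>0$ (so that the positive-part truncation in $D_k$ is inactive), $\hat X\hat X^\top=P_NNP_N$ with $P_N$ the top-$k$ eigenprojection of $N$. We then decompose
\[
P_MMP_M-P_NNP_N=(P_MMP_M-P_NMP_N)+P_NEP_N .
\]
The second term has norm at most $\|E\|_\op$. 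The first has norm at most $2\|M\|_\op\|P_M-P_N\|_\op$. By Davis--Kahan,
\[
\|P_M-P_N\|_\op\le \frac{2\|E\|_\op}{g}
\qquad\text{whenever } \|E\|_\op\le g/4,\quad\text{where } g:=\lambda_k(M)-\lambda_{k+1}(M).
\]
If instead $\|E\|_\op>g/4$, we bound the left-hand side crudely. We have $\|P_MMP_M\|_\op+\|P_NNP_N\|_\op\le 2\|M\|_\op+\|E\|_\op$, and this is $\lesssim(\|M\|_\op/g)\|E\|_\op$ in that case. In the case $\|E\|_\op\le g/4$, Weyl gives $\lambda_k(N)\ge \lambda_k(M)-g/4>0$, so $\eta_k>0$ as needed. (In the other case the crude bound also holds with the positive-part truncation, since $0\preceq \hat X\hat X^\top$ and $\|\hat X\hat X^\top\|_\op\le\|N\|_\op$.) In either case we obtain
\[
\|Y_kY_k^\top-\hat X\hat X^\top\|_\op\lesssim \Big(1+\frac{\|M\|_\op}{g}\Big)\|E\|_\op .
\]
It therefore remains to show, with probability $1-O(e^{-\sqrt d})$, that $\|M\|_\op\lesssim n\mu_1^2$ and $g\gtrsim n\mu_k^2$.

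\textbf{Spectral bounds on $YY^\top$.} The Gaussian matrix $\sqrt d Z$ is $n\times d$ with $d<n$. Standard concentration gives $\|Z\|_\op\le (\sqrt n+2\sqrt d)/\sqrt d\le 3\sqrt{n/d}$ with probability $1-e^{-d}$. The upper bounds are then direct.
\begin{itemize}
\item Since $\sigma_1(X)=\sqrt n\mu_1$, we get $\|M\|_\op\le(\sqrt n\mu_1+3\sqrt{n/d})^2\lesssim n\mu_1^2$, using $\mu_1\ge\mu_k=\omega(d^{-1/2})$.
\item Since $\operatorname{rank}X=k$, Weyl for singular values gives $\sigma_{k+1}(Y)\le\|Z\|_\op$, so $\lambda_{k+1}(M)\le 9n/d=o(n\mu_k^2)$.
\end{itemize}

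\textbf{Lower bound on $\lambda_k(M)$ (main obstacle).} Naively bounding the cross term by $\|X\|_\op\|Z\|_\op\approx n\mu_1/\sqrt d$ is not good enough, since it need not be $\ll n\mu_k^2$. Instead, let $Q$ be the projection onto the $k$-dimensional row space of $X$. Then
\[
\lambda_k(M)\ge\sigma_k(YQ)^2=\lambda_k\big(X^\top X+X^\top ZQ+QZ^\top X+QZ^\top ZQ\big)\quad\text{restricted to }\operatorname{range}Q .
\]
The last term is PSD, so it can be dropped. Conditionally on $X$, the matrix $X^\top ZQ$ is effectively a $k\times k$ Gaussian matrix with entry scale $\|X\|_\op/\sqrt d$. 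Its norm is therefore $\lesssim \sqrt n\mu_1(\sqrt k+d^{1/4})/\sqrt d$ with probability $1-e^{-\sqrt d}$. Using $k=o(\sqrt d)$ (\Cref{rmk:k}), $\mu_1^2\le\Tr\Sigma\lesssim d^{-1/2}$ and $n\mu_k^2\ge\omega(n/d)$, this norm is $o(n\mu_k^2)$, since $n\ge d$. Hence $\lambda_k(M)\ge n\mu_k^2(1-o(1))$ and $g\gtrsim n\mu_k^2$, which together with the reduction above gives the claimed factor $\mu_1^2/\mu_k^2$.

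Checking this cross-term estimate against the hypotheses is the only delicate step; the rest is standard perturbation theory.
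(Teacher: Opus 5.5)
Your proposal is correct and is essentially the paper's argument: the same splitting of $\Pi YY^\top\Pi-\hat\Pi\hat Y\hat Y^\top\hat\Pi$ into a projection-difference part controlled by Davis--Kahan and a part bounded by $\|YY^\top-\hat Y\hat Y^\top\|_\op$, followed by the bounds $\|YY^\top\|_\op\lesssim n\mu_1^2$ and $\lambda_k(YY^\top)-\lambda_{k+1}(YY^\top)\gtrsim n\mu_k^2$, which the paper obtains in \Cref{lem:eval_conc} by the same restriction to the row space of $X$ (Cauchy interlacing) that you use. Your extra case analysis guaranteeing $\eta_k>0$, so that $\hat X\hat X^\top$ really coincides with the top-$k$ truncation of $\hat Y\hat Y^\top$ despite the positive-part truncation in $D_k$, handles a point the paper passes over without comment.
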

For the proofs of both lemmas we will use the following lemma showing that the top eigenvalues of $YY^\top$ concentrate near those of $XX^\top$, up to a negligible shift.

\begin{lemma}
\label{lem:eval_conc}
    With probability at least $1-e^{-d}$,
    \begin{equation}    \label{YtoX}
        \|Y^\top Y - (X^\top X + \tfrac nd I_d)\|_\op \ls \sqrt{n}\mu_1\,.
    \end{equation}
    By Weyl's inequality, on the same event we have
    \begin{equation}\label{bd:YYlami}
        \lam_i(Y^\top Y) = n\mu_i^2 + \frac nd + O(\sqrt{n}\mu_1) \qquad \forall 1\le i\le d
    \end{equation}
    and in particular, by our assumptions $\mu_1^2\ge \mu_k^2 =\omega(1/d)$ and $d<n$,
    \begin{equation}    \label{bd:YYop}
        \lam_1(Y^\top Y)=\|YY^\top\|_\op \le (1+o(1))n\mu_1^2.
    \end{equation}
    Moreover, with probability at least $1-e^{-\sqrt{d}}$,
    \begin{equation}\label{eqn:interlacing_result}
    \begin{aligned}
           &\lambda_i(Y^\top Y)\ge n\mu_i^2 +\frac{n}{d}-O\left(\sqrt{n}\mu_1d^{-1/4}\right) \qquad \forall 1\leq i\leq k,\\
           &\lambda_{k+1}(Y^\top Y) \le \frac nd +O\left(\sqrt{\frac nd}\right)
 \end{aligned}
    \end{equation}
and consequently,
\begin{equation}\label{eqn:gap_conseq}
\lambda_k(Y^\top Y)-\lambda_{k+1}(Y^\top Y)\geq(1-o(1))n\mu_k^2.
\end{equation}
\end{lemma}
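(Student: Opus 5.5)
Write $Y=X+Z$, so that
\[
Y^\top Y - X^\top X - \tfrac nd I_d = X^\top Z + Z^\top X + \big(Z^\top Z - \tfrac nd I_d\big).
\]
We bound the three pieces separately in operator norm.

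\textbf{The noise term.} $\sqrt d\,Z$ is an $n\times d$ matrix with iid $N(0,1)$ entries. Standard Gaussian matrix bounds (Davidson--Szarek) give, with probability $1-e^{-d}$,
\[
\|Z^\top Z - \tfrac nd I\|_\op \lesssim \frac{\sqrt{nd}+d}{d} \lesssim \sqrt{n/d},
\]
using $d<n$. Since $\mu_1\ge\mu_k=\omega(d^{-1/2})$, we have $\sqrt{n/d} = o(\sqrt n\mu_1)$.

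\textbf{The cross term.} $X^\top Z$ is Gaussian. For fixed unit $u,v\in\R^d$, $u^\top X^\top Z v \sim N(0,\|Xu\|^2/d)$, with variance at most $n\mu_1^2/d$. A $1/4$-net argument over $\cS^{d-1}\times\cS^{d-1}$ (of size $e^{O(d)}$) gives $\|X^\top Z\|_\op\lesssim \sqrt{n}\mu_1\sqrt{d}/\sqrt d=\sqrt n\mu_1$ with probability $1-e^{-d}$. Alternatively, write $X=U S V^\top$ with $S$ of rank at most $d$. Then $X^\top Z=VSU^\top Z$, where $U^\top Z$ is a $d\times d$ Gaussian matrix with entries of variance $1/d$, so it has norm $O(1)$; the factor $\|S\|=\sqrt n\mu_1$ then gives the same bound. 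This proves \eqref{YtoX}, and \eqref{bd:YYlami} and \eqref{bd:YYop} follow by Weyl's inequality together with $n/d=o(n\mu_1^2)$.

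\textbf{Refined bounds \eqref{eqn:interlacing_result}.} In the rank-$k$ setting, let $P$ be the projection onto $V_k=\mathrm{row}(X)$, which has dimension $k$.

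For the upper bound on $\lambda_{k+1}$, use Courant--Fischer with test space $V_k^\perp$:
\[
\lambda_{k+1}(Y^\top Y)\le \|(I-P)Y^\top Y(I-P)\|_\op = \|Z(I-P)\|_\op^2 .
\]
The matrix $Z(I-P)$ is Gaussian of effective size $n\times(d-k)$, so its squared norm is at most $\tfrac nd(1+\sqrt{d/n}+t)^2$, i.e. $\tfrac nd+O(\sqrt{n/d})$ with high probability.

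For the lower bound on the top $k$ eigenvalues, for $1\le i\le k$ use Courant--Fischer restricted to $V_k$:
\[
\lambda_i(Y^\top Y)\ge \lambda_i\big(P Y^\top Y P\big)=\lambda_i\big((X+ZP)^\top(X+ZP)\big).
\]
This is a $k$-dimensional problem. Expand $PX^\top XP + PX^\top ZP + PZ^\top XP + PZ^\top ZP$. Here $PZ^\top ZP$ is $\tfrac nd I_k$ up to error $O(\sqrt{nk}/d)$. The cross term $U^\top Z P$ is a $d\times k$ Gaussian matrix with norm $O(\sqrt{k/d}+\text{dev})$, so the cross term is $O(\sqrt n\mu_1(\sqrt{k/d}+d^{-1/4}))$ with probability $1-e^{-\sqrt d}$, choosing the deviation parameter $t=d^{-1/4}$.

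\textbf{Main obstacle.} The key step is using $k=o(\sqrt d)$ (\Cref{rmk:k}), which gives $\sqrt{k/d}\le d^{-1/4}$; this yields the stated $O(\sqrt n\mu_1 d^{-1/4})$ error. It also makes $\sqrt{nk}/d = o(\sqrt n\mu_1 d^{-1/4})$, since $\mu_1 d^{1/2}\gg 1$. The probability $1-e^{-\sqrt d}$ comes from the Gaussian concentration tail $e^{-c\,d\,t^2}$ with $t=d^{-1/4}$.

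\textbf{The gap.} Finally, \eqref{eqn:gap_conseq} follows by subtraction:
\[
\lambda_k-\lambda_{k+1}\ge n\mu_k^2 - O\big(\sqrt n\mu_1 d^{-1/4}+\sqrt{n/d}\big).
\]
The error terms are $o(n\mu_k^2)$ because $\mu_k^2 d=\omega(1)$ and $n\mu_k^2\gg \sqrt n\mu_1 d^{-1/4}$. The latter needs $\sqrt n\mu_k^2\gg \mu_1 d^{-1/4}$, which holds since $\mu_1\lesssim d^{-1/4}$ by \eqref{assu:trace} and $\sqrt n\mu_k^2\gg \sqrt n/d\ge d^{-1/2}$.
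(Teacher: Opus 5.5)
Your argument is essentially the paper's. You use the same three-term decomposition and the same SVD reduction $\|X^\top Z\|_{\mathrm{op}}\le \sqrt n\mu_1\|U^\top Z\|_{\mathrm{op}}$ for \eqref{YtoX}. Your Courant--Fischer argument with test spaces $V_k$ and $V_k^\perp$ is exactly the Cauchy interlacing the paper applies to the block decomposition $Y=(\bar Y\;W)$, after rotating $\mathrm{row}(X)$ onto the first $k$ coordinates. Your $\lambda_{k+1}$ bound and the final subtraction for \eqref{eqn:gap_conseq} are correct.

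One step is false as written, and it is the step that carries the whole refinement. You say the cross term involves $U^\top ZP$, ``a $d\times k$ Gaussian matrix with norm $O(\sqrt{k/d}+\mathrm{dev})$.'' With $U\in\R^{n\times d}$ as in your earlier paragraph, $U^\top ZP$ is indeed (in a basis of $V_k$) a $d\times k$ matrix with $N(0,1/d)$ entries. But such a matrix has norm $\approx 1+\sqrt{k/d}=\Theta(1)$. That only gives the cross-term bound $O(\sqrt n\mu_1)$ already in \eqref{bd:YYlami}, which does not imply \eqref{eqn:gap_conseq} in general: for $d\asymp n$, $\mu_1^2\approx d^{-1/2}$ and $\mu_k^2\approx 1/d$ (up to logarithms), one has $\sqrt n\mu_1\gg n\mu_k^2$.

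What you need is the rank-$k$ structure. Write $X=U_kS_kV_k^\top$ with $U_k\in\R^{n\times k}$, so that $X^\top ZP=V_kS_k(U_k^\top ZP)$. Now $U_k^\top ZP$ is effectively a $k\times k$ Gaussian matrix, so $\|U_k^\top ZP\|_{\mathrm{op}}\le (2\sqrt k+O(d^{1/4}))/\sqrt d$ with probability $1-e^{-c\sqrt d}$. This is exactly the paper's bound on $\|U_k^\top N\|_{\mathrm{op}}$, and it yields your $O(\sqrt n\mu_1(\sqrt{k/d}+d^{-1/4}))$. Equivalently, run your net argument only over pairs of unit vectors in $V_k$, with a net of size $e^{O(k)}$ instead of $e^{O(d)}$.

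A smaller point: at probability level $1-e^{-\sqrt d}$, the error in $PZ^\top ZP$ is $O((\sqrt{nk}+\sqrt n\,d^{1/4}+k+\sqrt d)/d)$, not $O(\sqrt{nk}/d)$. The extra terms are $o(\sqrt n\mu_1d^{-1/4})$ because $\mu_1\sqrt d=\omega(1)$ and $d<n$, so the conclusion is unaffected.
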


\begin{proof}
We start by proving \eqref{YtoX}. We have
    \begin{align} 
        \Big\|Y^\top Y - (X^\top X + \frac nd I_d)\Big\|_\op
    &=\Big\| X^\top Z + Z^\top X + ( Z^\top Z - \frac ndI_d)\Big\|_\op \notag\\
    & \le 2\|X^\top Z\|_\op + \big\| Z^\top Z - \frac ndI_d\big\|_\op\,.
    \label{spec1}
    \end{align}
    From \cite[Theorem 4.6.1]{Vershynin2018}, we have
    \begin{equation}
        \label{ZZnorm}
        \|Z^\top Z - \frac ndI_d\|_\op \le C\sqrt{n/d}
    \end{equation}
    with probability at least $1-e^{-10d}$, for a universal constant $C>0$.
    Moreover, with $X=UDV^\top$ the singular value decomposition of $X$, with $D,V$ of sizes $d\times d$ and $U$ having $d$ orthonormal columns in $\R^n$, we have
    \begin{equation}\label{eqn:X_svd_noise}
    \|X^\top Z \|_\op = \|D U^\top Z\|_\op \le \|D\|_\op \|U^\top Z\|_\op = \sqrt{n}\mu_1 \|U^\top Z\|_\op.
    \end{equation}
    Now note that by rotational invariance of the distribution of the Gaussian columns of $Z$, the $d\times d$ matrix $U^\top Z$ is identically distributed to a Gaussian matrix $G$ with independent centered entries of variance $1/d$. Again from \cite[Theorem 4.6.1]{Vershynin2018} (or classical bounds) we thus have
    \[
    \|U^\top Z\|_\op \ls 1
    \]
    with probability at least $1-e^{-10d}$.
    By the union bound, \eqref{YtoX} follows by substituting the above bound and \eqref{ZZnorm} into \eqref{spec1}. (Note that $\sqrt{n}\mu_1\ge\sqrt{n}\mu_k \ge \sqrt{n/d}$ by our assumption \eqref{assu:gap-gen}, so the first term in \eqref{spec1} dominates.) 

    Now we estimate $\lambda_i(Y^\top Y)$ using \eqref{YtoX} via Cauchy interlacing. First, recall the decomposition \eqref{eqn: data decomp} and let $a_i = \bar{x}_i+N_i \in \R^k$, where $N_i \sim \cN(0, I_k/d)$ are i.i.d. and $\bar{x}_i$'s are $x_i$'s with the last $d-k$ zeros removed. 
    Let $\bar Y= \bar X+N \in \R^{n \times k}$ where the rows of $\bar X$ are $\bar x_i$ and the rows of $N$ are $N_i$.
   Let $W\in \R^{n\times d-k}$ be formed by the last $d-k$ columns of $Y$, so that $Y=(\bar Y\; W)$.
    Notice that with this coordinate decomposition, one can write 
    \begin{equation}\label{eqn:block_matrix}
        Y^\top Y = \begin{pmatrix}
            \bar Y^\top \bar Y & \bar Y^\top W\\
             W^\top\bar Y & W^\top W
        \end{pmatrix}
    \end{equation}
Then, by the Cauchy Interlacing theorem, we can interlace the eigenvalues of $\lambda_i(YY^T)=\lambda_i(Y^\top Y)$ ($1\leq i\leq d$) with those of $\lambda_i(\bar Y^\top \bar Y)$:
\begin{equation}\label{eqn:interlacing}
    \lambda_i(Y^\top Y)\geq \lambda_i(\bar Y^\top \bar Y), \qquad \forall 1\leq i\leq k.
\end{equation}
Now we repeat the proof of \eqref{YtoX} but with $\bar Y^\top \bar Y$ in place of $Y^\top Y$. Instead of \eqref{spec1} we have that 
\begin{equation}\label{eqn: barY_conc}
\norm{\bar Y^\top \bar Y -(\bar X^\top\bar X+ \frac{n}{d}I_k)}_\op\leq 2\norm{\bar X^\top N} + \norm{N^\top N- \frac{n}{d}I_k}_\op. 
\end{equation}
Similarly to the reduction \eqref{eqn:X_svd_noise}, we use the truncated singular value decomposition of $X$: $\bar X= U_kD_kV_k^\top$ where $U_k,V_k\in \R^{n\times k}$ are the top$-k$ right and left singular vectors of $X$ (or equivalently, $\bar X$). Thus, by \cite[Theorem 4.6.1]{Vershynin2018} 
, with probability at least
$1-\exp( - 10d^{1/2})$,
\[\norm{U_k^\top N}_\op\le\frac{2\sqrt k+
O(d^{1/4})}{\sqrt d}.\]
This is due to the fact that $U_k^\top N$ is $k\times k$.
Moreover, analogously to \eqref{ZZnorm},
with probability $1-\exp( - 10d^{1/2})$,
\[\norm{N^\top N -\frac{n}{d}I_k}_\op\lesssim \frac{\sqrt{nk}+\sqrt n d^{1/4}+k+\sqrt d}{d}\lesssim  \sqrt n\mu_1 \frac{1}{d^{1/4}},\]
where the last step uses $\mu_1\geq\mu_k=\omega(d^{-1/2})$  and $k=o(\sqrt{d}/\log^{2}n)$ by \Cref{rmk:k}. 
Now we apply the upper bounds from previous two displays to \eqref{eqn: barY_conc} to get that
\begin{equation}
\label{eq:YbartoXbar}
    \norm{\bar Y^\top \bar Y -(\bar X^\top\bar X+ \frac{n}{d}I_k)}_\op \lesssim \frac{\sqrt{n}\mu_1}{d^{1/4}}.
\end{equation}

Now we combine \eqref{eq:YbartoXbar} with Weyl's inequality to estimate eigenvalues of $\bar Y^\top \bar Y$ and thus the eigenvalues of $Y^\top Y$ via \eqref{eqn:interlacing}. Thus we obtain the first line of \eqref{eqn:interlacing_result}.
To prove the second line of \eqref{eqn:interlacing_result} we again use the Cauchy Interlacing Theorem, now with $\lambda_{k+1}(Y^\top Y)\leq \lambda_1(W^\top W)$. By \cite[Theorem 4.4.5]{Vershynin2018} with probability at least $1-e^{-d}$
\[\lambda_1(W^\top W)=\norm{W}_\op^2\le\frac{(\sqrt n+2\sqrt d)^2}{d}\le\frac nd+C\sqrt{\frac nd}.\]
Finally, \eqref{eqn:gap_conseq} is proved using \eqref{eqn:interlacing_result} with $i=k$, and taking a difference:
\begin{align*}
    \lambda_k(Y^\top Y)-\lambda_{k+1} (Y^\top Y)
    &\geq n\mu_k^2 + \frac{n}{d} -O(\sqrt{n}\mu_1 d^{-1/4})+O\left(\sqrt{\tfrac{n}{d}}\right)\\
    &=n\mu_k^2\left(1+ \frac{1}{d\mu_k^2} + O\left(-\tfrac{\mu_1}{\sqrt{n}d^{1/4}\mu_k^2}+\tfrac{1}{\sqrt{nd}\mu_k^2}\right)\right).
\end{align*}
\eqref{assu:gap-gen} implies that the term $ \frac{1}{d\mu_k^2}=o(1)$. As for the error term, \eqref{assu:trace}, \eqref{assu:gap-gen} and $d<n$ imply that $\tfrac{\mu_1}{\sqrt{n}d^{1/4}\mu_k^2}\leq \tfrac{1}{\sqrt{dn\log n}\mu_k^2}=o(1)$, $\tfrac{1}{\sqrt{dn}\mu_k^2}=o(1)$.
\end{proof}

\begin{proof}[Proof of \Cref{lem:projection-low}]
By the triangle inequality and sub-multiplicativity of the operator norm,
\begin{equation}\label{eqn:YtoX_triangle_ineq}
\begin{aligned}
   \norm{XX^\top-Y_kY_k^\top}_\op &\leq \norm{XX^\top - XY_k^\top}_\op+ \norm{XY_k^\top- Y_kY_k^\top}_\op\\
   &\leq (\norm{X}_\op+ \norm{Y_k}_\op)\norm{X-Y_k}_\op \,.
\end{aligned}
\end{equation}
Then, combining a triangle inequality and the fact that $Y_k$ is the best rank-$k$ approximation to $Y$ (the Eckart--Young--Mirsky theorem):
\[
\norm{X-Y_k}_\op\leq \norm{X-Y}_\op + \norm{Y-Y_k}_\op\leq 2\norm{X-Y}_\op.
\]
From \eqref{bd:YYop} and \eqref{ZZnorm} we have that with probability $1-O(e^{-d})$,
$\|Y_k\|_\op=\|Y\|_\op \ls \sqrt{n}\mu_1=\|X\|_\op$ and $\|X-Y\|_\op = \|Z\|_\op \ls \sqrt{n/d}$. Substituting these bounds into \eqref{eqn:YtoX_triangle_ineq} yields the claim.
\end{proof}
For the proof of \Cref{lem:perturbation-low} we use the following classical result (see e.g.\ \cite[Theorem 2]{yu2015useful}):

\begin{lemma}[Davis--Kahan inequality for spectral projections]\label{DavisKahanForProjections}
Consider two symmetric matrices A and B with spectral decompositions $A= \sum_{i=1}^n\lambda_i u_iu_i^\top$ and $B= \sum_{i=1}^n \mu_i v_iv_i^\top$,
in which the eigenvalues are (weakly) decreasing. Then the spectral projections $
\Pi^{(k)}_{A}=\sum_{i=1}^k u_iu_i^\top, \Pi^{(k)}_{B}=\sum_{i=1}^k v_iv_i^\top$ satisfy
\[
\norm{\Pi^{(k)}_{A}-\Pi^{(k)}_{B}}_\op \leq 2\frac{\norm{A-B}_\op}{\lambda_k-\lambda_{k+1}}.
\]
\end{lemma}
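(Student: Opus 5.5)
The plan is the standard $\sin\Theta$ argument for the projections $P:=\Pi^{(k)}_A$ and $Q:=\Pi^{(k)}_B$. Set $E:=B-A$, $\delta:=\norm{E}_\op$ and $g:=\lambda_k-\lambda_{k+1}$. We split into two cases according to the size of $\delta$ relative to $g$.

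\emph{Trivial case.} If $\delta\ge g/2$, then the right-hand side is at least $1$. Since $P-Q$ is a difference of two orthogonal projections, $\norm{P-Q}_\op\le 1$, so there is nothing to prove. Hence we assume $\delta<g/2$. By Weyl's inequality, $\mu_k\ge\lambda_k-\delta$.

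\emph{Reduction to an off-diagonal block.} For two orthogonal projections of the same rank $k$, one has
\[
\norm{P-Q}_\op=\norm{(I-P)Q}_\op=\norm{P(I-Q)}_\op .
\]
This is the classical fact that $P-Q$ restricted to $\mathrm{ran}\,Q$ has norm $\norm{(I-P)Q}_\op$, and the two off-diagonal norms coincide for equal ranks. We therefore only need to bound $X:=(I-P)Q$.

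\emph{Sylvester equation.} Since $Q$ and $I-P$ are spectral projections, $BQ=QBQ$ and $(I-P)A=(I-P)A(I-P)$. Write $M:=QBQ$, which acts on $\mathrm{ran}\,Q$ with eigenvalues $\ge\mu_k\ge\lambda_k-\delta$. Write $N:=(I-P)A(I-P)$, which acts on $\mathrm{ran}(I-P)$ with eigenvalues $\le\lambda_{k+1}$. Computing $(I-P)(B-A)Q$ in two ways gives
\[
XM-NX=(I-P)EQ=:R,\qquad \norm{R}_\op\le\delta .
\]

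\emph{Separation bound.} Let $\sigma:=\norm{X}_\op$, and take a top singular pair of $X$: unit vectors $v\in\mathrm{ran}\,Q$ and $u\in\mathrm{ran}(I-P)$ with $Xv=\sigma u$ and $X^\top u=\sigma v$. Then
\[
u^\top R v=(X^\top u)^\top Mv-(Nu)^\top Xv=\sigma\, v^\top Mv-\sigma\, u^\top Nu\ge\sigma\big(\lambda_k-\delta-\lambda_{k+1}\big).
\]
Since $u^\top Rv\le\norm{R}_\op\le\delta$, it follows that
\[
\sigma\le\frac{\delta}{g-\delta}\le\frac{2\delta}{g},
\]
where the last step uses $\delta<g/2$. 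Combined with the reduction step, this gives the claim.

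The only step requiring some care is the reduction $\norm{P-Q}_\op=\norm{(I-P)Q}_\op$ for equal-rank projections. If one prefers to avoid it, the same Sylvester argument applied to $P(I-Q)$ yields the same bound for that block. Then the decomposition $P-Q=P(I-Q)-(I-P)Q$, together with the fact that the two pieces have orthogonal ranges and orthogonal co-ranges, gives
\[
\norm{P-Q}_\op=\max\big(\norm{P(I-Q)}_\op,\norm{(I-P)Q}_\op\big),
\]
which suffices.
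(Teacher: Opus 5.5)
Your proof is correct. The paper gives no argument of its own here; it only cites Yu--Wang--Samworth, and the standard proof behind that citation takes a different route from yours.

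You use the original Davis--Kahan setup with a \emph{mixed} gap. Your Sylvester equation pits the top-$k$ eigenvalues of $B$ against the lower eigenvalues of $A$. So you need Weyl to get $\mu_k\ge\lambda_k-\delta$, and the resulting separation $g-\delta$ is only useful when $\delta<g/2$. That forces the case split with the trivial bound $\norm{P-Q}_\op\le 1$.

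The cited argument instead tests $A$'s eigenvalues against $B$'s eigenvectors. Take $\hat V$ an orthonormal basis of $\mathrm{ran}\,Q$, $\Lambda=\diag(\lambda_1,\dots,\lambda_k)$ and $\hat\Lambda=\diag(\mu_1,\dots,\mu_k)$. Then $A\hat V-\hat V\Lambda=(A-B)\hat V+\hat V(\hat\Lambda-\Lambda)$, which has norm at most $2\delta$ by Weyl. Its component in $\mathrm{ran}(I-P)$ is bounded below by $g\,\norm{(I-P)Q}_\op$, by exactly your singular-vector computation. So only $A$'s gap ever appears.

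What each route buys:
\begin{itemize}
\item The cited route needs no case distinction. In Frobenius norm it extends directly to interior eigenvalue blocks and, via Hoffman--Wielandt, to bounds like $2\min(\sqrt{k}\,\delta,\norm{A-B}_F)/g$.
\item Your route is fully elementary and gives the sharper bound $\delta/(g-\delta)$ when $\delta<g/2$.
\end{itemize}

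Both routes must pass from the off-diagonal block to $\norm{P-Q}_\op$. Your fallback handles this rigorously without the equal-rank identity: you take the maximum over the two blocks, which have orthogonal ranges and co-ranges.
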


\begin{proof}[Proof of \Cref{lem:perturbation-low}]
Recall that $\Pi^{(k)}_{M}$ denotes the projection matrix for the span of the top $k$ eigenvectors of a symmetric matrix $M$. For brevity, in this part of the proof, we let $ \Pi= \Pi^{(k)}_{Y}$ and $\hat \Pi= \Pi^{(k)}_{\hat{Y}}$. Then we decompose $Y_kY_k^{\top}-\hat X\hat X^\top$ using projections as follows:
\begin{align}
Y_kY_k^
{\top} - \hat X\hat X^\top 
&= \Pi  YY^{\top } \Pi  - \hat\Pi \hat{Y}\hat{Y}^\top \hat\Pi \notag\\
&= \underbrace{(\Pi  YY^{\top } \Pi  - \hat\Pi YY^{\top }  \Pi )}_{(i)} + \underbrace{(\hat\Pi YY^{\top } \Pi  - \hat\Pi YY^{\top }  \hat\Pi)}_{(ii)} +
\underbrace{(\hat\Pi YY^{\top }  \hat\Pi - \hat\Pi \hat{Y}\hat{Y}^\top \hat\Pi)\,.}_{(iii)}
\label{eqn:proj_decomp_YY_hatYY}
\end{align}
We dedicate the rest of the proof to estimating the operator norms of $(i)-(iii)$
:
\begin{itemize}
    \item[$(i+ii)$]
    By sub-multiplicativity of the operator norm
    and \Cref{DavisKahanForProjections},
\begin{align*}
\norm{\Pi YY^
{\top}\Pi -\hat\Pi YY^
{\top}\Pi }_\op &\leq \|YY^
{\top}\Pi \|_\op \|\Pi -\hat\Pi\|_\op\\
&\leq 2\norm{YY^\top}_\op\frac{\norm{YY^
{\top}-\hat{Y}\hat{Y}^\top}_\op}{\lambda_k(YY^
{\top})-\lambda_{k+1}(YY^
{\top})}.
\end{align*} 
By similar lines we obtain the same bound for $(ii)$.
\item[(iii)] 
By sub-multiplicativity of the operator norm:
\begin{align*}
 \|\hat\Pi YY^
{\top}\hat\Pi - \hat\Pi\hat{Y}\hat{Y}^\top \hat\Pi\|_\op &\leq \norm{\hat\Pi}_\op^2\|YY^{\top} - \hat{Y}\hat{Y}^\top\|_\op
\le \|YY^{\top} - \hat{Y}\hat{Y}^\top\|_\op. 
\end{align*}
\end{itemize}
Combining the bounds for $(i)$--$(iii)$ with \eqref{eqn:proj_decomp_YY_hatYY} and the triangle inequality yields
\begin{equation*}
\norm{Y_kY_k^\top-\hat X\hat X^\top}_\op\lesssim \norm{YY^\top- \hat Y \hat Y ^\top}_\op\left(\frac{\norm{YY^\top}_\op}{\lambda_k(YY^\top)-\lambda_{k+1}(YY^\top)}+1\right)
\asymp \frac{\norm{YY^\top}_\op\norm{Y_kY_k^\top-\hat X\hat X^\top}_\op}{\lambda_k(YY^\top)-\lambda_{k+1}(YY^\top)}
\end{equation*}
Finally, from \Cref{lem:eval_conc} we have  $\|YY^\top\|_\op \le (1+o(1)) n\mu_1^2$ and
$\lambda_k(YY^\top)-\lambda_{k+1}(YY^\top )\geq (1-o(1))n\mu_{k}^2$ with probability $1-O(e^{-\sqrt{d}})$. Substituting these estimates in the above display yields the claim.
\end{proof}

\section{Proof of \Cref{prop:YY_diff}}
\label{sec:YY_diff}

\subsection{Setup and overview}

We broadly follow the approach of \cite{LiSchramm2024}, expressing the entries of $A$ as functions of inner products of unit vectors and expanding in the basis of Gegenbauer polynomials. 
Since the distribution of the Gaussian matrix $Z$ is invariant under multiplication on the left by fixed orthogonal matrices, and the SNGM construction only depends on the inner products, we may assume without loss of generality that the subspace spanned by rows of $X$ aligns with the first $k$ canonical coordinates. Thus, we write
\begin{equation}\label{eqn: data decomp}
y_i=(a_i, w_i)\in \R^d,\qquad a_i\in \R^{k},\qquad w_i\sim \mathcal{N}(0,\frac{1}{d}I_{d-k}),\end{equation}
where $a_i \in \R^k$ contains the coordinates of $x_i$ plus the first $k$ coordinates of the noise, and $w_i \sim \mathcal{N}(0, \frac{1}{d}I_{d-k})$ represents the orthogonal noise component. We further decompose the Gaussian noise vector $w_i\in \R^{d-k}$ into a magnitude $l_i=\norm{w_i}_2$ and direction $v_i=\frac{w_i}{\norm{w_i}_2}$, thus
\begin{equation}
    \label{yalv}
    y_i=(a_i, l_iv_i)\,.
\end{equation}
Note that $a_i, l_i$ and $v_i$ are jointly independent. 
For each $i\neq j\in [n]$, the $(i,j)$-th entry of the adjacency matrix is thus given by
\begin{equation}\label{eq:indicator_shift}
A_{ij}=\1(\langle y_i,y_j\rangle \geq \tau)=\1(\ip{a_i}{a_j}+l_il_j\ip{v_i}{v_j}\geq \tau)=\1\left(\langle v_i, v_j \rangle \geq 
\tau^{ij} 
\right) ,
\end{equation}
where we define the (random) thresholds 
\[
\tau^{ij}:=\frac{\tau-\ip{a_i}{a_j}}{l_il_j}.
\] 
In \Cref{lemma:p_tauij_conc} we show that $\tau^{ij}$ are concentrated around $\tau$. Set
\begin{equation}    \label{def:vph}
    \vph(x):= \1(x \ge \sqrt{d-k}\tau)\,,\qquad 
    \vph_{ij}(x):= \1(x \ge \sqrt{d-k}\tau^{ij})\,.
\end{equation}
We expand these functions in the basis of Gegenbauer polynomials $(q^{(d-k)}_\ell)_{\ell\ge0}$. 
Recall that $(q^{(m)}_\ell)_{\ell\ge0}$ is an orthogonal basis for $L^2([-\sqrt{m},\sqrt{m}], \cD_m)$, where $\cD_m$ denotes the distribution of $\sqrt{m}\langle u_1,u_2\rangle$ for independent uniformly drawn elements of the unit sphere $\cS^{m-1}$ (thus, $\cD_m$ is the distribution of any fixed entry of $\sqrt{m}u_1$). 
The first three are given by
\begin{equation}
    q_0^{(m)}(x)= 1\,,\qquad 
    q_1^{(m)}(x) = x\,, \qquad
    q_2^{(m)}(x) = \frac1{\sqrt2} \sqrt{ \frac{m+2}{m-1}} (x^2-1)\,.
\end{equation}
We define expansion coefficients $p_\ell, p^{ij}_\ell$ with
\begin{equation}    \label{phi-expand}
    \vph(x) = \sum_{\ell=0}^\infty \sqrt{N_\ell^{(d-k)}} p_\ell q_\ell^{(d-k)}(x)\,,
    \qquad
    \vph_{ij}(x) = \sum_{\ell=0}^\infty \sqrt{N_\ell^{(d-k)}} p^{ij}_\ell q_\ell^{(d-k)}(x)
\end{equation}
where we have scaled the degree-$\ell$ parts by the square-roots of integers
\begin{equation}
    N_0^{(m)}=1\,, \qquad N_\ell^{(m)} = \frac{2\ell+m-2}{\ell}\binom{\ell+m-3}{\ell-1}\,,\quad \ell\ge1,
\end{equation}
giving the cardinality of the orthonormal degree-$\ell$ spherical harmonics on $\cS^{m-1}$.
(Note that for the case $\ell=1$ the above definition for $p_1$ agrees with our earlier definition \eqref{def:p1.overview}.)
For a review of spherical harmonics and Gegenbauer polynomials see for instance \cite{Hochstadt,dai2013approximation,EfFr}.

The orthogonal expansion \eqref{phi-expand} of $\vph_{ij}$ induces an expansion 
\begin{equation}    \label{Aexpand}
A= \sum_{\ell=0}^\infty A^{(\ell)} \,,\qquad \text{where}\quad
    A^{(\ell)}_{ij} = 
    \sqrt{N_\ell^{(d-k)}} p^{ij}_\ell q_\ell^{(d-k)}(\sqrt{d-k}\langle v_i,v_j\rangle)\,.
\end{equation}
In particular,
\begin{equation}    \label{Aexpand12}
    A^{(0)} = P_0 \,, \qquad
   A^{(1)} = (d-k) P_1\odot (VV^\top)
\end{equation}
where $V^\top = (v_1^\top,\dots, v_n^\top)$, 
\begin{equation}\label{def:Pell}
    P_\ell:= (p_\ell^{ij}1_{i\ne j})_{i,j=1}^n
\end{equation}
and we recall our notation $\odot $ for the entrywise product of matrices. We use the Gegenbauer expansion \eqref{Aexpand} of the adjacency matrix $A$ to bridge the gap between $YY^\top$ and $\hat Y \hat Y ^\top$.

\begin{defn}[Data, spherical and spectral approximations] \label{def:Bdata_Bsph}
We set
    \begin{equation}
        \boxed{
        \begin{aligned}
            B_\data &:= p_0\zeta\zeta^\top + d_kp_1 YY^\top, \\
            B_\sph &:= P_0 + d_kP_1\odot (VV^\top)\\
            B_\spec &:= \eta_0\mathbf{w}_0\mathbf{w}_0^\top + d_k p_1\hat{Y}\hat{Y}^\top
        \end{aligned}
        }
    \end{equation}
    where $\zeta\in\R^n$ has entries
    \begin{equation}
        \zeta_i := 1+\frac{(l_i-1)p_1d_k\tau}{p_0}
    \end{equation}
Here and in the sequel we write
\begin{equation}
    \label{def:dk}
    d_k:= d-k\,.
\end{equation}
\end{defn}
Recalling \eqref{Aexpand12}, we see that $B_\sph=A^{(0)}+A^{(1)}$ collects the order 0 and 1 terms of the Gegenbauer expansion of $A$.

For the proof of \Cref{prop:YY_diff} we note that 
\begin{align}
    d_kp_1 (YY^\top - \hat Y\hat Y^\top)
    &= B_\data - B_\spec + \eta_0\eigv_0\eigv_0^\top -p_0\zeta\zeta^\top   \notag\\
    &= (B_\data - A)+(A - B_\spec) + (\eta_0\eigv_0\eigv_0^\top -p_0\zeta\zeta^\top   ).\notag
\end{align}
Since $B_\spec$ is the best rank-$(d+1)$ approximation to $A$ under the operator norm, and $B_\data$ has rank at most $d+1$, we may thus bound
\begin{align}
    d_kp_1\norm{\hat Y \hat Y ^\top- YY^\top}_\op
    &\leq
    \|B_\spec-A\|_\op + \|A-B_\data
    \|_\op
    + \norm{\eta_0\mathbf{w}_0\mathbf{w}_0^\top- p_0\zeta\zeta^\top}_\op \notag\\
    &\leq 2\norm{A-B_\data}_\op + \norm{\eta_0\mathbf{w}_0\mathbf{w}_0^\top- p_0\zeta\zeta^\top}_\op \notag\\
    &\leq 2\norm{A-B_\sph}_\op+ 2\norm{B_\sph-B_\data}_\op +\norm{\eta_0\mathbf{w}_0\mathbf{w}_0^\top- p_0\zeta\zeta^\top}_\op  \label{eqn:prop_YY_diff_chain}
\end{align}
The three terms in the final line are bounded in Lemmas \ref{lemma:trace_method}, \ref{lemma:Bsph_Bdata_diff} and \ref{lem:eigenvector_diff}.
\subsection{Parameter Estimates}
\label{subsubsection:Parameter_estimation}

We require some preliminary parameter estimations to begin our analysis. Here, we only present the results, the proofs of these parameter estimation lemmas are standard concentration arguments, and can be found in \Cref{Appendix Section:parameter_estimation_proofs}. Although our parameter estimates are stated in terms of the original parameters, they are proved for the truncated versions of the relevant quantities (denoted $\tilde x_i,\tilde y_i, \tilde A, \tilde p$, etc.) -- which are introduced in \Cref{sec: numerically_low_rk}, the reader can ignore the tildes throughout the proofs in the appendix: under the exact low-rank assumption \eqref{assu:gap-gen}, truncation is trivial and $\tilde x_i=x_i,\tilde y_i= y_i,\tilde A=A, \tilde p=p$ etc. We adopt this convention so that the same proofs apply verbatim to the generalized numerically low-rank case treated in \Cref{sec: numerically_low_rk}.

\begin{lemma}
\label{lemma:ai_aj_li_lj_conc}
Under the same conditions as \Cref{thm:lowrank} then uniformly over all pairs $(i,j)\in [n]^2$ (including $i=j$) with probability at least $1-n^{-12}$:
\begin{align*}
\ip{a_i}{a_j}&=O\left(\Tr\left(\Sigma\right)\log n\right), \qquad l_i l_j = 1 + O\left(\sqrt{\frac{\log n}{d}}\right).
\end{align*}

\end{lemma}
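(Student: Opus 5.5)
We prove the two estimates separately, each for a fixed pair $(i,j)$ with failure probability $O(n^{-14})$, and then take a union bound over the at most $n^2$ pairs. Recall from \eqref{eqn: data decomp} that $a_i=\bar x_i+N_i$, where $\bar x_i\in\R^k$ is $x_i$ in the rotated coordinates, so $\|\bar x_i\|_2=\|x_i\|_2$, and $N_i\sim\cN(0,I_k/d)$. Also $l_i=\|w_i\|_2$ with $w_i\sim\cN(0,I_{d_k}/d)$.

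\emph{Inner products of the $a_i$.} Expand
\[
\ip{a_i}{a_j}=\ip{\bar x_i}{\bar x_j}+\ip{\bar x_i}{N_j}+\ip{N_i}{\bar x_j}+\ip{N_i}{N_j}.
\]
\begin{itemize}
\item The first term is deterministic. By Cauchy--Schwarz and the no-outlier assumption \eqref{assu:no-outliers}, it is at most $\max_i\|x_i\|_2^2\ls\Tr\Sigma$.
\item Each cross term is Gaussian with variance $\|\bar x_i\|_2^2/d\ls \Tr\Sigma/d$. So with probability $1-n^{-14}$ it is $O(\sqrt{\Tr\Sigma\log n/d})$.
\item For $i\ne j$, the last term $\ip{N_i}{N_j}$ is a sum of $k$ products of independent $\cN(0,1/d)$ variables. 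Bernstein's inequality gives $O\big((\sqrt{k\log n}+\log n)/d\big)$ with probability $1-n^{-14}$. For $i=j$, $\|N_i\|_2^2=k/d+O((\sqrt{k\log n}+\log n)/d)$ by the Laurent--Massart bound.
\end{itemize}
It remains to compare these noise contributions with $\Tr\Sigma\log n$, which is where the assumptions enter. By \Cref{rmk:k}, $k/d\le k\mu_k^2\le\Tr\Sigma$. Moreover $\mu_k^2=\omega(1/d)$ gives $1/d=o(\Tr\Sigma)$. Hence $(\sqrt{k\log n}+\log n)/d\ls \Tr\Sigma\log n$. For the cross terms, $\sqrt{\Tr\Sigma\log n/d}\le \Tr\Sigma\sqrt{\log n}$, using $1/d\le\Tr\Sigma$. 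So all terms are $O(\Tr\Sigma\log n)$.

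\emph{Noise magnitudes.} We have $d\,l_i^2\sim\chi^2_{d_k}$. By Laurent--Massart, with probability $1-n^{-14}$,
\[
l_i^2=\frac{d_k}{d}+O\Big(\frac{\sqrt{d\log n}+\log n}{d}\Big)=1+O\Big(\frac kd+\sqrt{\frac{\log n}{d}}\Big).
\]
Since $k=o(\sqrt d)$ by \Cref{rmk:k}, $k/d=o(d^{-1/2})$ and the middle term dominates. Since $d=\omega(\log^{16}n)$, $\log n/d\le\sqrt{\log n/d}$, so the error is $O(\sqrt{\log n/d})=o(1)$. Taking square roots gives $l_i=1+O(\sqrt{\log n/d})$, and multiplying two such factors gives $l_il_j=1+O(\sqrt{\log n/d})$, including when $i=j$.

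A union bound over all pairs gives total failure probability at most $n^{-12}$. No step is a genuine obstacle. The only care needed is verifying that the pure-noise contributions $k/d$ and $\log n/d$ are absorbed into $\Tr\Sigma\log n$, which uses $\mu_k=\omega(d^{-1/2})$ through \Cref{rmk:k}. In the later numerically low-rank setting, the same argument should apply verbatim to the truncated quantities.
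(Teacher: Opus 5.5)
Your proof is correct and follows essentially the same route as the paper: the same four-term expansion of $\langle a_i,a_j\rangle$, Cauchy--Schwarz with \eqref{assu:no-outliers} for the deterministic term, Gaussian tails for the cross terms, Bernstein for $\langle N_i,N_j\rangle$, and chi-squared concentration for $l_i$, followed by a union bound. Your explicit absorption of the pure-noise terms via $1/d=o(\Tr\Sigma)$, and of the $k/d$ shift in $l_i^2$ via $k=o(\sqrt d)$, makes precise steps the paper leaves implicit; the same goes for using Laurent--Massart in place of Bernstein for the $\chi^2$ terms.
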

See \Cref{proof of lemma:ai_aj_li_lj_conc} for the proof of the above Lemma. \newline
We need the following lemma estimating the value of the inner product threshold parameter $\tau$ defined in \eqref{def:taup}.

\begin{lemma}[Estimating Connectivity Threshold] \label{lemma:conc_tau}
Under the same conditions as in \Cref{lemma:ai_aj_li_lj_conc}
\[
\tau = (1+O\bigg(\frac1{\log^2n}\bigg))\sqrt{\frac2d\log (1/p)}.
\]
\end{lemma}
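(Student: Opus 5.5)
We need to estimate $\tau$ from the defining equation \eqref{def:taup}: the average over pairs of $\P(\langle y_i,y_j\rangle\ge\tau)$ equals $p$. The plan is to compare each pair probability with the pure-noise tail $\Psi(\sqrt d\,t)$ and then invert $\Psi$.

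\textbf{Step 1: pairwise reduction.} Using the decomposition \eqref{yalv}, write $\langle y_i,y_j\rangle=\langle a_i,a_j\rangle+l_il_j\langle v_i,v_j\rangle$. Conditionally on $a_i,a_j,l_i,l_j$, the quantity $\sqrt{d_k}\langle v_i,v_j\rangle$ has law $\cD_{d_k}$, since the $v_i$ are independent and uniform on $\cS^{d_k-1}$. By \Cref{lemma:ai_aj_li_lj_conc}, off an event of probability $n^{-12}$ we have $\langle a_i,a_j\rangle=O(\Tr\Sigma\log n)$ and $l_il_j=1+O(\sqrt{\log n/d})$. On this event the pair probability is sandwiched between $\P_{\cD_{d_k}}(x\ge \sqrt{d_k}\,\tau_\pm)$, where
\[
\tau_\pm=\frac{\tau\mp C\Tr\Sigma\log n}{1\pm C\sqrt{\log n/d}}.
\]
The exceptional event contributes only $O(n^{-12})$, which is negligible against $p\ge \omega(\log^2 n/n)$.

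\textbf{Step 2: tail estimate for $\cD_m$.} For $m=d_k$ and $t\sqrt m\le C\sqrt{\log n}$, the density of $\cD_m$ is proportional to $(1-x^2/m)^{(m-3)/2}$. This gives
\[
\P_{\cD_m}(x\ge s)=\Psi(s)\,e^{O(s^4/m+1/m)},
\]
or any comparable standard estimate. In the relevant range $s\lesssim\sqrt{\log n}$, with $m\ge d=\omega(\log^{16}n)$, this is $\Psi(s)(1+o(\log^{-2}n))$. Combining with Step 1, the average pair probability lies between $\Psi(\sqrt{d}\,\tau_+)(1+o(1))$ and $\Psi(\sqrt d\,\tau_-)(1+o(1))$, where the $\sqrt{d_k}$ versus $\sqrt d$ discrepancy contributes a relative change of $O(k/d)$ in the argument. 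Since $p\le 0.49$, a monotonicity argument first gives the a priori bound $\tau\ge c/\sqrt d$ and $\tau\lesssim\sqrt{\log n/d}$.

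\textbf{Step 3: inversion.} From the sandwich we get $\sqrt d\,\tau=\Psi^{-1}(p(1+o(\log^{-2}n)))\,(1+\delta)$. Here $\delta$ collects the three relative perturbations:
\begin{itemize}
\item $O(\sqrt{\log n/d})$ from the fluctuation of $l_il_j$;
\item $O(\Tr\Sigma\log n/\tau)=O(\sqrt d\,\Tr\Sigma\log n)=O(1/\log n)$, using \eqref{assu:trace} and $\tau\gtrsim d^{-1/2}$;
\item $O(k/d)$ from replacing $d_k$ by $d$, which is small by \Cref{rmk:k}.
\end{itemize}
The standard asymptotic is $\Psi^{-1}(q)=\sqrt{2\log(1/q)}\,(1+O(\log\log(1/q)/\log(1/q)))$ as $q\to0$. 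A multiplicative $(1+o(1))$ perturbation of $q$ shifts $\Psi^{-1}$ by an additive $o(1/\Psi^{-1})$.

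\textbf{Main obstacle.} The hard part is reaching the stated $1+O(\log^{-2}n)$ precision uniformly in $p$. Two issues arise.

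\emph{Size of the trace shift.} The term $\Tr\Sigma\log n$ relative to $\tau$ gives only $1/\log n$. I expect to fix this with a better bound, of $\Tr\Sigma$ order rather than $\Tr\Sigma\log n$, on the average of $\langle a_i,a_j\rangle$ over pairs, or by using \eqref{assu:trace} with its $\log^2 n$. Concretely, I would average the shifts inside the expectation: expand $\Psi$ to first order, which makes the linear term average to $\langle \bar a,\bar a\rangle$-type quantities of size $\Tr\Sigma$.

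\emph{The regime $p\asymp 1$.} Here $\log(1/p)$ is bounded, so $\sqrt{2\log(1/p)}$ is not literally the quantile. In this regime I would interpret the statement with $\Psi^{-1}(p)$ in place of $\sqrt{2\log(1/p)}$, matching how it is used in \eqref{def:scaling}. For small $p$ I would absorb the $\log\log$ correction and check it against the claimed error.
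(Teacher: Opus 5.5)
You have the same skeleton as the paper's proof (\Cref{lemma:tau_gaussian_quant}): compare each pair probability with a Gaussian tail uniformly over pairs, then invert using monotonicity of the averaged tail. The proposal still does not close, for two concrete reasons.

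\textbf{The $\log n$ loss.} The obstacle you flag as the main one comes from feeding in the simplified bound $\langle a_i,a_j\rangle=O(\Tr\Sigma\log n)$ of \Cref{lemma:ai_aj_li_lj_conc}. No averaging over pairs is needed. The sharper bound you hope for follows immediately:
\begin{itemize}
\item By Cauchy--Schwarz and \eqref{assu:no-outliers}, $|\langle x_i,x_j\rangle|\le\|x_i\|_2\|x_j\|_2\lesssim\Tr\Sigma$, with no logarithm.
\item The cross terms $\langle x_i,z_j\rangle\sim N(0,\|x_i\|_2^2/d)$ are $O(d^{-1/2}(\Tr\Sigma)^{1/2}\log n)$ with probability $1-O(e^{-c\log^2 n})$. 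This is the unsimplified bound \eqref{eqn:ai_inner_product}.
\end{itemize}
The paper writes $\langle y_i,y_j\rangle=\langle x_i,x_j\rangle+\langle x_i,z_j\rangle+\langle x_j,z_i\rangle+\|z_i\|_2\,g$, with $g=\langle z_i/\|z_i\|_2,z_j\rangle\sim N(0,1/d)$ exactly and $\|z_i\|_2=1+O(\log n/\sqrt d)$. It obtains $\P(\langle y_i,y_j\rangle\ge T/\sqrt d)=\Psi(T+O(\cE))+O(e^{-c\log^2 n})$ uniformly for $T\in[\frac1{100},\log n]$. Here $\cE=\sqrt d\,\Tr\Sigma+(\Tr\Sigma)^{1/2}\log n+\log^2 n/\sqrt d=O(\log^{-2}n)$ by \eqref{assu:trace} and $d=\omega(\log^{16}n)$. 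This avoids your $\cD_{d_k}$-versus-Gaussian tail comparison and the $d_k$-versus-$d$ bookkeeping. Your route also works once the shift is $O(\Tr\Sigma)$, since your other errors ($O(s^4/d)$, $O(k/d)$, $O(\sqrt{\log n/d})$) are all $o(\log^{-2}n)$.

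\textbf{The target.} You were right to replace $\sqrt{2\log(1/p)}$ by $\Psi^{-1}(p)$, but this is needed for \emph{every} admissible $p$, not only $p\asymp 1$. Your planned check that the $\log\log$ correction can be absorbed for small $p$ would fail. With $L=\log(1/p)$, one has $\Psi^{-1}(p)^2=2L-\log L-\log(4\pi)+o(1)$ as $L\to\infty$. So the relative gap between $\Psi^{-1}(p)$ and $\sqrt{2L}$ is of order $\log L/L$. Since \eqref{assu:density} forces $L\le\log n$, this gap is at least of order $\log\log n/\log n\gg\log^{-2}n$. For bounded $L$ the ratio stays bounded away from $1$; for example, $\Psi^{-1}(0.1)\approx 1.28$ while $\sqrt{2\log 10}\approx 2.15$. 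The signal moves $\sqrt d\,\tau$ by only $O(\log^{-2}n)$, so it cannot compensate, and the display as literally stated fails throughout the admissible range.

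What the paper's argument actually proves is $\sqrt d\,\tau=\Psi^{-1}(p)+O(\cE)$. Since $\Psi^{-1}(p)\ge\Psi^{-1}(0.49)>0$, this is equivalent to $\tau=(1+O(\log^{-2}n))\,\Psi^{-1}(p)/\sqrt d$. The paper relates this to $\sqrt{2\log(1/p)}$ only through $\Psi^{-1}(p)=\sqrt{2\log(1/p)}\,(1+O(\log^{-1/2}(1/p)))$, i.e.\ $\tau\asymp\sqrt{\log(1/p)/d}$. That, together with the $\Psi^{-1}$ form used in \Cref{lem:p1}, is what the later arguments rely on. You should state and prove that version.
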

See \Cref{proof of lemma:conc_tau} for the proof. We will oftentimes opt to use the weaker result $\tau\asymp\sqrt{\log(1/p)/d}$ for brevity.
\begin{lemma}[Shifted and Original Gegenbauer Coefficient Estimates]
\label{lemma:p_tauij_conc}
    Under the assumptions as \Cref{lemma:conc_tau} and \Cref{lemma:ai_aj_li_lj_conc}, then uniformly with probability at least $1-n^{-12}$ over all pairs $(i,j)\in [n]^2$:
    \[p_0^{ij}=O(p),\quad p_1^{ij}= O(p\tau),\quad \tau^{ij}= \tau+O\left(\frac{1}{\sqrt{d}\log n}\right).\]
    Moreover,
    \[p_0=\Theta(p), \quad p_1=\Theta (p\tau).\]
    
\end{lemma}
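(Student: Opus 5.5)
The proof starts from explicit formulas for the first two Gegenbauer coefficients of a step function on the sphere. For $m=d_k$ and threshold $s$ (here $s=\tau$ or $s=\tau^{ij}$), the degree-$0$ coefficient is a tail probability and the degree-$1$ coefficient is a first moment:
\[
p_0(s)=\Prob(u_1\ge s)\qquad\text{and}\qquad p_1(s)=\E\, u_1\1(u_1\ge s),
\]
with $u$ uniform on $\cS^{m-1}$. This matches \eqref{def:p1.overview} and the normalization in \eqref{phi-expand}, since $N_1^{(m)}=m$ and $q_1(x)=x$. The density of $u_1$ is $c_m(1-t^2)^{(m-3)/2}$, so we get the exact identity
\[
p_1(s)=\frac{c_m}{m-1}(1-s^2)^{(m-1)/2}.
\]
For the tail probability we use the standard Mills-ratio comparison for $s\sqrt m\to\infty$ with $s=o(1)$:
\[
p_0(s)=\Theta\Big(\frac{1}{s\sqrt{m}}\,e^{-ms^2/2}\Big)
\qquad\text{and}\qquad
p_1(s)=\Theta\Big(\frac{1}{\sqrt m}\, e^{-ms^2/2}\Big),
\]
and hence $p_1(s)=\Theta(s\,p_0(s))$. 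When $s\sqrt m=O(1)$, which is the regime $p$ bounded below, $p_0=\Theta(1)$ and $p_1=\Theta(m^{-1/2})=\Theta(s)$ up to constants, because $p\le 0.49$ forces $\tau\sqrt d\gtrsim 1$. Both regimes therefore give $p_1=\Theta(\tau p_0)$.

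The threshold bound comes next. On the event of \Cref{lemma:ai_aj_li_lj_conc}, we have $\tau^{ij}=(\tau-\ip{a_i}{a_j})/(l_il_j)$ with
\[
\ip{a_i}{a_j}=O(\Tr\Sigma\log n)=O\Big(\frac{1}{\sqrt d\log n}\Big)
\qquad\text{and}\qquad
l_il_j=1+O\Big(\sqrt{\tfrac{\log n}{d}}\Big),
\]
using \eqref{assu:trace}. Therefore
\[
\tau^{ij}-\tau=O\Big(\frac{1}{\sqrt d\log n}\Big)+O\Big(\tau\sqrt{\tfrac{\log n}{d}}\Big).
\]
By \Cref{lemma:conc_tau}, $\tau\lesssim\sqrt{\log n/d}$, so the second term is $O(\log n/d)$, which is $o\big(1/(\sqrt d\log n)\big)$ because $d=\omega(\log^{16}n)$.

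To compare $p_\ell^{ij}$ with $p_\ell$, we bound the ratio of Gaussian-type tails at the two thresholds. The exponent changes by
\[
\tfrac m2\big|(\tau^{ij})^2-\tau^2\big|\lesssim d\,\tau\,|\tau^{ij}-\tau|\lesssim \sqrt{\log(1/p)}/\log n=O(1),
\]
since $p\ge n^{-1}$ gives $\log(1/p)\le\log n$. The prefactors $1/(s\sqrt m)$ change only by a factor $1+o(1)$, and the bounded-$\tau\sqrt d$ regime is handled separately. This gives $p_0^{ij}=\Theta(p_0)$ and $p_1^{ij}=\Theta(p_1)=\Theta(\tau p_0)$. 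An upper bound $O(\cdot)$ is all that is claimed for the $(i,j)$ coefficients, but we in fact obtain $\Theta$.

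It remains to show $p_0=\Theta(p)$, and this is the main obstacle. By \eqref{def:taup}, $p$ is the average over pairs of $\Prob(\ip{y_i}{y_j}\ge\tau)=\E\,p_0(\tau^{ij})$, with the expectation over $a_i,a_j,l_i,l_j$. On the good event the integrand is $\Theta(p_0)$ by the previous step. Off the good event, which has probability at most $n^{-12}$, we bound the integrand by $1$ and obtain an additive error of $n^{-12}$, negligible against $p\ge\log^2 n/n$. Hence $p=\Theta(p_0)$. The subtle point is making the ratio bound uniform when $\tau^{ij}$ may be slightly negative or when $\tau\sqrt d$ is small. For that I would use monotonicity of $p_0$ in $s$ and sandwich $p_0(\tau^{ij})$ between $p_0\big(\tau\pm C/(\sqrt d\log n)\big)$. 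Finally, $p_1=\Theta(p\tau)$ follows by combining $p_0=\Theta(p)$ with $p_1=\Theta(\tau p_0)$.
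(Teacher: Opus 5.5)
Your argument is correct, and it reaches the coefficient estimates by a different, self-contained route. The estimate for $\tau^{ij}$ and the closed form for $p_1$ coincide with the paper's (cf.\ \eqref{eqn:tau_ij}, \eqref{eqn:p1_computation}). The paper feeds in the sharper three-term bound \eqref{eqn:ai_inner_product} for $\langle a_i,a_j\rangle$, but your $O(\Tr\Sigma\log n)$ together with \eqref{assu:trace} already gives $O(1/(\sqrt d\log n))$.

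The real difference is in the coefficient estimates. The paper does not prove $p_0=\Theta(p)$, $p_1=\Theta(p\tau)$ or the bounds on $p_\ell^{ij}$. It asserts instead that, because $\tau^{ij}$ has the same order as in the GMBM, the computation of \cite[Lemma 4.9]{LiSchramm2024} carries over verbatim. You prove these directly:
\begin{itemize}
\item two-sided Mills-ratio bounds for the spherical tail;
\item a monotone sandwich of $p_0(\tau^{ij})$ between $p_0(\tau\pm C/(\sqrt d\log n))$;
\item to link $p_0$ to the model parameter, the identity $\Prob(\langle y_i,y_j\rangle\ge\tau)=\E\,p_0(\tau^{ij})$, which uses independence of $v_i,v_j$ from $a_i,a_j,l_i,l_j$, averaged over pairs via \eqref{def:taup}.
\end{itemize}
One small precision: the approximation $(1-s^2)^{(m-1)/2}\asymp e^{-ms^2/2}$ needs $ms^4=O(1)$, not merely $s=o(1)$. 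That condition does hold here, since $s\lesssim\sqrt{\log n/d}$.

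The paper's route is shorter but leaves the transfer from the GMBM to the reader. Yours needs only the crude information $1\lesssim\sqrt d\,\tau\lesssim\sqrt{\log n}$ about the threshold, and that robustness is a real advantage. To read $p_0=\Theta(p)$ off an asymptotic formula for $\tau$, you would need $\sqrt d\,\tau=\Psi^{-1}(p)+O(1/\sqrt{\log(1/p)})$, as \Cref{lemma:tau_gaussian_quant} provides. Centering at $\sqrt{2\log(1/p)}$ as in \Cref{lemma:conc_tau} would instead give $p_0\asymp p/\sqrt{\log(1/p)}$ when $p\to0$. Your averaging identity avoids this issue.
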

See \ref{proof of lemma:p_tauij_conc} for the proof.

\subsubsection{
Parameter estimates in different regimes} 

In our proof of the ``Spherical to Data Error'' (\Cref{lemma:Bsph_Bdata_diff}), we will consider two regimes -- $\Tr\Sigma\leq \tau^2$ and $\Tr\Sigma>\tau^2$. 

\begin{remark}[On two regimes]
When $\Tr \Sigma \leq \tau^2$, the signal is quite weak so the data is very close to pure Gaussian noise, in which case $P_0 \approx p_0 \zeta\zeta^\top$ and $P_1 \approx p_1 \1\1^\top$. On the other hand, when $\Tr \Sigma > \tau^2$, the signal is a sufficiently strong perturbation of Gaussian noise to require a more refined analysis.
\end{remark}

\begin{lemma}\label{fact:spherical_matrix_norm}
    Let $v_1, \dots, v_n\in \mathcal{S}^{d-1}$ be uniform random vectors in $\mathcal{S}^{d-1}$ with $n>d$. Then their Gram matrix with the diagonal set to zero satisfies $\norm{VV^\top -I}_{\op}\leq O(\frac{n}{d})$, and hence $\norm{VV^\top-I}_{F}\leq O(\frac{n}{\sqrt{d}})$,  with probability at least $1-e^{-cd}$ for some $c>0$. 
\end{lemma}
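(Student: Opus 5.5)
The plan is to reduce to the Gaussian case, where $VV^\top$ is a Gram matrix of independent random vectors, and split it as a low-rank-like main part plus a diagonal correction. Write $v_i = g_i/\|g_i\|_2$ with $g_i\sim\mathcal N(0,I_d)$ independent, and let $G\in\R^{n\times d}$ have rows $g_i^\top$ and $D=\diag(\|g_1\|_2,\dots,\|g_n\|_2)$. Then $V=D^{-1}G$, so that
\[
VV^\top - I = D^{-1}GG^\top D^{-1} - I .
\]
Note that the diagonal of $VV^\top$ is identically $1$, so $VV^\top - I$ is exactly the Gram matrix with its diagonal zeroed, as in the statement.

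\textbf{Operator norm.} Since the diagonal entries of $VV^\top-I$ vanish, it is enough to bound $\|VV^\top\|_\op$ and add $1$:
\[
\|VV^\top - I\|_\op \le \|VV^\top\|_\op + 1 = \|V\|_\op^2 + 1 \le \|D^{-1}\|_\op^2\,\|G\|_\op^2 + 1 .
\]
We control the two factors separately.
\begin{itemize}
\item By \cite[Theorem 4.4.5]{Vershynin2018}, $\|G\|_\op\le \sqrt n + C\sqrt d$ with probability $1-2e^{-d}$. Since $n>d$, this gives $\|G\|_\op^2\lesssim n$.
\item For the norms $\|g_i\|_2$ we use Gaussian concentration of the Lipschitz function $\|\cdot\|_2$: $\Prob(\|g_i\|_2\le \sqrt d/2)\le e^{-cd}$.
\end{itemize}

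The hardest step is the union bound over the $n$ rows, because $n$ may exceed $e^{cd}$, so we cannot afford a bound on every row individually. We get around this by noting that $\|D^{-1}\|_\op$ need not be bounded uniformly. Instead, set $S=\{i:\|g_i\|_2\ge\sqrt d/2\}$, the rows with reasonably large norm. On $S$ the rescaling is benign, giving $\|V_S\|_\op^2\le (4/d)\|G\|_\op^2\lesssim n/d$.

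The rows outside $S$ are handled by a cruder bound. The matrix $V_{S^c}$ has unit-norm rows, so $\|V_{S^c}\|_\op^2\le |S^c|$. The indicators $\1(i\notin S)$ are i.i.d. Bernoulli with mean at most $e^{-cd}$. By Chernoff, $|S^c|\le n/d$ with probability $1-e^{-c'n/d}$. Hence
\[
\|V\|_\op^2\le \|V_S\|_\op^2+\|V_{S^c}\|_\op^2\lesssim n/d .
\]

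There is one gap in the probability: $e^{-c'n/d}$ is at least $e^{-cd}$ only when $n\gtrsim d^2$. To cover all $n>d$, we instead apply Chernoff with threshold $|S^c|\le \max(n/d,\,d)$. When the threshold is $d$, this holds with probability $1-e^{-cd}$, since $\binom{n}{d}e^{-c d\cdot d}$ is small provided $d\gtrsim\log n$. That condition is fine in our application, where $d=\omega(\log^{16}n)$, and I would state it as an implicit assumption. In that case $\|V_{S^c}\|_\op^2\le d\le n/d\cdot(d^2/n)$. When $d^2>n$, we bound $\|V_{S^c}\|_\op^2\le\|G_{S^c}\|_\op^2\cdot\max_{i\notin S}\|g_i\|_2^{-2}$ instead. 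If this route becomes messy, a cleaner alternative is to apply Bernstein-type bounds directly to the independent isotropic sub-Gaussian rows $\sqrt d\,v_i$, using \cite[Theorem 5.39]{Vershynin2018}. That gives $\|V\|_\op^2\lesssim (\sqrt n+\sqrt d)^2/d\lesssim n/d$ with probability $1-e^{-cd}$, and this is the route I would actually take.

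\textbf{Frobenius norm.} This follows from the operator bound together with a rank bound. The matrix $VV^\top$ has rank at most $d$, and the identity contributes $\|I\|_F=\sqrt n$. Therefore
\[
\|VV^\top-I\|_F\le \sqrt d\,\|VV^\top\|_\op+\sqrt n\lesssim \sqrt d\cdot\frac nd+\sqrt n\lesssim \frac{n}{\sqrt d},
\]
where the last step uses $n>d$.
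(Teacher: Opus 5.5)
Your committed route, applying the singular-value bound for matrices with independent isotropic sub-Gaussian rows to $\sqrt{d}\,V$ and then using $\rank(VV^\top)\le d$ for the Frobenius bound, is correct and is essentially the paper's proof; the paper cites exactly this bound as Theorem 4.6.1 of the 2018 book, and ``Theorem 5.39'' is the numbering from Vershynin's 2010 notes. The Gaussian-normalization detour can be dropped, and the probability gap you worried about there is not real anyway: a union bound over subsets gives $\Prob(|S^c|\ge n/d)\le\binom{n}{\lceil n/d\rceil}e^{-cd\lceil n/d\rceil}\le (ed\,e^{-cd})^{n/d}\le e^{-c'n}\le e^{-c'd}$ once $d$ exceeds a constant.
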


\begin{proof}  
This follows from standard matrix concentration results, see e.g. \cite[Theorem 4.6.1]{Vershynin2018}.
\end{proof}
\begin{defn}\label{def:parameter def}
Set
\[
L_i:=l_i-1
\quad \qquad
p_0=\Prob_{\xi\sim \mathcal{D}_{d_k}}(\xi\geq \sqrt{d_k}\tau)
\qquad
p_1=\frac{1}{\sqrt{d_k}}\E_{\xi\sim \mathcal{D}_{d_k}}[\xi\1(\xi\geq \sqrt{d_k}\tau)].
\]
We further define the fluctuations of $\tau^{ij}, p_0^{ij}, p_1^{ij}$ as follows
\begin{equation}\label{def:Delta_ps}
\Delta \tau^{ij}:= \tau^{ij}-\tau
\qquad
\Delta p_0^{ij}:=p_0^{ij}-p_0
\qquad
\Delta p_1^{ij}:=p_1^{ij}-p_1.
\end{equation}
\end{defn}

\begin{lemma} [Regime Dependent Estimates of Parameter Fluctuations]\label{lemma:scalar_estimates_whp}
    Under the same assumptions as \Cref{thm:lowrank}, then with probability at least $1-O(n^{-12})$ over all $i,j$, the following universal estimates hold independently of the trace regime:
    \begin{equation*}
        L_i = O\left(\sqrt{\frac{\log n}{d}}\right), \quad
        l_i^{-1} = 1 - L_i + O\left(\frac{\log n}{d}\right),
    \end{equation*}
     and 
    \begin{equation}
        \forall i,j \ (i\neq j):
        \Delta p_1^{ij} = \tau \Delta p_0^{ij}(1+o(1))\quad \text{\emph{and}} \quad 
        \langle v_i, v_j \rangle = o\left(\frac{\log n}{\sqrt{d}}\right).
    \end{equation}
    Furthermore, the remaining approximations satisfy the following:
    
\renewcommand{\arraystretch}{2.2}
    \resizebox{\linewidth}{!}{
    \begin{tabular}{l | l}
        \toprule
        \textbf{Small Trace:} $\Tr(\Sigma) \leq \tau^2$ & \textbf{Large Trace:} $\Tr(\Sigma) > \tau^2$ \\
        \midrule
        $\displaystyle \frac{\langle a_i,a_j\rangle}{\tau}
            = O\big(\frac{\log n}{\sqrt{d}}\big)$ &
        $\displaystyle \langle a_i, a_j \rangle = O\big(\Tr(\Sigma)\log n\big)$ \\

        $\displaystyle \Delta\tau^{ij} = -\tau\left(\frac{\langle a_i,a_j\rangle}{\tau} + L_i + L_j + o\left(\frac{\log^{3/2} n}{d}\right)\right)$ &
        $\displaystyle \Delta\tau^{ij} = -\langle a_i,a_j\rangle\left(1 - L_i - L_j + O\left(\frac{\log n}{d}\right)\right) -\tau\left(L_i + L_j + O\left(\frac{\log n}{d}\right)\right)$ \\

        $\displaystyle \Delta p_0^{ij} = -p_1 d_k \Delta\tau^{ij} \big(1 + O(\tau^2\sqrt{d}\log n)\big)$ &
        $\displaystyle \Delta p_0^{ij} = -p_1 d_k \Delta\tau^{ij} \big(1 + O(\Delta \tau^{ij}\tau d)\big)$ \\
        \bottomrule
    \end{tabular}}
\end{lemma}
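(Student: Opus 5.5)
We prove the estimates in the order they are stated, working on the intersection of the high-probability events from \Cref{lemma:ai_aj_li_lj_conc}, \Cref{lemma:conc_tau} and \Cref{lemma:p_tauij_conc}, together with standard Gaussian and spherical tail bounds. A union bound over $n^2$ pairs keeps the total failure probability $O(n^{-12})$.

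\textbf{Universal estimates.} Since $l_i^2=\|w_i\|^2$ is a $\chi^2_{d_k}$ variable divided by $d$, Bernstein's inequality gives $l_i^2=1-k/d+O(\sqrt{\log n/d})$ uniformly in $i$. By \Cref{rmk:k}, $k/d=o(d^{-1/2})$, so this yields $L_i=O(\sqrt{\log n/d})$. Taylor expansion of $x\mapsto x^{-1}$ around $1$ then gives $l_i^{-1}=1-L_i+O(L_i^2)$. For $\langle v_i,v_j\rangle$ we use the tail bound $\P(|\langle v_i,v_j\rangle|>t)\le 2e^{-d_kt^2/2}$ with $t\asymp\sqrt{\log n/d}$, which is $o(\log n/\sqrt d)$.

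\textbf{Relation $\Delta p_1^{ij}=\tau\Delta p_0^{ij}(1+o(1))$.} Write $p_0(s)=\P_{\xi\sim\cD_{d_k}}(\xi\ge\sqrt{d_k}s)$ and $p_1(s)=d_k^{-1/2}\E[\xi\1(\xi\ge\sqrt{d_k}s)]$, so that $p^{ij}_\ell=p_\ell(\tau^{ij})$. Differentiating, $p_1'(s)=s\,p_0'(s)$, because both derivatives are given by the density of $\cD_{d_k}$ evaluated at $\sqrt{d_k}s$, and the integrand of $p_1$ carries the extra factor $s$. By the mean value theorem, $\Delta p_1^{ij}=\int_\tau^{\tau^{ij}}s\,p_0'(s)\,ds$. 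Because $p_0'$ has constant sign and $s=\tau(1+o(1))$ on this interval (as $\Delta\tau^{ij}=O(1/(\sqrt d\log n))=o(\tau)$ by \Cref{lemma:p_tauij_conc}), the integral equals $\tau(1+o(1))\Delta p_0^{ij}$.

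\textbf{Regime table.} The first row is \Cref{lemma:ai_aj_li_lj_conc} combined with $\tau\asymp\sqrt{\log(1/p)/d}$. In the small-trace regime this gives $\langle a_i,a_j\rangle/\tau\lesssim \tau\log n$, and $\tau\log n$ is bounded by $\log n/\sqrt d$ up to a factor $\sqrt{\log(1/p)}$. A sharper argument is needed here: $\langle a_i,a_j\rangle$ splits into $\langle x_i,x_j\rangle$ plus Gaussian cross terms, and we use \eqref{assu:trace} and the no-outlier condition \eqref{assu:no-outliers} to get $|\langle a_i,a_j\rangle|\lesssim \sqrt{\Tr\Sigma\log n/d}+\Tr\Sigma$. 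For $\Delta\tau^{ij}$, expand $\tau^{ij}=(\tau-\langle a_i,a_j\rangle)l_i^{-1}l_j^{-1}$ using $l_i^{-1}l_j^{-1}=1-L_i-L_j+O(\log n/d)$. In the small-trace case, factor out $\tau$; the leftover cross terms are $(\langle a_i,a_j\rangle/\tau)(L_i+L_j)=O(\log^{3/2}n/d)$ with small-order bookkeeping. In the large-trace case, keep the two products separately, which is exactly the displayed form.

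\textbf{Last row (the main obstacle).} We Taylor-expand $p_0(\tau^{ij})-p_0(\tau)=p_0'(\tau)\Delta\tau^{ij}+\tfrac12p_0''(\theta)(\Delta\tau^{ij})^2$. We identify $p_0'(\tau)=-d_kp_1\cdot(1+O(1/d))$, using that the density of $\cD_{d_k}$ is proportional to $(1-x^2/d_k)^{(d_k-3)/2}$, so that integrating $x$ against it gives exactly a multiple of the density. The log-derivative $p_0''/p_0'\approx -d_k s$ then gives a relative second-order error $O(d\tau\,\Delta\tau^{ij})$, uniformly for $\theta$ between $\tau$ and $\tau^{ij}$; this is the large-trace claim. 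In the small-trace regime we substitute $|\Delta\tau^{ij}|\lesssim\tau(\log n/\sqrt d)$ to turn this into $O(\tau^2\sqrt d\log n)$.

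The delicate point is controlling $p_0''$ uniformly on the interval between $\tau$ and $\tau^{ij}$ without losing the ratio $p_0(\tau^{ij})/p_0(\tau)=O(1)$. For this we rely on \Cref{lemma:p_tauij_conc} together with $d\tau\,\Delta\tau^{ij}=O(\sqrt{\log(1/p)}/\log n)=o(1)$.
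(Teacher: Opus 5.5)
Your proof is correct in outline and follows the paper for $L_i$, $l_i^{-1}$, $\langle v_i,v_j\rangle$ and the two $\Delta\tau^{ij}$ expansions. You are even a bit more explicit than the paper about the shift $\E l_i^2=1-k/d$. Where you differ is the coefficient fluctuations. The paper first proves the $\Delta p_0^{ij}$ row: it sandwiches $\int f$ between the endpoint values of the monotone density $f=f_{d_k}$, bounds their ratio by $1+O(\tau d\Delta\tau^{ij})$, and converts $f(\sqrt{d_k}\tau)$ into $p_1$ using the closed form $p_1\propto(1-\tau^2)^{(d_k-1)/2}$. It then gets $\Delta p_1^{ij}$ from a binomial expansion of that closed form and divides by the $\Delta p_0^{ij}$ estimate.

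Your identity $p_1'(s)=s\,p_0'(s)$ is more direct. It gives $\Delta p_1^{ij}=\bar s\,\Delta p_0^{ij}$ for some $\bar s$ between $\tau$ and $\tau^{ij}$ in one line, independently of the $\Delta p_0^{ij}$ asymptotics. Neither this nor your Taylor expansion depends on the sign of $\Delta\tau^{ij}$. The paper's sandwich is written under $\tau^{ij}<\tau$, which need not hold for every pair, since $\langle a_i,a_j\rangle$ and $L_i+L_j$ take both signs. Your second-order Taylor step rests on the same density-ratio estimate as the paper. Be precise about what must be $O(1)$ uniformly: it is $p_0'(\theta)/p_0'(\tau)=\big((1-\theta^2)/(1-\tau^2)\big)^{(d_k-3)/2}=e^{O(d\tau|\Delta\tau^{ij}|)}$, not $p_0(\tau^{ij})/p_0(\tau)$. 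Your bound $d\tau\Delta\tau^{ij}=o(1)$ gives this directly.

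One step needs repair. In the small-trace row, your sharper bound $|\langle a_i,a_j\rangle|\lesssim\sqrt{\Tr\Sigma\log n/d}+\Tr\Sigma$ drops the noise--noise term $\langle N_i,N_j\rangle$, where $a_i=\tilde x_i+N_i$ and $N_i\sim\cN(0,\frac1d I_k)$.
\begin{itemize}
\item This term is of order $\sqrt{\max\{k,\log n\}\log n}/d$ uniformly over pairs, and it is not dominated in general. For $k=1$ and $\Tr\Sigma\asymp\log\log n/d$, one has $\max_{i\ne j}|N_iN_j|\asymp\log n/d\gg\sqrt{\Tr\Sigma\log n/d}$.
\item It is exactly the term that produces the stated rate $\log n/\sqrt d$. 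As written, your bound would give $\langle a_i,a_j\rangle/\tau=O(\sqrt{\log n/d})$, better than claimed, which should have flagged the omission.
\item Keeping it costs nothing. First, $\sqrt{k\log n}/d\le\sqrt{\Tr\Sigma\log n/d}$, because $k/d\le k\mu_k^2\le\Tr\Sigma$. Second, $\log n/(d\tau)\lesssim\log n/\sqrt{d\log(1/p)}\lesssim\log n/\sqrt d$, because $p\le 0.49$.
\end{itemize}

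Two smaller points, both shared with the paper.
\begin{itemize}
\item The exact relation is $p_0'(\tau)=-(d_k-1)p_1/(1-\tau^2)$, so the relative error is $O(\tau^2)=O(\log(1/p)/d)$, not $O(1/d)$. This is harmless in the small-trace row. In the large-trace row, however, absorbing it into $O(d\tau\Delta\tau^{ij})$ requires $|\Delta\tau^{ij}|\gtrsim\tau/d$, which is not automatic pair by pair. The safe form is $1+O(\tau^2+d\tau|\Delta\tau^{ij}|)$.
\item In the small-trace $\Delta\tau^{ij}$ expansion, the cross term $(\langle a_i,a_j\rangle/\tau)(L_i+L_j)$ only gives $O(\log^{3/2}n/d)$, not the $o(\cdot)$ in the statement. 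Your ``small-order bookkeeping'' glosses over this, and the paper's grouping step does no better.
\end{itemize}
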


See \Cref{proof of lemma:scalar_estimates_whp} for the proof.

\begin{lemma}[Operator Norm Bounds for Threshold Matrix Fluctuations]\label{lemma: DeltaTau_norm_estimates}
Extending the entry-wise the definition of the parameters in \Cref{def:parameter def}, consider the matrix
\begin{equation}\label{eqn:Delta_Tau}
\Delta\tau = -aa^\top(1 + O\left(\frac{\log n}{d}\right)) + \left[ aa^\top - \tau\1\1^\top\right] \odot \left(\mathbf{1}L^\top + L\mathbf{1}^\top\right)-\tau O\left(\frac{\log n}{d}\right) \mathbf{1}\mathbf{1}^\top .
\end{equation}
We have defined $a=(a_1,\dots, a_n)^\top \in\R^{n\times d}$, $L=(L_1,\dots, L_n)^\top\in \R^n$ consistent with the decomposition \eqref{eqn: data decomp}. In addition, let $V\in \R^{n\times d}$ be the matrix with rows $v_i$ (consistent with \eqref{def:parameter def} and \Cref{fact:spherical_matrix_norm}). Then with probability at least $1-O(n^{-12})$
\begin{enumerate}
    \item[(a)] $\norm{\Delta\tau \odot VV^\top}_\op\lesssim \frac nd \left(\Tr\Sigma + \tau^2\right)\log n,$ 
    \item[(b)] $\norm{\Delta\tau^{\odot 2}\odot VV^\top}_\op\lesssim \frac nd \left((\Tr\Sigma \log n)^2+ \tau^2 \log (n)/\sqrt{d}\right)$, 
    \item[(c)] $\norm{\Delta\tau^{\odot 2}}_\op
            \lesssim n\mu_1^2\,\Tr\Sigma\,\log n + \dfrac{n\tau^2\log^3 n}{d}.$
\end{enumerate}
\end{lemma}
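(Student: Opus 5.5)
The three bounds are proved by splitting $\Delta\tau$ into its three pieces from \eqref{eqn:Delta_Tau}, namely
\[
\Delta\tau = T_1+T_2+T_3,\qquad T_1:=-aa^\top\big(1+O(\tfrac{\log n}{d})\big),\quad T_2:=\big[aa^\top-\tau\1\1^\top\big]\odot(\1L^\top+L\1^\top),\quad T_3:=-\tau\, O\big(\tfrac{\log n}{d}\big)\1\1^\top .
\]
Each piece is controlled with two Hadamard-product facts. First, for $u,w\in\R^n$ and any matrix $M$ we have $(uw^\top)\odot M=\diag(u)M\diag(w)$, so $\|(uw^\top)\odot M\|_\op\le\|u\|_\infty\|w\|_\infty\|M\|_\op$. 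Second, Schur's inequality: for $P\succeq 0$, $\|P\odot M\|_\op\le \max_i P_{ii}\,\|M\|_\op$. Throughout we work on the intersection of the events of \Cref{lemma:ai_aj_li_lj_conc}, \Cref{lemma:scalar_estimates_whp} and \Cref{fact:spherical_matrix_norm} (applied in dimension $d_k$, which gives $\|VV^\top\|_\op\lesssim n/d$). On this event $\max_i\|a_i\|^2\lesssim \Tr\Sigma\log n$, $\|L\|_\infty\lesssim\sqrt{\log n/d}$, and $\|aa^\top\|_\op=\|a\|_\op^2\lesssim n\mu_1^2$ by \Cref{lem:eval_conc} restricted to the first $k$ coordinates.

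\emph{Part (a).} Since $aa^\top\succeq0$, Schur gives $\|aa^\top\odot VV^\top\|_\op\lesssim \Tr\Sigma\log n\cdot n/d$. The $(1+O(\log n/d))$ correction is entrywise; I write it as a rank-one scalar profile, or simply absorb it by the same Schur bound applied to $O(\log n/d)\,aa^\top$. For $T_2$, write $aa^\top\odot(\1L^\top)\odot VV^\top=(aa^\top\odot VV^\top)\diag(L)$, whose norm is at most $\|L\|_\infty\cdot$(previous bound), so it is negligible. Likewise $\tau\,\1L^\top\odot VV^\top=\tau VV^\top\diag(L)$ has norm at most $\tau\sqrt{\log n/d}\,n/d\le \tau^2 n\log n/d$ whenever $\tau\gtrsim d^{-1/2}$, which holds by \Cref{lemma:conc_tau}. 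Finally $T_3\odot VV^\top$ has norm at most $\tau\frac{\log n}{d}\frac nd$. Summing gives (a).

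\emph{Part (b).} Expand $\Delta\tau^{\odot2}$ into Hadamard products of pairs $T_a\odot T_b$. The main term is $(aa^\top)^{\odot2}=(a^{\otimes2})(a^{\otimes 2})^\top\succeq0$, with diagonal $\|a_i\|^4\lesssim(\Tr\Sigma\log n)^2$, so Schur gives $(\Tr\Sigma\log n)^2 n/d$. The cross terms involving $L$ are handled by the diagonal-scaling trick, each contributing an extra factor $\sqrt{\log n/d}$. The pure-$\tau$ terms are $\tau^2(\1L^\top+L\1^\top)^{\odot2}\odot VV^\top$. Expanding, these are sums of $\diag(u)VV^\top\diag(w)$ with $u,w\in\{\1,L,L^{\odot2}\}$, which have norms bounded by $\tau^2\frac{\log n}{d}\frac nd$; this is within the stated $\tau^2\log n/\sqrt d\cdot n/d$. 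Mixed $aa^\top$–$\tau$ terms are bounded by AM–GM against these two.

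\emph{Part (c).} Here there is no $VV^\top$, so the ``rank-one times $\1\1^\top$'' terms are no longer small. For $(aa^\top)^{\odot2}$, use Schur with $P=aa^\top$ and $M=aa^\top$, giving $\max_i\|a_i\|^2\,\|aa^\top\|_\op\lesssim \Tr\Sigma\log n\cdot n\mu_1^2$. The $\tau$-terms reduce to $\tau^2\|\diag(u)\1\1^\top\diag(w)\|_\op\le\tau^2\|u\|_2\|w\|_2$ with $\|L\|_2^2\lesssim n\log n/d$. This yields $\tau^2 n\log n/d$, and for the $T_3$ term even $\tau^2 n\log^2n/d^2$. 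Cross terms are again handled by AM–GM, and the $\log^3 n$ budget absorbs everything.

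The main obstacle is in (b) and (c): the $O(\cdot)$ correction factors in \eqref{eqn:Delta_Tau} are only entrywise, not structured matrices, so Schur and the diagonal-scaling trick do not apply to them directly. My first attempt is to use the cruder bound $\|M\|_\op\le n\max_{ij}|M_{ij}|$ on these remainders, checking that the extra $\log n/d$ factor keeps the result within the stated bounds. If that fails, I would instead go back to the proof of \Cref{lemma:scalar_estimates_whp}, where the corrections are explicit Taylor terms that are themselves products of $L_i$ and $\langle a_i,a_j\rangle$, and treat them as additional structured pieces.
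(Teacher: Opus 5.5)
Your proposal is correct and is essentially the paper's proof. Both use the same split of $\Delta\tau$ into its $aa^\top$, $L$ and $\tau\1\1^\top$ pieces, the Schur bound of \Cref{fact:hadamard_product_bound} for the PSD matrices $aa^\top$ and $(aa^\top)^{\odot 2}$ (with $P=M=aa^\top$ in (c)), and pulling out $\diag(L)$ for the $L$-terms; your diagonal-scaling bound $\tau^2\frac{\log n}{d}\cdot\frac{n}{d}$ for the pure $\tau^2$-terms in (b) is slightly sharper than the paper's Frobenius bound $\|R_3''\|_\infty^2\|VV^\top\|_F$.

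On the obstacle you flag: the paper also treats these $O(\cdot)$ factors as scalars, and the issue already arises in (a). If you take the crude route, the bound to use is $\|R\odot VV^\top\|_\op\le\|R\|_\infty\|VV^\top\|_F\lesssim\|R\|_\infty\, n/\sqrt{d}$, not $n$ times an entrywise maximum. The unit diagonal of $VV^\top$ makes the entrywise maximum far too costly. Even off the diagonal, the $\sqrt{\log n/d}$ entries can lose a factor $\sqrt{\log n}$ on the $\tau\,O(\log n/d)$ term in (a) when $p$ is of constant order.

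The cleanest fix is the exact identity $\Delta\tau=D(\tau\1\1^\top-aa^\top)D-\tau\1\1^\top$, with $D=\diag(l_1^{-1},\dots,l_n^{-1})$ and $\|D-I\|_\op\lesssim\sqrt{\log n/d}$. Under it, every piece of $\Delta\tau$ and $\Delta\tau^{\odot 2}$ is a diagonal scaling of $aa^\top$, $(aa^\top)^{\odot 2}$ or $\1\1^\top$; for example, $\Delta\tau\odot VV^\top=-D(aa^\top\odot VV^\top)D+\tau(DVV^\top D-VV^\top)$. Your two Hadamard facts then apply with no remainder at all.
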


See \Cref{proof of lemma: DeltaTau_norm_estimates} in the appendix for these computations.

We will use the following extension of \cite[Proposition 4.1]{LiSchramm2024}, obtained by straightforward modifications of the proof in that work.

\begin{lemma}[Approximation to the Truncated Adjacency Matrix]
\label{lemma:trace_method}
    For $\Tr\left(\Sigma\right)\leq 1/(\sqrt{d}\log^2 n)$, $d\geq \omega(\log^4n)$, and $k=o(\sqrt{d}/\log^2 n)$,
    \[
    \lVert A- (P_0+ d_kP_1\odot VV^\top)\rVert_{op}\leq o\left( \max\{np\tau^2,\sqrt{np}\}\log^9 n\right),
    \]
    with probability at least $1-n^{-10}$. 
\end{lemma}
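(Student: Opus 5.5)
The plan is to follow the trace-method argument of \cite[Proposition 4.1]{LiSchramm2024}, working conditionally on the radial and low-dimensional data $(a_i,l_i)_{i=1}^n$. By \eqref{eq:indicator_shift} and \eqref{Aexpand}, we have $A-(P_0+d_kP_1\odot VV^\top)=\sum_{\ell\ge2}A^{(\ell)}$, where $A^{(\ell)}_{ij}=\sqrt{N_\ell^{(d_k)}}\,p^{ij}_\ell\, q_\ell^{(d_k)}(\sqrt{d_k}\langle v_i,v_j\rangle)$. Conditionally on $(a,l)$, the directions $v_i$ are i.i.d.\ uniform on $\cS^{d_k-1}$, and the coefficients $p_\ell^{ij}$ are deterministic. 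We first restrict to the event of \Cref{lemma:ai_aj_li_lj_conc}, \Cref{lemma:conc_tau} and \Cref{lemma:p_tauij_conc}, on which $\tau^{ij}=\tau+O(1/(\sqrt d\log n))$ uniformly. On this event each $\vph_{ij}$ is a step function whose threshold lies in a window of width $o(1/\log n)$ around $\sqrt{d_k}\tau\asymp\sqrt{\log(1/p)}$.

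We split the sum into a low-degree part $2\le\ell\le L$ and a tail $\ell>L$, taking $L\asymp\log n$ (or a small polylog).

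For the tail, we use the Frobenius bound. Parseval gives
\[
\E_v\Big|\sum_{\ell>L}A^{(\ell)}_{ij}\Big|^2=\sum_{\ell>L}N_\ell (p^{ij}_\ell)^2 .
\]
Following \cite{LiSchramm2024}, we bound $N_\ell (p^{ij}_\ell)^2\lesssim p\,\cdot(\text{decay in }\ell)$, using the explicit formula for Gegenbauer coefficients of a step function, namely a derivative identity that expresses $p_\ell$ through $q_{\ell-1}$ evaluated at the threshold times the density of $\cD_{d_k}$ there. This estimate is uniform over thresholds in the window, so it applies to every $\tau^{ij}$ at once.

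For each fixed $2\le\ell\le L$, we bound $\|A^{(\ell)}\|_\op$ by the trace method, computing $\E\Tr (A^{(\ell)})^{2m}$ with $m\asymp\log n$. Expanding over closed walks, orthogonality of spherical harmonics kills every walk that traverses some edge only once. The surviving walks are controlled with the reproducing identity $\E_v q_\ell(\sqrt{d}\langle u,v\rangle)q_\ell(\sqrt d\langle v,w\rangle)=q_\ell(\sqrt d\langle u,w\rangle)/\sqrt{N_\ell}$ together with the sup bound $|q_\ell|\le\sqrt{N_\ell}$. The conditional coefficients only enter as entrywise weights bounded by $\max_{ij}|p^{ij}_\ell|$, and $\max_{ij}|p^{ij}_\ell|\sqrt{N_\ell}$ is comparable to the unshifted $p_\ell\sqrt{N_\ell}$ because the thresholds move by $o(1/\log n)$ in units where the Gaussian tail varies on scale $1/\sqrt{\log(1/p)}$. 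The outcome should mirror \cite{LiSchramm2024}: the $\ell=2$ term is of order $np\tau^2$, coming from $p_2\sqrt{N_2}\approx p\,d_k\tau^2/\sqrt2$ times $\|q_2\text{-Gram}\|_\op\lesssim n/d$, while for $\ell\ge3$ the walk count gives $\sqrt{np}$ up to polylogs. Summing over $\ell\le L$ costs at most another $\log n$, and Markov's inequality on the $2m$-th moment yields failure probability $n^{-10}$. The hypotheses $\Tr\Sigma\le1/(\sqrt d\log^2n)$ and $k=o(\sqrt d/\log^2n)$ are used only to place $\tau^{ij}$ in the window, and $d\ge\omega(\log^4 n)$ keeps $L,m\ll \sqrt d$ so that the Gegenbauer moment estimates of \cite{LiSchramm2024} remain valid in dimension $d_k$.

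The main obstacle is that the weights $p^{ij}_\ell$ differ across pairs, which breaks the exact rotational structure $A^{(\ell)}=c_\ell\,(q_\ell\text{-kernel})$ exploited in \cite{LiSchramm2024}. In the trace expansion, however, each walk contributes a product of $p^{ij}_\ell$ times an expectation over $v$ that depends only on the walk's shape. Taking absolute values of the deterministic weights (given $a,l$) and bounding them uniformly reduces the computation to the homogeneous one. The cancellation of single-traversal walks survives, since it only uses $\E_v q_\ell(\sqrt{d}\langle u,v\rangle)=0$ for $\ell\ge1$ with $v$ independent. This is the step I would verify most carefully.
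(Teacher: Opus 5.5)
Your reduction is exactly the paper's proof, which consists of that citation alone: condition on $(a_i,l_i)_{i\le n}$ so that the $v_i$ are i.i.d.\ uniform on $\mathcal{S}^{d_k-1}$, check that every $\tau^{ij}$ lies within $O(1/(\sqrt{d}\log n))$ of $\tau$, and rerun \cite[Proposition 4.1]{LiSchramm2024} in dimension $d_k$. The concrete argument you then sketch, however, breaks at the tail step.

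Since $\varphi_{ij}$ is a step function, its Gegenbauer coefficients decay only polynomially. In the Hermite-like range $\log(1/p)\ll\ell\ll\sqrt{d_k}$, on average over $\ell$,
\[
N_\ell(p^{ij}_\ell)^2\asymp e^{-t^2/2}\ell^{-3/2},\qquad t=\sqrt{d_k}\,\tau^{ij},\qquad e^{-t^2/2}\gtrsim p .
\]
Hence $\sum_{\ell>L}N_\ell(p^{ij}_\ell)^2\ge p/\mathrm{polylog}(n)$ for every polylogarithmic $L$. The entrywise variance $\sum_{\ell\ge1}N_\ell(p^{ij}_\ell)^2=p^{ij}_0(1-p^{ij}_0)$ is of order $p$, and the part above $L$ is not a negligible tail. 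Consequently
\[
\E_v\Big\|\sum_{\ell>L}A^{(\ell)}\Big\|_F^2=\sum_{i\ne j}\sum_{\ell>L}N_\ell(p^{ij}_\ell)^2\gtrsim n^2p/\mathrm{polylog}(n).
\]
The Frobenius route therefore gives at best $n\sqrt{p}/\mathrm{polylog}(n)$. That is a factor of order $\sqrt{n}$ above $\sqrt{np}$, and far above $np\tau^2$ once $d$ is polynomial in $n$. Pushing the tail mass below $p/n$ would require $L$ polynomial in $n$, far outside the range where per-degree bounds can be summed (Cauchy--Schwarz over $\ell\le L$ already costs $\sqrt{L}$).

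The $\sqrt{np}$ term is a Wigner-type operator-norm effect of the entire high-degree part. So $R:=A-P_0-d_kP_1\odot VV^\top$ (possibly after splitting off $A^{(2)}$) has to be bounded in operator norm directly, e.g.\ by a trace method applied to $R$ itself. That argument can use that the entries of $R$ are polylogarithmically bounded, have conditional variance at most $p^{ij}_0=O(p)$, and satisfy $\E_{v_j}[R_{ij}]=0$ and $\E_{v_j}[R_{ij}v_j]=0$.

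Separately, the walk cancellation your trace computation relies on does not exist. In a closed walk without self-loops every vertex is incident to at least two steps, so $\E_v q_\ell(\sqrt{d_k}\langle u,v\rangle)=0$ never applies to a lone factor. By the reproducing identity, a cycle through $r$ distinct vertices has expectation $N_\ell^{-(r-2)/2}\neq0$. These all-single-traversal cycles are precisely what produce the $nc_\ell/\sqrt{N_\ell}$ part of $\|A^{(\ell)}\|_{\mathrm{op}}$, where $c_\ell:=\sqrt{N_\ell}\max_{i\ne j}|p^{ij}_\ell|$. For $r=2m$ they contribute about $(n^{2m}c_\ell^{2m}N_\ell^{1-m})^{1/(2m)}\approx nc_\ell/\sqrt{N_\ell}$, and for $\ell=2$ this is exactly your $np\tau^2$. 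Discarding them therefore contradicts your own predicted outcome. The count must retain these walks and contract singly visited vertices through the reproducing identity, at a cost of $N_\ell^{-1/2}$ each.

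Likewise, controlling multiply traversed edges by $|q_\ell|\le\sqrt{N_\ell}$ fails as soon as $N_\ell\gg n$, and already $N_2\asymp d_k^2$ can exceed $n$. Each extra pair of traversals of an edge saves a factor $n$ in the vertex count but is charged $N_\ell$. You need hypercontractive moment bounds, or the high-probability estimate $|\langle v_i,v_j\rangle|\lesssim\sqrt{\log n/d}$, instead.
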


\begin{proof}
Given that the magnitude of $\tau^{ij}$ established in \Cref{lemma:conc_tau} matches that of the GMBM studied in \cite{LiSchramm2024}, the proof of this lemma follows verbatim the proof of \cite[Proposition 4.1]{LiSchramm2024}, replacing $d-1$ with $d-k$ where appropriate (one verifies that all steps are unchanged if $k\le cd$ for a sufficiently small constant $c>0$). 
\end{proof}

\subsection{Spherical to Data Error} In the following division of this section, we will focus on controlling the spherical to data error component $\norm{B_\sph -B_\data}_\op$. The proof is split into different regimes according to the size of the trace.

\begin{lemma}[Data Matrix is well approximated by Spherical Component]
\label{lemma:Bsph_Bdata_diff}
Under the conditions as \Cref{thm:lowrank},
with probability at least $1-O(n^{-12})$
:
\begin{equation*}
   \big\lVert B_\sph-B_\data \big\rVert_{op} \lesssim np\big(\tau^2+d\mu_1^2(\Tr\Sigma)\big) \log^4 (n).
\end{equation*}

\end{lemma}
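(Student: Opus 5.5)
Write $P_0 = p_0\1\1^\top + \Delta P_0$ and $P_1 = p_1\1\1^\top + \Delta P_1$, with off-diagonal entries $\Delta p_0^{ij},\Delta p_1^{ij}$; the diagonal corrections contribute at most $O(p)$ in operator norm and are negligible. We have
\[
YY^\top = aa^\top + LL^\top\odot(VV^\top) + \mathbf 1 L^\top\odot VV^\top + L\mathbf 1^\top\odot VV^\top + VV^\top ,
\]
using $\langle y_i,y_j\rangle=\langle a_i,a_j\rangle + l_il_j\langle v_i,v_j\rangle$ and $l_il_j = 1+L_i+L_j+L_iL_j$. So $B_\data - B_\sph$ splits into a rank-one/zeroth-order part and a first-order part.

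\textbf{Zeroth-order part.} This is $p_0\zeta\zeta^\top - p_0\1\1^\top - \Delta P_0 + d_kp_1\,aa^\top$. By \Cref{lemma:scalar_estimates_whp}, $\Delta p_0^{ij} = -p_1 d_k\Delta\tau^{ij}(1+\text{err})$. Substituting the expansion \eqref{eqn:Delta_Tau} of $\Delta\tau$ gives
\[
-\Delta P_0 \approx p_1d_k\big(-aa^\top - \tau(\1L^\top + L\1^\top)\big) + \text{corrections}.
\]
The term $-p_1d_k\,aa^\top$ cancels $d_kp_1\,aa^\top$, since $B_\data$ contains $+p_1d_k\,aa^\top$ through $YY^\top$ and $B_\sph$ contains $P_0\ni -p_1 d_k(-aa^\top)$; the sign bookkeeping must be checked carefully. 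The $\tau(\1L^\top+L\1^\top)$ terms match the cross terms of $p_0\zeta\zeta^\top = p_0\1\1^\top + p_1d_k\tau(\1L^\top+L\1^\top) + (p_1d_k\tau)^2LL^\top/p_0$. The leftover pieces are:
\begin{itemize}
\item $(p_1d_k\tau)^2 LL^\top/p_0$, of size $\lesssim p\,\tau^4d^2\cdot n\log n/d \asymp np\tau^2\log n\cdot\log(1/p)$;
\item $p_1d_k\,[aa^\top-\tau\1\1^\top]\odot(\1L^\top+L\1^\top)$;
\item the $O(\log n/d)\,\tau\1\1^\top$ term, bounded by $p_1 d\,\tau\cdot n\log n/d\lesssim np\tau^2\log n$;
\item the multiplicative errors $O(\tau^2\sqrt d\log n)$ or $O(\Delta\tau\,\tau d)$, which contribute a term controlled by $p_1d\,\tau d\,\|\Delta\tau^{\odot2}\|_\op$.
\end{itemize}
By \Cref{lemma: DeltaTau_norm_estimates}(c), that last term is $\lesssim p\tau^2d^2\,(n\mu_1^2\Tr\Sigma\log n + n\tau^2\log^3 n/d)$. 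Since $\tau^2d\asymp\log(1/p)$, this gives $np(d\mu_1^2\Tr\Sigma + \tau^2)\log^4 n$. For the Hadamard term, write it as $\diag(L)\,aa^\top + aa^\top\diag(L)$ minus $\tau(\1 L^\top+L\1^\top)$; the latter is rank two of norm $\tau n\sqrt{\log n/d}$. Multiplying by $p_1d\asymp p\tau d$ gives $np\tau^2\sqrt{d\log n}$, which is too large. So I expect these $\tau\1L^\top$ pieces to cancel exactly against the $\zeta$ cross terms, and I will only bound the genuinely second-order remainder. In the two trace regimes I will use the respective columns of \Cref{lemma:scalar_estimates_whp}.

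\textbf{First-order part.} This is $d_kp_1\big(VV^\top + L\1^\top\odot VV^\top + \dots\big) - d_k(p_1\1\1^\top+\Delta P_1)\odot VV^\top$. The main terms $d_kp_1VV^\top$ cancel. Since $\Delta P_1 = \tau\Delta P_0(1+o(1)) \approx -p_1d_k\tau\,\Delta\tau$, the remainder is bounded using \Cref{lemma: DeltaTau_norm_estimates}(a):
\[
d\cdot p\tau d\,\tau\cdot\tfrac nd(\Tr\Sigma+\tau^2)\log n \lesssim np\log(1/p)(\Tr\Sigma+\tau^2)\log n .
\]
Then $\Tr\Sigma\lesssim d\mu_1^2\Tr\Sigma$ because $\mu_1^2\ge 1/d$. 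The $L$-Hadamard terms of $YY^\top$ are handled with $\diag(L)VV^\top$, which has norm $\lesssim\sqrt{\log n/d}\cdot n/d$; multiplied by $p_1d$ this gives $np\tau\sqrt{\log n/d}\lesssim np\tau^2\log n$. Part (b) of \Cref{lemma: DeltaTau_norm_estimates} handles the second-order errors in $\Delta p_1$.

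\textbf{Main obstacle.} The hardest step is making the cancellations exact: the $aa^\top$ cancellation and the cancellation of the $\tau(\1L^\top+L\1^\top)$ terms against $\zeta$. Residual terms of size $p\tau d\cdot\tau n\sqrt{\log n/d}$ must not survive. To handle this, I will first write $\Delta\tau^{ij}$ exactly as $(\tau - \langle a_i,a_j\rangle)/(l_il_j) - \tau$, then expand to second order in $L$ and $\langle a_i,a_j\rangle$, so that every surviving term is quadratic in the fluctuations and can be controlled by (b) or (c).
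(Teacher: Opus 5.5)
Your plan is the paper's proof. Your zeroth-order part is its $f_1+f_2$ and your first-order part is its $f_3+f_4$. You use the same two cancellations, the same $\diag(L)$/Hadamard bound for the $L$-terms against $VV^\top$, and \Cref{lemma: DeltaTau_norm_estimates}(a)--(c) for the $\Delta\tau$-terms.

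\textbf{The ``main obstacle'' is not an obstacle.} Your own bookkeeping already resolves it. The piece $-p_1d_k\tau(\1L^\top+L\1^\top)$ inside your second bullet is the same linear term you cancelled against the cross terms of $p_0\zeta\zeta^\top$ one line earlier, so listing it again double-counts. What genuinely survives is:
\begin{itemize}
\item $p_1d_k(\diag(L)aa^\top+aa^\top\diag(L))$, of norm $\lesssim p\tau d\cdot n\mu_1^2\sqrt{\log n/d}\lesssim np\mu_1^2\log n\le npd\mu_1^2\Tr\Sigma\log n$, since $d\Tr\Sigma\ge d\mu_1^2\ge1$;
\item a deterministic scalar mismatch $\frac{d_k-1}{d_k(1-\tau^2)}=1+O(\tau^2)$, which is lower order;
\item the entrywise relative error $O(\tau d\,\Delta\tau^{ij})$.
\end{itemize}
The last item is quadratic and belongs in the part~(c) term. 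This is legitimate because $\Delta\tau^{\odot 2}$ is entrywise nonnegative, so entrywise domination passes to the operator norm.

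\textbf{One step is wrong as written: the diagonal correction is not $O(p)$.} $B_\sph$ has zero diagonal, while $(B_\data)_{ii}=p_0\zeta_i^2+d_kp_1\|y_i\|_2^2\asymp d_kp_1\asymp p\sqrt{d\log(1/p)}$. In your bookkeeping this is the diagonal $-d_kp_1$ of $d_k\Delta P_1\odot VV^\top$; you dropped the factor $d_k$.

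The paper records this as a separate $O(p\sqrt{d\log n})$ and calls it dominated, but that is not automatic. Take $p=0.1$, $k=1$, $\Tr\Sigma=\mu_1^2=\log n/d$ and $d=n^{0.9}$, which satisfy the hypotheses of \Cref{thm:lowrank}. The claimed bound is then $\asymp n\log^6 n/d$. Yet $\|B_\sph-B_\data\|_\op\ge|(B_\sph-B_\data)_{11}|\asymp\sqrt d$, which is much larger.

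So you should carry an extra additive $O(dp\tau)$ in the conclusion. This is harmless where the lemma is used: after dividing by $d_kp_1$ in \eqref{eqn:prop_YY_diff_chain}, it contributes only $O(1)$ to \eqref{bd:YY_diff}.
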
 
In the proof of \Cref{lemma:Bsph_Bdata_diff} we make frequent use of the following:
\begin{fact}[Infinity norm bound]\label{fact:infinity_norm_bound}
\begin{equation}
    \norm{M}_{op}\leq \norm{M}_F\leq n\norm{M}_\infty
    \end{equation}
    for an $n\times n$ matrix $M$, where $\|M\|_\infty:= \max_{i,j} |M_{ij}|$.
\end{fact}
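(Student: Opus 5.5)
The plan is to prove the two inequalities separately, using only the definitions of the three norms; nothing from the probabilistic part of the paper is needed.

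For the first inequality, $\norm{M}_\op\le\norm{M}_F$, we use the singular values $\sigma_1(M)\ge\dots\ge\sigma_n(M)\ge 0$ from the notation of \Cref{sec:notation}. By definition $\norm{M}_\op=\sigma_1(M)$. Also
\[
\norm{M}_F^2=\Tr(M^\top M)=\sum_{i=1}^n\lambda_i(M^\top M)=\sum_{i=1}^n\sigma_i(M)^2 .
\]
The sum on the right is at least its first term $\sigma_1(M)^2$, because every term is nonnegative. Taking square roots gives the claim.

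An alternative argument avoids the spectral decomposition. For a unit vector $u$, Cauchy--Schwarz applied row by row gives $|(Mu)_i|^2\le\norm{M_{i\cdot}}_2^2$, where $M_{i\cdot}$ is the $i$-th row of $M$. Summing over $i$ gives $\norm{Mu}_2^2\le\norm{M}_F^2$, and taking the supremum over $u$ gives the same bound.

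For the second inequality, $\norm{M}_F\le n\norm{M}_\infty$, each of the $n^2$ entries satisfies $M_{ij}^2\le\norm{M}_\infty^2$. Hence
\[
\norm{M}_F^2=\sum_{i,j=1}^n M_{ij}^2\le n^2\norm{M}_\infty^2 ,
\]
and taking square roots gives $\norm{M}_F\le n\norm{M}_\infty$.

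Neither step is a real obstacle; the only point to state explicitly is the identity $\norm{M}_F^2=\sum_i\sigma_i(M)^2$. In the proof of \Cref{lemma:Bsph_Bdata_diff}, this fact will be applied to the Hadamard-product error matrices, whose entries are controlled uniformly by the entrywise estimates of \Cref{lemma:scalar_estimates_whp}.
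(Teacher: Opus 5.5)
Your proof is correct: $\|M\|_{\mathrm{op}}=\sigma_1(M)\le(\sum_i\sigma_i(M)^2)^{1/2}=\|M\|_F$ (or equivalently your row-wise Cauchy--Schwarz bound), together with the count of $n^2$ entries each bounded by $\|M\|_\infty^2$, is the standard justification. The paper states this as a fact without proof, so there is no different route to compare against.
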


    \begin{fact}[Hadamard Product Bound {\cite[7.5.P24]{horn2012matrix}}] \label{fact:hadamard_product_bound}
    If $A$ is positive semi-definite, then
    \[\norm{A\odot B}_\op\leq \norm{B}_\op 
    \|A\|_\infty.\]
    \end{fact}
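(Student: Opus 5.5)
The plan is to prove the sharper bound $\norm{A\odot B}_\op\le \big(\max_i A_{ii}\big)\norm{B}_\op$ and then note that $\max_i A_{ii}\le\|A\|_\infty$. For a positive semidefinite $A$ the two quantities in fact coincide: Cauchy--Schwarz for the PSD form gives $|A_{ij}|\le\sqrt{A_{ii}A_{jj}}\le\max_i A_{ii}$. The main tool is a Gram factorization of $A$, which rewrites the Hadamard product as a sum of diagonal conjugates of $B$.

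\textbf{Step 1 (factorization).} Since $A\succeq0$, write $A=RR^\top$ with $R\in\R^{n\times r}$, whose rows we denote $\rho_1,\dots,\rho_n$. Then $A_{ij}=\langle \rho_i,\rho_j\rangle=\sum_{m=1}^r R_{im}R_{jm}$. Let $D_m:=\diag(R_{1m},\dots,R_{nm})$. This gives the identity
\[
A\odot B=\sum_{m=1}^r D_m B D_m .
\]

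\textbf{Step 2 (bilinear form).} Fix unit vectors $x,y\in\R^n$. Using the identity, sub-multiplicativity, and then Cauchy--Schwarz over $m$,
\[
|x^\top(A\odot B)y|\le\sum_m |(D_mx)^\top B(D_my)|\le \norm{B}_\op\Big(\sum_m\|D_mx\|_2^2\Big)^{1/2}\Big(\sum_m\|D_my\|_2^2\Big)^{1/2}.
\]

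\textbf{Step 3 (evaluating the sums).} Compute
\[
\sum_m\|D_mx\|_2^2=\sum_i x_i^2\sum_m R_{im}^2=\sum_i x_i^2A_{ii}\le\max_iA_{ii},
\]
and the same bound holds for $y$. Taking the supremum over unit $x,y$ gives $\norm{A\odot B}_\op\le(\max_i A_{ii})\norm{B}_\op$. Combined with the preliminary observation $\max_i A_{ii}\le\|A\|_\infty$, this proves the claim.

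There is no real obstacle here. The only point needing care is the use of positive semidefiniteness to obtain the factorization in Step 1; for an indefinite $A$ the bound fails in general.
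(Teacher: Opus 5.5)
Your proof is correct. The identity $A\odot B=\sum_m D_mBD_m$ from $A=RR^\top$, the Cauchy--Schwarz step over $m$, and the evaluation $\sum_m\|D_mx\|_2^2=\sum_i x_i^2A_{ii}$ are all sound. You also read $\|A\|_\infty$ as the largest entry in absolute value, which matches the paper's convention in \Cref{fact:infinity_norm_bound}.

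The paper gives no argument of its own; it only cites Horn--Johnson. So your write-up is a self-contained replacement for a citation, not a variant of an in-paper proof.

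A common textbook route takes the Schur product theorem as known. It applies it to the positive semidefinite matrices $\begin{pmatrix} A & A\\ A & A\end{pmatrix}$ and $\begin{pmatrix}\norm{B}_\op I & B\\ B^\top & \norm{B}_\op I\end{pmatrix}$, then reads off $\norm{A\odot B}_\op\le\norm{B}_\op\max_iA_{ii}$ from positivity of the resulting block matrix. That is shorter once the theorem is granted. Your factorization identity is essentially the proof of the Schur product theorem itself, so your argument needs nothing beyond Cauchy--Schwarz.

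Your route also makes the general Schur-multiplier bound transparent. For any factorization $A=RS^\top$ with rows $r_i,s_j$, the same computation gives $\norm{A\odot B}_\op\le\max_i\|r_i\|_2\max_j\|s_j\|_2\,\norm{B}_\op$, with no positivity assumption on $A$.

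Both routes give the sharper constant $\max_iA_{ii}$, which, as you observe, equals $\|A\|_\infty$ when $A$ is positive semidefinite. Your closing remark that the bound fails for indefinite $A$ is also right. For example, $A=B=\begin{pmatrix}1&1\\1&-1\end{pmatrix}$ gives $\norm{A\odot B}_\op=2>\sqrt2=\norm{B}_\op\|A\|_\infty$.
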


\begin{proof}[Proof of \Cref{lemma:Bsph_Bdata_diff}]
We want an upper bound on the operator norm of the matrix
    \begin{equation}
        \label{Bsph-Bdata}
        B_\sph-B_\data=P_0 + d_k P_1\odot VV^\top-(p_0\zeta\zeta^\top + d_k p_1  YY^\top )
    \end{equation}
    
    \textbf{Adding an artificial diagonal: }We originally defined $P_0,P_1$ to have no diagonal, however we can introduce an artificial diagonal
    \begin{equation}\label{eqn:artif_diag_Bsph}
        \diag(B_\sph)_{ii}:=p_0^{ii} -d_kp_1^{ii} \norm{v_i}_2^2.
    \end{equation}
    The computation in \Cref{lemma:p_tauij_conc} holds for $i=j$, so the operator norm of this diagonal is of order:
    \begin{equation}\label{eqn:diag_Bdata_Bspher_diff}
    \diag(B_\sph)_{ii}=O(p)+O(dp\tau)=O(p\sqrt{d\log n}),
    \end{equation}
    where we used estimates from \Cref{lemma:p_tauij_conc}, \Cref{lemma:conc_tau} to estimate $p_1,\tau$ as well as the fact that $\norm{v_i}_2^2=1$. 
    Thus, \begin{equation}\label{eqn:diag_op_norm}
        \norm{\diag(B_\sph)}_\op\lesssim O(p\sqrt{d\log n}),
    \end{equation}
    with probability at least $1-O(n^{-12})$. \eqref{eqn:diag_op_norm} is much smaller than the resulting error we are looking for (see the theorem statement \Cref{lemma:Bsph_Bdata_diff}). We call the matrices that include $p^{ii}_0, p_1^{ii}$ on their diagonals $P_0^{\diag},P_1^{\diag}$ respectively.
    
We decompose the matrix entries in \eqref{Bsph-Bdata} into four components:
\begin{align*}
    &(B_\sph-B_\data)_{ij} \\
    &=p_0^{ij}+d_k p_1^{ij}\langle v_i,v_j\rangle-p_0(1+L_i p_1d_k\tau/p_0)(1+L_j p_1d_k\tau/p_0)-d_k p_1(\langle a_i,a_j\rangle+l_il_j\langle v_i,v_j\rangle)\\
    &= \Delta p_0^{ij} + d_k\langle v_i,v_j\rangle (p_1^{ij} -p_1 l_il_j) - \frac{L_iL_jp_1^2 d_k^2\tau^2}{p_0}-d_kp_1\langle a_i,a_j\rangle -(L_i+L_j)p_1d_k\tau\\
    &= \underbrace{(\Delta p_0^{ij}-d_kp_1\langle a_i,a_j\rangle -(L_i+L_j)p_1d_k\tau)}_{=:f_1^{ij}}\underbrace{- \frac{L_iL_jp_1^2d_k^2\tau^2}{p_0}}_{=:f_2^{ij}}+\underbrace{\Delta p_1^{ij}d_k\langle v_i,v_j\rangle}_{=:f_3^{ij}}\underbrace{-d_kp_1\langle v_i,v_j\rangle (L_i+L_j + L_iL_j)}_{=:f_4^{ij}}
\end{align*}
Then, define the matrices $f_1, f_2, f_3, f_4$ 
with entry $i,j$ equal to $f_1^{ij}, f_2^{ij}, f_3^{ij}, f_4^{ij}$. We control the operator norm of each of these matrices individually. We note that the operator norm of $f_2, f_4$ do not depend on the trace regime ($\Tr\Sigma\geq \tau^2$ or $\Tr\Sigma<\tau^2$). We begin the proof by estimating the regime-independent terms:
\begin{itemize}
    \item [$f_2$: ]For this term we use a straightforward entry-wise estimation and \Cref{fact:infinity_norm_bound}. Thanks to \Cref{lemma:p_tauij_conc}, \Cref{lemma:conc_tau} we know the approximate values of $p_0=\Theta(p), \tau=\Theta(\sqrt{\log(1/p)/d})$. Thus, plugging in estimate for every term in $\frac{L_iL_jp_1^2d_k^2\tau^2}{p_0}$ yields :
\begin{equation}\label{eqn:f2_error_simplified_case1}
\boxed{\lVert f_2 \rVert_{op} \leq O\left(\frac{\log n}{d}\right)\frac{n\tau^2p_1^2d_k^2}{p_0} = O(np\tau^4d\log n),}
\end{equation}
on the same event from \Cref{lemma:ai_aj_li_lj_conc} probability at least $1-O(n^{-12})$.
    \item[$f_4$:] For $f_4=-d_kp_1(L\1^\top+\1L^\top + LL^\top)\odot VV^\top$, we use \Cref{fact:hadamard_product_bound} since both $VV^\top$ and $\1\1^\top$ are PSD, then we use estimates on the parameter $p_1$, as well as on the norm of $VV^\top$, estimated in \Cref{fact:spherical_matrix_norm}.

\begin{align*}
\norm{f_4}_\op&\leq d_kp_1\bigg(\norm{VV^\top\odot\big(L\1^\top+ \1L^\top \big)}_\op + \norm{VV^\top\odot LL^\top}_\op\bigg)\\
&\leq d_kp_1\bigg( \norm{\diag(L_1,\dots, L_n)VV^\top}_\op + \max_{i,j}|L_iL_j|\norm{VV^\top}_\op\bigg)\\
&\lesssim d_k p_1\bigg(\underbrace{O\big(\tfrac{n}{d}\big)}_{\ref{fact:spherical_matrix_norm}}\cdot \underbrace{O\big(\sqrt{\tfrac{\log n}{d}}\big)}_{\ref{lemma:scalar_estimates_whp}} + \underbrace{O\big(\tfrac{\log n}{d}\big)}_{\ref{lemma:scalar_estimates_whp}}\cdot \underbrace{O\big(\tfrac{n}{d}\big)}_{\ref{fact:spherical_matrix_norm}}\bigg)\\
&\lesssim d_kp_1\bigg(O\big(\tfrac{n\sqrt{\log n}}{d\sqrt{d}}\big)+O\big(\tfrac{n\log n}{d^2}\big)\bigg)
\end{align*}
Now, simplifying this expression further by taking the dominant terms, we obtain
\begin{equation}\label{eqn:f4_error_simplified_case1}
\boxed{\lVert f_4\rVert_{\op} \leq O\left(np\tau \sqrt{\frac{\log n}{d}}\right),}
\end{equation}
which occurs on the same event as \Cref{lemma:ai_aj_li_lj_conc} and \Cref{fact:spherical_matrix_norm} with probability at least $1-O(n^{-12})-O(e^{-cd})\geq 1-O(n^{-12})$ by $\omega(\log^{16}n)\leq d$.
We now move on to estimating the norms of the error terms that depend on the trace regime, $f_1, f_3$.
\item[$f_1$: ]Substituting $\Delta p_0^{ij}=-p_1d_k\Delta\tau^{ij}(1+o(1))$
(\Cref{lemma:scalar_estimates_whp}) together with the entrywise form \eqref{eqn:Delta_Tau} of
$\Delta\tau$ into $f_1^{ij}$, the linear terms $d_kp_1\langle a_i,a_j\rangle$ and
$(L_i+L_j)p_1d_k\tau$ cancel, leaving in matrix form
\[
  f_1=-p_1d_k\Big[aa^\top\odot(\1L^\top+L\1^\top)+O\big(\tfrac{\log n}{d}\big)aa^\top
       -\tau O\big(\tfrac{\log n}{d}\big)\1\1^\top\Big]+O(pd^2\tau^2)\,\Delta\tau^{\odot2}.
\] 
Since $aa^\top\odot(\1L^\top+L\1^\top)=aa^\top\diag(L)+\diag(L)aa^\top$, and using part (c)
of \Cref{lemma: DeltaTau_norm_estimates} to control $\norm{\Delta\tau^{\odot 2}}_\op$, and $\norm{aa^\top}_\op\lesssim n\mu_1^2$,
$\norm{\diag(L)}_\op=O(\sqrt{\log n/d})$,
\begin{align*}
  \norm{f_1}_\op
  &\lesssim p_1d_k\Big[\norm{aa^\top}_\op\norm{\diag(L)}_\op+\tfrac{n\tau\log n}{d}\Big]
     +O(pd^2\tau^2)\,\norm{\Delta\tau^{\odot 2}}_\op\\
  &\lesssim np\tau\sqrt d\,\mu_1^2\log^{1/2}n+np\tau^2\log n
     +npd^2\tau^2 \bigg(\mu_1^2\,\Tr\Sigma\log n + \frac{\tau^2\log^3 n}{d}\bigg),
\end{align*}
where we used $d^2\tau^2=d O(\log n)$. Simplifying and taking the dominant terms, yields
\begin{equation}\label{eqn:f1_merged}
  \boxed{\ \norm{f_1}_\op\lesssim np\big(\tau^2+d\mu_1^2\Tr\Sigma\big)\log^4 n,\ }
\end{equation}
on the same event as \Cref{lemma:scalar_estimates_whp} and \Cref{lemma: DeltaTau_norm_estimates}, which both occur with probability at least $1-O(n^{-12})$.
The two summands correspond to the small and large trace regimes,  i.e. if $\Tr\Sigma\le\tau^2$ then
$\mu_1^2\leq\Tr\Sigma\leq\tau^2$, so $d\mu_1^2\Tr\Sigma\leq \tau^2(d\tau^2)=O(\tau^2\log n)$ and the
bound is $np\tau^2\log^3 n$. On the other hand, if $\Tr\Sigma>\tau^2$ then $d\mu_1^2\Tr\Sigma$ dominates (as
$d\mu_1^2 = \omega(1)$ by \eqref{assu:gap-gen}). 

\item[$f_3:$] By $\Delta p_1^{ij}=\tau\Delta p_0^{ij}(1+o(1))=-p_1d_k\tau\Delta\tau^{ij}(1+o(1))$
(\Cref{lemma:scalar_estimates_whp}), and considering that we have added the artificial diagonal to make $P_1^{\diag}$, we have that
\begin{align*}
    P_1-p_1\1\1^\top&=-p_1d_k\tau\Delta\tau+O(d^2p\tau^3)\Delta\tau^{\odot2},\\
    f_3&=-d_k^2p_1\tau\,(\Delta\tau\odot VV^\top)+d_k\,O(d^2p\tau^3)(\Delta\tau^{\odot2}\odot VV^\top).
\end{align*}
With $d_k^2p_1\tau=\Theta(d^2p\tau^2)$ and parts (a),(b) of \Cref{lemma: DeltaTau_norm_estimates},
\begin{align*}
  \norm{f_3}_\op
  &\lesssim d^2p\tau^2\,\norm{\Delta\tau\odot VV^\top}_\op+d^3p\tau^3\,\norm{\Delta\tau^{\odot2}\odot VV^\top}_\op\\
  &\lesssim d^2p\tau^2\cdot\tfrac nd\big(\Tr\Sigma+\tau^2\big)\log n
   +d^3p\tau^3\cdot\tfrac nd\big((\Tr\Sigma\log n)^2+\tfrac{\tau^2\log n}{\sqrt d}\big).
\end{align*}
Using $d\tau^2=O(\log n)$ and \eqref{assu:trace} (to make the simplification $\sqrt d\,\Tr\Sigma=O(1)$), then taking the dominant terms yields
\begin{equation}\label{eqn:f3_merged}
  \boxed{\norm{f_3}_\op\lesssim np (\Tr\Sigma+\tau^2)\log^2 n,\ }
\end{equation}
which is $\leq np(\tau^2+d\mu_1^2\Tr\Sigma)\log^4 n$, the right-hand side of \eqref{eqn:f1_merged}, since $d\mu_1^2\ge1$. Note that in the small trace regime, $\norm{f_3}_\op\lesssim np\tau^2\log^2 n$. Moreover, the bound in \eqref{eqn:f3_merged} happens on the same event as \Cref{lemma:scalar_estimates_whp}, \Cref{lemma: DeltaTau_norm_estimates} which both occur with probability at least $1-O(n^{-12})$.

\end{itemize}
In conclusion, by adding up all the boxed errors via the triangle inequality and taking the dominant terms:
\begin{align*}
\norm{B_\sph-B_\data}_\op&=\lVert  P_0^{\diag}+d_k P_1^{\diag}\odot VV^\top -(p_0\zeta\zeta^\top +d_kp_1  YY^\top ) \rVert_{\op}+\norm{\diag(B_\sph)}_\op \\
    &\leq \lVert f_1 \rVert_{\op} + \lVert f_2 \rVert_{\op} + \lVert f_3 \rVert_{\op} + \lVert f_4 \rVert_{\op} + O(p\sqrt{d\log n})\\
    &\lesssim np\big(\tau^2+d\mu_1^2\Tr\Sigma\big)\log^4 n,
\end{align*}
which probability at least $1-O(n^{-12})$ (since \eqref{eqn:diag_op_norm} also occurs with at least probability $1-O(n^{-12})$ as with all the $f_1,\dots, f_4$) which concludes the proof.
\end{proof}

\subsection{Rank-1 difference}\label{subsec:rank-1-diff}
This section establishes the following bound for the third term in \eqref{eqn:prop_YY_diff_chain}.

\begin{lemma}
\label{lem:eigenvector_diff}
    Under the conditions of \Cref{thm:lowrank}, with probability at least $1-O(n^{-10})$,
    \begin{equation*}
        \norm{\eta_0 \mathbf{w}_0\mathbf{w}_0^\top- p_0\zeta\zeta^\top}_\op \lesssim  npd\mu_1^2\Tr(\Sigma)\log^4(n)+\max\{np\tau^2,  \sqrt{np}\}\log^9 n+dp\tau\log^4n.
    \end{equation*}
\end{lemma}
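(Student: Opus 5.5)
We will compare both rank-one matrices to the intermediate matrix $B_\data = p_0\zeta\zeta^\top + d_kp_1YY^\top$. The point is that $p_0\zeta\zeta^\top$ is essentially the top eigen-component of $B_\data$, and $\eta_0\mathbf w_0\mathbf w_0^\top$ is the top eigen-component of $A$. By the triangle inequality together with Lemmas \ref{lemma:trace_method} and \ref{lemma:Bsph_Bdata_diff},
\[
\|A-B_\data\|_\op\le \|A-B_\sph\|_\op+\|B_\sph-B_\data\|_\op \lesssim np(\tau^2+d\mu_1^2\Tr\Sigma)\log^4 n + \max\{np\tau^2,\sqrt{np}\}\log^9 n =: \cE_A,
\]
on an event of probability $1-O(n^{-10})$. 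This already accounts for the first two terms of the claimed bound.

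\textbf{Gap in $B_\data$.} We first show that the top eigenvalue of $B_\data$ is separated from the rest. Using Lemma \ref{lemma:scalar_estimates_whp}, $L_i=O(\sqrt{\log n/d})$, so $\zeta_i = 1+O(L_i d\tau^2)=1+o(1)$ uniformly, since $d\tau^2=O(\log(1/p))$ and $d\gg\log^{16}n$. Hence $p_0\|\zeta\|^2=(1+o(1))np_0\asymp np$. On the other hand, by \eqref{bd:YYop}, the second summand satisfies $\|d_kp_1YY^\top\|_\op\lesssim dp\tau\, n\mu_1^2 \lesssim np\sqrt{d\log n}\,\Tr\Sigma$. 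By \eqref{assu:trace} this is $O(np/\log^{3/2} n)=o(np)$. So $B_\data$ is a rank-one matrix of size $\asymp np$ plus a PSD perturbation of size $o(np)$, and $\lambda_1(B_\data)-\lambda_2(B_\data)\gtrsim np$.

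\textbf{Perturbation.} Since $\cE_A=o(np)$ (using $p\gg \log^{18}n/n$ and $\tau^2,d\mu_1^2\Tr\Sigma$ small), Weyl's inequality gives $|\eta_0-\lambda_1(B_\data)|\le\cE_A$. Davis--Kahan (Lemma \ref{DavisKahanForProjections} with $k=1$) gives $\|\mathbf w_0\mathbf w_0^\top-\mathbf u\mathbf u^\top\|_\op\lesssim \cE_A/(np)$, where $\mathbf u$ is the top eigenvector of $B_\data$. Therefore
\[
\|\eta_0\mathbf w_0\mathbf w_0^\top-\lambda_1(B_\data)\mathbf u\mathbf u^\top\|_\op\lesssim \cE_A.
\]

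\textbf{Main obstacle.} It remains to bound $\|\lambda_1(B_\data)\mathbf u\mathbf u^\top-p_0\zeta\zeta^\top\|_\op$. A crude estimate by $\|d_kp_1YY^\top\|_\op$ loses too much, because it is of order $np\sqrt d\,\Tr\Sigma$, which need not be below the target. Instead, we will use that only the component of $YY^\top$ along $\zeta$ matters at first order. Write $\hat\zeta=\zeta/\|\zeta\|$ and $M=d_kp_1YY^\top$. A second-order perturbation argument for the top eigenpair of $p_0\zeta\zeta^\top+M$ then shows that the difference is controlled by $\|M\hat\zeta\|$ together with $\|M\|_\op^2/(np)$.

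For $\|M\hat\zeta\|$, since $\zeta\approx\1$, we need to bound $Y^\top\1/\sqrt n = X^\top\1/\sqrt n+Z^\top\1/\sqrt n$.
\begin{itemize}
\item The noise part is $O(1)$ with high probability.
\item The signal part is at most $\sqrt n\mu_1$.
\end{itemize}
The combination, however, should be handled through $\|Y\|_\op\|Y^\top\hat\zeta\|$. The cleaner route is to use $\|Y^\top\hat\zeta\|\lesssim\sqrt{n}\mu_1+1$ and then compare with the target. For the pure-noise contribution $d_kp_1 ZZ^\top$ along $\1$, the bound $\lesssim dp\tau\cdot(n/d)$ is too big. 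Here I expect instead to exploit that $Z^\top\1/\sqrt n$ is a single Gaussian vector, so $\|ZZ^\top\hat\zeta\|\lesssim\|Z\|_\op\|Z^\top\hat\zeta\|\lesssim\sqrt{n/d}\cdot\log n$. This yields a contribution $\lesssim dp\tau\sqrt{n/d}\log n$. After dividing by the gap and multiplying by $np$, this should produce a term of the form $dp\tau\log^4 n$; I expect this to be the origin of the last term in the claim.

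I anticipate that matching the exact form $dp\tau\log^4 n$ and the signal terms $npd\mu_1^2\Tr\Sigma$ will require this careful bookkeeping of $M\hat\zeta$ versus $\|M\|_\op$. That bookkeeping is the step I expect to be hardest; I would first try the resolvent/Schur-complement formula for the top eigenvector to make it precise. Finally, we combine the three pieces by the triangle inequality and take a union bound over the events.
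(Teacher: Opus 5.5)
Your reduction is sound as far as it goes. Since $\lambda_1(B_\data)\ge p_0\|\zeta\|_2^2\asymp np$ and $\lambda_2(B_\data)\le\|M\|_\op=o(np)$, Weyl and \Cref{DavisKahanForProjections} give $\|\eta_0\mathbf w_0\mathbf w_0^\top-\lambda_1(B_\data)\mathbf u\mathbf u^\top\|_\op\lesssim\cE_A$. A residual (sin-$\theta$) argument then bounds $\|\lambda_1(B_\data)\mathbf u\mathbf u^\top-p_0\zeta\zeta^\top\|_\op$ by $O(\|M\hat\zeta\|_2)$. This is the paper's argument in another guise: the paper uses $B_\prj=p_0\zeta\zeta^\top+d_kp_1\Pi^\perp_\zeta YY^\top\Pi^\perp_\zeta$, for which $\zeta$ is an exact eigenvector, and pays $d_kp_1\|YY^\top-\Pi^\perp_\zeta YY^\top\Pi^\perp_\zeta\|_\op\asymp\|M\hat\zeta\|_2$. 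So everything rests on the step you leave open, and the bound you propose there does not close it. The estimate $d_kp_1\|Y\|_\op\|Y^\top\hat\zeta\|_2$ with $\|Y^\top\hat\zeta\|_2\lesssim\sqrt n\mu_1+1$ is of order $np\sqrt d\,\mu_1^2+p\sqrt{nd}\,\mu_1$ up to logarithms. Neither term is dominated by the right-hand side in general; for instance, the first exceeds $npd\mu_1^2\Tr\Sigma\log^4n$ by the factor $(\sqrt d\,\Tr\Sigma\log^4n)^{-1}$, which can be large because only $\Tr\Sigma=\omega(1/d)$ is guaranteed.

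The second term is an artifact. Since $\zeta$ depends only on the $w_i$, conditionally on $\zeta$ the signal-coordinate part $\sum_i\hat\zeta_iN_i$ of $Z^\top\hat\zeta$ is $\cN(0,I_k/d)$. Hence $\|XZ^\top\hat\zeta\|_2\lesssim\mu_1\sqrt{n(k+\log n)/d}$, and this contributes only $\lesssim\sqrt{np}$.

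The first term is not an artifact and cannot be removed. It comes from $X^\top\1=\sum_ix_i$, which nothing in \Cref{thm:lowrank} controls. Suppose $x_1=\dots=x_n=x$. Then $\eta_0\ge\1^\top A\1/n=(n-1)\hat p$. Also $p-p_0\approx d_kp_1\|x\|_2^2$, because the common offset $\|x\|_2^2$ in $\langle y_i,y_j\rangle$ raises the edge probability above the pure-noise value $p_0$. Finally $\|\zeta\|_2^2=n(1+O(\log^2n/d))$. Together these give $\eta_0-p_0\|\zeta\|_2^2\gtrsim np\sqrt d\,\|x\|_2^2$ up to lower-order terms. Take $p$ fixed, $d=\log^{20}n$ and $\|x\|_2^2=\log^2n/d$; all hypotheses of \Cref{thm:lowrank} hold with $k=1$. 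The left-hand side is then of order $np\log^{-8}n$, whereas the claimed bound is $O(np\log^{-11}n+\sqrt{np}\log^9n)$.

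The missing ingredient is therefore a centering condition such as $\|\sum_ix_i\|_2\lesssim\sqrt{n\Tr\Sigma\log n}$. The paper's \Cref{lemma:projection_of_data_matrix} obtains $\|\zeta^\top Y\|_2^2=O(n\log^4n)$ only by applying ``Gaussian concentration'' to the deterministic sums $\sum_i\zeta_ix_{ij}$, so it implicitly assumes exactly this. Under that condition $X^\top\hat\zeta$ is negligible, and all four pieces of $d_kp_1YY^\top\hat\zeta$ fall within the $\sqrt{np}\log^9n$ term.

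Your estimate $\|ZZ^\top\hat\zeta\|_2\lesssim\sqrt{n/d}\log n$ has the right order. (The paper's $O(\log^4n)$ for $\|\hat\zeta\hat\zeta^\top YY^\top\|_\op$ actually bounds $\|Y^\top\hat\zeta\|_2^2$, not $\|YY^\top\hat\zeta\|_2$.) However, multiplied by $d_kp_1$ it gives $p\sqrt{n\log(1/p)}\log n\le\sqrt{np}\log n$. That is absorbed by $\sqrt{np}\log^9n$ and is not the source of $dp\tau\log^4n$. In the paper, that last term is $d_kp_1$ times the $O(\log^4n)$ of \Cref{lemma:projection_of_data_matrix}.
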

Recall that the overall goal of \Cref{prop:YY_diff} is to control $\| YY^\top -\hat{Y}\hat{Y}^\top\|_\op$. The first term $ YY^\top$ is contained in $B_\data$, while the other, is contained in the spectral decomposition of $A$ (which we called $B_\spec$). $B_\spec$ is expressed in terms of orthogonal eigenvectors of $A$, however, for the components of $B_\data= p_0\zeta\zeta^\top + d_kp_1  YY^\top$, we don't necessarily know that $\zeta$ and $ Y$ lie in orthogonal vector spaces, so we must force orthogonality.
Consequently, the strategy for this part of the proof will be to project $ YY^\top $ away from the subspace spanned by $\zeta$. Then we use Davis--Kahan  to compare $\eta_0\mathbf{w}_0\mathbf{w}_0^\top$ and $p_0\zeta\zeta^\top$. 

 \begin{lemma}
 \label{lemma:projection_of_data_matrix}
     Under the same conditions as \Cref{thm:lowrank}, with probability at least $1-O(n^{-10})$:
    \begin{align*}
     \norm{ YY^\top -\Pi^{\perp}_{\zeta}( YY^\top )\Pi^{\perp}_{\zeta}}_{op}=O(\log^4 n)
     \end{align*}
     where we denote the projection $\Pi^{\perp}_{\zeta}= I-\frac{\zeta\zeta^\top}{\lVert\zeta \rVert_2^2}$.
 \end{lemma}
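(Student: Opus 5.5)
Write $M:=YY^\top$ and $u:=\zeta/\|\zeta\|_2$, so that $\Pi^\perp_\zeta=I-uu^\top$. Expanding the projection gives
\[
M-\Pi^\perp_\zeta M\Pi^\perp_\zeta = uu^\top M + Mu u^\top - (u^\top M u)\,uu^\top .
\]
Each of these terms has operator norm at most $\|Mu\|_2$ (the last because $|u^\top Mu|\le \|Mu\|_2$). So it suffices to show $\|Y^\top u\|_2^2=\|Mu\|_2\cdot O(\cdots)$; more directly, $\|Mu\|_2\le \|Y\|_\op\|Y^\top u\|_2$. The plan is therefore to bound $\|Y^\top \zeta\|_2/\|\zeta\|_2$, the component of the data matrix in the direction $\zeta$.

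\textbf{Step 1: the norm of $\zeta$.} By \Cref{lemma:scalar_estimates_whp}, $L_i=O(\sqrt{\log n/d})$. By \Cref{lemma:p_tauij_conc} and \Cref{lemma:conc_tau}, $p_1d_k\tau/p_0=\Theta(d\tau^2)=\Theta(\log(1/p))=O(\log n)$. Hence $\zeta_i=1+O(\log^{3/2}n/\sqrt d)=1+o(1)$ uniformly in $i$, and so $\|\zeta\|_2\asymp\sqrt n$. Write $\zeta=\1+cL$ with $c:=p_1d_k\tau/p_0=O(\log n)$.

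\textbf{Step 2: bounding $Y^\top\zeta=Y^\top\1+cY^\top L$.}
\begin{itemize}
\item The signal part $X^\top\1$ is a priori of size up to $n\sqrt{\Tr\Sigma}$, which is too large. I expect this to be the main obstacle. The noise part $Z^\top \1\sim\cN(0,\tfrac nd I_d)$ has norm $O(\sqrt n)$ with high probability, so it contributes $\|Y\|_\op\cdot O(1)\lesssim\sqrt{n}\mu_1\cdot O(1)$, since $Y^\top u$ picks up a factor $\sqrt n/\|\zeta\|\asymp 1$. That bound is still not $O(\log^4 n)$, so bounding $\|Mu\|_2\le\|Y\|_\op\|Y^\top u\|$ is too lossy.
\item Instead I will use the block structure \eqref{eqn: data decomp}. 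Writing $Y=(\bar Y\;W)$, we have $M=\bar Y\bar Y^\top+WW^\top$, and $W=\diag(l)V$ with $l_i=1+L_i$.
\item In the noise block, the direction $\zeta$ is essentially $\1$ plus the fluctuation $L$. The component $WW^\top\1$ concentrates around $\tfrac{d_k}{d}\,l\odot l\approx\1$, up to errors controlled by \Cref{fact:spherical_matrix_norm}. This suggests that $\zeta$ is nearly an eigenvector of $WW^\top$, and that the off-diagonal cross term $(I-uu^\top)WW^\top u$ is the quantity to bound, rather than $Mu$ itself.
\end{itemize}

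\textbf{Revised strategy.} Using the same expansion, bound $\|M-\Pi^\perp_\zeta M\Pi^\perp_\zeta\|_\op\le 2\|\Pi^\perp_\zeta Mu\|_2+|u^\top Mu|$. The second term is the quadratic form $u^\top Mu=\|Y^\top u\|^2$. Its noise contribution is $\|Z^\top\1\|^2/n=O(\log n)$. The signal contribution $\|X^\top \1\|^2/n$ would have to be small, and I would need an assumption that the point cloud is roughly centered. If it is not, I would try to absorb this direction into $\zeta$. This is the step I flag as uncertain.

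\textbf{Step 3: the cross term.} For $\Pi^\perp_\zeta Mu$, use Gaussian concentration of $V$ and $L$ together with independence of $a,l,v$. Bound $\|\Pi^\perp_\zeta WW^\top \1\|_2$ via the mean $\E[WW^\top\1\mid l]=l\odot(\text{const})$ plus a fluctuation. The fluctuation term is $\diag(l)\big(VV^\top-I\big)\diag(l)\1$. Since $V^\top\diag(l)\1$ is a sum of independent spherical vectors, it has norm $\sqrt n$ up to logs, and $\|V\|_\op\lesssim\sqrt{n/d}$. This gives $O(n/\sqrt d\cdot\mathrm{polylog})$ before normalizing by $\|\zeta\|\asymp\sqrt n$, i.e.\ $O(\sqrt{n/d}\,\mathrm{polylog})$. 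Collecting logarithmic factors from the union bounds over $i,j$ should give the stated $\log^4 n$. Finally, I would recheck whether the intended statement implicitly normalizes the size, i.e.\ whether the bound is in fact relative.
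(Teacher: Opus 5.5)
Your proposal does not prove the stated bound, and it stalls at exactly the two points where it must. First, the signal contribution $X^\top\zeta$ is left open. The $x_i$ are deterministic, and no hypothesis of \Cref{thm:lowrank} makes the cloud centred. Second, your own Step 3 estimate of the cross term $\Pi^\perp_\zeta YY^\top u$ is of order $\sqrt{n/d}\,\mathrm{polylog}\,n$. Since only $d<n$ is assumed, no bookkeeping of logarithms turns this into $O(\log^4 n)$, so the closing sentence of Step 3 does not follow.

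Following the paper will not close this gap. The paper bounds $\|Y^\top u\|_2^2=\|\zeta^\top Y\|_2^2/\|\zeta\|_2^2$ coordinate by coordinate. For $j\le k$ it uses ``Gaussian concentration'' of $\sum_i\zeta_i(x_{ij}+z_{ij})$, which tacitly treats the deterministic sum $\sum_i x_{ij}$ as a centred fluctuation; this is the centring you flagged. It then asserts $\|\frac{\zeta\zeta^\top}{\|\zeta\|_2^2}YY^\top\|_\op\le\sum_j(\zeta^\top Y)_j^2/\|\zeta\|_2^2$. As you noticed, the left side equals $\|YY^\top u\|_2$, which is bounded \emph{below} by $u^\top YY^\top u=\|Y^\top u\|_2^2$, not above. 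That step is invalid.

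Your heuristics are in fact sharp, and the $O(\log^4 n)$ claim fails in parts of the admissible range. Set $t:=Y^\top u$ and $w:=\Pi^\perp_\zeta Yt$. Then $N:=YY^\top-\Pi^\perp_\zeta YY^\top\Pi^\perp_\zeta=\|t\|_2^2\,uu^\top+uw^\top+wu^\top$. Hence $\|N\|_\op\ge\max\{\|t\|_2^2,\|w\|_2\}$ and $\|w\|_2^2=\|Yt\|_2^2-\|t\|_2^4\ge\|t\|_2^2\big(\lambda_d(Y^\top Y)-\|t\|_2^2\big)$. The last $d-k$ coordinates of $t$ are $W^\top u$, so $\|t\|_2^2\ge 1-o(1)$ w.h.p. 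For, e.g., $d\asymp\sqrt n$, \Cref{lem:eval_conc} gives $\lambda_d(Y^\top Y)\ge(1-o(1))n/d$, because $\sqrt n\mu_1\le\sqrt n\,d^{-1/4}=o(n/d)$. Splitting according to whether $\|t\|_2^2$ exceeds $n/(2d)$ gives $\|N\|_\op\gtrsim\sqrt{n/d}\asymp n^{1/4}\gg\log^4 n$.

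Separately, the non-centred cloud $X=\1 x^\top$ with $k=1$ satisfies all hypotheses and has $\|t\|_2^2\approx n\Tr\Sigma$. For $n\asymp d$ this can be of order $\sqrt d/\log^2 n\gg\log^4 n$.

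What your Step 3 actually points to is $\|N\|_\op\lesssim\sqrt{n/d}\,\mathrm{polylog}\,n$, under a centring hypothesis such as $\|X^\top\1\|_2\lesssim\sqrt{n\Tr\Sigma}\,\mathrm{polylog}\,n$. That is the statement to aim for. It still suffices in \Cref{lem:eigenvector_diff}: there it is multiplied by $d_kp_1\asymp p\sqrt{d\log(1/p)}$, producing $p\sqrt{n\log(1/p)}\,\mathrm{polylog}\,n\le\sqrt{np}\,\mathrm{polylog}\,n$. This is the same type as the $\sqrt{np}\log^9 n$ error already present there.
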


 \begin{proof} We have
 \[
 YY^\top  - \Pi^{\perp}_{\zeta}( YY^\top )\Pi^{\perp}_{\zeta} = -\frac{\zeta\zeta^\top}{\lVert\zeta \rVert^2} YY^\top  -  YY^\top \frac{\zeta\zeta^\top}{\lVert\zeta \rVert^2} + \frac{\lVert\zeta^\top  Y \rVert^2}{\lVert\zeta \rVert^4}\zeta\zeta^\top.
 \]
We use concentration to estimate the norm $\lVert\zeta^\top  Y \rVert_2^2$, first when $|(\zeta^\top Y)_j|$ where $j\leq k$, we note that $z_{ij}$ is independent from $\norm{w_i}_2-1=l_i-1$ (here, it is useful to visualize the decomposition $z_i=(N_i, w_i)$ from the proof of \eqref{eqn:interlacing_result}), so the term $L_i\frac{p_1d_k\tau}{p_0} = O\Big(\sqrt{\tfrac{\log n}{d}}\Big)\cdot O(\log n) = O\Big(\tfrac{\log^{3/2}n}{\sqrt d}\Big) = o(1),$ with high probability. 
Therefore, by using Gaussian concentration, with probability at least $1-n^{-10}$ we have that for all $j \leq k$,
    \begin{align*}
    |(\zeta^\top Y)_j|&=\Bigg|\sum^n_{i=1}(1+L_i\ p_1  d_k \tau/p_0)( x_{ij}+z_{ij})\Bigg|
	\\
	&\ls (1+\log n\sqrt{d}\tau^2)\sqrt{n\log n(\tfrac{1}{d}+\Tr\left(\Sigma\right))}\\
    &\ls \sqrt{n\log n(\tfrac{1}{d}+\Tr\left(\Sigma\right))} \,.
    \end{align*}
	For $j>k$, 
    \[|(\zeta^\top  Y)_j|=\Bigg| \sum^n_{i=1}(1+L_i\ p_1  d_k \tau/p_0)z_{ij}\Bigg|\le  \Bigg| \sum^n_{i=1}z_{ij}\Bigg|+p_1d_k\tau/p_0\Bigg|\sum_{i=1}^n (l_i-1)z_{ij}\Bigg|.\]
    By Gaussian concentration, the first term is of order $O(\sqrt{n\log n/d})$. To control the second term, we first condition on the event $\cL=\bigcap_{i=1}^n\{L_i\lesssim \sqrt{\frac{\log n} d}\}$ and note that 
    $z_{ij} = l_i v_{ij}$.
    Thus, applying Hoeffding's inequality for bounded random variables (e.g. see \cite[Theorem 2.2.6]{Vershynin2018}) to $v_{ij}$, which are independent for different $i$ given a fixed $j$ and centered, we get that 
    \[\Prob\left(\left|\sum_{i=1}^n l_i(l_i-1)v_{ij}\right|\geq t\big| \cL\right)\Prob(\cL)\leq 2\exp\left(-\frac{ct^2d_k}{O(n\log (n)/d)} \right).\]
    On the right hand side, we used the bound $\Prob(\cL)\leq 1$ and the fact that on the event $\cL$, each summand satisfies $l_i^2(l_i-1)^2=l_i^2L_i^2=O(\log n/d)$ deterministically, since $l_i=1+L_i=1+O(\sqrt{\log n/d})$ on $\cL$, so summing over $n$ gives $O(n\log(n)/d)$. Undoing the conditioning on $\cL$, we then have
     \begin{align*}
&\Prob\left(\left|\sum_{i=1}^n l_i(l_i-1)v_{ij}\right|\geq t\right)\\ 
     &\quad\leq \Prob(\cL)\Prob\left(\left|\sum_{i=1}^n l_i(l_i-1)v_{ij}\right|\geq t\big| \cL \right) +\Prob\left(\cL^c \right) \Prob\left(\left|\sum_{i=1}^n l_i(l_i-1)v_{ij}\right|\geq t\big| \cL^c \right)
     \\
     &\quad \leq  \Prob\left(\left|\sum_{i=1}^n l_i(l_i-1)v_{ij}\right|\geq t\big| \cL \right) +\Prob\left(\cL^c \right)\quad  \text{ by }\Prob(\cL), \Prob\left(\sum l_i(l_i-1)v_{ij}|\cL^c\right)\leq 1\\
     &\quad\leq 2\exp\left(-\frac{ct^2d_k}{O(n\log n/d)}\right) + n^{-12} \qquad\text{by \Cref{lemma:ai_aj_li_lj_conc}}.
     \end{align*}
     Finally, setting $t=C\frac {\sqrt{n}}{d}\log n$ for sufficiently large $C$ we get that with probability at least $1-n^{-12}$
     $$\bigg \vert \sum_{i=1}^n l_i(l_i - 1)v_{ij} \bigg \vert = O\left(\frac{\sqrt{n} \log n}{d}\right).$$

    Once again, the first term in the expression for $(\zeta^\top Y)_j$ is the dominant one, so
    \[(\zeta^\top Y)_j\lesssim \sqrt{n\log n/d} \qquad j>k.\]

	Taking the norm, and a union bound over the probabilities, we obtain that with probability at least $1-dn^{-12}\geq 1-n^{-10}$,
    \begin{align*}
	\lVert\zeta^\top  Y \rVert_2^2&=kO\left({n\log n(\tfrac{1}{d}+\Tr\left(\Sigma\right))}\right) + (d-k)O\left(\frac{n\log^4 n}{d}\right)
	=O(n\log ^4n).
	\end{align*}
     Since $\Tr\left(\Sigma\right)=O(1)$, with probability at least $1-n^{-10}$
     \begin{align*}\lVert \frac{\lVert\zeta^\top  Y \rVert^2}{\lVert\zeta \rVert^4}\zeta\zeta^\top\rVert_{\op} &= \frac{\lVert\zeta^\top  Y \rVert^2}{\lVert\zeta\zeta^\top\rVert^2}
	=O(\log^4n).
	\end{align*}
Similarly, with probability at least $1-n^{-10}$
\[
\big\lVert \frac{\zeta\zeta^\top}{\norm{\zeta}^2_2} YY^\top  \big\rVert_{\op}\leq \sum_{j=1}^d\big\lVert \frac{(\zeta^\top Y)_j^2}{\norm{\zeta}^2_2} \big\rVert_{\op}=O(\log^4 n).
\]
    This concludes the proof of \Cref{lemma:projection_of_data_matrix}.
\end{proof}

\begin{proof}[Proof of \Cref{lem:eigenvector_diff}]
We would like to apply Davis--Kahan to \Cref{lemma:projection_of_data_matrix}, so we need to find the spectral gap of 
\begin{equation}
    \label{def:Bproj}
    B_{\prj} :=p_0\zeta\zeta^\top +d_k p_1\Pi^{\perp}_{\zeta}( YY^\top)\Pi^{\perp}_{\zeta}.
\end{equation}
To this end, following \Cref{YtoX}
the first eigenvalue of the matrix $B_\prj$ can be estimated with \Cref{lemma:p_tauij_conc}, which tells us that $|\zeta_i|=O(1+\sqrt{\log n/d}\tau^2 d)$, thus;
\begin{equation}\label{eqn:lambda1_B_proj}
\lambda_1(B_\prj)=\Theta(pn),
\end{equation}
while the second eigenvalue
\begin{align}
\lambda_2(B_\prj)
&=d_kp_1\lambda_1(\Pi^{\perp}_{\zeta}( YY^\top )\Pi^{\perp}_{\zeta})\nonumber\\
&\leq d_kp_1 \norm{YY^\top}_\op \norm{\Pi^{\perp}_{\zeta}}^2_\op \quad\text{by sub-multiplicativity}\\
&\leq d_kp_1O\left(n\mu_1^2+\frac{n}{d}+ 
O\left(\sqrt{n}\mu_1 \right)
\right)\quad \text{ by }\Cref{lem:eval_conc}\nonumber\\
&\leq dp\sqrt{\tfrac{\log n}{d}}O\big(\tfrac{n}{\sqrt{d}\log^2 n}\big)\nonumber\\
&= o(pn)\label{eqn:lambd2a_Bprj}.
\end{align}

Therefore, by the Davis--Kahan theorem, there exists $\theta\in \{-1,1\}$ such that
\begin{equation}\label{eqn:DavisKahanEigenVectors}
    \norm{\theta \mathbf{w}_0-\frac{\zeta}{\norm{\zeta}_2}}_2
  \lesssim \frac{\norm{A-B_\prj}_\op}{\lambda_1(B_\prj)-\lambda_2(B_\prj)}
\end{equation}
From the triangle inequality and Lemmas \ref{lemma:projection_of_data_matrix}, \ref{lemma:trace_method} and \ref{lemma:Bsph_Bdata_diff},
\begin{align*}
\norm{A-B_\prj}_\op&= \norm{A-p_0\zeta\zeta^\top -d_k p_1\Pi^{\perp}_{\zeta}( YY^\top)\Pi^{\perp}_{\zeta}}_\op\\
&\leq d_kp_1\norm{ YY^\top-\Pi^{\perp}_{\zeta}( YY^\top)\Pi^{\perp}_{\zeta}}_\op+ \norm{A-B_\data}_\op\\
&\lesssim dp\tau\log^4(n) + npd\mu_1^2\Tr(\Sigma)\log^2(n)+\max\{np\tau^2,  \sqrt{np}\}\log^9 n\\
&\lesssim npd\mu_1^2\Tr(\Sigma)\log^2(n)+\max\{np\tau^2,  \sqrt{np}\}\log^9 n +dp\tau\log^4 n.
\end{align*}
The last inequality was obtained by taking dominant terms.
Now assume without loss of generality that $\theta=1$ in \eqref{eqn:DavisKahanEigenVectors}. Thus,
\begin{equation}
\label{eqn:eigenvector_diff}
\begin{aligned}
\norm{\eta_0\mathbf{w}_0\mathbf{w}_0^\top-p_0\zeta\zeta^\top}_\op&\leq \vert\eta_0-p_0\norm{\zeta}_2^2\vert+  p_0\norm{\zeta}_2^2\norm{\mathbf{w}_0-\frac{\zeta}{\norm{\zeta}}_2}_2 \nonumber\\
    &\leq \underbrace{\norm{A-B_\prj}_\op}_{\text{by Weyl's inequality}} + \underbrace{\Theta(pn)}_{
    =\lambda_1(B_\prj)}\cdot\underbrace{\frac{\norm{A-B_\prj}}{\Theta(pn) - o(pn)}}_{\text{by }\ref{eqn:DavisKahanEigenVectors} \text{ and }\ref{eqn:lambd2a_Bprj}} \\
    &\leq \Theta(1)\left(npd\mu_1^2\Tr(\Sigma)\log^4(n)+\max\{np\tau^2,  \sqrt{np}\}\log^9 n\right)
    + dp\tau \log^4 n\end{aligned}
\end{equation}
which completes the proof.
\end{proof}

\subsection{Proof of \texorpdfstring{\Cref*{prop:YY_diff}}{Proposition 4.2}}
Inserting the bounds from Lemmas \ref{lemma:trace_method}, \ref{lemma:Bsph_Bdata_diff} and \ref{lem:eigenvector_diff} into
\eqref{eqn:prop_YY_diff_chain}, we have 
\begin{equation}\label{eqn:scaled_YY_diff}
\begin{aligned}
    &\norm{d_k p_1  YY^\top -d_k p_1\hat Y \hat Y ^\top}_\op\\
    &\leq (\text{Adjacency to Spherical}) +(\text{Spherical to Data}) +(\text{Rank-1 difference})\\
    &\lesssim npd(\tau^2+\mu_1^2\Tr(\Sigma))\log^4(n)+\max\{np\tau^2,  \sqrt{np}\}\log^9 n +dp\tau\log^4n.
\end{aligned}
\end{equation}
From \Cref{lemma:p_tauij_conc} we have $p_1=\Theta(p\tau)$, so we may divide through by $d_kp_1\asymp dp\tau$ to obtain
\[
\norm{ YY^\top -\hat{Y}\hat{Y}^\top}_\op \lesssim n\sqrt{d}\mu_1^2\Tr(\Sigma)\log^4(n)+\left(n\frac{\tau}{d}+  \sqrt{\frac{n}{dp}}\right)\log^9 n + \log^4n.
\]
Noticing that $\sqrt{\frac{n}{dp}}\log^9 (n)\geq \log^4n$ and $np\tau^2\log^9 n\geq np\tau^2\log^4n$, gives the statement of the Proposition.\qed

\section{Generalization to Numerically Low-Rank Data}\label{sec: numerically_low_rk}

In this section we prove \Cref{thm:main}, which relaxes the strict rank assumption of \Cref{thm:lowrank}.
The general approach is to pass to a low-rank projection of the data matrix $X$. Hence, the statements and proofs of many lemmas and propositions mirror those of \Cref{thm:lowrank}, but additional arguments are needed to handle errors arising from the spectral projection. 

Recall our notation $\Pi^{(k)}_M$ for the matrix for projection to the span of the top $k$ eigenvectors of a symmetric matrix $M$.
Write $X_k:= \Pi^{(k)}_{X}X$, and set
\[
  \tilde Y := X_k + Z,
  \qquad
  \tilde A_{ij} := \1\!\left(\langle \tilde y_i,\tilde y_j\rangle \geq \tau\right),
\]
so that $\tilde Y$ is the noisy version of the spectrally truncated point cloud $X_k$ and
$\tilde A$ is the adjacency matrix of the exactly-rank-$k$ model on $\tilde Y$, whose rows are $\tilde y_i= \tilde x_i + z_i$.
Set $X_{k+1:d}:= X-X_k$, and, recalling $\Sigma=\frac1nX^\top X$, set 
\begin{equation}
    \label{def:Sigmak}
    \Sigma_k := \frac1nX_k^\top X_k\,,\qquad \Sigma_{k+1:d}:= \Sigma-\Sigma_k = \frac1n X_{k+1:d}^\top X_{k+1:d}.
\end{equation}
The spectral embedding $\Xalg$ is, as before, built from the top-$k$
eigenpairs of the \textit{observed} matrix $A$. Note that \Cref{lem:p1} was already stated under the assumptions of  \Cref{thm:main}, so we can carry on the rest of the proof with $\hat X$ in place of $\Xalg$.

The proof proceeds in three main steps, summarized as follows.
\begin{itemize}
    \item[(i)] \textbf{Parameter truncation:} Under \eqref{assu:gap-gen} the truncation does
not change the leading order model parameters: $\Tr\Sigma = (1+o(1))\Tr\Sigma_k$,
the threshold $\tau$ and first Gegenbauer coefficient $p_1$ are unchanged to
leading order, and $p=(1+o(1))\tilde p$ by \Cref{lem:ptp-same}.
Consequently every parameter estimate proved for the truncated model
(\Cref{Appendix Section:parameter_estimation_proofs}) transfers verbatim, up to
$(1+o(1))$ factors, to the observed model. In fact, the parameter estimate proofs in the appendix are done for this generalized case.
\item[(ii)]\textbf{One further projection:}
We pick up an additional error term in passing from the full-rank projected data $Y_kY_k^\top$ to the truncated projected data
$\tilde Y_k\tilde Y_k^\top$, defined as $\tilde Y_k= \Pi_{\tilde Y}^{(k)} \tilde Y$.
This step is spelled out  in \eqref{eqn:overview_triangle_ineq_split_gen}.
\item[(iii)]\textbf{One new comparison:} Because $\hat X$ is built from the observed adjacency matrix $A$, while the
rank-$k$ truncated data describes $\tilde A$, the comparison step of
\Cref{thm:lowrank} picks up an additional summand
$\|A-\tilde A\|_\op$. This new term is the source of the truncation term in
\eqref{bd:main}.
\end{itemize}
We bound
\begin{equation}\label{eqn:overview_triangle_ineq_split_gen}
\|XX^\top-
\hat X \hat X^\top\|_\op\leq\norm{XX^\top-Y_kY_k^\top}_\op
   +\norm{Y_kY_k^\top-\tilde Y_k \tilde Y_k^\top}_\op
   +\|\tilde Y_k \tilde Y_k^\top-\hat X\hat X^\top\|_\op\,.
\end{equation}
The second term is new and is controlled by concentration arguments
-- see \Cref{lemma: Y_k matrices difference_gen}. To bound the third term we now use the truncated adjacency matrix $\tilde A$, and the data matrix is redefined as $\tilde B_\data= p_0\zeta\zeta^\top + p_1d_k \tilde Y\tilde Y^\top$. Then to pass to the \textit{observed} adjacency matrix $A$ we need to estimate $\norm{A-\tilde A}_\op$. 
For additional clarity, \Cref{fig:proof comparison tree} is a diagram that illustrates the new steps, incorporated within the proof of the strictly rank--$k$ case.

\begin{figure}[t]
  \centering
  \resizebox{\textwidth}{!}{%
  \begin{tikzpicture}
  \node[neutral] (goal0)  at (-7,0)     {\textbf{Goal (\Cref{thm:main})}\\[1pt]$\|XX^\top-\Xalg\Xalg^\top\|_\op$};
  \node[neutral] (goal)  at (0,0)     {\textbf{ 
  \Cref{prop:XhatX}}\\[1pt]$\|XX^\top-\hat X\hat X^\top\|_\op$};
  \node[strict]  (step0) at (-10,-2.0)  {\textbf{Reduction to $\hat X$} (\Cref{lem:p1})\\ $\|\Xalg\Xalg^\top-\hat X\hat X^\top\|_\op$};
  \node[strict]  (step1) at (-3,-2.0)  {
  Lemma \ref{lem:projection-low}/\ref{lem:projection}\\
  $\|XX^\top- Y_kY_k^\top \|_\op$};
  \node[strict]  (step2) at (0,-3.9)  {
  Lemma \ref{lem:perturbation-low}/\ref{lem:perturbation}
  \\[1pt]Davis--Kahan};
  \node[strict]  (step3) at (-7,-6.9)  {
  Proposition \ref{prop:YY_diff}/\ref{prop:YY_diff_gen}\\$ \|\tilde Y\tilde Y^\top-\hat Y\hat Y^\top\|_\op$};
  \draw[arr] (goal) --(goal0);
    \draw[arr] (step0) -- (goal0);
  \draw[arr]  (step1)-- (goal);
\draw[arr] (step3) -- (goal0);
  \draw[arr] (step2) -- (goal);

  \node[new] (termii) at (7,0)
     {\textbf{Intermediate Low-rank Projection:}\\
     (\Cref{lemma: Y_k matrices difference_gen})\\[1pt]$\|Y_kY_k^\top-\tilde Y_k\tilde Y_k^\top\|_\op$}; 
  \coordinate (tIIb) at (,-2.95);
  \draw[arrnew] (termii) to (goal);

  \node[strict] (adjsph) at (5,-5.6)  {\textbf{Adjacency to Spherical} (\Cref{lemma:trace_method})\\[1pt]$\|A-B_\sph\|_\op$};
  \node[strict] (sphdat) at (5,-8.2)  {\textbf{Spherical to data} (\Cref{lemma:Bsph_Bdata_diff})\\[1pt]$\|B_\sph-B_\data\|_\op$};
  \node[strict] (rank1)  at (5,-10.6) {\textbf{Rank-1 difference} (\Cref{lem:eigenvector_diff})\\[1pt]$\|\eta_0\mathbf{w}_0\mathbf{w}_0^\top-p_0\zeta\zeta^\top\|_\op$};
  \draw[arr]    (adjsph) to (step3);
  \draw[arr]    (sphdat) to (step3);
  \draw[arr]    (rank1) to (step3);
  \draw[arr] (adjsph.east) to[bend left=45] (rank1);
  \draw[arr]    (sphdat) to  (rank1);

  \node[new] (pTrunc) at (13.2,-5.2)
     {\textbf{Truncation error} (\Cref{prop: Adjacency Matrix Truncation})\\[1pt]$\|A-\tilde A\|_{\op}$};
  
  \node[new] (pEq) at (13.2,-7.2)
     {\textbf{Average probability equivalence} (\Cref{lem:ptp-same})\\[1pt]$p=(1+o(1))\tilde p$};
  
  \node[param] (pTau) at (13.2,-9.0)
     {\textbf{Threshold Estimation} \Cref{lemma:conc_tau}\\[1pt]$\tau\asymp \sqrt{\log(1/p)/d}$};
  
  \node[param] (pRest) at (13.2,-11.0)
     {\textbf{Coefficient estimates} (\Cref{lemma:ai_aj_li_lj_conc}, \ref{lemma:p_tauij_conc}, \ref{lemma:scalar_estimates_whp})\\[1pt]
      $\langle a_i,a_j\rangle,\,p_0,\,p_1, \Delta\tau^{ij},\dots$};

  \draw[arrnew] (pTrunc) to [bend right=15](rank1);
  \draw[arrnew] (pTrunc) to (adjsph);

  \draw[arrnew]  (pTrunc) -- (pEq);
  \draw[arrparam] (pEq)    -- (pTau);
  \draw[arrparam] (pTau)   -- (pRest);

  \draw[arrparam] (pRest) to[bend left=10] (sphdat);
  \draw[arrparam] (pTau) to[bend left=20] (sphdat);

  \node[strict, text width=0.25cm, minimum width=0.5cm, minimum height=0.32cm, inner sep=1pt] 
     at (13,-0.55) {};
  \node[anchor=west, font=\footnotesize] at (13.45,-0.55) {strict rank-$k$};
  
  \node[new, text width=0.25cm, minimum width=0.5cm, minimum height=0.32cm, inner sep=1pt] 
     at (13,-1.45) {};
  \node[anchor=west, font=\footnotesize] at (13.45,-1.45) {numerically low-rank (new)};
  
  \node[param, text width=0.25cm, minimum width=0.5cm, minimum height=0.32cm, inner sep=1pt] 
     at (13,-2.35) {};
  \node[anchor=west, font=\footnotesize] at (13.45,-2.35) {parameter estimation };
  
  \end{tikzpicture}}

  \caption{Structure of the proof of \Cref{thm:main}. 
  Steps for
  the argument for \Cref{thm:lowrank} (strict rank-$k$ case) are shaded blue.
  For numerically low-rank data, two new components are added in red: the additional
  triangle inequality term passing from the full-rank projected data to the truncated
  projected data, and, inside the proof of \Cref{prop:YY_diff_gen}, a step estimating
  $\|\tilde A-A\|_\op$. 
}
  \label{fig:proof comparison tree}
\end{figure}

\subsection{Projection and Perturbation Step Analogues}

We begin by stating an extension of \Cref{lem:eval_conc}.

\begin{lemma}\label{eqn:kplus1_eval_UB} 
With probability at least $1-e^{-\sqrt{d}}$, 
$\lambda_{k+1}(YY^{\top}) \leq  \frac nd
 + O\Big(\sqrt{\frac nd}+\frac{n\mu_{k+1}}{\sqrt d}+n\mu_{k+1}^2\Big)$.
 The estimates of $\lambda_{i}(YY^{\top}), 1\leq i\leq k$ agree with \Cref{lem:eval_conc}.
\end{lemma}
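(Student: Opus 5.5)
We will follow the proof of \eqref{eqn:interlacing_result} in \Cref{lem:eval_conc}, but now account for the nonzero tail block of the clean data. As there, we rotate coordinates (using rotational invariance of $Z$) so that the top $k$ right singular vectors of $X$ are the first $k$ coordinate axes. We then write $Y=(\bar Y\;\; W)$, with $\bar Y\in\R^{n\times k}$ the first $k$ columns and $W\in\R^{n\times (d-k)}$ the remaining ones. The difference from the rank-$k$ case is that now
\[
W = X_{k+1:d}' + Z'\,,
\]
where $X_{k+1:d}'$ denotes the last $d-k$ columns of the rotated clean data (whose nonzero singular values are $\sqrt n\mu_{k+1},\dots,\sqrt n\mu_d$) and $Z'$ is an $n\times(d-k)$ Gaussian matrix with iid $\cN(0,1/d)$ entries.

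For the upper bound on $\lambda_{k+1}$, Cauchy interlacing applied to the block decomposition \eqref{eqn:block_matrix} gives $\lambda_{k+1}(YY^\top)=\lambda_{k+1}(Y^\top Y)\le \lambda_1(W^\top W)$. We expand
\[
W^\top W = Z'^\top Z' + X'^\top Z' + Z'^\top X' + X'^\top X'
\]
and bound each piece separately.
\begin{itemize}
\item By \cite[Theorem 4.4.5]{Vershynin2018}, $\|Z'^\top Z'\|_\op\le \frac nd + C\sqrt{n/d}$ with probability $1-e^{-d}$.
\item $\|X'^\top X'\|_\op = n\mu_{k+1}^2$ exactly.
\item For the cross term we argue as in \eqref{eqn:X_svd_noise}: writing the SVD of $X'$, we get $\|X'^\top Z'\|_\op\le \sqrt n\mu_{k+1}\|U'^\top Z'\|_\op$. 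Here $U'^\top Z'$ is a square Gaussian matrix of size at most $d$ with variance-$1/d$ entries, so its norm is $O(1)$ with probability $1-e^{-cd}$. This gives a contribution $O(\sqrt n\mu_{k+1})\le O(n\mu_{k+1}/\sqrt d)$, using $d<n$.
\end{itemize}
Adding these three bounds yields the claimed upper bound on $\lambda_{k+1}(YY^\top)$, on an event of probability at least $1-e^{-\sqrt d}$.

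For the first $k$ eigenvalues, the lower bounds in \eqref{eqn:interlacing_result} again follow from interlacing, $\lambda_i(Y^\top Y)\ge\lambda_i(\bar Y^\top\bar Y)$. The proof of \eqref{eq:YbartoXbar} transfers verbatim, because $\bar Y=\bar X+N$ involves only the top-$k$ block, whose singular values are $\sqrt n\mu_1,\dots,\sqrt n\mu_k$. The upper bounds \eqref{bd:YYlami}--\eqref{bd:YYop} come from \eqref{YtoX}, whose proof never used the rank-$k$ assumption; it only used $\|X\|_\op=\sqrt n\mu_1$. The gap bound \eqref{eqn:gap_conseq} still holds in the new setting: by \eqref{assu:gap-gen}, $\mu_{k+1}^2\le\sum_{i>k}\mu_i^2=o(1/d)=o(\mu_k^2)$, so every new term in the upper bound on $\lambda_{k+1}$ is $o(n\mu_k^2)$.

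The only point needing care is that the rotation is a deterministic function of $X$, so that $Z'$ and $N$ remain Gaussian and independent of one another. This holds because $X$ is fixed.
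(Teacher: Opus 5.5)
Your proposal is correct and follows the same route as the paper. The paper's proof applies Cauchy interlacing to the block decomposition with the leftover tail coordinates placed in the lower-right block, so $\lambda_{k+1}(Y^\top Y)\le\lambda_1(W^\top W)$, and then bounds $\|W\|_\op$ for $W=X'+Z'$. Your SVD bound on the cross term is slightly sharper than expanding $(\|X'\|_\op+\|Z'\|_\op)^2$, and it implies the stated $n\mu_{k+1}/\sqrt d$ term. The first-$k$ estimates carry over unchanged, since they involve only the top-$k$ block and $\|X\|_\op$.
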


\begin{proof}
The proof follows the Cauchy Interlacing part of the proof of \eqref{YtoX}, specifically the decomposition \eqref{eqn: data decomp}, but adding the ``leftover'' coordinates to the lower-right block of the matrix \ref{eqn:block_matrix}.
\end{proof}

\begin{lemma}[Clean Data and Noisy Data Projection Matrix Distance Generalized]
\label{lem:projection}
Under the same conditions as  \Cref{thm:main}, with probability $1-O(e^{-d})$,
\[
\norm{XX^\top-Y_kY_k^\top}_\op\lesssim n\mu_1\sqrt{\frac{1}{d}}.
\]
\end{lemma}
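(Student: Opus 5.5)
We follow the proof of \Cref{lem:projection-low} essentially verbatim; the rank-$k$ hypothesis is used there only through the eigenvalue and norm estimates of \Cref{lem:eval_conc}. Start from the same factorisation,
\[
\norm{XX^\top-Y_kY_k^\top}_\op\le (\norm{X}_\op+\norm{Y_k}_\op)\,\norm{X-Y_k}_\op .
\]
Here $\norm{X}_\op=\sqrt n\mu_1$, and $\norm{Y_k}_\op=\norm{Y}_\op$ because $Y_k$ keeps the top singular direction of $Y$.

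For $\norm{Y}_\op$ we use \eqref{YtoX}. Its proof never used the rank of $X$: it bounds $\|X^\top Z\|_\op\le \sqrt n\mu_1\|U^\top Z\|_\op$, where $U$ has at most $d$ orthonormal columns, and controls $\|Z^\top Z-\frac nd I_d\|_\op$ by \cite[Theorem 4.6.1]{Vershynin2018}. So it holds under the hypotheses of \Cref{thm:main} with probability $1-O(e^{-d})$. Combined with $\mu_1^2\ge\mu_k^2=\omega(1/d)$ and $d<n$, this gives $\norm{Y}_\op^2\le n\mu_1^2+\frac nd+O(\sqrt n\mu_1)=(1+o(1))n\mu_1^2$, exactly as in \eqref{bd:YYop}.

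Next, for $\norm{X-Y_k}_\op$ we do not truncate $X$ at all, which keeps tail singular values out of the estimate. Since $Y_k$ is the best rank-$k$ approximation of $Y$, we have $\norm{Y-Y_k}_\op=\sigma_{k+1}(Y)$. Weyl's inequality for singular values gives $\sigma_{k+1}(Y)\le\sigma_{k+1}(X)+\norm{Z}_\op=\sqrt n\mu_{k+1}+\norm{Z}_\op$. Note that \Cref{eqn:kplus1_eval_UB} is not needed here. Hence
\[
\norm{X-Y_k}_\op\le \norm{Z}_\op+\sigma_{k+1}(Y)\le 2\norm{Z}_\op+\sqrt n\mu_{k+1}.
\]
From \eqref{ZZnorm}, $\norm{Z}_\op\lesssim\sqrt{n/d}$ with probability $1-e^{-10d}$. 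For the tail term, \eqref{assu:gap-gen} gives $\mu_{k+1}^2\le\sum_{i>k}\mu_i^2=o(d^{-1})$, so $\sqrt n\mu_{k+1}=o(\sqrt{n/d})$. Therefore $\norm{X-Y_k}_\op\lesssim\sqrt{n/d}$.

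Multiplying the two bounds gives $\norm{XX^\top-Y_kY_k^\top}_\op\lesssim\sqrt n\mu_1\cdot\sqrt{n/d}=n\mu_1/\sqrt d$ on an event of probability $1-O(e^{-d})$, by a union bound over the two Gaussian matrix estimates. The only step needing care is the bound on $\norm{X-Y_k}_\op$. The rank-$k$ proof used $\norm{Y-Y_k}_\op\le\norm{Y-X}_\op$, which requires $\operatorname{rank}X\le k$ and so fails here. The Weyl-type replacement above is what absorbs the tail, using only the $o(d^{-1})$ part of \eqref{assu:gap-gen}.
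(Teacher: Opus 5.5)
Your proof is correct and is essentially the paper's argument. The paper splits off $X_{k+1:d}X_{k+1:d}^\top$ and bounds $\|X_k-Y_k\|_\op\le 2\|X_k-Y\|_\op\le 2(\sqrt{n}\mu_{k+1}+\|Z\|_\op)$ via Eckart--Young, which is the same estimate underlying your Weyl bound $\sigma_{k+1}(Y)\le\sqrt{n}\mu_{k+1}+\|Z\|_\op$; comparing $X$ to $Y_k$ directly, as you do, only avoids the harmless extra $n\mu_{k+1}^2$ term.
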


\begin{proof}
The proof is analogous to \Cref{lem:projection-low}, except for the first triangle inequality which separates the trailing eigenvalues of $XX^\top$ from the first $k$. 
\begin{equation}\label{eqn:xx_ykyk_diff_step1}
\begin{aligned}
   \norm{XX^\top-Y_kY_k^\top}_\op&\leq \norm{X_{k+1:d}X_{k+1:d}^\top}_\op+ \norm{X_kX_k^\top - X_kY_k^\top}_\op+ \norm{X_kY_k^\top- Y_kY_k^\top}_\op\\
   &\leq \norm{X_{k+1:d}X_{k+1:d}^\top}_\op + (\norm{X_k}_\op+ \norm{Y_k}_\op)\norm{X_k-Y_k}_\op\,.
\end{aligned}
\end{equation}
The first term on the right hand side of \eqref{eqn:xx_ykyk_diff_step1} is $n\mu_{k+1}^2$. For the second summand on the right hand side, we have that with probability at least $1-e^{-d}$,
\begin{equation}\label{eqn:xx_ykyk_diff_step2}
\norm{X_k - Y_k}_\op \leq 2 \norm{X_k - Y}_\op = 2 \norm{X_k - (X+Z)}_\op \leq 2( \sqrt{n}\mu_{k+1} + \norm{Z}_\op)\lesssim \sqrt{n}\mu_{k+1}+\sqrt{\frac nd}.
\end{equation}
Then, combining \eqref{eqn:xx_ykyk_diff_step1} and \eqref{eqn:xx_ykyk_diff_step2} with bounds from the proof of \Cref{lem:projection-low} yields
\begin{equation}
    \norm{XX^\top- Y_kY_k^\top}_\op\lesssim n\mu_{k+1}^2 +n\mu_1(\mu_{k+1}+ 1/\sqrt{d}).
\end{equation}
From the conditions \eqref{assu:gap-gen}
it follows $\mu_{k+1}\le d^{-1/2} \le \mu_k\le \mu_1$, so the right hand side is $O(n\mu_1/\sqrt{d})$ as claimed.
\end{proof}

\begin{lemma}[Original and Truncated Data Difference] \label{lemma: Y_k matrices difference_gen}
    Under the same conditions as \Cref{thm:main}, we have with probability at least $1-e^{-\sqrt{d}}$,
    \[\norm{Y_kY_k^\top-\tilde Y_k\tilde Y_k^\top}_\op\lesssim \frac{n}{\sqrt{d}}\frac{\mu_1^2}{\mu_k^2}\mu_{k+1}
    .\]
\end{lemma}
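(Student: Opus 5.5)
We plan to mimic the proof of \Cref{lem:perturbation-low}: treat $Y_kY_k^\top$ and $\tilde Y_k\tilde Y_k^\top$ as rank-$k$ spectral truncations of the two Gram matrices $YY^\top$ and $\tilde Y\tilde Y^\top$, and compare them through their spectral projections. Write $\Pi=\Pi^{(k)}_Y$ and $\tilde\Pi=\Pi^{(k)}_{\tilde Y}$, so that $Y_kY_k^\top=\Pi YY^\top\Pi$ and $\tilde Y_k\tilde Y_k^\top=\tilde\Pi\tilde Y\tilde Y^\top\tilde\Pi$. Exactly as in \eqref{eqn:proj_decomp_YY_hatYY}, we telescope
\[
\Pi YY^\top\Pi-\tilde\Pi\tilde Y\tilde Y^\top\tilde\Pi=(\Pi-\tilde\Pi)YY^\top\Pi+\tilde\Pi YY^\top(\Pi-\tilde\Pi)+\tilde\Pi(YY^\top-\tilde Y\tilde Y^\top)\tilde\Pi .
\]
This gives
\[
\|Y_kY_k^\top-\tilde Y_k\tilde Y_k^\top\|_\op\le \bigl(2\|YY^\top\|_\op\|\Pi-\tilde\Pi\|_\op+1\bigr)\|YY^\top-\tilde Y\tilde Y^\top\|_\op .
\]

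The key input is a bound on $\|YY^\top-\tilde Y\tilde Y^\top\|_\op$. Since $Y=\tilde Y+X_{k+1:d}$,
\[
YY^\top-\tilde Y\tilde Y^\top=X_{k+1:d}\tilde Y^\top+\tilde Y X_{k+1:d}^\top+X_{k+1:d}X_{k+1:d}^\top .
\]
Its norm is at most $2\sqrt n\mu_{k+1}\|\tilde Y\|_\op+n\mu_{k+1}^2$. The bound $\|\tilde Y\|_\op\le\|X_k\|_\op+\|Z\|_\op\lesssim\sqrt n\mu_1$ holds with probability $1-e^{-d}$, using $\|Z\|_\op\lesssim\sqrt{n/d}$ and $\mu_1\ge\omega(d^{-1/2})$. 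Hence $\|YY^\top-\tilde Y\tilde Y^\top\|_\op\lesssim n\mu_1\mu_{k+1}$, since $\mu_{k+1}\le\mu_1$.

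We need a sharper bound than this to reach the claimed rate $\frac{n}{\sqrt d}\frac{\mu_1^2}{\mu_k^2}\mu_{k+1}$. The cross term $X_{k+1:d}X_k^\top$ vanishes, because the row spaces of $X_k$ and $X_{k+1:d}$ are orthogonal. So in fact
\[
YY^\top-\tilde Y\tilde Y^\top=X_{k+1:d}Z^\top+ZX_{k+1:d}^\top+X_{k+1:d}X_{k+1:d}^\top .
\]
Arguing as in \eqref{eqn:X_svd_noise}, $\|X_{k+1:d}Z^\top\|_\op\le\sqrt n\mu_{k+1}\|Z\|_\op\lesssim n\mu_{k+1}/\sqrt d$. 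Also $n\mu_{k+1}^2\le n\mu_{k+1}/\sqrt d$, because $\mu_{k+1}\le(\sum_{i>k}\mu_i^2)^{1/2}=o(d^{-1/2})$ by \eqref{assu:gap-gen}. Therefore $\|YY^\top-\tilde Y\tilde Y^\top\|_\op\lesssim n\mu_{k+1}/\sqrt d$.

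For the projection difference we apply \Cref{DavisKahanForProjections} with the gap of $\tilde Y\tilde Y^\top$. This matrix is exactly the rank-$k$ model, so \Cref{lem:eval_conc} applies to it and gives gap $\ge(1-o(1))n\mu_k^2$ with probability $1-e^{-\sqrt d}$. Alternatively, the gap of $YY^\top$ is controlled by \Cref{eqn:kplus1_eval_UB}. This yields $\|\Pi-\tilde\Pi\|_\op\lesssim \mu_{k+1}/(\sqrt d\mu_k^2)$. Combined with $\|YY^\top\|_\op\le(1+o(1))n\mu_1^2$, which holds by \Cref{lem:eval_conc} and its extension, the telescoped bound becomes
\[
\lesssim\Bigl(\frac{\mu_1^2}{\mu_k^2}+1\Bigr)\frac{n\mu_{k+1}}{\sqrt d}\lesssim\frac{n}{\sqrt d}\frac{\mu_1^2}{\mu_k^2}\mu_{k+1}.
\]

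The main obstacle is the sharpened difference bound. The crude estimate $n\mu_1\mu_{k+1}$ loses a factor $\mu_1\sqrt d$, so it is essential to exploit the orthogonality $X_kX_{k+1:d}^\top=0$. It is also essential that $Z$ enters only through $\|Z\|_\op\lesssim\sqrt{n/d}$. Everything else is a union bound over the events of \Cref{lem:eval_conc} applied to both models, each of probability at least $1-e^{-\sqrt d}$ up to constants.
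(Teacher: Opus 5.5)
Your proof is correct and follows essentially the paper's route. Both arguments rest on the sharp bound $\|YY^\top-\tilde Y\tilde Y^\top\|_{\op}\lesssim n\mu_{k+1}/\sqrt d$, obtained from $X_kX_{k+1:d}^\top=0$, followed by Davis--Kahan with a spectral gap $\gtrsim n\mu_k^2$. The only differences are cosmetic: the paper telescopes through $\Pi\tilde Y\tilde Y^\top\Pi$ and uses the gap of $YY^\top$ via \Cref{eqn:kplus1_eval_UB}, whereas you use the gap of the rank-$k$ model $\tilde Y\tilde Y^\top$, which is equally valid.

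One small slip: your first displayed inequality should read $2\|YY^\top\|_{\op}\|\Pi-\tilde\Pi\|_{\op}+\|YY^\top-\tilde Y\tilde Y^\top\|_{\op}$, with no factor $\|YY^\top-\tilde Y\tilde Y^\top\|_{\op}$ multiplying the first term. This corrected form is in fact the one your final computation uses.
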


\begin{proof}
    Recall that $\tilde Y_k= \Pi_{\tilde Y}^{(k)}(X_k+Z), Y_k = \Pi_{Y}^{(k)}(X+Z)$, for short, we will use $\Pi= \Pi^{(k)}_{Y}$ as in the proof of \Cref{lem:perturbation-low} and $\tilde \Pi= \Pi^{(k)}_{\tilde Y}$
    Using the triangle inequality, we introduce an intermediate term $\Pi^{(k)}_{ Y}\tilde Y=\Pi \tilde Y$:
    \begin{equation}\label{eqn:triangle_ineq_truncated}
        \norm{Y_kY_k^\top-\tilde Y_k\tilde Y_k^\top}_\op\leq \norm{Y_kY_k^\top - \Pi\tilde Y (\Pi\tilde Y)^\top}_\op + \norm{\Pi\tilde Y (\Pi\tilde Y)^\top - \tilde Y\tilde Y^\top}_\op
    \end{equation}
    Then, estimating the first term on the right hand side via concentration of the operator norm of $Z$ (e.g. \cite[Theorem 4.4.5]{Vershynin2018}) yields, that with probability $1-O(e^{-d})$, 
    \begin{equation}\label{eqn:triangle_term_trunc1}
    \begin{aligned}
        \norm{\Pi\tilde Y (\Pi \tilde Y)^\top- Y_kY_k^\top}_\op&=\left\| \Pi\left[X_kX_k^\top- XX^\top+(X_k-X)Z^\top+Z(X_k-X)^\top\right] \Pi\right\|_\op\\
        &\le \| X_kX_k^\top- XX^\top+(X_k-X)Z^\top+Z(X_k-X)^\top\|_\op\\
        &\leq 
        \norm{X_kX_k^\top-XX^\top}_\op + 2 \norm{X-X_{k}}_\op 
        \norm{ Z}_\op\\
        & = n\mu_{k+1}^2  + 2\sqrt{n}\mu_{k+1} \|Z\|_\op \\
        &
        \ls n\mu_{k+1}^2+ n\mu_{k+1}\sqrt{\frac{1}{d}} \lesssim \frac{n}{\sqrt{d}}\mu_{k+1}
    \end{aligned}
    \end{equation}
where we used \eqref{ZZnorm} in the final line.
    As for the second expression in \eqref{eqn:triangle_ineq_truncated}, we write
    \begin{equation}\label{eqn:triangle_term_trunc2}
    \begin{aligned}
        \norm{\Pi\tilde Y (\Pi \tilde Y)^\top - \tilde Y\tilde Y^\top}_\op&= \norm{(\Pi-\tilde \Pi)\tilde Y\tilde Y^\top\tilde \Pi+ \Pi\tilde Y\tilde Y^\top(\Pi-\tilde \Pi)}_\op\\
        &\leq 2\norm{\Pi-\tilde \Pi}_\op\norm{\tilde Y\tilde Y}_\op\\
        &\lesssim \norm{\tilde Y\tilde Y^\top}_\op\frac{\norm{YY^\top-\tilde Y\tilde Y^\top}_\op}{\lambda_{k}(YY^\top)-\lambda_{k+1}(YY^\top)}\quad \text{ by \Cref{DavisKahanForProjections}}.
    \end{aligned}       
    \end{equation}
By definition of $\tilde Y$, we know that $\lambda_1(X_kX_k^\top)=n\mu_1^2=\lambda_1(XX^\top)$, so by the same argument as \Cref{lem:eval_conc}, $\norm{\tilde Y\tilde Y^\top}_\op=n\mu_1^2 +O(n/d)\lesssim n\mu_1^2,$  
with probability $1-O(e^{-d})$, and similarly combining \Cref{lem:eval_conc} and \Cref{eqn:kplus1_eval_UB} gives us an estimate on the eigenvalue gap 
\begin{equation}
\begin{aligned}
\lambda_{k}(YY^\top)-\lambda_{k+1}(YY^\top)&\geq  n\mu_k^2 +\frac{n}{d}-O\left(\sqrt{n}\mu_1d^{-1/4}\right)-\bigg(\frac nd
 + O\Big(\sqrt{\frac nd}+\frac{n\mu_{k+1}}{\sqrt d}+n\mu_{k+1}^2\Big)\bigg)\\
 &\gtrsim n\mu_k^2.
 \end{aligned}
 \end{equation}
 In the last line, we use \eqref{assu:gap-gen}, \eqref{assu:trace} and $d<n$ to obtain the dominant term.
 Then, by 
the same lines as in \eqref{eqn:triangle_term_trunc1}, we have that with probability $1-O(e^{-d})$:
 \begin{equation}   \label{YY-tYtY}
 \begin{aligned}
 \norm{YY^\top-\tilde Y\tilde Y^\top}_\op& 
\lesssim n\mu_{k+1}/\sqrt{d}\,.
 \end{aligned}
 \end{equation}
 Plugging these estimates into \eqref{eqn:triangle_term_trunc2}, then adding to \eqref{eqn:triangle_term_trunc1} gives us:
 \[\norm{Y_kY_k^\top -\tilde Y_k\tilde Y_k^\top}_\op\lesssim \frac{n}{\sqrt d}\mu_{k+1}+ n\frac{\mu_1^2}{\mu_k^2}(\mu_{k+1}^2+ \mu_{k+1}\frac{1}{\sqrt d})
 \lesssim \frac{\mu_1^2}{\mu_k^2} \frac{n\mu_{k+1}}{\sqrt{d}},\]
as claimed.
\end{proof}

\begin{lemma}
[Embedding Matrix and Noisy Data Projection Matrix Difference]
\label{lem:perturbation} 
Under the same conditions as \Cref{thm:main}, we have with probability $1-O(n^{-10})$,
\[
    \norm{\tilde Y_k\tilde Y^\top_k- \hat X\hat X^\top}_\op \lesssim \frac{\mu_1^2}{\mu_k^2} \big\|\tilde Y \tilde Y^\top-\hat{Y}\hat{Y}^\top\big\|_\op.
\]
\end{lemma}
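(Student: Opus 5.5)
The plan is to mirror the proof of \Cref{lem:perturbation-low}, with $\tilde Y$ (the noisy version of the exactly rank-$k$ point cloud $X_k$) in place of $Y$. Write $\tilde\Pi=\Pi^{(k)}_{\tilde Y}$ and $\hat\Pi=\Pi^{(k)}_{\hat Y}$. Since $\hat X\hat X^\top=\hat\Pi\hat Y\hat Y^\top\hat\Pi$ (the top $k$ eigenvectors of $\hat Y\hat Y^\top$ are $\mathbf w_1,\dots,\mathbf w_k$), we have
\[
\tilde Y_k\tilde Y_k^\top-\hat X\hat X^\top=(\tilde\Pi-\hat\Pi)\tilde Y\tilde Y^\top\tilde\Pi+\hat\Pi\tilde Y\tilde Y^\top(\tilde\Pi-\hat\Pi)+\hat\Pi(\tilde Y\tilde Y^\top-\hat Y\hat Y^\top)\hat\Pi .
\]
This is the same three-term decomposition as \eqref{eqn:proj_decomp_YY_hatYY}.

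\emph{Bounding the terms.} The third term is at most $\|\tilde Y\tilde Y^\top-\hat Y\hat Y^\top\|_\op$. The first two are each at most $\|\tilde Y\tilde Y^\top\|_\op\|\tilde\Pi-\hat\Pi\|_\op$. By \Cref{DavisKahanForProjections} applied with $A=\tilde Y\tilde Y^\top$,
\[
\|\tilde\Pi-\hat\Pi\|_\op\le \frac{2\|\tilde Y\tilde Y^\top-\hat Y\hat Y^\top\|_\op}{\lambda_k(\tilde Y\tilde Y^\top)-\lambda_{k+1}(\tilde Y\tilde Y^\top)}.
\]
Hence it suffices to show, with the stated probability, that
\[
\|\tilde Y\tilde Y^\top\|_\op\le(1+o(1))n\mu_1^2,\qquad \lambda_k(\tilde Y\tilde Y^\top)-\lambda_{k+1}(\tilde Y\tilde Y^\top)\ge(1-o(1))n\mu_k^2 .
\]

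\emph{Spectral estimates for $\tilde Y$.} Here I would invoke \Cref{lem:eval_conc} directly for $\tilde Y=X_k+Z$. The matrix $X_k$ has exactly rank $k$, its singular values are $\mu_1\ge\dots\ge\mu_k$ followed by zeros, and its trace is at most $\Tr\Sigma$. The hypotheses used in that lemma, namely $\mu_k=\omega(d^{-1/2})$, \eqref{assu:trace}, $d<n$ and $k=o(\sqrt d/\log^2 n)$ from \Cref{rmk:k}, therefore all hold for $X_k$. The lemma then yields \eqref{bd:YYop} and \eqref{eqn:gap_conseq} for $\tilde Y$ with probability $1-O(e^{-\sqrt d})$. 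The rotation argument of \eqref{eqn: data decomp} applies because the row space of $X_k$ is $k$-dimensional.

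\emph{Probability and the main obstacle.} The $1-O(n^{-10})$ in the statement comes only from whatever event is later used to bound $\|\tilde Y\tilde Y^\top-\hat Y\hat Y^\top\|_\op$; this lemma itself needs only the $1-O(e^{-\sqrt d})$ event, and $e^{-\sqrt d}\le n^{-10}$ because $d=\omega(\log^{16}n)$. The main point to check is that the no-outlier condition \eqref{assu:no-outliers} is not needed for the eigenvalue estimates. I believe it is not, since \Cref{lem:eval_conc} uses only Gaussian operator-norm bounds and interlacing. If the gap estimate were to cause trouble, I would fall back on Weyl's inequality combined with \eqref{YtoX} applied to $\tilde Y^\top\tilde Y$, together with interlacing against the last $d-k$ pure-noise columns as in \eqref{eqn:interlacing_result}. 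Combining the bounds gives a factor $1+O(\mu_1^2/\mu_k^2)\lesssim\mu_1^2/\mu_k^2$ on $\|\tilde Y\tilde Y^\top-\hat Y\hat Y^\top\|_\op$, which is the claimed estimate.
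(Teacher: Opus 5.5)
Your proposal is correct and follows the paper's proof. It uses the same three-term decomposition \eqref{eqn:proj_decomp_YY_hatYY} with $\tilde Y,\tilde\Pi$ in place of $Y,\Pi$, Davis--Kahan (\Cref{DavisKahanForProjections}) for the two projection-difference terms, and \Cref{lem:eval_conc} applied to the exactly rank-$k$ matrix $\tilde Y=X_k+Z$ to get $\|\tilde Y\tilde Y^\top\|_\op\lesssim n\mu_1^2$ and a gap $\gtrsim n\mu_k^2$. Your checks that $X_k$ inherits the hypotheses of that lemma and that the $e^{-\sqrt d}$ failure probability is absorbed into $O(n^{-10})$ spell out what the paper leaves implicit.
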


\begin{proof}[Proof of \Cref{lem:perturbation}]
The argument coincides with that of \Cref{lem:perturbation-low}
up to the final substitution, so we only indicate the changes. Let us use the following shorthand notation, to avoid clutter:
$\tilde Y = X_k + Z$ is the truncated noisy data, and we abbreviate
$\tilde \Pi = \Pi^{(k)}_{\tilde Y}$ and $\hat\Pi = \Pi^{(k)}_{\hat Y}$ for the projections
onto the top $k$ eigenvectors of $\tilde Y\tilde Y^\top$ and
$\hat Y\hat Y^\top$ respectively.
 
The projection decomposition \eqref{eqn:proj_decomp_YY_hatYY} 
and the
term-by-term estimates $(i)$--$(iii)$ hold verbatim with $\tilde Y, \tilde \Pi$ in place of
$Y, \Pi$.
Terms $(i)$--$(ii)$ are controlled through
Davis--Kahan (\Cref{DavisKahanForProjections}) and $(iii)$ directly, yielding 
\[
\norm{\tilde Y_k\tilde Y_k^\top - \hat X\hat X^\top}_\op
  \lesssim
  \norm{\tilde Y\tilde Y^\top - \hat Y\hat Y^\top}_\op
  \left(\frac{\norm{\tilde Y\tilde Y^\top}_\op}
             {\lambda_k(\tilde Y\tilde Y^\top)-\lambda_{k+1}(\tilde Y\tilde Y^\top)}
        +1\right).
\]
$\tilde Y=X_k+Z$ has rank-$k$ signal, thus the eigenvalue estimates of \Cref{YtoX} apply
verbatim to $\tilde Y$, the spectral gap is then $\lambda_k(\tilde Y\tilde Y^\top)-\lambda_{k+1}(\tilde Y\tilde Y^\top)\gtrsim n\mu_k^2$. Substituting then gives \Cref{lem:perturbation}.
\end{proof}

We have the following analogue of \Cref{prop:YY_diff}:

\begin{prop}
\label{prop:YY_diff_gen}
Under the conditions of \Cref{thm:main},
with probability at least $1-O(n^{-10}$),
\[
    \big\|\tilde Y \tilde Y^\top-\hat{Y}\hat{Y}^\top\big\|_\op\lesssim  n\sqrt{d}\mu_1^2\Tr(\Sigma)\log^4(n)+\left(n\frac{\sqrt{\log(1/p)}}{d^{3/2}}+  \sqrt{\frac{n}{dp}}\right)\log^9 n+ \frac{n}{dp\tau}\left(\Tr\Sigma_{k+1:d}\right)^{1/4}.
\]
\end{prop}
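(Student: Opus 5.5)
The plan is to rerun the proof of \Cref{prop:YY_diff} with the truncated model $\tilde Y = X_k+Z$ in place of $Y$. The only change is that the observed adjacency matrix $A$ differs from the truncated adjacency matrix $\tilde A$. Define $\tilde B_\data := p_0\tilde\zeta\tilde\zeta^\top + d_kp_1\tilde Y\tilde Y^\top$ and $\tilde B_\sph := \tilde P_0 + d_k\tilde P_1\odot (VV^\top)$, where all quantities are built from $\tilde y_i$. The rotation reduction \eqref{eqn: data decomp} is legitimate here because $X_k$ has rank exactly $k$. Then $\tilde B_\data$ has rank at most $d+1$, and $B_\spec$ (built from the eigenpairs of the observed $A$) is the best rank-$(d+1)$ approximation of $A$. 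Exactly as in \eqref{eqn:prop_YY_diff_chain},
\begin{align*}
d_kp_1\|\tilde Y\tilde Y^\top-\hat Y\hat Y^\top\|_\op
&\le 2\|A-\tilde B_\data\|_\op + \|\eta_0\mathbf w_0\mathbf w_0^\top - p_0\tilde\zeta\tilde\zeta^\top\|_\op\\
&\le 2\|A-\tilde A\|_\op + 2\|\tilde A-\tilde B_\sph\|_\op + 2\|\tilde B_\sph-\tilde B_\data\|_\op + \|\eta_0\mathbf w_0\mathbf w_0^\top - p_0\tilde\zeta\tilde\zeta^\top\|_\op .
\end{align*}

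The middle two terms are handled verbatim by \Cref{lemma:trace_method} and \Cref{lemma:Bsph_Bdata_diff} applied to the exactly rank-$k$ model $\tilde A$. Their hypotheses still hold for $X_k$, because truncation only decreases $\Tr\Sigma$, preserves the top $k$ singular values, and only shrinks row norms, so \eqref{assu:no-outliers} persists up to $(1+o(1))$. The density parameter is $\tilde p$, which satisfies $\tilde p=(1+o(1))p$ by \Cref{lem:ptp-same}; as noted, the appendix parameter estimates are already proved for the tilde quantities. For the rank-one term I would repeat the proof of \Cref{lem:eigenvector_diff}. Davis--Kahan is now applied to $A$ versus $B_\prj$, and $\|A-B_\prj\|_\op$ acquires the extra summand $\|A-\tilde A\|_\op$. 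The gap $\lambda_1(B_\prj)-\lambda_2(B_\prj)=\Theta(pn)$ is unchanged, so this term is again bounded by the previous bound plus $O(\|A-\tilde A\|_\op)$. Dividing by $d_kp_1\asymp dp\tau$ reproduces the first two groups of terms of \Cref{prop:YY_diff}. It therefore remains to show
\[
\|A-\tilde A\|_\op \lessapprox n(\Tr\Sigma_{k+1:d})^{1/4},
\]
which after division gives the stated term $\frac{n}{dp\tau}(\Tr\Sigma_{k+1:d})^{1/4}$. This is the content of \Cref{prop: Adjacency Matrix Truncation}, which I would prove as follows.

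The main obstacle is this truncation bound, since $A-\tilde A$ is a discontinuous function of the data. Write $\langle y_i,y_j\rangle-\langle\tilde y_i,\tilde y_j\rangle = \langle r_i,r_j\rangle+\langle r_i,\tilde y_j\rangle+\langle \tilde y_i,r_j\rangle =: \delta_{ij}$, where $r_i$ is the $i$-th row of $X_{k+1:d}$. I would bound the operator norm crudely by the Frobenius norm: $\|A-\tilde A\|_\op^2\le\|A-\tilde A\|_F^2=\#\{(i,j): A_{ij}\ne\tilde A_{ij}\}$. An entry can flip only if $|\langle\tilde y_i,\tilde y_j\rangle-\tau|\le|\delta_{ij}|$. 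Fix a window width $\epsilon>0$ and split the count into pairs with $|\delta_{ij}|>\epsilon$ and pairs with $|\langle\tilde y_i,\tilde y_j\rangle-\tau|\le\epsilon$.

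For the first set, conditioning on $X$ and using that $r_i$ is deterministic, $\langle r_i,z_j\rangle$ is Gaussian with variance $\|r_i\|^2/d$. This gives $\sum_{i,j}\delta_{ij}^2\lesssim \|X_{k+1:d}X^\top\|_F^2 + \frac{n}{d}\|X_{k+1:d}\|_F^2$ up to logs, and hence by Markov the number of such pairs is $\lesssim n^2(\Tr\Sigma_{k+1:d})\epsilon^{-2}$ times logarithmic factors. For the second set, the density of $\langle\tilde y_i,\tilde y_j\rangle$ near $\tau$ is $O(\sqrt d\,p\sqrt{\log(1/p)})$ by the Gaussian-tail estimates behind \Cref{lemma:conc_tau}, so the expected count is $\lesssim n^2\epsilon\sqrt{d}\,p\tau d^{1/2}$; a bounded-differences or second-moment argument upgrades this to high probability.

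Balancing the two counts with $\epsilon\asymp(\Tr\Sigma_{k+1:d})^{1/2}$ (adjusted by $d,p$ factors) yields a count of order $n^2(\Tr\Sigma_{k+1:d})^{1/2}$ up to logs, and hence $\|A-\tilde A\|_\op\lessapprox n(\Tr\Sigma_{k+1:d})^{1/4}$. The hypothesis $\Tr\Sigma_{k+1:d}=o(\min(d^{-1},p^3))$ from \eqref{assu:gap-gen} is what I expect is needed to absorb the leftover $p$ and $d$ factors in this balancing, and to keep $\tilde p=(1+o(1))p$. I expect the delicate point to be getting the window-density estimate uniformly, given the anisotropic signal part $\langle \tilde x_i,\tilde x_j\rangle$. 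I would handle it by conditioning on the $a_i$ and $l_i$ and using that $\langle v_i,v_j\rangle$ has the bounded density $\cD_{d_k}$, exactly as in the reduction \eqref{eq:indicator_shift}.
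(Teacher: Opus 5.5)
Your reduction to the single estimate $\|A-\tilde A\|_\op\lessapprox n(\Tr\Sigma_{k+1:d})^{1/4}$ is sound. Your rank-one step (Davis--Kahan of $A$ against $\tilde B_\prj$) differs from the paper's comparison of $\eta_0\mathbf w_0$ with $\tilde\eta_0\tilde{\mathbf w}_0$, but harmlessly. The gap is that your counting argument for $\|A-\tilde A\|_\op$ cannot reach the $1/4$ exponent.

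Write $T:=\Tr\Sigma_{k+1:d}$. Your two counts are $N_1\lesssim \frac{n^2T}{d\epsilon^2}$ and $N_2\lesssim n^2\epsilon\,dp\tau$; the correct factor is $\E\|M\|_F^2\lesssim \frac{n^2}{d}T$, and you dropped the $1/d$. Balancing a term proportional to $T\epsilon^{-2}$ against one proportional to $\epsilon$ always produces $T^{1/3}$, whatever the $d,p$ adjustments. The optimum $\epsilon^3\asymp T/(d^2p\tau)$ gives $N_1+N_2\asymp n^2(Tp^2\log(1/p))^{1/3}$. Hence you only get $\|A-\tilde A\|_\op\le\sqrt{N_1+N_2}\lesssim n(Tp^2\log(1/p))^{1/6}$.

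This is $\lessapprox nT^{1/4}$ only when $T\gtrsim p^4\log^2(1/p)$. That is a \emph{lower} bound on $T$, which the hypothesis $T=o(\min(d^{-1},p^3))$ cannot supply. For smaller $T$, the resulting term $\frac{n}{dp\tau}(Tp^2)^{1/6}$ is not absorbed by the rest of the statement: its ratio to $\sqrt{n/(dp)}$ is of order $\sqrt n\,(T/p)^{1/6}$ up to logarithms.

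The loss sits entirely in the window part. Bounding it through $\|\cdot\|_F$ charges $\sqrt{N_2}\approx n(\epsilon\,dp\tau)^{1/2}$, the square root of a small probability. The missing idea is to separate the two mechanisms before taking norms, and to use a row-sum bound for the window.

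A flip forces $|\delta_{ij}|>\epsilon$ or $|\tilde Q_{ij}-\tau|\le\epsilon$, where $\tilde Q_{ij}=\langle\tilde y_i,\tilde y_j\rangle$. So entrywise $|A-\tilde A|\le E^{(1)}+E^{(2)}$, with $E^{(1)}_{ij}=\1(|\delta_{ij}|>\epsilon)$ and $E^{(2)}_{ij}=\1(|\tilde Q_{ij}-\tau|\le\epsilon)$. Monotonicity of the operator norm on nonnegative matrices then gives $\|A-\tilde A\|_\op\le\|E^{(1)}\|_F+\|E^{(2)}\|_\op$.

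\begin{itemize}
\item For $E^{(1)}$, your Markov bound gives $\|E^{(1)}\|_F\le\|M\|_F/\epsilon\lesssim\frac{n\sqrt T}{\epsilon\sqrt d}$, linear in $1/\epsilon$. Make it high-probability via Gaussian concentration, since $\|M(Z)\|_F$ is $2\sqrt n\mu_{k+1}$-Lipschitz in $Z$.
\item For $E^{(2)}$, use the Schur test: $\|E^{(2)}\|_\op$ is at most the maximal row sum. Conditionally on $z_i$, on $\{\|\tilde y_i\|_2\ge 1/4\}$, the $\tilde Q_{ij}$ ($j\ne i$) are independent Gaussians with density $O(\sqrt d)$. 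Bernstein then gives a row sum $\lesssim n\epsilon\sqrt d+\log n$ w.h.p., which is linear in the window probability.
\end{itemize}

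Choosing $\epsilon=d^{-1/2}T^{1/4}$ gives $nT^{1/4}$. This is precisely the paper's argument, which uses the $1/\delta$-Lipschitz ramp $I_\delta$ instead of the hard split. There, $\|A_\delta-\tilde A_\delta\|_\op\le\|M\|_F/\delta$, and the Schur test handles the residual supported in the window.
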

We defer the proof of this Proposition to \Cref{sec:compare-gen}.

Now that we have all the components from \eqref{eqn:overview_triangle_ineq_split_gen}, we can complete the proof of the theorem.
\begin{proof}[Proof of \Cref{thm:main}]
The passage from $\Xalg$ to $\hat X$ follows from the same argument as in the proof of \Cref{thm:lowrank} using \Cref{lem:p1}.
    The proof is concluded by combining  Lemmas \ref{lem:projection}, \ref{lemma: Y_k matrices difference_gen},  \ref{lem:perturbation} and \Cref{prop:YY_diff_gen} with \eqref{eqn:overview_triangle_ineq_split_gen}.
\end{proof}

\subsection{Comparison Step} 
\label{sec:compare-gen}

Recall that in the proof of \Cref{thm:main}, we used \Cref{prop:YY_diff_gen}, a quantitative control on $\|\tilde Y\tilde Y^\top-\hat Y\hat Y^\top \|_\op$. In this subsection we explain how \Cref{prop:YY_diff_gen} follows from combining the arguments of \Cref{sec:YY_diff} (replacing occurrences of $Y$ with $\tilde Y$) with  a good enough quantitative bound on $\|A-\tilde A\|_\op$ (see \Cref{prop: Adjacency Matrix Truncation}). 

\subsubsection{Truncating the Adjacency Matrix}\label{subsec:truncating_adj_matrix}
In this section, we will show that the observed adjacency matrix $A_{ij}=\1\left(y_i, y_j\rangle\geq \tau\right)$ is well-approximated by adjacency matrix $\tilde A=\1\left(\langle \tilde y_i, \tilde y_j\rangle\geq \tau\right)$, induced by the strictly low-rank data. This is the first component that we need to find to prove \Cref{prop:YY_diff_gen}.

\begin{prop}[Truncation Error]
\label{prop: Adjacency Matrix Truncation}
Under the same assumptions as \Cref{thm:main}, and $\Tr\Sigma_{k+1:d}>0$ (otherwise we reduce to the strictly rank$-k$ case, as we have already done in \Cref{thm:lowrank}). Then, with probability $1- O(n^{-10})$:

\begin{equation}\label{eqn: adjacency truncation bound}
\|A-\tilde A\|_\op \lesssim n\left(\Tr\Sigma_{k+1:d}\right)^{1/4}+ \log n.
\end{equation}

\end{prop}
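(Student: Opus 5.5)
We will bound $\|A-\tilde A\|_\op$ by its Frobenius norm, i.e.\ by counting the entries where $A$ and $\tilde A$ differ. The matrix $A-\tilde A$ has entries in $\{-1,0,1\}$, so $\|A-\tilde A\|_\op\le\|A-\tilde A\|_F=\sqrt{\#\{(i,j):A_{ij}\ne\tilde A_{ij}\}}$. It therefore suffices to show that, with probability $1-O(n^{-10})$, the number $N$ of disagreeing pairs satisfies $N\lesssim n^2(\Tr\Sigma_{k+1:d})^{1/2}+\log^2 n$.

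\textbf{Step 1: size of the perturbation.} Write $e_i:=x_i-\tilde x_i$, the rows of $X_{k+1:d}$. Then
\[
\langle y_i,y_j\rangle-\langle\tilde y_i,\tilde y_j\rangle=\langle e_i,\tilde x_j\rangle+\langle\tilde x_i,e_j\rangle+\langle e_i,e_j\rangle+\langle e_i,z_j\rangle+\langle z_i,e_j\rangle=:\delta_{ij}.
\]
A disagreement at $(i,j)$ forces $|\langle\tilde y_i,\tilde y_j\rangle-\tau|\le|\delta_{ij}|$. Note that $\langle\tilde x_i,e_j\rangle$ need not vanish pointwise, but $X_k^\top X_{k+1:d}=0$. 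Using $\sum_i\|e_i\|^2=n\Tr\Sigma_{k+1:d}$ together with Cauchy--Schwarz and the no-outlier bound \eqref{assu:no-outliers}, we will show
\[
\sum_{i\ne j}\E_Z\,\delta_{ij}^2\lesssim n^2\Tr\Sigma_{k+1:d}\,\big(\Tr\Sigma+\tfrac1d\big)
\]
(the Gaussian terms contribute $\|e_i\|^2/d$, and the data terms are controlled through $\|X_{k+1:d}X^\top\|_F^2\le n\mu_{k+1}^2\|X\|_F^2$ or similar). With $\Tr\Sigma\lesssim d^{-1/2}$, this gives a typical size $|\delta_{ij}|\lesssim s:=(\Tr\Sigma_{k+1:d})^{1/2}d^{-1/4}$, at least in mean square.

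\textbf{Step 2: anti-concentration at the threshold.} Conditionally on $\tilde x$, the variable $\langle\tilde y_i,\tilde y_j\rangle$ contains the term $\langle z_i,z_j\rangle$ of scale $d^{-1/2}$. Hence $\langle\tilde y_i,\tilde y_j\rangle$ has density $\lesssim\sqrt d$ near $\tau$, which we justify via the decomposition \eqref{yalv}, since $\sqrt{d_k}\langle v_i,v_j\rangle$ has density bounded by $O(1)$. To decouple $\delta_{ij}$ from $\langle\tilde y_i,\tilde y_j\rangle$, we split on the threshold $|\delta_{ij}|\le t$:
\[
\Prob(A_{ij}\ne\tilde A_{ij})\le\Prob\big(|\langle\tilde y_i,\tilde y_j\rangle-\tau|\le t\big)+\Prob(|\delta_{ij}|>t)\lesssim\sqrt d\,t+\E\delta_{ij}^2/t^2 .
\]
Summing over pairs and optimizing in $t$ gives $t\asymp(\overline{\E\delta^2}/\sqrt d)^{1/3}$, which leads to a $1/3$ power. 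We would rather get a $1/2$ power on $\Tr\Sigma_{k+1:d}$ in $\E N$. To achieve this we will use a Markov/Chebyshev bound applied to $\sum_{i,j}|\delta_{ij}|$ directly, namely $\E N\lesssim\sqrt d\sum\E|\delta_{ij}|\lesssim n^2\sqrt d\,s$. This requires a conditional density bound for $\langle\tilde y_i,\tilde y_j\rangle$ given $\delta_{ij}$, obtained by conditioning on the components of $z_i,z_j$ along $e_i,e_j$ and using the remaining $d-O(k)$ Gaussian directions. Since $\sqrt d\,s\le(\Tr\Sigma_{k+1:d})^{1/2}$ up to constants, this yields $\E N\lesssim n^2(\Tr\Sigma_{k+1:d})^{1/2}$. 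If the exact exponent fails, the $1/3$ route combined with $\Tr\Sigma_{k+1:d}=o(d^{-1})$ should still give a bound absorbable into the stated one after adjusting constants.

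\textbf{Step 3: concentration of $N$.} $N$ is a U-statistic-type sum of indicators, but they are not independent. Conditionally on $(x_i,z_i)_{i\in S}$, however, the entries of row $i$ are independent across $j$, so $\sum_j\1(A_{ij}\ne\tilde A_{ij})$ is a sum of conditionally independent Bernoullis. Bernstein's inequality with a union bound over $i$ then gives each row count $\le\E+O(\sqrt{n\log n}+\log n)$ with probability $1-n^{-11}$. Summing over rows gives $N\lesssim n^2(\Tr\Sigma_{k+1:d})^{1/2}+n^{3/2}\sqrt{\log n}$. To match the additive $\log n$ in \eqref{eqn: adjacency truncation bound}, we instead bound $\|A-\tilde A\|_\op$ by the maximal row sum, i.e.\ the $\ell_\infty\to\ell_\infty$ norm of a symmetric matrix: $\|A-\tilde A\|_\op\le\max_i\sum_j|A_{ij}-\tilde A_{ij}|$. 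This gives $\lesssim n(\Tr\Sigma_{k+1:d})^{1/2}d^{1/2}\cdot(\ldots)+\log n$ by Bernstein/Chernoff per row, and a per-row version of Step 2 with the $1/4$ exponent arising from optimizing $t$ as above. \textbf{The main obstacle} is Step 2: obtaining a clean conditional anti-concentration bound for $\langle\tilde y_i,\tilde y_j\rangle$ near $\tau$ that is uniform over rows, while $\delta_{ij}$ shares the noise $z_i,z_j$. I expect the threshold-splitting optimization to give precisely the $(\Tr\Sigma_{k+1:d})^{1/4}$ exponent in the row-sum version.
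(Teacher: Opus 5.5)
Your reduction $\|A-\tilde A\|_\op\le\sqrt N$ is sound, and so is the conditional-density idea of Step~2. Conditioning on the projections of $z_i,z_j$ onto $\mathrm{span}(e_i,e_j)$ makes $\delta_{ij}$ measurable, while $\langle\tilde y_i,\tilde y_j\rangle$ keeps a Gaussian part of variance $\gtrsim 1/d$.

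You should, however, note that $\langle\tilde x_i,e_j\rangle=0$ for \emph{all} $i,j$: the rows of $X_k$ and $X_{k+1:d}$ lie in orthogonal right-singular subspaces, so $X_kX_{k+1:d}^\top=0$. This is what gives $\sum_{i\ne j}\E\delta_{ij}^2\lesssim\frac{n^2}{d}\Tr\Sigma_{k+1:d}$. With your $s=(\Tr\Sigma_{k+1:d})^{1/2}d^{-1/4}$, the asserted $\sqrt d\,s\lesssim(\Tr\Sigma_{k+1:d})^{1/2}$ is off by a factor $d^{1/4}$.

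Your fallback is also unavailable. Threshold-splitting gives at best $N\lesssim n^2(\Tr\Sigma_{k+1:d})^{1/3}$, i.e.\ $\|A-\tilde A\|_\op\lesssim n(\Tr\Sigma_{k+1:d})^{1/6}$. This is \emph{larger} than $n(\Tr\Sigma_{k+1:d})^{1/4}$ because $\Tr\Sigma_{k+1:d}\to0$, and there is no power of $d$ to trade.

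Even repaired, the Frobenius route yields only $\E N\lesssim n^2(\Tr\Sigma_{k+1:d})^{1/2}$. Row-wise Bernstein plus a union bound upgrades this to $N\lesssim n^2(\Tr\Sigma_{k+1:d})^{1/2}+n\log n$ with probability $1-O(n^{-10})$. That is an additive $\sqrt{n\log n}$ in the norm, not $\log n$.

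The genuine gap is the switch to $\max_i\sum_j|A_{ij}-\tilde A_{ij}|$. The count in row $i$ is driven by the tail energy $\|e_i\|$ of that single row; for fixed $p$ its conditional mean is of order $n\|e_i\|$. The hypotheses bound $\|e_i\|^2$ only by $\|x_i\|^2\lesssim\Tr\Sigma$ via \eqref{assu:no-outliers}, not by $\Tr\Sigma_{k+1:d}$.

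Here is a concrete failure. Let the rows $x_2,\dots,x_n$ carry the top-$k$ signal, and let $x_1$ be orthogonal to their span with $\|x_1\|^2\asymp\Tr\Sigma\asymp d^{-1/2}\log^{-2}n$. Then $\Tr\Sigma_{k+1:d}=\|x_1\|^2/n$ satisfies \eqref{assu:gap-gen}. Yet row $1$ has $\asymp nd^{-1/4}/\log n$ disagreements. This exceeds $n(\Tr\Sigma_{k+1:d})^{1/4}+\log n\asymp n^{3/4}d^{-1/8}(\log n)^{-1/2}$ by a factor $\gtrsim n^{1/8}(\log n)^{-1/2}$. So no per-row optimization of $t$ can produce the stated bound.

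The missing idea is the paper's mollification: replace $\1(\cdot\ge\tau)$ by the $\delta^{-1}$-Lipschitz ramp $I_\delta$.
\begin{itemize}
\item The smooth part satisfies $\|A_\delta-\tilde A_\delta\|_F\le\delta^{-1}\|M\|_F$ with $M=Q-\tilde Q$. This sees only the global quantity $\|X_{k+1:d}\|_F^2=n\Tr\Sigma_{k+1:d}$ and concentrates by Gaussian Lipschitz concentration, giving $\frac{n}{\delta\sqrt d}(\Tr\Sigma_{k+1:d})^{1/2}$.
\item The Schur test is applied only to the remainder. Its entries are dominated by the window indicators $\1(Q_{ij}\in[\tau-\delta,\tau])$ and $\1(\tilde Q_{ij}\in[\tau-\delta,\tau])$. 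Their row sums are $\lesssim\delta\sqrt d\,n$ uniformly in $i$, because $Q_{ij}$ given $z_i$ has density $O(\sqrt d)$ regardless of $e_i$.
\end{itemize}
Taking $\delta=d^{-1/2}(\Tr\Sigma_{k+1:d})^{1/4}+C\log n/(\sqrt d\,n)$ balances the two terms and produces exactly $n(\Tr\Sigma_{k+1:d})^{1/4}+\log n$.
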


To prove this proposition, we first define a continuous approximation to the indicator function. For $\delta>0$, let:
\[
I_\delta(x)= \begin{cases} 0 \quad &\text{ if } x<\tau-\delta,\\
\frac{1}{\delta} (x - \tau + \delta) \quad &\text{ if } \tau-\delta \leq x < \tau,\\
1 \quad &\text{ if } x\geq \tau.
\end{cases}
\]
Note that $I_\delta$ is $(1/\delta)$-Lipschitz and bounded between 0 and 1. Let $A_\delta, \tilde A_\delta$ be matrices with entries $I_\delta(\langle y_i,y_j\rangle)$, $I_\delta(\langle \tilde y_i,\tilde y_j\rangle)$, respectively,
where we recall that $\tilde y_i = \Pi^{(k)}_{X}x_i +z_i$. We split the error as follows:
\begin{equation}\label{AtA-split}
\|A-\tilde A\|_\op\leq \|A_\delta-\tilde A_\delta\|_\op+\|A-A_\delta+\tilde A_\delta-\tilde A\|_\op.
\end{equation}

\begin{lemma}[Lipschitz Approximation Error]
\label{lem:ALip}
Under the same conditions as \Cref{thm:main}, with probability at least $1-O(n^{-10})$:
\begin{equation}
    \|A_\delta -\tilde A_\delta\|_\op \lesssim \frac{n}{\delta \sqrt d} (\Tr \Sigma_{k+1:d})^{1/2} 
    \qquad 
    \forall \delta>0. 
\end{equation}
\end{lemma}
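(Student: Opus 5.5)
We bound the operator norm by the Frobenius norm and use the Lipschitz property of $I_\delta$ entrywise. Since $|I_\delta(s)-I_\delta(t)|\le \delta^{-1}|s-t|$,
\[
\|A_\delta-\tilde A_\delta\|_\op^2\le \|A_\delta-\tilde A_\delta\|_F^2\le \frac{1}{\delta^2}\sum_{i,j}\big(\langle y_i,y_j\rangle-\langle \tilde y_i,\tilde y_j\rangle\big)^2 = \frac1{\delta^2}\|YY^\top-\tilde Y\tilde Y^\top\|_F^2 .
\]
Note that $I_\delta$ applies to all entries, including the diagonal; if the diagonal of $A$ is set to zero, simply drop those terms, which only helps. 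This reduces the lemma to the deterministic-plus-Gaussian estimate $\|YY^\top-\tilde Y\tilde Y^\top\|_F\lesssim n\,d^{-1/2}(\Tr\Sigma_{k+1:d})^{1/2}$, uniformly in $\delta$, on a single event.

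Writing $E:=X_{k+1:d}=X-X_k$, we have $Y=\tilde Y+E$, so
\[
YY^\top-\tilde Y\tilde Y^\top = EE^\top + E\tilde Y^\top+\tilde YE^\top = EE^\top + EX_k^\top + X_kE^\top + EZ^\top+ZE^\top .
\]
The rows of $X_k$ lie in the span of the top $k$ right singular vectors of $X$, while the rows of $E$ lie in its orthogonal complement. Hence $EX_k^\top=0$, and these cross terms vanish identically. For the remaining terms we use $\|MN\|_F\le \|M\|_F\|N\|_\op$.

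First, $\|EE^\top\|_F\le \|E\|_F\|E\|_\op = \sqrt{n\Tr\Sigma_{k+1:d}}\cdot \sqrt n\mu_{k+1}$. By \eqref{assu:gap-gen}, $\mu_{k+1}^2\le \sum_{i>k}\mu_i^2=o(d^{-1})$, so $\mu_{k+1}\le d^{-1/2}$ and this term is $\le n d^{-1/2}(\Tr\Sigma_{k+1:d})^{1/2}$.

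Second, $\|EZ^\top\|_F\le \|E\|_F\|Z\|_\op$. On the event of probability $1-O(e^{-d})$ from \eqref{ZZnorm}, or \cite[Theorem 4.4.5]{Vershynin2018}, we have $\|Z\|_\op\lesssim \sqrt{n/d}$ because $d<n$. Therefore $\|EZ^\top\|_F\lesssim \sqrt{n\Tr\Sigma_{k+1:d}}\sqrt{n/d}=n d^{-1/2}(\Tr\Sigma_{k+1:d})^{1/2}$, and the same bound holds for $ZE^\top$.

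Summing the three contributions gives the claimed bound. The only probabilistic input is the bound on $\|Z\|_\op$, whose failure probability $e^{-cd}$ is at most $n^{-10}$ since $d=\omega(\log^{16}n)$. The event does not depend on $\delta$, so the bound holds simultaneously for all $\delta>0$. I do not expect a serious obstacle. The one step needing care is the orthogonality $EX_k^\top=0$, which removes the potentially large term $\|E\|_F\|X_k\|_\op\sim n\mu_1(\Tr\Sigma_{k+1:d})^{1/2}$; without it we would lose the $d^{-1/2}$ factor.
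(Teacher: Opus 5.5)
Your proof is correct. It follows the paper's skeleton: pass to the Frobenius norm, use the $1/\delta$-Lipschitz property of $I_\delta$ entrywise, and write $YY^\top-\tilde Y\tilde Y^\top=EE^\top+EZ^\top+ZE^\top$ with $E=X_{k+1:d}$. You also make explicit the orthogonality $EX_k^\top=0$, which the paper uses tacitly when it writes down $M$.

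Where you differ is the probabilistic step.
\begin{itemize}
\item \emph{The paper:} it treats $\|M(Z)\|_F$ as a $2\sqrt n\,\mu_{k+1}$-Lipschitz function of the Gaussian matrix $Z$. It bounds the mean by $\E\|M\|_F\le(\sqrt n\,\mu_{k+1}+2\sqrt{n/d})\|E\|_F$, using $\E\|EZ^\top\|_F^2=\frac nd\|E\|_F^2$. It then applies Gaussian concentration with deviation $t\asymp\sqrt{n/d}\,\mu_{k+1}\log n$, which is of lower order.
\item \emph{You:} you bound $\|EZ^\top\|_F\le\|E\|_F\|Z\|_{\op}$ deterministically. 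The only randomness you need is the single event $\|Z\|_{\op}\lesssim\sqrt{n/d}$, which already follows from \eqref{ZZnorm}.
\end{itemize}

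Because $\|Z\|_{\op}\asymp\sqrt{n/d}$ when $d<n$, your route loses nothing in the order of the bound. It avoids concentration for Lipschitz functions entirely, reuses an event the paper already works on, has failure probability $e^{-cd}$, and is visibly uniform in $\delta$. The paper's route gets the leading term $\sqrt{n/d}\,\|E\|_F$ from an exact second-moment computation rather than through $\|Z\|_{\op}$. That gives a clean constant and shows the fluctuations are governed by the small quantity $\mu_{k+1}$. For this lemma, that extra precision is not needed.
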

\begin{proof}
We will use the Frobenius norm to control the operator norm since $\|A_\delta -\tilde A_\delta\|_\op\leq \|A_\delta -\tilde A_\delta\|_F$. Then, explicitly writing out the component-wise difference, we obtain:
\begin{equation}\label{eqn:Mij}
\begin{aligned}
|I_\delta(\langle y_i,y_j\rangle)- I_\delta (\langle \tilde y_i, \tilde y_j\rangle)| &\leq \frac{1}{ \delta}|\langle y_i,y_j\rangle-\langle \tilde y_i,  \tilde y_j\rangle| =:\frac1{\delta} M_{ij}, 
\end{aligned}
\end{equation}
and thus $\|A_\delta-\tilde A_\delta\|_F \leq \frac1{\delta}\|M\|_F$. We can express the matrix $M$ as a function of the noise vectors $M=M(Z)=\frac{1}{ \delta}\left(X_{k+1:d} X_{k+1:d}^\top+ X_{k+1:d}Z^\top + ZX_{k+1:d}^\top\right)$, where $Z$ contains the noise vectors as rows and $X_{k+1:d} = X-X_k=(I-\Pi^{(k)}_{X})X$ is the projection onto the tail eigenspace.

Note that for different inputs $W,W'\in \R^{n\times d}$, the reverse triangle inequality and the non-commutative H\"older inequality yield:
\begin{align*}
    \big|\|M(W)\|_F - \|M(W')\|_F\big| &\leq \|M(W)-M(W')\|_F\\
    &=  \|X_{k+1: d}(W-W')^\top + (W-W')X_{k+1:d}^\top\|_F\\
    &\leq
    2\sqrt{n}\mu_{k+1}
    \|W-W'\|_F.
\end{align*}
Therefore $\|M(Z)\|_F$ is a $2\sqrt{n}\mu_{k+1}$-Lipschitz function (with respect to the Euclidean Frobenius metric) of Gaussian noise with variance $\frac{1}{d}$.
From Gaussian concentration (see \cite[Theorem 5.2.3 ]{Vershynin2018}) it follows
\begin{equation*}
  \Prob(\|M\|_F - \mathbb{E}\|M\|_F\geq t ) \leq \exp\left(-\frac{c t^2 d}{n\mu^2_{k+1}}\right)
\end{equation*}
for any $t>0$.
Moreover, it is straightforward to control the expectation:
\begin{align}
\mathbb{E}\|M\|_F &\leq 
\| X_{k+1:d}X_{k+1:d}^\top\|_F + 2 \E\|X_{k+1:d} Z^\top \|_F  
\notag\\
&\leq 
\bigg(\sqrt{n}\mu_{k+1} + 2\sqrt{\frac{n}{d}}\bigg) \|X_{k+1:d}\|_F \notag\\
&\lesssim \frac{n}
{\sqrt{d}}
\sqrt{
\sum_{l > k}\mu_l^2}.\label{eqn:expectation_lipschitz_part}
\end{align}
The lemma follows by setting $t=C\sqrt{n/d}\mu_{k+1}\log n$ for a sufficiently large constant $C$, 
and noting that $\mu_{k+1}\log n\le \sqrt{n\Tr\Sigma_{k+1:d}}$.
\end{proof}

Now, we want to control the other part of the triangle inequality in \eqref{AtA-split}, which captures the discontinuity of the hard indicator. Let $g(x)= \1(x\geq \tau)-I_\delta(x)$, $Q_{ij}=\langle y_i,y_j\rangle$, and $\tilde Q_{ij}=\langle \tilde y_i,\tilde y_j\rangle$. 
Let $H$ be the symmetric matrix with entries $(A-A_\delta+\tilde A_\delta-\tilde A)_{ij} = g(Q_{ij})- g(\tilde Q_{ij})$.
By the Schur test,
\begin{equation}    \label{bd:Schur}
    \|H\|_\op \le \max_{1\le i\le n} \sum_{j=1}^n |H_{ij}|
\end{equation}
Note that $0\le |g(x)|\le \1(x\in [\tau-\delta,\tau])$. 
Thus, with $D_{ij}:= \1(Q_{ij}\in [\tau-\delta,\tau])$ and $\t D_{ij}:= \1(\t Q_{ij}\in [\tau-\delta,\tau])$, we have
\begin{equation}
    \label{bd:Hij}
    |H_{ij}|\le 2 (D_{ij} + \t D_{ij})\qquad \forall i, j\in[n].
\end{equation}

\begin{claim}   \label{claim:Dij}
    For any $\delta>0$, with probability $1-O(e^{-cd}) + O(ne^{-c\delta \sqrt{d}n})$,
    \begin{equation}    \label{bd:Dij}
        \max_{1\le i\le n} \sum_{j=1}^n D_{ij} \ls \delta\sqrt{d}n\,.
    \end{equation}
    The same holds with $\tilde D$ in place of $D$.
\end{claim}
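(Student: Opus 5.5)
We prove the claim by conditioning on row $i$ and using the fact that, given $y_i$, the inner products $Q_{ij}$, $j\neq i$, are conditionally independent with densities bounded by $O(\sqrt d)$ (uniformly in $y_i$, on a good event). A Chernoff bound then handles each row and a union bound handles all rows.

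\textbf{Step 1: the good event for $y_i$.} Condition on $z_i$, and hence on $y_i$. For $j\ne i$ we have
\[
Q_{ij}=\langle y_i,x_j\rangle+\langle y_i,z_j\rangle,
\]
where $\langle y_i,z_j\rangle\sim\mathcal N(0,\|y_i\|_2^2/d)$ and $z_j$ is independent of $y_i$. Let $\mathcal E$ be the event that $\|z_i\|_2\ge 1/2$ for every $i$. Standard Gaussian norm concentration gives $\Prob(\|z_i\|_2<1/2)\le e^{-cd}$, and $d=\omega(\log^{16}n)$ makes the union bound over $i$ harmless. (We accept the stated $1-O(e^{-cd})$, or more precisely $1-O(ne^{-cd})$, which is absorbed.)

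The noise term alone does not bound $\|y_i\|_2$ from below, so we use the signal bound. By \eqref{assu:trace} and \eqref{assu:no-outliers}, $\|x_i\|_2\lesssim (\Tr\Sigma)^{1/2}=o(1)$, hence $\|y_i\|_2\ge 1/4$ on $\mathcal E$. Conditional on $y_i$, each $Q_{ij}$ is Gaussian with standard deviation at least $1/(4\sqrt d)$. Its density is therefore at most $C\sqrt d$, so
\[
\Prob\big(Q_{ij}\in[\tau-\delta,\tau]\,\big|\,y_i\big)\le C\delta\sqrt d .
\]
The same bound holds for $\tilde Q_{ij}$, because $\tilde y_i=\Pi_X^{(k)}x_i+z_i$ has $\|\Pi^{(k)}_Xx_i\|_2\le\|x_i\|_2$.

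\textbf{Step 2: Chernoff bound for one row.} Given $y_i$, the indicators $D_{ij}$, $j\ne i$, are independent Bernoulli variables with means at most $C\delta\sqrt d$, since they depend on the independent $z_j$. The diagonal contributes at most $1$. By the multiplicative Chernoff bound in the form $\Prob(S\ge 2\mu')\le e^{-c\mu'}$ with $\mu':=C\delta\sqrt d\, n$ (which dominates $\E S$ by stochastic domination), we get
\[
\Prob\Big(\sum_j D_{ij}\ge 2C\delta\sqrt d\,n+1\ \Big|\ y_i\Big)\le e^{-c\delta\sqrt d\, n}.
\]

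\textbf{Step 3: union bound.} Taking a union bound over $i\in[n]$ gives the failure probability $O(ne^{-c\delta\sqrt d n})+O(e^{-cd})$. The additive $1$ is absorbed whenever $\delta\sqrt d\,n\gtrsim1$; otherwise the bound is vacuous and the claim can be interpreted with the $+1$, or applied later only with $\delta$ large. This yields \eqref{bd:Dij}, and the identical argument gives the bound for $\tilde D$.

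\textbf{Main obstacle.} The difficulty is the uniform density bound, i.e. a lower bound on $\|y_i\|_2$ that holds over all rows simultaneously, together with the conditional independence across $j$. Both are secured by conditioning on $z_i$ alone and using the small-signal assumption.
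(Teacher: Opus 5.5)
Your proposal is correct and follows essentially the same route as the paper. You condition on $z_i$ and use $\|y_i\|_2\ge 1/4$ (from $\|x_i\|_2=o(1)$ and $\|z_i\|_2\ge 1/2$) to bound the conditional Gaussian density of $Q_{ij}$ by $O(\sqrt d)$; you then apply a Chernoff bound (the paper uses Bernstein) to the conditionally independent indicators and union bound over rows. Your explicit treatment of the diagonal term $D_{ii}$, which is absorbed whenever the probability bound is nonvacuous, tidies a point the paper glosses over.
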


We first conclude the proof of \Cref{prop: Adjacency Matrix Truncation} on the claim. 

\begin{proof}[Proof of \Cref{prop: Adjacency Matrix Truncation}]
Let $\delta>0$ to be chosen.
From \eqref{AtA-split}, \Cref{lem:ALip}, \eqref{bd:Hij} and \Cref{claim:Dij}, 
\[
\|A-\t A\|_\op \ls \frac n{\delta\sqrt{d}} (\Tr \Sigma_{k+1:d} )^{1/2}+ \delta\sqrt{d}n
\]
with probability $1-O(n^{-10} + e^{-cd}+ e^{-c\delta\sqrt{d}n})$. Taking $\delta  = d^{-1/2}(\Tr \Sigma_{k+1:d})^{1/4}+ C\frac{\log n}{\sqrt{d}n}$ for a sufficiently large constant $C$ concludes the proof. 
\end{proof}

\begin{proof}[Proof of \Cref{claim:Dij}]
We only give the argument for $D$; the argument for $\tilde D$ being almost identical. 

Fixing an arbitrary $i\in[n]$, note that conditional on $z_i$, the $n-1$ variables $(Q_{ij})_{j\ne i}$ are independent and Gaussian, with $Q_{ij}$ having mean $\langle y_i,x_j\rangle$ and variance $\|y_i\|_2^2/d$. 
In particular, the conditional density of $Q_{ij}$ given $z_i$ is bounded by $O(\sqrt{d}/\|y_i\|_2)$. Letting $\cE_i=\{ \|y_i\|_2\ge \frac14\}$, we have that on $\cE_i$ the variables $(D_{ij})_{j\ne i}$ are independent Bernoulli variables with expectations $O(\delta\sqrt{d})$. From Bernstein's inequality it follows $\1(\cE_i)\sum_{j=1}^n D_{ij} \ls \delta\sqrt{d}n$ with probability $1-O( \exp( - c\delta \sqrt{d}n))$. 
Furthermore, we have $\P( \cE_i) \ge 1-e^{-cd}$ for all $i\in[n]$.
Indeed, from \eqref{assu:no-outliers} we have $\|x_i\|_2 \ls \sqrt{\Tr \Sigma} = o(1)$, while with probability at least $1-e^{-cd}$, $\|z_i\|_2 \in [\frac12,2]$, and hence $\|y_i\|_2 \ge \|z_i\|_2 - \|x_i\|_2 \ge \frac12-o(1)$.
By the union bound, for $C$ sufficiently large we have
\begin{align*}
    \P \bigg( \max_{1\le i\le n} \sum_{j=1}^n D_{ij} > C\delta\sqrt{d}n\bigg) 
    &\le \sum_{i=1}^n \P( \cE_i^c) + \P \bigg( \1(\cE_i)\sum_{j=1}^n D_{ij} > C\delta\sqrt{d}n\bigg)\\
    &\ls n (e^{-cd} + e^{-c\delta\sqrt{d}n}) \\
    &\ls e^{-cd} + ne^{-c\delta\sqrt{d}n}
\end{align*}
where in the last bound we used that $d=\omega(\log n)$. 
\end{proof}

In some of the parameter estimates for the truncated case we will approximate $p$ with $\tilde p$, which we justify below.
\subsubsection{Estimation for Truncated Parameters}
\label{subsec:trunc_param_est}

In this part of the subsection, we show that there is asymptotically (up to leading order) no difference between the parameters of $A$ and $\tilde A$. We begin with $\tilde p$:
\begin{lemma}
\label{lem:ptp-same}
    Under the same conditions as \Cref{prop: Adjacency Matrix Truncation}, we have that
    \[(1+o(1))p=\tilde p.\]
\end{lemma}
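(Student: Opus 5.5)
Here $\tilde p$ is the average edge probability of $\tilde A$ at the same threshold $\tau$, i.e.\ $\tilde p=\binom n2^{-1}\sum_{i<j}\P(\langle\tilde y_i,\tilde y_j\rangle\ge\tau)$. The plan is to compare the two averages pair by pair. It suffices to show
\[
\frac{1}{\binom n2}\sum_{i<j}\big|\P(Q_{ij}\ge\tau)-\P(\tilde Q_{ij}\ge\tau)\big| = o(p).
\]

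\textbf{Step 1 (approximation by the smoothed indicator).} For each pair, bound
\[
|\P(Q_{ij}\ge\tau)-\P(\tilde Q_{ij}\ge\tau)| \le \E|I_\delta(Q_{ij})-I_\delta(\tilde Q_{ij})| + \P(Q_{ij}\in[\tau-\delta,\tau]) + \P(\tilde Q_{ij}\in[\tau-\delta,\tau]).
\]
For the two window probabilities, condition on $z_i$ exactly as in the proof of \Cref{claim:Dij}. On the event $\cE_i=\{\|y_i\|_2\ge 1/4\}$, the variable $Q_{ij}$ is Gaussian with variance $\|y_i\|_2^2/d$, so its density is $O(\sqrt d)$. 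The complement $\cE_i^c$ has probability $e^{-cd}$. Hence each window term is $O(\delta\sqrt d+e^{-cd})$.

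\textbf{Step 2 (the Lipschitz term).} Averaging over pairs, the Lipschitz term is at most $\delta^{-1}\binom n2^{-1}\E\sum_{i<j}M_{ij}$. By Cauchy--Schwarz this is at most $\delta^{-1}n^{-1}\E\|M\|_F$ up to constants. Using \eqref{eqn:expectation_lipschitz_part}, this is $\lesssim (\delta\sqrt d)^{-1}(\Tr\Sigma_{k+1:d})^{1/2}$.

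\textbf{Step 3 (choice of $\delta$).} Optimizing as in the proof of \Cref{prop: Adjacency Matrix Truncation}, take $\delta=d^{-1/2}(\Tr\Sigma_{k+1:d})^{1/4}$. The total discrepancy is then
\[
|p-\tilde p|\lesssim (\Tr\Sigma_{k+1:d})^{1/4}+e^{-cd}.
\]
By \eqref{assu:gap-gen}, $\Tr\Sigma_{k+1:d}=\sum_{i>k}\mu_i^2=o(p^3)$. This gives $(\Tr\Sigma_{k+1:d})^{1/4}=o(p^{3/4})$, which is not $o(p)$ when $p$ is small, so the crude window bound is the main obstacle.

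\textbf{Step 4 (sharpening the window bound).} The fix is to account for the fact that $\tau$ sits in the Gaussian tail. Conditionally, the density of $Q_{ij}$ near $\tau$ is not $\sqrt d$ but about $\sqrt d\, e^{-d\tau^2/2}\asymp p\sqrt{d\log(1/p)}$, by \Cref{lemma:conc_tau} and the concentration of $\langle a_i,a_j\rangle$ and $l_i$ from \Cref{lemma:ai_aj_li_lj_conc}. With this density the window terms become $O(\delta\sqrt d\,p\sqrt{\log(1/p)})$.

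The Lipschitz term must be localized in the same way. A difference $I_\delta(Q_{ij})-I_\delta(\tilde Q_{ij})$ is nonzero only if $Q_{ij}$ or $\tilde Q_{ij}$ lies above $\tau-\delta$, an event of probability $O(p)$. Apply Cauchy--Schwarz on this event. The entries of $M$ have second moments $O\big((\Tr\Sigma_{k+1:d})^2+\Tr\Sigma_{k+1:d}/d\big)$, using the no-outlier bound \eqref{assu:no-outliers} applied to the tail projections. This gives the Lipschitz term as $O\big(\delta^{-1}\sqrt{p\,\Tr\Sigma_{k+1:d}/d}\big)$.

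Balancing with $\delta\asymp d^{-1/2}(\Tr\Sigma_{k+1:d}/p)^{1/4}$ yields
\[
|p-\tilde p|\lesssim p^{3/4}\sqrt{\log(1/p)}\,(\Tr\Sigma_{k+1:d})^{1/4}=o(p^{3/2}\sqrt{\log(1/p)})=o(p),
\]
using $\Tr\Sigma_{k+1:d}=o(p^3)$. Adding the $e^{-cd}$ error, which is $o(p)$ since $d=\omega(\log^{16}n)$ and $p\ge n^{-1}$, gives $\tilde p=(1+o(1))p$.
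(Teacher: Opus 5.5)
Your final argument (Step 4) is essentially sound, but it takes a different and heavier route than the paper, and the obstacle you hit in Step 3 can be avoided. The paper uses the three-indicator bound $|\1(Q_{ij}\ge\tau)-\1(\tilde Q_{ij}\ge\tau)|\le\1(|Q_{ij}-\tilde Q_{ij}|\ge s)+\1(|Q_{ij}-\tau|<s)+\1(|\tilde Q_{ij}-\tau|<s)$. It controls the first term by a second-moment (Chebyshev) bound rather than your first-moment Lipschitz bound. With $T:=\Tr\Sigma_{k+1:d}$, \eqref{eq:markov_trunc_inner_prod} gives $\sum_{i<j}\P(|M_{ij}|\ge s)\le s^{-2}\E\|M\|_F^2\lesssim s^{-2}n^2T/d$. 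Combined with only the crude small-ball bound $O(s\sqrt d+e^{-cd})$, this yields $|p-\tilde p|\lesssim s\sqrt d+T/(s^2d)+e^{-cd}$. Choosing $s=d^{-1/2}T^{1/3}$ gives $T^{1/3}+e^{-cd}=o(p)$, which is exactly why the hypothesis reads $T=o(p^3)$. Your exponent $1/4$ comes from working with $\E|M_{ij}|$ instead of $\E M_{ij}^2$.

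Your Step 4 instead extracts a factor $p\sqrt{\log(1/p)}$ from the Gaussian tail in the window probabilities, and localizes the Lipschitz error to an $O(p)$-probability event. This buys a sharper bound, $|p-\tilde p|\lesssim p^{3/4}T^{1/4}\sqrt{\log(1/p)}$, which needs only about $T=o(p/\log^2(1/p))$. The price is conditional density estimates for $Q_{ij},\tilde Q_{ij}$ given $z_i$, uniform over pairs on good events; these come from the Gaussian-quantile relation of \Cref{lemma:tau_gaussian_quant} and the Mills ratio.

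Once you make that investment, the comparison through $I_\delta$ is not even needed. The estimate \eqref{taup-goal1} uses only \eqref{assu:trace} and \eqref{assu:no-outliers}, and the truncated points $\Pi^{(k)}_Xx_i$ inherit both. Hence $\P(Q_{ij}\ge\tau)$ and $\P(\tilde Q_{ij}\ge\tau)$ are both $(1+o(1))\Psi(\sqrt d\tau)$ uniformly in $i\ne j$. This uses $(1+\sqrt d\tau)\cE_1=o(1)$, with $\cE_1$ as in \Cref{lem:p1}, and $e^{-c\log^2 n}=o(p)$.

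Two steps in Step 4 need repair.

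\textbf{Per-entry second moments.} The bound $\E M_{ij}^2=O(T^2+T/d)$ does not follow from \eqref{assu:no-outliers}. That hypothesis bounds $\|x_i\|_2^2$ by $\Tr\Sigma$, not the tail components $\|(I-\Pi^{(k)}_X)x_i\|_2^2$ by $T$, and the tail energy can be carried by a few points. Only the pair-average matters, however, so apply Cauchy--Schwarz jointly over pairs and randomness:
\[
\binom n2^{-1}\sum_{i<j}\E\big[|M_{ij}|\1_{E_{ij}}\big]\le\Big(\binom n2^{-1}\sum_{i<j}\E M_{ij}^2\Big)^{1/2}\Big(\binom n2^{-1}\sum_{i<j}\P(E_{ij})\Big)^{1/2}.
\]
The first factor is $\lesssim (T/d)^{1/2}$ by \eqref{eq:markov_trunc_inner_prod}.

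\textbf{The event $E_{ij}$.} This event includes $\{\tilde Q_{ij}\ge\tau-\delta\}$, whose pair-averaged probability is $\tilde p$ plus a window term, i.e.\ the unknown itself. You must bound $\P(\tilde Q_{ij}\ge\tau-\delta)=O(p)$ directly from the conditional Gaussian tail of the truncated model. Alternatively, close the loop with a bootstrap inequality in $|p-\tilde p|$; the resulting extra term is $O(\sqrt{Tp})=o(p)$.

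With these two repairs your argument is valid.
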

\begin{proof}
    We use the same notation as in the proof of \Cref{prop: Adjacency Matrix Truncation}. We will use that for any $s>0$, 
    \begin{equation}\label{eqn: entrywise_p_tildep_UB}
    |\1(Q_{ij}\geq \tau)- \1(\tilde Q_{ij}\geq \tau)|\leq \1[|Q_{ij}-\tilde Q_{ij}|\geq s ] + \1[|Q_{ij}-\tau|<s]+ \1[|\tilde{Q}_{ij}-\tau|<s]\,.    
    \end{equation}
    Fixing $s>0$ to be optimized later, we have
    \begin{equation}\label{eqn:p_trunc_computation}
\begin{aligned}
    |p-\tilde p|&= \bigg|\binom{n}{2}^{-1}\sum_{i<j}^n \Prob\left(\langle y_i,y_j\rangle \geq \tau\right)-\Prob\left(\langle \tilde y_i,\tilde y_j\rangle \geq \tau\right)\bigg|\\
    &\leq \binom{n}{2}^{-1}\sum_{i<j}^n \E\big|\1\left(\langle y_i,y_j\rangle \geq \tau\right)-\1\left(\langle \tilde y_i,\tilde y_j\rangle \geq \tau\right)\big|\\
    &\leq \binom{n}{2}^{-1}\sum_{i<j}^n  \Prob(|Q_{ij}-\tau|\leq s) +\Prob(|Q_{ij}-\tilde Q_{ij}|\geq s)+ \Prob(|\tilde Q_{ij}-\tau|<s)\qquad \text{by }\eqref{eqn: entrywise_p_tildep_UB}.
\end{aligned}
\end{equation}
All that is left is to estimate the probabilities on the right hand side above. Recall from the proof of \Cref{claim:Dij} that 
\begin{equation} \label{eqn: small_ball_expectation}
\Prob(|Q_{ij}-\tau|\leq s), \Prob(|\tilde Q_{ij}-\tau|\leq s) \lesssim s\sqrt{d} + e^{-cd}.\end{equation}
Then, by Markov's inequality,
\begin{align*}
    \sum_{i<j}^n\Prob(|Q_{ij}-\tilde Q_{ij}|\geq s)&\leq\frac{1}{s^2}\sum_{i<j}^n\E(|Q_{ij}-\tilde Q_{ij}|^2) = \frac{1}{s^2}\sum_{i<j}^n\E\big(M_{ij}^2\big),
\end{align*}
with $M=Q-\t Q = X_{k+1:d}X_{k+1:d}^\top + X_{k+1:d}Z^\top + ZX_{k+1:d}^\top$ as in 
\eqref{eqn:Mij}.
We have
\begin{equation}\label{eq:markov_trunc_inner_prod}
\begin{aligned}
\E\norm{M}_F^2
    &\ls \norm{X_{k+1:d}X_{k+1:d}^\top}_F^2 + \E \|X_{k+1:d}Z^\top\|_F^2\\
    &=\norm{X_{k+1:d}X_{k+1:d}^\top}_F^2 +  \frac nd\norm{X_{k+1:d}}_F^2 \\
&\le (\|X_{k+1:d}\|_F^2 + \frac nd) \|X_{k+1:d}\|_F^2
\ls \frac nd \|X_{k+1:d}\|_F^2
\end{aligned}    
\end{equation}
where in the final bound we used our assumption \eqref{assu:gap-gen} that $\frac1n \|X_{k+1:d}\|_F^2 = \Tr\Sigma_{k+1:d}=o(1/d)$.
Combining with 
\eqref{eqn:p_trunc_computation} yields
\[
|p-\tilde p|\lesssim s\sqrt{d}+ e^{-cd}+ \frac{1}{s^2d}
\Tr \Sigma_{k+1:d}
\,.\]
Optimizing over $s$ 
gives 
$s= \frac1{\sqrt{d}}(\Tr\Sigma_{k+1:d})^{1/3}$, which by plugging into the right hand side of our bound, and taking dominant terms yields 
\[|p-\tilde p|\lesssim (\Tr\Sigma_{k+1:d})^{1/3}
+e^{-cd}=o(p), \text{ by }\eqref{assu:gap-gen}\]
as desired.

\end{proof}

\subsubsection{Proof of \texorpdfstring{\Cref*{prop:YY_diff_gen}}{Proposition 4.2}}

Recall that in \Cref{prop:YY_diff}, we controlled the quantity $\|YY^\top-\hat Y\hat Y^\top\|_\op$ by comparing the adjacency matrix to another matrix $B_\data$. To prove \Cref{prop:YY_diff_gen}, we must instead control $\|\tilde Y\tilde Y^\top-\hat Y\hat Y^\top\|_{\op}$, we do this by comparing $\tilde Y\tilde Y^\top$ to the truncated versions 
of \Cref{def:Bdata_Bsph},
\begin{equation}\label{eqn:B_data_gen}
    \boxed{
    \begin{aligned}
    \tilde{B}_\data &= p_0\zeta\zeta^\top + d_kp_1 \tilde Y\tilde Y^\top\\
    \tilde B_\prj &= p_0\zeta\zeta^\top+d_k p_1\Pi^\perp_{{\zeta}}(\tilde Y\tilde Y^\top)\Pi^\perp_{{\zeta}}.
    \end{aligned}}
\end{equation}

\begin{proof}[Proof of \Cref{prop:YY_diff_gen}]
Recall that in this context $B_\spec = \eta_0\mathbf{w}_0\mathbf{w}_0+d_kp_1\hat Y\hat Y^\top$ is the rank--$d$ spectral decomposition of the \textit{observed} adjacency matrix $A$. Moreover, if $\tilde A,\tilde Y,\{\tilde x_i\}_{i=1}^n,\tilde p$ are the truncated analogues of
$A,Y,\{x_i\}_{i=1}^n,p$, then \Cref{lemma:trace_method,lemma:Bsph_Bdata_diff,lem:eigenvector_diff}
hold verbatim with these substitutions. In particular, note that $B_\sph$ in \eqref{def:Bdata_Bsph}, is now defined in terms of the decomposition of $\tilde y=(a_i,w_i)$ from \eqref{eqn: data decomp}.

Since $\{\tilde x_i\}_{i=1}^n=\{\Pi_X^{(k)}x_i\}_{i=1}^n$ is exactly rank--$k$, the noisy vectors $\tilde y_i=\tilde x_i+z_i$ satisfy the assumptions of \Cref{thm:lowrank} and the cited proofs depend on the signal only through the parameter estimates in \Cref{subsubsection:Parameter_estimation}, which remain unchanged after truncation due to \Cref{lem:ptp-same} and $\tau$ remaining fixed; in particular, we note that coefficients $p_i$ in the Gegenbauer expansion $\1(x\geq \tau)=p_0+ p_1x+ \dots$ remain the same. Thus,

\begin{align}
    d_kp_1\|\hat Y \hat Y ^\top-\tilde Y\tilde Y^\top \|_\op
    &\leq 
    \|B_\spec-A\|_\op
    + \|\tilde A-A\|_\op+\|\eta_0\mathbf{w}_0\mathbf{w}_0^\top- p_0\zeta\zeta^\top\|_\op +  \|\tilde A-
    \tilde{B}_\data
    \|_\op  \notag\\
    &\leq 2\|A-\tilde{B}_\data\|_\op + 2\|\tilde A-A\|_\op+\|\eta_0\mathbf{w}_0\mathbf{w}_0^\top- p_0\zeta\zeta^\top\|_\op   \notag\\
    &\leq  2 \|\tilde A-\tilde{B}_\sph\|_\op+2\|\tilde{B}_\sph-\tilde{B}_\data\|_\op +\|\eta_0\mathbf{w}_0\mathbf{w}_0^\top- p_0\zeta\zeta^\top\|_\op+ 4\|\tilde A-A\|_\op.   \notag\\
    &\leq  \left(2\|\tilde A-\tilde{B}_\sph\|_\op+2\|\tilde{B}_\sph-\tilde{B}_\data\|_\op +\|\tilde{\eta}_0\tilde{\mathbf{w}}_0\tilde{\mathbf{w}}_0^\top- p_0\zeta\zeta^\top \|_\op\right)   \notag\\
    & \qquad + \| \eta_0 \mathbf{w}_0 \mathbf{w}_0^\top - \tilde{\eta}_0 \tilde{\mathbf{w}}_0\tilde{\mathbf{w}}_0^\top\|_\op + 4 \|A-\tilde{A} \|_\op.
    \label{eqn:prop_YY_diff_chain_gen}
\end{align}

Note that the terms inside the parentheses can be controlled via \Cref{lemma:trace_method}, \Cref{lemma:Bsph_Bdata_diff} and \Cref{lem:eigenvector_diff}, which combined together give the conclusion of \Cref{prop:YY_diff}.
We already have an estimate on $\|A-\tilde A\|_\op$, while $\| \eta_0 \mathbf{w}_0 \mathbf{w}_0^\top - \tilde{\eta}_0 \tilde{\mathbf{w}}_0\tilde{\mathbf{w}}_0^\top\|_\op$ can be controlled via Davis--Kahan and Weyl's inequality:
\begin{equation}\label{eqn:DavisKahan_tilde_A}
\| \eta_0 \mathbf{w}_0 \mathbf{w}_0^\top - \tilde{\eta}_0 \tilde{\mathbf{w}}_0\tilde{\mathbf{w}}_0^\top\|_\op
\leq \rvert \eta_0-\tilde\eta_0\lvert+ \tilde \eta_0\|\mathbf{w}_0 -\tilde{\mathbf{w}}_0\|_2 \leq \|A-\tilde A\|_\op+ 2\tilde \eta_0\frac{\|A-\tilde A\|_\op}{\lambda_1(\tilde A)-\lambda_2(\tilde A)}.\end{equation}
We estimate the spectral gap $\lambda_1(\tilde A)-\lambda_2(\tilde A)$ by first applying Weyl's inequality to relate it to $\lambda_1(\tilde B_\data) - \lambda_2(\tilde B_\data)$. Next $\lambda_1(\tilde B_\data) - \lambda_2(\tilde B_\data)$ can be estimated by following the argument in \Cref{subsec:rank-1-diff}; to find the eigenvalues of $\tilde B_\data,$ we must project away from the first eigenspace, which is spanned by $\zeta$. To this end, we apply Weyl's inequality once again:
\begin{equation}\label{eqn:gap_tilde_A}
\begin{aligned}
    \lambda_1(\tilde A)-\lambda_2(\tilde A)&\geq
    \lambda_1(\tilde B_\data)-\lambda_2(\tilde B_\data)-2\|\tilde A-\tilde B_\data\|_\op\\
    &\geq \lambda_1(\tilde B_\prj)-\lambda_2(\tilde B_\prj)-2( \|\tilde B_\data-\tilde B_\prj\|_\op+\|\tilde A-\tilde B_\data\|_\op).
\end{aligned}
\end{equation}
By \eqref{eqn:lambda1_B_proj} 
, we have that $\lambda_1(\tilde B_\prj)=\Theta(\tilde pn)=\Theta(pn)$, where the last equality is given by \Cref{lem:ptp-same}. Moreover, the bounds from  
Lemmas \ref{lemma:trace_method}, \ref{lemma:Bsph_Bdata_diff} and
\ref{lemma:projection_of_data_matrix} 
give: $\|\tilde A-\tilde B_\data\|_\op+\|\tilde B_\data-\tilde B_\prj \|_\op=o(pn)$ by \eqref{assu:density}, \eqref{assu:trace} and \Cref{lem:ptp-same}. Then \Cref{eqn:kplus1_eval_UB} yields \begin{equation}
\lambda_2(\tilde B_\prj)=d_kp_1\lambda_1(\tilde Y\tilde Y^\top)=\Theta(dp\tau)\cdot \left(n\mu_1^2+ \tfrac{n}{d}+ O(\sqrt{n}\mu_1)\right)=o(n p).
\end{equation}
The last equality was obtained by \eqref{assu:trace}. Then, plugging our estimates on $\lambda_1(\tilde B_{\proj})$, $\lambda_2(\tilde B_{\proj})$ into \eqref{eqn:gap_tilde_A} yields $\lambda_1(\tilde A)-\lambda_2(\tilde A)\geq \Theta(np)-o(np)$. Applying this last inequality to \eqref{eqn:DavisKahan_tilde_A} gives:
\begin{equation}\label{eqn:eigenvector_diff_tilde_A}
    \| \eta_0 \mathbf{w}_0 \mathbf{w}_0^\top - \tilde{\eta}_0 \tilde{\mathbf{w}}_0\tilde{\mathbf{w}}_0^\top\|_\op\lesssim \|A-\tilde A\|_\op\left(1+ \frac{\Theta(np)}{\Theta(np)-o(pn)}\right)\lesssim \|A-\tilde A\|_\op.
\end{equation}
Plugging \eqref{eqn:eigenvector_diff_tilde_A} into \eqref{eqn:prop_YY_diff_chain_gen} and applying \Cref{prop:YY_diff} to the resultant expression, we obtain 
\[d_kp_1\|\hat Y\hat Y^\top-\tilde Y\tilde Y^\top\|_\op\lesssim np(\tau^2+d\mu_1^2\Tr\Sigma)\log^4 n +\max\{np\tau^2, \sqrt{np}\}\log^9n+ dp\tau\log^4n+\|A-\tilde A\|_\op. \]
Finally, applying \Cref{prop: Adjacency Matrix Truncation} to the above display, taking dominant terms and dividing by $d_kp_1=\Theta(dp\tau)$, produces the desired result.
\end{proof}

\section{Proof of Frobenius and Entrywise Error Bounds}

\begin{proof}[Proof of \Cref{cor: Frob cor1}]
     To obtain \eqref{eqn:frob_cor}, it suffices to decompose  $XX^\top$ in terms of its first $k$ eigenspaces. Recall that $X_k=\Pi_{X}^{(k)}X$ and $X_{k+1:d}=X-X_k$. Then, 
\begin{align}
\|XX^\top - \Xalg \Xalg^\top\|_F&\le \|X_kX_k^\top- \Xalg\Xalg^\top\|_F+ \|X_{k+1:d}X_{k+1:d}^\top\|_F  
\notag\\
&\leq \sqrt{2k} \|X_kX_k^\top - \Xalg\Xalg^\top\|_\op+ %n\mu_{k+1}\sqrt{\Tr\Sigma_{k+1:d}}  
n \Big( \sum_{i>k} \mu_i^4 \Big)^{1/2} \notag\\
&\leq \sqrt{2k}(\|XX^\top -\Xalg\Xalg^\top\|_\op+n\mu_{k+1}^2)+ %n\mu_{k+1}\sqrt{\Tr\Sigma_{k+1:d}}. 
n \Big( \sum_{i>k} \mu_i^4 \Big)^{1/2}.
\label{eqn:frob_cor_comp}
\end{align}
Then, one can use \eqref{bd:main} for the first term on the right hand side. Note that $n\mu_{k+1}^2=o(n\mu_1d^{-1/2})\le \cE_\op(X)$ due to \eqref{assu:gap-gen}, so we can drop the $n\mu_{k+1}^2$ term in \eqref{eqn:frob_cor_comp}.  This concludes the proof of \eqref{eqn:frob_cor}. 
\eqref{eqn:inner_prod_recover} is immediate from applying Markov's inequality and \eqref{eqn:frob_cor} to obtain the mean squared error of the inner products. To prove \eqref{eqn:norm_recovery}, we first define a ``signed'' matrix $S=\diag(s_1,\dots, s_n)$ where $s_i=\text{sign}( \|x_i\|_2^2- \|x_i^\alg\|_2^2).$ We can write the difference between norms as
\begin{equation}\label{eqn:Tr_computation_signed}
    \begin{aligned}
    \sum_{i=1}^n |\| x_i \|_2^2-\| x_i^\alg\|_2^2 |&= \sum_{i=1}^ns_i(\| x_i \|_2^2-\| x_i^\alg\|_2^2)=\Tr(S( X X^\top-\Xalg\Xalg^\top ))\\
    &= \Tr(S(X_kX_k^\top-\Xalg\Xalg^\top ))+ \Tr(S(X_{k+1:d}X_{k+1:d}^\top))\\
    &\leq 2k\|S(X_kX_k^\top-\Xalg\Xalg^\top)\|_\op+ n\Tr\Sigma_{k+1:d}\\
    &\leq 2k(\|S\|_\op\|XX^\top-\Xalg\Xalg^\top\|_\op+n\mu_{k+1}^2)+n\Tr\Sigma_{k+1:d}.
    \end{aligned}
    \end{equation}
The penultimate inequality was obtained by using the fact that $S(X_kX_k^\top-\Xalg\Xalg^\top)$ is at most rank-$2k$, the last inequality was obtained by a reverse triangle inequality, and $\|S\|_\op=1$.
Applying Markov's inequality to the above gives \eqref{eqn:norm_recovery}, then using the inequality $(|a|-|b|)^2\leq |a^2-b^2|$ and again noting that $n\mu_{k+1}^2=o(\cE_\op(X))$  
gives \eqref{eqn:norm_recovery2}.
\end{proof}

    Moreover, if one restricts the point cloud to the following good sets, we can quantify the relative recovery;
    \begin{equation} \label{eq:good_sets_lb}
    \cA(t):=\{(i,j): i\neq j,|\langle x_i,x_j\rangle| \geq  t\},\qquad \cA'(t):=\{i\in [n]: \|x_i\|_2\geq t\}.\end{equation}
\begin{cor}\label{cor:relative_recovery}
Under the same conditions as \Cref{thm:main}, we have that with probability at least $1-O(n^{-10})$, for any $\varepsilon,\delta \in(0,1]$, for some sufficiently large constant $C>0$ and $t= \log^C n \cE_F(X) n^{-1}\varepsilon^{-1/2}\delta^{-1}$
\begin{equation}\label{eq:inner_prod_recovery_relative}
    \lvert \{(i,j)\in \cA(t): \langle x_i^\alg,x_j^\alg\rangle =(1+O(\delta))\langle x_i,x_j\rangle)\}\rvert\geq \lvert \cA(t)\rvert -\varepsilon n^2;
\end{equation}
if $t=\log^{C/2}n\left(\frac{2k\cE_\op(X)+n\Tr\Sigma_{k+1:d}}{\varepsilon \delta n}\right)^{1/2}$,
\begin{equation}\label{eq:norm_recovery_relative}
    \lvert\{i\in \cA'(t): \| x_i^\alg\|_2 =(1+O(\delta))\| x_i\|_2)\}\rvert\geq |\cA'(t)|-\varepsilon n.
\end{equation}
\end{cor}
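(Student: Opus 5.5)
The plan is to derive \Cref{cor:relative_recovery} deterministically from \eqref{eqn:inner_prod_recover} and \eqref{eqn:norm_recovery}, which were already established in \Cref{cor: Frob cor1}. We work on the same event of probability $1-O(n^{-10})$ on which \eqref{bd:main} and \eqref{eqn:frob_cor} hold. On that event, the set of pairs whose inner product is badly recovered has at most $\varepsilon n^2$ elements. The idea is that on $\cA(t)$ the absolute error becomes a relative error, because $|\langle x_i,x_j\rangle|$ is large compared with the error threshold.

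\textbf{Inner products.} Let $C'$ be the constant from \eqref{eqn:inner_prod_recover}, and set $\eta:=\frac{\log^{C'}n}{n\sqrt{\varepsilon}}\cE_F(X)$. By \eqref{eqn:inner_prod_recover}, all but at most $\varepsilon n^2$ off-diagonal pairs satisfy
\[
|\langle x_i,x_j\rangle-\langle x_i^\alg,x_j^\alg\rangle|<\eta .
\]
Choose $C\ge C'$, so that $t=\log^{C}n\,\cE_F(X)n^{-1}\varepsilon^{-1/2}\delta^{-1}\ge \eta/\delta$. For such a good pair lying in $\cA(t)$ we then have
\[
|\langle x_i,x_j\rangle-\langle x_i^\alg,x_j^\alg\rangle|<\eta\le \delta t\le\delta|\langle x_i,x_j\rangle|,
\]
that is, $\langle x_i^\alg,x_j^\alg\rangle=(1+O(\delta))\langle x_i,x_j\rangle$. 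Removing the at most $\varepsilon n^2$ bad pairs from $\cA(t)$ gives \eqref{eq:inner_prod_recovery_relative}.

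\textbf{Norms.} Here we use \eqref{eqn:norm_recovery} with $\varepsilon$ unchanged. By \Cref{def:Sigmak} we have $\sum_{i>k}\mu_i^2=\Tr\Sigma_{k+1:d}$. So all but $\varepsilon n$ indices satisfy
\[
|\|x_i\|_2^2-\|x_i^\alg\|_2^2|\le \eta':=\frac{\log^{C'}n}{\varepsilon n}\big(2k\cE_\op(X)+n\Tr\Sigma_{k+1:d}\big).
\]
For such $i$ in $\cA'(t)$, the stated choice of $t$ gives $t^2\ge \eta'/\delta$, again after enlarging $C$. Hence
\[
\big|\|x_i^\alg\|_2^2/\|x_i\|_2^2-1\big|\le \eta'/t^2\le\delta .
\]
Taking square roots with $\sqrt{1+u}=1+O(u)$ for $|u|\le 1$ yields $\|x_i^\alg\|_2=(1+O(\delta))\|x_i\|_2$, which is \eqref{eq:norm_recovery_relative}.

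The only real care needed is in matching constants. The polylog power $C$ must be at least the one implicit in \Cref{cor: Frob cor1}, and in the norm case the squared threshold is the relevant scale, which explains the power $C/2$ and the square root in $t$. Beyond this bookkeeping there is no substantive obstacle.
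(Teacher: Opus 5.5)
Your proposal is correct and follows essentially the same route as the paper. Working on the event of \Cref{cor: Frob cor1}, the choice of $t$ turns the absolute error thresholds in \eqref{eqn:inner_prod_recover} and \eqref{eqn:norm_recovery} into relative errors on $\cA(t)$ and $\cA'(t)$, after discarding at most $\varepsilon n^2$ pairs (respectively $\varepsilon n$ indices). The only differences are cosmetic: for norms the paper concludes via $\bigl|\|x_i\|_2-\|x_i^\alg\|_2\bigr|\le \bigl|\|x_i\|_2^2-\|x_i^\alg\|_2^2\bigr|/\|x_i\|_2\le\delta\|x_i\|_2$ instead of your $\sqrt{1+u}=1+O(u)$, and your explicit requirement $C\ge C'$ on the polylog exponent spells out a point the paper leaves implicit.
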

\begin{proof}[Proof of \Cref{cor:relative_recovery}]
    The proof of \eqref{eq:inner_prod_recovery_relative} follows immediately from the conclusion of \eqref{eqn:norm_recovery} and $|\langle x_i,x_j\rangle-\langle x_i^\alg,x_j^\alg\rangle|\leq \delta\log^C n\cdot  \cE_F(X) n^{-1}\varepsilon^{-1/2}\delta^{-1}\leq \delta |\langle x_i,x_j\rangle|$, on the good pairs $(i,j)\in\cA(\log^C n\cdot \cE_F(X) n^{-1}\varepsilon^{-1/2}\delta^{-1})$. As for \eqref{eq:norm_recovery_relative}, consider $i\in \cA'(\log^{C/2}n\cdot \left(\frac{2k\cE_\op(X)+n\Tr\Sigma_{k+1:d}}{\varepsilon \delta n}\right)^{1/2})$, and such that $i$ does \emph{not} belongs in the $\varepsilon n$ indices from the set \eqref{eqn:norm_recovery}. Then,
    \begin{equation}\label{eqn:good_index_bd}
    |\|x_i\|_2^2-\|x_i^\alg\|_2^2|\leq \log^{C}n\cdot\left(\frac{2k\cE_\op(X)+n\Tr\Sigma_{k+1:d}}{\varepsilon  n}\right)=\delta\log^{C}n\cdot\left(\frac{2k\cE_\op(X)+n\Tr\Sigma_{k+1:d}}{\varepsilon \delta n}\right) \leq \delta\|x_i\|_2^2.
    \end{equation}
    Then we recover the difference of the norms as follows:
    \begin{equation}
        \begin{aligned}
        |\|x_i\|_2-\|x_i^\alg\|_2|&=\frac{ |\|x_i\|^2_2-\|x_i^\alg\|^2_2|}{\|x_i\|_2+\|
        x_i^\alg\|_2}\\
        &\leq\frac{ |\|x_i\|^2_2-\|x_i^\alg\|^2_2|}{\|x_i\|_2}\\
        &\leq  \delta\|x_i\|_2^2/\|x_i\|_2= \delta \|x_i\|_2.
        \end{aligned}
    \end{equation}
The last inequality was obtained by applying \eqref{eqn:good_index_bd}. An application of Markov's inequality concludes the proof.
\end{proof}

\section{Proofs of Application to Random Sampling}
\label{sec: manifold denoising}

\subsection{Low-rank variant}

We begin by stating and proving a low-rank variant of \Cref{thm:numerically_low_rk_denoising}, meant as an analogue to \Cref{thm:lowrank}.
\begin{theorem}[Denoising with low-rank $\Sigma^{\infty}$]
\label{thm:manifold_denoising_low_rank}
    With $n,d,p$ as in \Cref{thm:lowrank}, let
    $X$ be the matrix whose rows are given by the point cloud $\cX = \{x_1, \ldots, x_n\}$ sampled independently from a 
    distribution $\nu$ in $\R^d$, with 
    $\Sigma^{\infty}$ as defined in \Cref{eq:sigmaMdef} of rank $k$. Moreover, suppose
    \begin{equation}
        \label{assu:trace_truecov}
        \Tr \Sigma^{\infty} \lesssim \frac{1}{\sqrt{d}\log^2 n},
    \end{equation}
        \begin{equation}
\label{assu:no_outliers_truecov}
\esssup_{\bfx\sim\nu}\norm{\bfx}_2^2 \lesssim  \Tr \Sigma^{\infty}, 
    \end{equation}
    \begin{equation}
    \label{assu:spec_gap_truecov}
    \lambda_k(\Sigma^{\infty}) = \omega(d^{-1}).
    \end{equation}
    Let $G \sim \mathcal{G}_{n,d}(p, \cX)$. For the spectral algorithm of Section \ref{sec:algorithm},  
    with probability at least $1-O(n^{-10})$:
    \begin{equation}\label{eqn:low-rank denoising}
    \norm{XX^\top-\hat X\hat X^\top}_\op \lessapprox n\lambda_1\left(\frac{1}{\sqrt{\lambda_1 d}} +\frac{\lambda_1}{\lambda_k}\sqrt{d}\Tr\Sigma^{\infty} + \frac{1}{\lambda_k}\sqrt{\frac{1}{dnp}} \right).
\end{equation}
\end{theorem}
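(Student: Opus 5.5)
The plan is to reduce to \Cref{thm:lowrank} applied to the sample point cloud $X$. The key input is a high-probability comparison between the sample covariance $\Sigma=\frac1nX^\top X$ and $\Sigma^\infty$. Since $\nu$ has rank-$k$ covariance, $x_i\in\mathrm{span}(u_1,\dots,u_k)$ almost surely. Hence $X$ has rank at most $k$ and $\mu_{k+1}=0$ automatically. It remains to check \eqref{assu:trace}, \eqref{assu:no-outliers} and $\mu_k=\omega(d^{-1/2})$ for the empirical singular values. We also need to translate the error bound \eqref{eqn:lowrank_cor}, written in $\mu_1,\mu_k,\Tr\Sigma$, into $\lambda_1,\lambda_k,\Tr\Sigma^\infty$.

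\textbf{Step 1: concentration of $\Sigma$.} The summands $x_ix_i^\top$ are i.i.d.\ PSD with mean $\Sigma^\infty$ and $\|x_ix_i^\top\|_\op\le R:=\esssup\|\bfx\|_2^2\lesssim\Tr\Sigma^\infty$ by \eqref{assu:no_outliers_truecov}. I will apply matrix Bernstein (or matrix Chernoff) on the $k$-dimensional range, obtaining with probability $1-O(n^{-10})$
\[
\|\Sigma-\Sigma^\infty\|_\op\lesssim \sqrt{\frac{R\lambda_1\log n}{n}}+\frac{R\log n}{n}.
\]
\textbf{Step 2: transfer of eigenvalues.} By Weyl, $\mu_i^2=\lambda_i+O(\|\Sigma-\Sigma^\infty\|_\op)$. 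This step is the main obstacle: we need the error to be $o(\lambda_k)$. Since $R\lesssim\Tr\Sigma^\infty\le k\lambda_1$, the error is about $\lambda_1\sqrt{k\log n/n}$. Matrix Chernoff gives the relative form $\mu_k^2\ge(1-o(1))\lambda_k$ as soon as $R\log n/(n\lambda_k)=o(1)$. This holds because $R\lesssim d^{-1/2}$, $\lambda_k=\omega(1/d)$ and $d<n$, so $R/\lambda_k\lesssim \sqrt d\ll n/\log n$. Similarly $\mu_1^2\le(1+o(1))\lambda_1$. The scalar Bernstein bound gives $\Tr\Sigma=(1+o(1))\Tr\Sigma^\infty$, using bounded summands of size $R\lesssim\Tr\Sigma^\infty$. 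Hence $\mu_k^2=\omega(1/d)$. Moreover $\Tr\Sigma\lesssim 1/(\sqrt d\log^2n)$, and by \Cref{rmk: on the conditions} a constant factor here is acceptable. Finally $\max_i\|x_i\|^2\le R\lesssim\Tr\Sigma^\infty\asymp\Tr\Sigma$, which is \eqref{assu:no-outliers}.

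\textbf{Step 3: conclude.} Conditioning on $X$ (independent of $Z$), I apply \Cref{thm:lowrank} on the intersection of the events from Steps 1--2. This gives \eqref{eqn:lowrank_cor}, and the error $\|XX^\top-\Xalg\Xalg^\top\|_\op$ controls $\|XX^\top-\hat X\hat X^\top\|_\op$ via \Cref{prop:XhatX} and \Cref{lem:p1} exactly as in that proof. Substituting $\mu_1^2\asymp\lambda_1$, $\mu_k^2\asymp\lambda_k$ and $\Tr\Sigma\asymp\Tr\Sigma^\infty$ turns $n\mu_1/\sqrt d$ into $n\lambda_1/\sqrt{\lambda_1 d}$. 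It turns $\frac{\mu_1^2}{\mu_k^2}n\sqrt d\mu_1^2\Tr\Sigma$ into $n\lambda_1\cdot\frac{\lambda_1}{\lambda_k}\sqrt d\Tr\Sigma^\infty$. It turns $\frac{\mu_1^2}{\mu_k^2}\sqrt{n/(dp)}$ into $n\lambda_1\cdot\frac1{\lambda_k}\sqrt{1/(dnp)}$. This is exactly \eqref{eqn:low-rank denoising}. A union bound over the $O(n^{-10})$ failure probabilities finishes the argument.
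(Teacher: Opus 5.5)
Your proposal is correct and follows essentially the paper's proof. Both arguments show $\operatorname{rank}\Sigma\le k$ almost surely, verify \eqref{assu:trace}, \eqref{assu:no-outliers} and $\mu_k=\omega(d^{-1/2})$ from concentration of $\Sigma$ around $\Sigma^\infty$ (\Cref{lem: discrete trace limit}), then apply \Cref{thm:lowrank} conditionally on $X$ and substitute $\mu_i^2\asymp\lambda_i$ and $\Tr\Sigma\asymp\Tr\Sigma^\infty$. One difference: your ``main obstacle'' is not actually one, and the paper handles it with the additive bound. That bound gives $\|\Sigma-\Sigma^\infty\|_\op\lesssim\Tr\Sigma^\infty\sqrt{\log n/n}\lesssim 1/(\sqrt{dn}\log^{3/2}n)=o(d^{-1})=o(\lambda_k)$, using $d<n$; your own Step 1 bound gives the same once you use $R,\lambda_1\lesssim d^{-1/2}$. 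So your relative matrix Chernoff step is valid but unnecessary.
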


We prove \Cref{thm:manifold_denoising_low_rank} by applying Theorem \ref{thm:lowrank} to $\cX$. Let  $\Sigma = \frac{1}{n} X^\top X$ and $\{\mu^2_i\}_{i=1}^n$ the singular values of $\Sigma$, as before. We must verify that $\Sigma$ is strictly low rank and that the conditions we impose on $\Sigma^{\infty}$ imply the analogous assumptions \eqref{assu:trace}, \eqref{assu:no-outliers}, \eqref{assu:gap-gen} on $\Sigma$ in the statement of \Cref{thm:lowrank}. To this end, we begin by making the informal statement \eqref{eq:sigma_to_sigmaf} precise.

\begin{lemma}[Concentration of $\Sigma$ around $\Sigma^{\infty}$]\label{lem: discrete trace limit}
If $\esssup_{\bf{x} \sim \nu} \norm{{\bf x}}_2^2 = O(\Tr \Sigma^{\infty})$, then there exists $C \geq 0 $ such that with probability at least $1-n^{-11}$ 

:
\begin{equation}
\label{eqn:single entry concentrates}
\big \vert \Sigma_{jj} - \Sigma^{\infty}_{jj} \big \vert \leq  \norm{\Sigma-\Sigma^{\infty}}_\op  \leq C\Tr \Sigma^{\infty}\sqrt{\frac{\log n}{n}} \ \ \ \ \forall j:  1 \leq j \leq d, 
\end{equation}
\begin{equation}
\label{eqn: tail_trace_concentrates}
\bigg\vert\sum_{j=\ell}^d\Sigma_{jj} - \sum_{j=\ell}^d \Sigma^{\infty}_{jj} \bigg\vert\leq C\sqrt{\Tr \Sigma^{\infty} \Tr\Sigma_{\infty,\ell:d}}\sqrt{\frac{\log n}n} \ \ \ \ \forall \ell: 1\leq \ell \leq d,\end{equation}
in particular, setting $\ell = 1$ in \eqref{eqn: tail_trace_concentrates} we obtain
\begin{equation}
\label{eqn: curve-trace of sigma concentrates}
\big\vert\Tr\Sigma - \Tr \Sigma^{\infty} \big\vert \leq C\Tr\Sigma^{\infty}\sqrt{\frac{\log n}n}.
\end{equation}

\end{lemma}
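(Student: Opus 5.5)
The plan is to write $\Sigma-\Sigma^{\infty}=\frac1n\sum_{i=1}^n(x_ix_i^\top-\Sigma^{\infty})$ as a sum of independent, centered, bounded random matrices and apply the matrix Bernstein inequality (e.g.\ \cite[Theorem 5.4.1]{Vershynin2018}). Set $K:=\esssup\|\bfx\|_2^2\lesssim\Tr\Sigma^{\infty}$ and $W_i:=x_ix_i^\top-\Sigma^{\infty}$. Then $\|W_i\|_\op\le \|x_i\|_2^2+\|\Sigma^{\infty}\|_\op\le 2K$. For the matrix variance,
\[
\Big\|\sum_i\E W_i^2\Big\|_\op\le n\|\E\|\bfx\|_2^2\bfx\bfx^\top\|_\op\le nK\|\Sigma^{\infty}\|_\op\le nK\Tr\Sigma^{\infty}.
\]
Matrix Bernstein then gives, with probability $1-2d\,e^{-c\min(t^2/(nK\Tr\Sigma^\infty),\,t/K)}$, that $\|\sum_iW_i\|_\op\le t$. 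Take $t=C\Tr\Sigma^{\infty}\sqrt{n\log n}$ and use $d<n$ to absorb the dimensional factor. The failure probability is at most $n^{-11}$ provided the linear term is controlled, i.e.\ $t/K\gtrsim\log n$; this holds because $\sqrt{n\log n}\ge\log n$. Dividing by $n$ gives the operator-norm bound in \eqref{eqn:single entry concentrates}. The first inequality there is trivial: $|e_j^\top(\Sigma-\Sigma^\infty)e_j|\le\|\Sigma-\Sigma^\infty\|_\op$.

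For \eqref{eqn: tail_trace_concentrates}, fix $\ell$ and let $P_\ell$ be the coordinate projection onto coordinates $\ell,\dots,d$. Then $\sum_{j\ge\ell}\Sigma_{jj}=\frac1n\sum_i\|P_\ell x_i\|_2^2$ is an average of i.i.d.\ scalars in $[0,K]$ with mean $\Tr\Sigma_{\infty,\ell:d}:=\sum_{j\ge\ell}\Sigma^{\infty}_{jj}$. Their variance is at most $\E\|P_\ell\bfx\|_2^4\le K\,\Tr\Sigma_{\infty,\ell:d}$. Scalar Bernstein then gives deviation
\[
\lesssim\sqrt{K\Tr\Sigma_{\infty,\ell:d}\,\tfrac{\log n}{n}}+K\tfrac{\log n}{n}
\]
with probability $1-n^{-13}$, and a union bound over the $d<n$ values of $\ell$ keeps the total failure probability at most $n^{-12}$. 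Substituting $K\lesssim\Tr\Sigma^{\infty}$ turns the first term into the claimed bound. Setting $\ell=1$ gives \eqref{eqn: curve-trace of sigma concentrates}; in that case the term $K\log n/n\le\Tr\Sigma^\infty\sqrt{\log n/n}$ is harmless.

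The main obstacle is the Bernstein linear term $K\log n/n$ in the tail bound when $\Tr\Sigma_{\infty,\ell:d}$ is very small. The claimed bound has no such additive term, and it dominates precisely when $\Tr\Sigma_{\infty,\ell:d}\lesssim\Tr\Sigma^{\infty}\log n/n$. I would first try to handle this regime by an improved bound on the summands: $\|P_\ell\bfx\|_2^2$ is not bounded by $\Tr\Sigma_{\infty,\ell:d}$ in general, so the clean fix is unavailable. I would therefore either state the lemma with the extra additive $\Tr\Sigma^\infty\log n/n$ term, which is negligible in every application since the tails are compared with $d^{-1}\gg\log n/n$, or note that when $\Tr\Sigma_{\infty,\ell:d}\le\Tr\Sigma^\infty\log n/n$ both tail sums are already of the stated order up to the constant $C$ after adjusting for the logarithm. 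If the exact statement is needed, I would use Bennett's inequality instead of Bernstein, since its sharper form in the small-variance regime gives the tightest version.
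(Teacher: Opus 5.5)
Your route is the same as the paper's. For the operator-norm bound you use matrix Bernstein with $\|x_ix_i^\top-\Sigma^{\infty}\|_\op\lesssim K$ and variance $\lesssim nK\Tr\Sigma^{\infty}$; this part, and the case $\ell=1$, are correct as written. For the tail sums you use scalar Bernstein with variance at most $K\,T_\ell$, where $T_\ell:=\sum_{j\ge\ell}\Sigma^{\infty}_{jj}$.

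The obstacle you flag is genuine, and the paper's proof passes over it silently: it cites ``Bernstein for bounded variables'' and drops the linear term $K\log n/n$. However, neither of the ways you suggest for removing that term can work, because \eqref{eqn: tail_trace_concentrates} is false as stated. Here is a counterexample:
\begin{itemize}
\item Let $\nu$ put mass $1-q$ at $\sqrt{K}e_1$ and mass $q$ at $\sqrt{K}e_2$. Then $\|\bfx\|_2^2=K=\Tr\Sigma^{\infty}$ almost surely.
\item For $\ell=2$ we have $\Sigma_{22}=KN/n$ with $N\sim\Bin(n,q)$, and $T_2=Kq$.
\item Take $q=n^{-2}$. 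The event $\{N\ge 1\}$ has probability $1-(1-n^{-2})^n\ge 1/(2n)$, and on it $\Sigma_{22}-T_2\ge K/(2n)$.
\item The claimed bound is only $CK\sqrt{\log n}\,n^{-3/2}$, so the inequality fails with probability $\gtrsim n^{-1}\gg n^{-11}$ for any fixed $C$.
\end{itemize}
Bennett's inequality improves the linear term by at most a logarithmic factor, so it cannot beat this example. Your other remark, that for $T_\ell\le\Tr\Sigma^{\infty}\log n/n$ ``both tail sums are already of the stated order,'' holds for the population sum but fails for the empirical one, which equals $K/n$ here.

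The correct statement is your first option: add $C\Tr\Sigma^{\infty}\frac{\log n}{n}$ to the right-hand side of \eqref{eqn: tail_trace_concentrates}. Your Bernstein argument, with the union bound over $\ell$, proves exactly this.

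Your reason for why the extra term is harmless is slightly off, though. The tails are compared with $\min(p^3,d^{-1})$, not just $d^{-1}$. Also, $d^{-1}\gg\log n/n$ does not follow from $d<n$. What actually makes the term negligible in \eqref{eq:tail_sv_bd} is \eqref{assu:trace_num_low_rank}:
\begin{itemize}
\item its second half says precisely that $\Tr\Sigma^{\infty}\log n/n=o(p^3)$;
\item its first half gives $\Tr\Sigma^{\infty}\log n/n\lesssim 1/(n\sqrt{d}\log n)=o(d^{-1})$.
\end{itemize}
The paper already needs this hypothesis to control the cross term, since by AM--GM $\sqrt{\Tr\Sigma^{\infty}T_\ell\log n/n}\le T_\ell+\Tr\Sigma^{\infty}\log n/n$. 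So the correction costs nothing downstream. In \eqref{eq:sinsuoid_tail_conc}, the extra term contributes only $O(\log n/n)$ after dividing by $\sum_j a_j^2\asymp\Tr\Sigma^{\infty}$.
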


\begin{proof}
We begin by proving \eqref{eqn: tail_trace_concentrates}. We apply Bernstein's inequality to 
$$\sum_{j=\ell}^d \Sigma_{jj} - \sum_{j = \ell}^d \Sigma^{\infty}_{jj} = \frac{1}{n}\sum_{i=1}^n\bigg(\sum_{j=\ell}^d (x_i)_j^2 - \E\bigg[\sum_{j=\ell}^d (x_i)_j^2\bigg]\bigg),$$ where $x_i\sim \nu$ are i.i.d, and $(x_i)_j$ denotes the $j-$th coordinate of $x_i$. By \eqref{assu:no_outliers_truecov}
$$\Var\left[\sum_{j=\ell}^d (x_i)_j^2\right]\leq O(\Tr\Sigma^{\infty}) \E_{x_{i}\sim \nu } \left[\sum_{j=\ell}^d (x_i)_j^2\right] = O(\Tr\Sigma^{\infty})\left[\sum_{j=\ell}^d \Sigma^{\infty}_{jj}\right].$$
Thus, Bernstein inequality for bounded variables and \eqref{assu:no_outliers_truecov} yield \eqref{eqn: tail_trace_concentrates}.

Next, we prove \eqref{eqn:single entry concentrates} -- in particular, we focus on the second inequality in \eqref{eqn:single entry concentrates} as the first is immediate from the definition of $\|\cdot\|_\op$. Here we apply matrix Bernstein inequality (e.g.\cite[ Theorem 5.4.1]{Vershynin2018}) to 
$$ \Sigma - \Sigma^{\infty} = \sum_{i=1}^n\frac{1}{n}(x_i^\top x_i-\Sigma^{\infty}).$$ To do so, we must verify that $\|\frac{1}{n}(x_i^\top x_i-\Sigma^{\infty})\|_\op$ is a bounded random variable. Note that assumption \eqref{assu:no_outliers_truecov} implies that 
$$\|x_i^\top x_i-\Sigma^{\infty}\|_{\op}\leq\|x_i\|_2^2+\|\Sigma^{\infty}\|_{\op}\le O(\Tr\Sigma^{\infty})^2$$ where we used $\|\Sigma^{\infty}\|_{\op}\leq\E_{\bfx\sim\nu}\|\bfx\|_2^2=\Tr\Sigma^{\infty}$.
Moreover, the matrix analogue of variance satisfies
\begin{align*}
\norm{\frac{1}{n^2}\sum_{i=1}^n\E_{x_i\sim \nu}\big(x_i x_i^\top-\Sigma^{\infty}\big)^2}_\op&
  =\frac{1}{n}\norm{\E_{\bfx\sim\nu}[\|\bfx\|_2^2 \bfx \bfx^\top \big]-(\Sigma^{\infty})^2}_\op\\
&\leq\frac{1}{n}(O(\Tr\Sigma^{\infty})\|\Sigma^{\infty}\|_\op+\|\Sigma^{\infty}\|_\op^2\big)\\
&\leq\frac{O(\Tr\Sigma^{\infty})^2}{n}.
\end{align*}
Therefore, we obtain
\[\Prob\left(\norm{\Sigma-\Sigma^{\infty}}_\op\geq t\right)\leq 2d\exp\left(-c\min\bigg\{\frac{t^2n}{O(\Tr\Sigma^{\infty})^2},\frac{tn}{O(\Tr\Sigma^{\infty})}\bigg\}\right). \]
Setting $t=C(\Tr\Sigma^{\infty})\sqrt{\frac{\log n}{n}}$ yields the second inequality in \eqref{eqn:single entry concentrates}, as desired. 
\end{proof}

 We are now ready check the conditions required for the application of \Cref{thm:lowrank} and complete the proof of Theorem \ref{thm:manifold_denoising_low_rank}.
\begin{proof}[Proof of \Cref{thm:manifold_denoising_low_rank}]

First, we show that if $\Sigma^{\infty}$ is rank $k$, then $\Sigma$ is at most rank $k$ a.s. Let $u_1$, $\ldots$, $u_d$ denote the eigenbasis for $\Sigma^{\infty}$ with $u_{k+1}$, $\ldots$, $u_d$ corresponding to the 0 eigenvalue. Then, for $i \geq k+1$ $$0 = u_{i}^T \Sigma^{\infty} u_i = \mathbb{E}[u_i^{\top} \Sigma u_i].$$
    Since $\Sigma$ is a positive semidefinite matrix, $u_i^{\top} \Sigma u_i \geq 0$ and the above identity implies $u_i^{\top} \Sigma u_i = 0$ a.s. Thus, $\dim \ker \Sigma \geq d-k$ a.s. and the desired conclusion follows.

Next we verify the following conditions (1)--(3), corresponding to \eqref{assu:trace}--\eqref{assu:gap-gen}, respectively, hold with high probability.

\begin{enumerate} 

\item $\Tr \Sigma \lesssim \frac{1}{\sqrt{d} \log^2 n}$ : 
This follows immediately from \eqref{assu:trace_truecov} and \eqref{eqn: curve-trace of sigma concentrates}.

\item The point cloud $\cX$ has no outliers (\eqref{assu:no-outliers}) with probability  $1 -O(n^{-10})$,  i.e.  $\max_{i, j} \langle x _i, x_j \rangle = O(\Tr \Sigma)$ :
By \eqref{eqn: curve-trace of sigma concentrates} we get that there exists $C\geq 0$ such that with probability $ 1- O(n^{-10})$
\begin{equation}\label{eqn: continuous regularity computation}
\Tr\Sigma\geq \Tr \Sigma^{\infty}\left( 1-C\sqrt{\frac{\log n}{n}}\right).
\end{equation}
Therefore, letting $n$ be sufficiently large (so that the right side of the above display is positive), we obtain
\[\max_{i,j}\langle x_i,x_j\rangle\leq \esssup_{\mathbf{x} \sim \nu} \| \mathbf{x}\|_2^2 =O(\Tr\Sigma^{\infty}) = O(\Tr\Sigma) \]
where the second inequality follows from \eqref{assu:no_outliers_truecov}.
\item $\mu_k^2 = \omega(d^{-1})$ : 
By Weyl's inequality and \eqref{eqn:single entry concentrates} we have 
$\mu_k^2 \geq \lambda_k(\Sigma^{\infty}) - C\Tr \Sigma^{\infty} \sqrt{\frac{\log n}{n}} = \omega(d^{-1}),$
where the last relation follows by \eqref{assu:spec_gap_truecov} and \eqref{assu:trace_truecov}.
\end{enumerate}
Thus, with high probability, Theorem \ref{thm:lowrank} applies to $\cX$ and \eqref{eqn:low-rank denoising} follows.
\end{proof}

To prove \Cref{thm:numerically_low_rk_denoising} we will check that with high probability conditions of \Cref{thm:main}, in place of \Cref{thm:lowrank} as in the proof of \Cref{thm:manifold_denoising_low_rank}, are satisfied by the point cloud $\cX$.

\subsection{Proof of \Cref{thm:numerically_low_rk_denoising}}

% \begin{proof}
% We verify that the following conditions (1)--(3) hold with high probability.
With conditions  (1) and (2) as in the proof of Theorem \ref{thm:manifold_denoising_low_rank}, these properties hold with high probability by the same argument. 
To apply \Cref{thm:main} it only remains to verify condition \eqref{assu:gap-gen},
that $\Sigma$ is numerically low rank,  i.e.  for some $k = o(\sqrt{d}/\log^2 n)$ we have that $\mu_k \geq \omega(d^{-1/2})$ and $\Tr \Sigma_{k+1:d} \leq o\left(\min(p^3, d^{-1})\right)$.

Let $k$ be such that $\lambda_k(\Sigma^{\infty}) \geq \omega(d^{-1})$ and $\sum_{j=k+1}^d \lambda_j(\Sigma^{\infty}) = o\left(\min(p^3,d^{-1})\right)$. The fact that $\mu_k = \omega(d^{-1/2})$ follows from the same argument as in the proof of \Cref{thm:manifold_denoising_low_rank}.

The rest of this proof is dedicated to verifying that $\Tr \Sigma_{k+1:d} \leq o\left(\min(p^3, d^{-1})\right)$. For this argument, it is convenient to assume that $\Sigma^{\infty}$ is diagonal -- we can do so without loss of generality as the properties \eqref{assu:trace_num_low_rank}--\eqref{assu:spec_gap_num_low_rank}
remain unchanged if we diagonalize $\Sigma^{\infty}$. Additionally, without loss of generality we assume that $\Sigma^{\infty}_{11} \geq \ldots \geq \Sigma^{\infty}_{dd}.$
    
By the Horn--Schur Theorem (e.g. see \cite[Theorem 4.3.45 (Schur)]{horn2012matrix}) and \eqref{eqn: tail_trace_concentrates}, we have that with probability at least $1-n^{-11}$
    \begin{equation}
    \label{eqn:schur_horn_thm}
        \Tr \Sigma_{k+1: d} \leq \sum_{j=k+1}^d \Sigma_{\ell_j \ell_j} \leq \sum_{j=k+1}^d \Sigma^{\infty}_{\ell_j\ell_j} + C\sqrt{\Tr \Sigma^{\infty} \Tr \Sigma_{\infty, k+1:d}}\sqrt{ \frac{\log n}{n}}
    \end{equation}
    where $\Sigma_{\ell_1 \ell_1} \geq \Sigma_{\ell_2\ell_2} \geq \ldots \geq \Sigma_{\ell_d \ell_d}$. Next, we show that with high probability $\{\ell_{k+1}, \ldots, \ell_d\} = \{k+1, \ldots, d\}$.
\begin{align}
   \mathbb{P}\big(\{\ell_{k+1}, \ldots, \ell_d\}  \neq & \{k+1, \ldots, d\}\big)\nonumber \\
   \leq &\sum_{j=1}^k \mathbb{P}(j \in \{\ell_{k+1}, \ldots, \ell_d\})\nonumber \\
    = & \sum_{j=1}^k \mathbb{P}(\Sigma_{jj} < \Sigma_{\ell \ell} \text{ for some $\ell \geq k+1$})\nonumber \\
    \leq & \sum_{j=1}^k \bigg[ \mathbb{P}\bigg(\Sigma_{jj} < \Sigma^{\infty}_{k+1,k+1}+ C\Tr \Sigma^{\infty} \sqrt{\frac{\log n}{n}}\bigg) \nonumber\\
    & + \mathbb{P}\bigg(\max_{\ell \geq k+1} \Sigma_{\ell \ell} \geq \Sigma^{\infty}_{k+1,k+1}+ C\Tr \Sigma^{\infty} \sqrt{\frac{\log n}{n}} \bigg)\bigg] \nonumber\\
    \leq & \sum_{j=1}^k \bigg[ \mathbb{P}\bigg(\Sigma_{jj} < \Sigma^{\infty}_{jj} -  C\Tr \Sigma^{\infty} \sqrt{\frac{\log n}{n}}\bigg) \label{eq: intermediate simplification}\\
    & + \sum_{j=k+1}^d \mathbb{P}\bigg(\Sigma_{jj} \geq \Sigma^{\infty}_{jj}+ C\Tr \Sigma^{\infty}  \sqrt{\frac{\log n}{n}} \bigg)\bigg] \nonumber\\
    \leq & dn^{-11}, \label{eqn:no_crossings_whp}
\end{align}
where to obtain \eqref{eq: intermediate simplification} we observe that for all $j \leq k$
$$\left(\Sigma^{\infty}_{jj}- C \Tr \Sigma^{\infty} \sqrt{\frac{\log n}{n}} \right)-\left( \Sigma^{\infty}_{k+1, k+1}+ C \Tr \Sigma^{\infty} \sqrt{\frac{\log n}{n}} \right) = \omega(d^{-1}) > 0$$
since \eqref{assu:spec_gap_num_low_rank} implies $\Sigma^{\infty}_{jj} - \Sigma^{\infty}_{k+1, k+1} =\omega(d^{-1})$ and \eqref{assu:trace_num_low_rank} with $d \leq n$ imply 
$$\Tr \Sigma^{\infty}\sqrt{\frac{\log n}{n}} \lesssim \frac{1}{\sqrt{dn \log n}} = o(d^{-1}).$$
Finally, \eqref{eqn:no_crossings_whp}
follows from \eqref{eqn:single entry concentrates}.

Now we combine \eqref{eqn:schur_horn_thm}, \eqref{eqn:no_crossings_whp} and \eqref{assu:trace_num_low_rank} to get that with probability $1 - O(n^{-10})$
\begin{equation}
\label{eq:tail_sv_bd}
\begin{aligned}
    \Tr\Sigma_{k+1:d}&\leq \Tr \Sigma^{\infty}_{k+1:d} + C\sqrt{\Tr \Sigma^{\infty} \Tr \Sigma_{\infty, k+1:d}}\sqrt{\frac{\log n}{n}} = o\left(\min(p^3, d^{-1})\right).
\end{aligned}
\end{equation}
%\end{enumerate}

Since we showed that conditions on $\cX$ required for application of \Cref{thm:main} are satisfied, the stated result \eqref{eqn:diag_manifold_error_rate_OP} follows if we substitute $\mu_i^2=(1+o(1))\lambda_i$, $\Tr\Sigma=(1+o(1))\Tr\Sigma^{\infty}$ and $\Tr\Sigma_{k+1:d}=(1+o(1))\Tr\Sigma^\infty_{ k+1:d}$ into \eqref{bd:main}.
% \end{proof}
\qed

\subsection{Proof of \Cref{thm:manifold_denoising_approx_diag}}
%\begin{proof}
By Weyl's inequality,
\begin{equation}
\label{eqn:weyl_ineq_inter_approx_diag}
    \lambda_k \geq \Sigma^{\infty}_{kk} - \norm{E}_\op \gtrsim \Sigma^{\infty}_{kk}  = \omega(d^{-1}).
\end{equation}
By Horn--Schur Theorem,
\begin{equation}
\label{eqn:horn_schur_approxdiag}
    \sum_{j=k+1}^d \lambda_j \leq \sum_{j=k+1}^d \Sigma^{\infty}_{jj} = o\bigg(\min(p^3, d^{-1})\bigg).
\end{equation}
Thus, conditions \eqref{assu:trace_num_low_rank}--\eqref{assu:spec_gap_num_low_rank} are satisfied and we can apply \Cref{thm:numerically_low_rk_denoising} to $\cX$ to obtain 
\begin{align*}
\|XX^\top- \Xalg\Xalg^\top\|_\op
    \lessapprox&
    n\lambda_1\left(\frac{1}{\sqrt{d\lambda_1}} +\frac{\lambda_1}{\lambda_k}\sqrt{d}\Tr\Sigma^{\infty} + \frac{1}{\lambda_k\sqrt{d}}\left(\sqrt{\frac{1}{np}}+\frac{(\sum_{j=k+1}^d \lambda_j )^{1/4}}{p}\right)\right).
\end{align*}
We then substitute $\lambda_1$ and $\lambda_k$ with $\Sigma^{\infty}_{11}$ and $\Sigma^{\infty}_{kk}$, respectively by \eqref{eqn:weyl_ineq_inter_approx_diag} and because by Weyl's inequality and the fact that $\lambda_k = \omega(d^{-1})$ we have 
$$\lambda_1 \leq \Sigma^{\infty}_{11} + \norm{E}_\op \lesssim \Sigma^{\infty}_{11}.$$
Similarly, we can replace $\sum_{j=k+1}^d \lambda_j$ by $\sum_{j=k+1}^d \Sigma^{\infty}_{jj}$ due to \eqref{eqn:horn_schur_approxdiag}. 
%\end{proof}
\qed

\subsection{Examples}

\subsubsection{Sphere in $m$ dimensions}
\begin{lemma}
\label{lem:inner prod lower bd}
    Let $x_1$, $\ldots$, $x_n$ denote i.i.d. samples from $\Unif(R\mathcal{S}^{m-1})$. Then, for $0 \leq \varepsilon_0 \leq \frac{1}{8}$
    $$\bigg\vert \bigg\{(i,j): i \neq j, \vert \langle x_i, x_j \rangle \vert \geq \frac{\varepsilon_0 R^2}{\sqrt{m}}\bigg \} \bigg\vert \geq \bigg(1 - \sqrt{\frac{96 \log n}{(n-1)}}\bigg)\left(1-2\varepsilon_0 - \frac{1}{m}\right)n(n-1)$$
    with probability at least $1-n^{-11}$.
\end{lemma}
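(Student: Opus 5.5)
The plan is to reduce the statement to a single-pair probability estimate and then to a concentration bound for a sum over pairs. By rotational invariance, for $i\neq j$ the inner product $\langle x_i,x_j\rangle/R^2$ has the law of the first coordinate $u_1$ of a uniform point $u$ on $\mathcal S^{m-1}$. First we lower bound
\[
q:=\Prob\Big(|u_1|\ge \tfrac{\varepsilon_0}{\sqrt m}\Big)\ \ge\ 1-2\varepsilon_0-\tfrac1m .
\]
For this, write $u_1=g_1/\|g\|$ with $g\sim\cN(0,I_m)$. Then the density of $\sqrt m\,u_1$ is $c_m(1-t^2/m)^{(m-3)/2}$ on $[-\sqrt m,\sqrt m]$, and its maximum is bounded by a constant close to $(2\pi)^{-1/2}$. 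Hence $\Prob(|\sqrt m u_1|<\varepsilon_0)\le 2\varepsilon_0\cdot\sup(\text{density})$.

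The crude bound $\sup\le 1/\sqrt{2\pi}\cdot(1+O(1/m))$ gives $\Prob(|u_1|<\varepsilon_0/\sqrt m)\le 2\varepsilon_0$ up to a $1/m$ correction, absorbed into the $-1/m$ term. For small $m$ ($m=1,2,3$) a direct check suffices: for $m=1$ the bound is trivial since $|u_1|=1$, and for $m=2,3$ the density of $u_1$ is explicit, being $\frac1\pi(1-s^2)^{-1/2}$ and $\frac12$ respectively. Alternatively, one can avoid exact constants via the Gaussian representation: $\Prob(|g_1|<\varepsilon_0\|g\|/\sqrt m)\le \Prob(|g_1|<2\varepsilon_0\cdot\frac{1}{\sqrt{2\pi}}\cdots)$ after conditioning on $\|g\|^2\le 2m$, paying a Chebyshev error $\Prob(\|g\|^2>2m)\le 2/m$. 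I would settle these constants first, since this is the only place where the explicit factor $1-2\varepsilon_0-1/m$ must come out. This bookkeeping is the main, though modest, obstacle: making the small-ball constant at most $2\varepsilon_0$ uniformly in $m$, so a careful density bound is required.

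Second, we count. Let $N=\sum_{i\neq j}\1(|\langle x_i,x_j\rangle|\ge \varepsilon_0R^2/\sqrt m)$, so $\E N=q\,n(n-1)$. The indicators are only pairwise dependent, so we condition on one point at a time. Write $N=\sum_i N_i$ with $N_i=\sum_{j\neq i}\1_{ij}$. Given $x_i$, the variables $\1_{ij}$, $j\neq i$, are i.i.d. Bernoulli$(q)$, with $q$ independent of $x_i$ by rotational invariance. Chernoff's multiplicative lower tail gives
\[
\Prob\big(N_i<(1-\delta)q(n-1)\big)\le \exp(-\delta^2 q(n-1)/2).
\]
Take $\delta=\sqrt{96\log n/(n-1)}$. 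If $q\ge 1/4$, the exponent is at least $12\log n$, and a union bound over $i$ gives failure probability at most $n\cdot n^{-12}=n^{-11}$. If instead $q<1/4$, then $1-2\varepsilon_0-1/m\le q$ fails only when the right-hand side is small; since $\varepsilon_0\le1/8$, this requires $m<4$, and those cases can be handled directly, or one simply uses the bound $q\ge 1-2\varepsilon_0-1/m$ to check that $q\ge 1/4$ whenever the claim is nontrivial, which is when the right-hand side is positive. Summing over $i$ then yields $N\ge(1-\delta)q\,n(n-1)\ge(1-\delta)(1-2\varepsilon_0-1/m)n(n-1)$, as claimed.
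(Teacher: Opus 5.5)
Your proof is correct, and its counting half is the paper's. You condition on $x_i$, note that the indicators $\mathbf{1}(|\langle x_i,x_j\rangle|\ge\varepsilon_0R^2/\sqrt m)$, $j\ne i$, are i.i.d.\ Bernoulli$(q)$ with $q$ independent of $x_i$, apply the multiplicative Chernoff lower tail to each row sum, and union bound over $i$ using $q\ge 1/4$. You run Chernoff with the true mean $q(n-1)$ and only afterwards use $q\ge 1-2\varepsilon_0-1/m$. The paper instead inserts this lower bound directly into the Chernoff exponent, which tacitly uses monotonicity of the bound in the mean, so your ordering is marginally cleaner.

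The real difference is the single-pair small-ball estimate. The paper writes $x_i\stackrel{d}{=}Rg/\|g\|_2$ and splits on $\|g\|_2\le(1+\delta)\sqrt m$. It bounds $\mathbb{P}(|g_1|\le\varepsilon_0(1+\delta))\le\varepsilon_0(1+\delta)$ and pays $e^{-\delta^2m/2}$ for Gaussian concentration of $\|g\|_2$. The choice $\delta=\sqrt{2\log m/m}$ is exactly where the $1/m$ in the statement comes from.

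Your explicit-density route is sharper. By Wendel's inequality $\Gamma(y+\tfrac12)\le\sqrt{y}\,\Gamma(y)$, for $m\ge3$ the density of $\sqrt m\,u_1$ peaks at $c_m\le\sqrt{(m-1)/(2\pi m)}<1/\sqrt{2\pi}$. This gives $\mathbb{P}(|\sqrt m\,u_1|<\varepsilon_0)\le 2\varepsilon_0/\sqrt{2\pi}<\varepsilon_0$ with no $1/m$ loss at all, and $m=2$ follows from the arcsine law as you say. So the constant bookkeeping you single out as the main obstacle is not one: the factor $2/\sqrt{2\pi}<1$ leaves ample room, and you need neither a $(1+O(1/m))$ correction nor the $-1/m$ term to absorb it.

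Two minor fixes. First, the Chebyshev variant you sketch, cutting at $\|g\|^2\le 2m$, costs $\mathbb{P}(\|g\|^2>2m)\le 2/m$, so it does not reproduce the stated $1/m$. Cutting at $3m$ instead gives error $1/(2m)$ and small-ball term $\sqrt{6/\pi}\,\varepsilon_0<2\varepsilon_0$, which repairs it; so does using Gaussian concentration of $\|g\|_2$ as the paper does. Second, the case discussion around $q<1/4$ can be collapsed. Since $\varepsilon_0\le 1/8$, already $1-2\varepsilon_0-1/m\ge 1/4$ for every $m\ge 2$, which is the range the paper's proof assumes. So your bound on $q$ gives $q\ge1/4$ there directly, without the detour through $m<4$.
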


\begin{proof}
    
We begin by recalling that if $U$ is an $m \times m$ orthogonal matrix independent of $x_i$ then $U x_i \stackrel{d}{=} x_i$. Thus, if we $U$ satisfies $Ux_j = Re_1$ we get that
\begin{equation}
\label{eq:rotational invariance}
    \langle x_i, x_j \rangle \stackrel{d}{=} (x_i)_1.
\end{equation}
Let $g$ denote an $m$-dimensional standard Gaussian vector. Using \eqref{eq:rotational invariance} and the fact that $x_i \stackrel{d}{=} \frac{R g}{\|g\|_2}$, for all $\varepsilon \geq 0$ we have that
\begin{align*}
    \P\bigg(\vert \langle x_i, x_j \rangle \vert \leq \frac{\varepsilon R^2}{\sqrt{m}}\bigg)= & \P\bigg(\vert (x_i)_1\vert \leq \frac{\varepsilon R^2}{\sqrt{m}}\bigg) \\
    = & \P\bigg(\vert g_1 \vert \leq \frac{\varepsilon\| g\|_2}{\sqrt{m}}\bigg) \\
    = & \P\bigg(\vert g_1 \vert \leq \frac{\varepsilon \|g\|_2}{\sqrt{m}}, \|g\|_2 \leq (1+\delta)\sqrt{m}\bigg)\\
    & + \P\bigg(\vert g_1 \vert \leq \frac{\varepsilon \|g\|_2}{\sqrt{m}}, \|g\|_2 \geq (1+\delta)\sqrt{m}\bigg)\\
    \leq & \P(\vert g_1 \vert \leq \varepsilon (1+\delta)) + \exp(-\delta^2 m/2) \\
    \leq & \varepsilon (1+\delta) + \exp(-\delta^2 m/2),
\end{align*}
where the penultimate inequality follows by applying Gaussian concentration to $\| g\|_2$. Setting $\delta = \sqrt{\frac{2 \log m}{m}}$ in the above display we get that for $m \geq 2$
\begin{equation}
\label{eq:inner product anti conc -- upper bd}
    \P\bigg(\vert \langle x_i, x_j \rangle \vert \leq \frac{\varepsilon R^2}{\sqrt{m}}\bigg) \leq 2\varepsilon + \frac{1}{m}.
\end{equation}
Now, using Chernoff bound we get that for $0\leq t \leq 1$
\begin{align*}
\mathbb{P}\left(\sum_{j:j\neq i} \1\left(\vert \langle x_i, x_j \rangle \vert \geq \frac{\varepsilon R^2}{\sqrt{m}}\right) \leq (1-t)\left(1 - 2\varepsilon  -\frac{1}{m}\right)(n-1)\bigg\vert x_i\right) \leq \exp\left(-\frac{t^2}{2}\left(1-2\varepsilon - \frac{1}{m}\right)(n-1)\right).
\end{align*}

Next, by the Pigeonhole principle and the union bound we have 

\begin{align*}
\mathbb{P}\Bigg(\sum_{i,j:j\neq i} \1\left(\vert \langle x_i, x_j \rangle \vert \geq \frac{\varepsilon R^2}{\sqrt{m}}\right) \leq &(1-t)\left(1-2\varepsilon - \frac{1}{m} \Bigg)n(n-1)\right) \\
\leq  & \sum_{i=1}^n \mathbb{P}\left(\sum_{j:j\neq i} \1\left(\vert \langle x_i, x_j \rangle \vert \geq \frac{\varepsilon R^2}{\sqrt{m}}\right) \leq (1-t)\left(1-2\varepsilon -\frac{1}{m}\right)(n-1)\right)\\
\leq & n\exp\left(-\frac{t^2}{2}\left(1-2\varepsilon-\frac{1}{m}\right)(n-1)\right),
\end{align*}
and the desired result follows by setting $t = \sqrt{\frac{24 \log n}{(1-2\varepsilon - \frac{1}{m})(n-1)}}$ and noting that for $m \geq 2$ and $0 \leq \varepsilon \leq \frac{1}{8}$ we have that $ 1- 2\varepsilon - \frac{1}{m} \geq \frac{1}{4}.$
\end{proof}

\begin{proof}[Proof of Proposition \ref{prop:sphere}]
We begin by proving \eqref{eq:sphere_err_bd}.
We would like to apply Theorem \ref{thm:manifold_denoising_low_rank} with $k = m$. To do so, we must verify that $\Sigma^{\infty}$ is rank $m$ and assumptions \eqref{assu:trace_truecov}--\eqref{assu:spec_gap_truecov} hold. First, note that
$$\Sigma^{\infty} =\mathbb{E}_{\bfx \sim \Unif R\cS^{m-1}}\bigg[\bfx\bfx^\top \bigg] = \begin{pmatrix}
    \frac{R^2}{m}I_m & O_{d-m} \\
    O_{d-m} & O_{d-m}
\end{pmatrix},$$
where $I_m$ is the $m \times m$ identity matrix and $O_{d-m}$ is the $(d-m) \times (d-m)$ matrix of all 0s.
Thus, $\Sigma^{\infty}$ is rank $m$ and
$\Tr \Sigma^{\infty} =  R^2.$
Hence, \eqref{assu:trace_truecov} is satisfied by assumption that $R^2 \lesssim 1/(\sqrt{d} \log^2 n)$. Additionally, $\max_{i, j} \langle x_i, x_j \rangle \leq R^2 = \Tr \Sigma^{\infty}$. Thus, \eqref{assu:no_outliers_truecov} holds. Finally, $\lambda_m(\Sigma^{\infty}) = \frac{R^2}{m}$, thus $\lambda_m(\Sigma^{\infty}) = \omega(d^{-1})$ by assumption $R^2 = \omega(\frac{m}{d})$ and, therefore, \eqref{assu:spec_gap_truecov} is satisfied as well.
 Consequently, we may apply \Cref{thm:manifold_denoising_low_rank} to $\cX$.

Now we estimate the spectral quantities that appear in the bound we get from a direct application of Theorem \ref{thm:manifold_denoising_low_rank} to $\cX$. To this end, we relate $\lambda_1(XX^{\top})$, $\lambda_m(XX^{\top})$ and $\norm{X}_F^2$ to the corresponding spectral statistics of $n \Sigma^{\infty}$ using Lemma \ref{lem: discrete trace limit}. By Weyl's inequality with \eqref{eqn:single entry concentrates} and \eqref{eqn: curve-trace of sigma concentrates}
\begin{equation}
\label{eq:lambda1_sphere}
   \lambda_i(XX^{\top}) = \lambda_i(X^\top X) = \frac{nR^2}{m} + O\bigg(R^2\sqrt{n\log n}\bigg) =(1+o(1))\frac{nR^2}{m}, \text{ for all }1\leq i\leq m,
\end{equation}
since our assumptions imply $\omega(m) = \frac{\sqrt{d}}{\log n} < \sqrt{\frac{n}{\log n}}$. Specifically, this tells us that $\|XX^\top\|_\op=(1+o(1))nR^2/m$ and $\|XX^\top\|_F=(1+o(1))nR^2/\sqrt m$.
Now, setting $\lambda_1=\lambda_k=R^2/m$, $k=m$ and $\Tr\Sigma^{\infty} = R^2$ in \Cref{thm:manifold_denoising_low_rank} and using the fact that $\omega(m/d) = R^2$ we obtain \eqref{eq:sphere_err_bd}. The Frobenius bound is immediately obtained using the fact that $XX^\top-\Xalg\Xalg^\top$ is at most rank$-2m$.
We now prove \eqref{eq:len preserved}. By \eqref{eq:sphere_err_bd}, and \eqref{eq:lambda1_sphere} we have that for some $C>0$
\begin{equation}
    \|XX^\top-\Xalg\Xalg^\top\|_\op\leq\log^{C}n\cdot\frac{nR^2}{m} \left(\frac{\sqrt{m}}{R\sqrt{d}}+ \sqrt{d}R^2+ \frac{m}{R^2\sqrt{npd}}\right):=(\cE_\infty)_\op.
\end{equation}
Note that the constant $C$ in \eqref{eqn:inner_prod_recover} and \eqref{eqn:norm_recovery} are meant to account for the constant from the above display, so they are the same. Let
\begin{equation}\label{eqn:varepsilon_sphere}
\varepsilon_n=\log^{C/2}n\cdot(\cE_\infty)^{1/2}_{\op}\frac{\sqrt{2m}}{\sqrt{n}R}.
\end{equation}

Then set $\varepsilon_n=\varepsilon$ from \eqref{eqn:norm_recovery}. 
Combining the above display with our assumptions \eqref{assu:radii}, we get that $\varepsilon_n=o(1)$. Therefore all but $n\varepsilon_n$ indices satisfy $|\|x_i^\alg\|_2^2-R^2|<\varepsilon_nR^2$, and consequently $\|x_i^\alg\|_2=(1+O(\varepsilon_n))R$ for all but $o(n)$ indices.
We now prove \eqref{eq:inner prod preserved}. First, we use $\varepsilon=\varepsilon_n^2$  as an input to \eqref{eqn:inner_prod_recover}, the lower bound on the inner product difference in the set \eqref{eqn:inner_prod_recover} is then
\[\frac{\log^C n}{n\sqrt{\varepsilon_n^2}}\cdot\sqrt{2m}(\cE_\infty)_\op=\frac{\sqrt{m}R^2\varepsilon_n^2}{m\varepsilon_n}=\varepsilon_n\frac{R^2}{\sqrt m}.\]
Consequently, taking the complement of \eqref{eqn:inner_prod_recover} with the above lower bound, we obtain that
\begin{equation}\label{eqn:good_complement}
|\cG|:=\bigg\lvert\left\{(i,j):i\ne j, |\langle x_i^\alg,x_j^\alg\rangle-\langle x_i,x_j\rangle|<\varepsilon_n\frac{R^2}{\sqrt m}\right\}\Bigg\rvert\geq n(n-1)-\varepsilon_n^2 n^2.
\end{equation}
Now, let $\cA(\frac{\varepsilon_0 R^2}{\sqrt m})$ be defined as in \Cref{eq:good_sets_lb}, where $\varepsilon_0\in (0,\frac18]$. By \Cref{lem:inner prod lower bd}, we know that with probability at least $1-n^{-11}$,
\begin{equation}\label{eq:cA_size}
\left|\cA\left(\tfrac{\varepsilon_0 R^2}{\sqrt m}\right)\right|\geq\bigg(1-\sqrt{\frac{96 \log n}{n-1}}\bigg)\bigg(1-2\varepsilon_0-\frac1m\bigg)n(n-1).
\end{equation}
Thus, for a pair on indices in the intersection of $\cA(\frac{\varepsilon_0 R^2}{\sqrt m})$ and $\cG$,
\[
|\langle x_i^\alg,x_j^\alg\rangle-\langle x_i,x_j\rangle|<\varepsilon_n\frac{R^2}{\sqrt m}\leq\frac{\varepsilon_n}{\varepsilon_0}|\langle x_i,x_j\rangle|,
\]
so $\langle x_i^\alg,x_j^\alg\rangle=(1+o_{\varepsilon_0}(1))\langle x_i,x_j\rangle$, since $\varepsilon_0$ is fixed and $\varepsilon_n=o(1)$. Finally, combining \eqref{eqn:good_complement} and  \eqref{eq:cA_size}, we count the indices in the intersection:
\begin{equation}
\begin{aligned}
    |\cA(\tfrac{\varepsilon_0 R^2}{\sqrt m})\cap\cG|&\geq|\cA(\tfrac{\varepsilon_0 R^2}{\sqrt m})|-\big(n(n-1)-|\cG|\big)\\
    &\ge|\cA(\tfrac{\varepsilon_0 R^2}{\sqrt m})|-\varepsilon_n^2n^2\geq\bigg(1-2\varepsilon_0-\frac1m-o(1)\bigg)n(n-1),
\end{aligned}
\end{equation}
which gives us \eqref{eq:inner prod preserved}. 
\end{proof}

\subsubsection{Mixture of nested spheres}

\begin{proof}[Proof of \Cref{prop:nested_spheres}
]
    Notice that
    $$\Sigma^{\infty} =\mathbb{E}_{\bfx \sim \sum^s_{j=1}w_j\Prob_j}\bigg[\bfx\bfx^\top\bigg] = \begin{pmatrix}
    \frac{R^2}{m}I_m & O_{d-m} \\
    O_{d-m} & O_{d-m}
\end{pmatrix},$$
which is identical to the spherical setting. Thus, the conclusion follows immediately from Proposition \ref{prop:sphere}. 
\end{proof}

\begin{proof}[Proof of Proposition \ref{prop:nested_sphere_analysis}]

Let $I_j = \{i: x_i \text{ sampled from } R_j \mathcal{S}^{m-1}\}$. By Bernstein inequality, we have that with probability at least $ 1-n^{-10}$ for every $j$
$$\vert I_j \vert = nw_j\bigg(1+O\bigg(\sqrt{\frac{\log n}{nw_j(1-w_j)}}\bigg)\bigg).$$
Therefore, if $w_j\ge 1/C_0$, then $|I_j|\ge n/(2C_0)$ for some $C_0>0$. 
We apply \eqref{eqn:norm_recovery} with $k=m$, using the error bound from \Cref{prop:nested_spheres} in place of $\cE_\op(X)$ (see \Cref{rmk:sampling_recovery}). Since $\Sigma^{\infty}$ has rank $m$, we have $\Tr\Sigma_{m+1:d}^\infty=0$. 
Let $\varepsilon_n$ be as in \eqref{eqn:varepsilon_sphere}. By definition,
\begin{equation}\label{eq:nested_lb}
\frac{\log^C n}{n}\cdot 2m(\cE_\infty)_\op=\varepsilon_n^2R^2 .
\end{equation}
To be precise, let us name the implicit constants in the assumptions \eqref{assu:radii}; assume $R^2 \geq \frac{m}{d}\log^{C_1} n$, $R^2 \leq d^{-1/2}\log^{-C_1} n$, and $R^2\sqrt{npd} \ge m\log^{C_1} n$ for some $C_1 > 4C$. So our assumptions on the constants, combined with \eqref{assu:radii}, imply that for $\beta=C_1/2-2C>0$ such that $\varepsilon_n^2\lesssim \log^{-\beta} n$. Thus applying \eqref{eqn:norm_recovery}, with $\varepsilon_0/(2C_0)$ in place of $\varepsilon$, gives 
\begin{equation*}
\label{eq:markov_nested_sphere}
 \bigg\vert \bigg\{i: \vert \|x_i^\alg\|_2^2 - R_j^2 \vert \lesssim_{C_0} \frac{R^2}{\varepsilon_0 \log^\beta n}\bigg\}\bigg\vert = (1-\varepsilon_0)\vert I_j \vert.   \end{equation*} 
If $R_j \asymp R$ for all $j$, we conclude that on the event in the previous display all but $\varepsilon_0 \vert I_j \vert$ indices $i \in I_j$ satisfy
\begin{equation*}
    \| x_i^\alg \|_2 = R_j\bigg(1+O_{C_0}\bigg(\frac{1}{\varepsilon_0 \log^\beta n}\bigg)\bigg).
\end{equation*}
Moreover, since $\varepsilon_0$ is fixed, $(\varepsilon_0\log^\beta n)^{-1} = o(1)$, since the separation $\alpha \leq R_1 - R_0 = R_1 \le C_0 R$ and \eqref{eq:separation bd} yield $R(\varepsilon_0 \log^\beta n)^{-1} = o_{C_0}(\alpha) = o_{C_0}(R)$. Hence the above display implies $|\|x_i\|_2-\|x_i^\alg\|_2| = o(\alpha)$ and $= o_{C_0}(\|x_i\|_2)$, as desired.
\end{proof}

\subsubsection{Sinusoid curve}

\begin{proof}[Proof of \Cref{prop:cosines}]

Here we will use \Cref{thm:manifold_denoising_approx_diag}, as $\Sigma^{\infty}$ turns out to be a numerically low rank approximately diagonal matrix. We begin by proving \eqref{eqn:cosines_ex_error} by computing $\Sigma^{\infty}$. Using the product-to-sum trigonometric identity $\cos(a)\cos(b)= \frac{1}{2}(\cos(a-b)+ \cos(a+b))$, we obtain for $i \neq j$ (also noting that our assumption \eqref{assu:cosines-gap} implies that $\omega_i \neq \omega_j$ for $i \neq j$)
    \begin{equation}\label{eqn:integral_prod_to_sum}
    \begin{aligned}
        \Sigma^{\infty}_{ij}&= a_ia_j\int_{[0,1]}\cos(\omega_it+\phi_i )\cos(\omega_jt+\phi_j )dt\\
        &= \frac{a_ia_j}{2}\int_{[0,1]}\cos((\omega_i-\omega_j)t +\phi_i-\phi_j)dt + \frac{a_ia_j}{2}\int_{[0,1]}\cos((\omega_i+\omega_j)t +\phi_i+\phi_j)dt\\
        &=\frac{a_ia_j}{2}\left(\frac{\sin((\omega_i-\omega_j) +\phi_i-\phi_j)- \sin(\phi_i-\phi_j)}{\omega_i-\omega_j} + \frac{\sin((\omega_i+\omega_j) +\phi_i+\phi_j)- \sin(\phi_i+\phi_j)}{\omega_i+\omega_j}\right).
        \end{aligned}
    \end{equation}
    
    Similarly, the diagonal entries:
    \begin{equation}\label{eqn:sinusoid_diag_entries}
    \begin{aligned}
        \Sigma^{\infty}_{ii}&= a_i^2\int_{[0,1]}\cos^2(\omega_it+\phi_i )dt\\
        &= a_i^2\frac{1}{2}+ a_i^2\frac{\sin(2(\omega_i +\phi_i))-\sin(2\phi_i)}{4\omega_i}.
    \end{aligned}
    \end{equation}
    Notice that we can write 
    \begin{equation}\label{eqn:cosines_true_covar}
    \Sigma^{\infty}= \frac{1}{2}\diag\Bigg(a_1^2\left(1+ \frac{\sin(2(\omega_1 + \phi_1))-\sin(2\phi_1)}{2\omega_1}\right),\dots, a_d^2\left(1+\frac{\sin(2(\omega_d + \phi_d))-\sin(2\phi_d)}{2\omega_d}\right)\Bigg) + E,\end{equation} where $E$ is defined to have entries
\begin{align*}
    E_{ij}&=  \frac{a_ia_j}{2}\left(\frac{\sin((\omega_i-\omega_j) +\phi_i-\phi_j)- \sin(\phi_i-\phi_j)}{\omega_i-\omega_j} + \frac{\sin((\omega_i+\omega_j) +\phi_i+\phi_j)- \sin(\phi_i+\phi_j)}{\omega_i+\omega_j}\right) 
\end{align*}
for $i\ne j$ and $E_{ii}=0$ for $1\le i\le d$. 
Applying triangle inequality to the above display we get that,
    \[|E_{ij} |\leq \begin{cases}
        a_ia_j\left(\frac{1}{|\omega_i-\omega_j|} + \frac{1}{|\omega_i+\omega_j|}\right)\quad \text{ if }i\neq j,\\
        0\qquad\text{ if }i= j
    \end{cases}.\]
    Recall $\Omega= \min_{i\neq j}||\omega_i|-|\omega_j||$, and that by assumption $a_1=\max_i |a_i|$. By the Schur test (see \eqref{bd:Schur}), 
    \[\norm{E}_\op\leq \max_i
    \sum_j |E_{ij}|
    \lesssim 
    \frac{ a_1^2d}{\Omega}
    .\]
   
    Notice, that our assumption \eqref{assu:cosines-freq} 
    gives us $\norm{E}_\op \lesssim \tfrac{1}{d}$, satisfying the ``approximately diagonal'' condition in the statement of \Cref{thm:manifold_denoising_approx_diag}.
    
    We check the other conditions 
    of \Cref{thm:manifold_denoising_approx_diag}. For condition \eqref{assu:trace approx diag} to be satisfied, we note that is deduced from  the minimum assumption on the frequencies \eqref{assu:cosines-freq} :
    \begin{equation}\label{eqn:sinusoid_tr_UB}
            \Tr\Sigma^{\infty}= \sum_{i=1}^d \frac{1}{2}\Bigg(a_i^2\bigg(1+ \frac{\sin(2(\omega_i+\phi_i))-\sin(2\phi_i)}{2\omega_i}\bigg)\Bigg)
            \lesssim  \sum_{i=1}^d a_i^2\left(1+ \frac{1}{|\omega_i|}\right)
    \lesssim \sum_{i=1}^d a_i^2.
    \end{equation}
    The last step is obtained by \eqref{assu:cosines-freq}, since that assumption implies that $1+1/|\omega_i|\lesssim 1$. Note that the same steps can be repeated, by using the lower bound on $\sin(y)\geq -1$, to obtain $\Tr\Sigma^{\infty}\gtrsim \sum_{i=1}^d a_i^2\left(1- \frac{1}{|\omega_i|}\right)\gtrsim\sum_{i=1}^d a_i^2$. \eqref{eqn:sinusoid_tr_UB} and the lower bound computation imply \begin{equation}\label{eqn:sinusoid_tr_magnitude}\Tr\Sigma^{\infty} =\Theta\left(\sum_{i=1}^d a_i^2\right).\end{equation}
    Combining \eqref{eqn:sinusoid_tr_UB} with \eqref{assu:cosines-amplitude}, gives us the upper bound assumptions on $\Tr\Sigma^{\infty}$--\eqref{assu:trace approx diag}.

    Condition \eqref{assu:no spikes approx diag} 
    is now trivially satisfied, by $-1\leq \cos(y)\leq 1$ and \eqref{eqn:sinusoid_tr_magnitude}:
    $$\norm{\bfx }_2^2 =  \sum_{i=1}^d a_i^2 \cos^2(\omega_i T+\phi_i) \leq \sum_{i=1}^d a_i^2 \lesssim \Tr\Sigma^{\infty}.$$
    Condition \eqref{assu:spec gap assum approx diag} follows from our assumptions $k=o(\sqrt{d}/\log^2n)$ and \eqref{assu:cosines-gap}. Then, the assumption on the tail \eqref{assu:spec gap assum approx diag}, can be inferred by the fact that $\Tr E=0$ and \eqref{assu:cosines-tail}  Therefore, \Cref{thm:manifold_denoising_approx_diag} applies and the inequality \eqref{eqn:cosines_ex_error} follows. We defer the proof of \eqref{eqn:norms-recovery-sinusoid-eq} below, as we will need \Cref{lemma:sinusoid-norms}.
\end{proof}

    \begin{remark}
        One could also sample frequencies in the window of length $n^{10}d\Omega:=\Omega'$
        \[\omega_i\sim \Unif\big[1,\Omega' 
        \big],\]
         where recall $\Omega$ is the minimum frequency gap, of the same order as specified in \eqref{assu:cosines-freq}.
Using an elementary order statistics computation (see \Cref{lemma: random frequency gap} for more details), we can find the probability of the smallest gap explicitly. For $t(d-1)\leq d\cdot\Omega$, we have
\[\P\left(\min_{i< j}|\omega_i-\omega_j|\geq t\right)= \left(1-\frac{(d-1)t}{\Omega'}\right)^d.\]
In our example, we require the gap to be of order greater than $\Omega\gtrsim a^2d^2$, setting $t=\Omega$ gives us that with probability at least $1-n^{-10}$,
\[\min_{i< j}|\omega_i-\omega_j|\geq \Omega=\omega\left(a^2d^2\right).\]
    \end{remark}\begin{lemma}\label{lemma: random frequency gap}
    If $\omega_1,\dots\omega_d\stackrel{iid}{\sim}\Unif{[a,b]}$ then
    \[\P\left(\min_{i\neq j}|\omega_i-\omega_j|\geq t\right)=\begin{cases}
        \left(\frac{b-a-t(d-1)}{b-a}\right)^d\quad \text{ for   }t(d-1)\leq b-a,\\
        0\quad \text{otherwise}.
    \end{cases}\]
\end{lemma}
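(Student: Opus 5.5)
The statement is about the minimum gap among $d$ i.i.d. uniform points on $[a,b]$. After the affine change of variables $u_i=(\omega_i-a)/(b-a)$, it reduces to $d$ i.i.d. uniforms on $[0,1]$ with threshold $s=t/(b-a)$. The claim becomes
\[
\P\Big(\min_{i\ne j}|u_i-u_j|\ge s\Big)=(1-(d-1)s)_+^d .
\]

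We compute this probability as the volume of the set
\[
G_s=\{u\in[0,1]^d:\ |u_i-u_j|\ge s\ \ \forall i\ne j\}.
\]
Since the $u_i$ are distinct almost surely, $[0,1]^d$ splits, up to a null set, into $d!$ regions indexed by the permutation that sorts the coordinates. By exchangeability each region contributes the same volume, so
\[
\vol(G_s)=d!\cdot\vol\{0\le u_1\le\cdots\le u_d\le 1,\ u_{i+1}-u_i\ge s\ \forall i\}.
\]
On the ordered region the pairwise constraint reduces to consecutive gaps, because nonconsecutive gaps are sums of consecutive ones.

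Next we apply the shearing map $v_i=u_i-(i-1)s$. It is a translation, so it preserves volume, and it sends the ordered constrained region bijectively onto
\[
\{0\le v_1\le\cdots\le v_d\le 1-(d-1)s\}.
\]
This is an ordered simplex of side $L=1-(d-1)s$, with volume $L^d/d!$ when $L\ge 0$, and it is empty when $L<0$, i.e. when $(d-1)s>1$. Multiplying by $d!$ gives $L^d$. Substituting back $s=t/(b-a)$ yields $\big((b-a-t(d-1))/(b-a)\big)^d$ in the case $t(d-1)\le b-a$, and $0$ otherwise.

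The only step needing care is the bijectivity of the shear onto the smaller simplex. The inequality $v_{i+1}\ge v_i$ is equivalent to $u_{i+1}-u_i\ge s$, and the boundary conditions become $v_1=u_1\ge 0$ and $v_d=u_d-(d-1)s\le 1-(d-1)s$. Both directions of the correspondence are immediate, so nothing here is a genuine obstacle.

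Ties and boundary equalities such as $|u_i-u_j|=s$ exactly have measure zero, so the choice of $\ge$ versus $>$ is immaterial. At the boundary case $t(d-1)=b-a$ both formulas give $0$.
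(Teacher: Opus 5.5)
Your proof is correct and is essentially the paper's argument: the paper also passes to order statistics with joint density $d!/(b-a)^d$ (your $d!$ symmetrization over sorting permutations) and applies the same shift $y_i=x_i-(i-1)t$ to land in an ordered simplex of side $b-a-(d-1)t$. Your explicit description of the image region, including the constraint $v_d\le 1-(d-1)s$, is cleaner than the paper's iterated integral. In the paper's integral the outermost upper limit should read $b$ rather than $b-t$; with $b-t$ one would get $\bigl((b-a-dt)/(b-a)\bigr)^d$ instead.
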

\begin{proof}
    Let $\omega_{(i)}$ be the $(i)$-th order statistic. Since $\omega_i$'s are identically distributed, we know that the joint probability density of $\omega_{(1)},\dots \omega_{(d)}$ is $d!\frac{1}{(b-a)^d}$. To compute our desired probability, it suffices to evaluate the following integral:
    \begin{align*}
    \P\left(\min_{i\neq j}|\omega_i-\omega_j|\geq t\right) &= \P\left(\omega_{(1)}<\omega_{(2)}-t,\dots , \omega_{(d-1)}<\omega_{(d)}-t \right)\\
    &= \frac{d!}{(b-a)^d}\int_{a+(d-1)t}^{b-t}\dots \int_{a+t}^{x_3-t}\int_{a}^{x_2-t}1 dx_1dx_2\dots dx_d
    \end{align*}
    Making the change of variables $y_i=x_i-(i-1)t$ gives the result.
\end{proof}

\begin{lemma}\label{lemma:sinusoid-norms}
    Let $\cX$ be as in \Cref{prop:cosines}, satisfying
\eqref{assu:cosines-amplitude}--\eqref{assu:cosines-freq}, and assume additionally that \eqref{assu:cosines-wellcond}.
   Then, on the same event on which \eqref{eqn:cosines_ex_error} and \eqref{eqn:cosines_ex_error_frob} hold, we have that for some constants $c,c_1>0$, with probability at least $1-O(n^{-10})$
   \begin{equation}
    \Big|\Big\{i\in[n]: \|x_i\|_2^2\geq c\sum_{l=1}^d a_l^2\Big\}\Big|\geq \bigg(1-\sqrt{\frac{44\log n}{n}}\bigg)\bigg(1-\frac{c_1}{k}\bigg)n.
   \end{equation}
\end{lemma}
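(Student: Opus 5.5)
The plan is to prove the bound for a single sample first, then transfer it to all $n$ samples by a Chernoff bound, exactly as in \Cref{lem:inner prod lower bd}. Write $S:=\sum_{l=1}^d a_l^2$ and, for $T\sim\Unif[0,1]$, set
\[
f(T):=\sum_{l=1}^d a_l^2\cos^2(\omega_lT+\phi_l)=\tfrac12 S+g(T),\qquad g(T):=\tfrac12\sum_{l=1}^d a_l^2\cos(2\omega_lT+2\phi_l),
\]
so that $\|x_i\|_2^2=f(T_i)$ with the $T_i$ i.i.d. The first step is the mean. We have $\E f(T)=\Tr\Sigma^\infty$, and by \eqref{eqn:sinusoid_tr_magnitude} this equals $\Theta(S)$. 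Hence there is a constant $c'>0$ with $\E f\ge 2c'S$. We take $c:=c'$.

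The second step is a variance bound of order $S^2/k$, followed by Chebyshev. Expanding $\E g^2$ with the product-to-sum identity, as in \eqref{eqn:integral_prod_to_sum}, splits it into two parts.
\begin{itemize}
\item The diagonal terms are bounded by $\sum_l a_l^4\le a_1^2S$.
\item The off-diagonal terms are bounded by
\[
\sum_{l\ne m}a_l^2a_m^2\Big(\frac1{2|\omega_l-\omega_m|}+\frac1{2|\omega_l+\omega_m|}\Big)\lesssim \frac{S^2}{\Omega}.
\]
\end{itemize}
Subtracting $(\E g)^2\ge 0$ only helps, so $\Var f=\Var g\lesssim a_1^2S+S^2/\Omega$.

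Now use \eqref{assu:cosines-wellcond}. Since $a_1/a_k=O(1)$, we get $S\ge k a_k^2\gtrsim k a_1^2$, so $a_1^2S\lesssim S^2/k$. For the off-diagonal part, \eqref{assu:cosines-freq} and \eqref{assu:cosines-gap} give $\Omega\gtrsim a_1^2d^2\ge a_k^2 d^2=\omega(d)\ge k$. Therefore $\Var f\lesssim S^2/k$. Chebyshev then yields
\[
\P\big(f(T)<cS\big)\le \P\big(|f-\E f|\ge c'S\big)\le \frac{\Var f}{c'^2S^2}\le \frac{c_1}{k}.
\]

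The third step is the passage to $n$ samples. The indicators $\1(\|x_i\|_2^2\ge cS)$ are i.i.d. Bernoulli with mean at least $q:=1-c_1/k$. Since $k=\omega(1)$, we have $q\ge\frac12$ for large $n$. A multiplicative Chernoff bound with deviation $t=\sqrt{22\log n/(qn)}\le\sqrt{44\log n/n}$ shows that the count is at least $(1-t)qn$ with probability $1-O(n^{-10})$. This is the claimed bound.

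The statement is purely about $\cX$, so it holds on any event. Intersecting with the event of \eqref{eqn:cosines_ex_error} and \eqref{eqn:cosines_ex_error_frob} costs only another $O(n^{-10})$ in probability.

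The main obstacle is the variance bound. Everything depends on the cross terms being controlled by the frequency separation $\Omega$, and on the diagonal terms being small relative to $S^2$. The latter is exactly where both well-conditioning and $k=\omega(1)$ enter: without $a_1/a_k=O(1)$, a single dominant amplitude would make $\cos^2$ of one coordinate vanish with constant probability. A secondary point is the lower bound on the mean, which needs $\min_l|\omega_l|$ at least a large enough constant. This is inherited from \eqref{eqn:sinusoid_tr_magnitude}, and if needed I would enlarge that constant in \eqref{assu:cosines-freq}.
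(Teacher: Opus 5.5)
Your proposal is correct and follows the paper's proof essentially step for step. The shared steps are the double-angle decomposition, the variance bound $\lesssim a_1^2 S + S^2/\Omega$ reduced to $S^2/k$ via \eqref{assu:cosines-wellcond}, Chebyshev against the mean $\Tr\Sigma^{\infty}=\Theta(S)$, and the Chernoff bound with $t=\sqrt{22\log n/(qn)}$. The only cosmetic difference is that you absorb the cross terms directly through $\Omega\gtrsim a_k^2d^2=\omega(d)\ge k$, whereas the paper uses $S\le d a_1^2$ to bound them by $S/d\lesssim a_1^2 S$ before invoking well-conditioning.
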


\begin{proof}[Proof of \Cref{lemma:sinusoid-norms}]
    The proof will rely on an application of a Chernoff bound on the indicators of large enough $\|x_i\|_2$'s. To this end, we first estimate the probability of $\P(\|x_i\|_2^2\leq O(\sum_{l=1}^d a_i^2))$, by Tchebyshev's inequality, thus we estimate the variance of $\|x_i\|_2=\|(a_j\cos(\omega_jT_i+ \phi_j))_{j=1}^d\|_2$, for $T_i\sim \Unif{[0,1]}$. \eqref{assu:cosines-wellcond} and the simplification $a_1\geq \dots\geq a_d\geq 0$ imply that 
    \begin{equation}\label{eqn:cosines-ka1}
    \sum_{l=1}^d a_l^2\geq\sum_{j\leq k}a_j^2\geq k a_k^2\gtrsim k a_1^2,
    \qquad\text{thus }k=\omega(1)\text{ gives }\qquad a_1^2=o\Big(\sum_{l=1}^d a_l^2\Big).
\end{equation}
By the double angle identity
$\cos^2(\theta)=\frac12(1+\cos(2\theta))$, we can find an explicit expression for the norm 
\begin{equation}\label{eqn:cosines-norm-decomp}
    \|x_i\|_2^2=\frac12\sum_{j=1}^d a_j^2+\frac12\sum_{j=1}^d a_j^2\cos(2\omega_jT_i+2\phi_j),
\end{equation}
and therefore for any $1\leq i\leq n$, \begin{align*}\Var\big(\|x_i\|_2^2\big)&\leq\frac{1}{4}\E_{T_i\sim \Unif{[0,1]}}\bigg[\Big(\sum_{j=1}^d
    a_j^2\cos(2\omega_jT_i+2\phi_j)\Big)^2\bigg]\\
    &=\;\frac14\sum_{j,l=1}^d a_j^2a_l^2\int_{[0,1]}\cos(2\omega_jt+2\phi_j)\cos(2\omega_lt+2\phi_l)dt.
\end{align*}
Following the same computation as in \eqref{eqn:integral_prod_to_sum} yields for $j\neq l$
\[\bigg|\int_{[0,1]}\cos(2\omega_jt+2\phi_j)\cos(2\omega_lt+2\phi_l)\,dt\bigg|
    \;\leq\;\frac{1}{2|\omega_j-\omega_l|}+\frac{1}{2|\omega_j+\omega_l|}\;\leq\;\frac{1}{\Omega}.\]
For $j=l$, the integral is $\leq1$. Then using
\eqref{assu:cosines-freq} and $\sum_{j=1}^d a_j^2\leq da_1^2$,
\begin{equation}\label{eqn:sinusoid-variance}
    \Var\left(\|x_i\|_2^2\right)\lesssim\sum_{j=1}^d a_j^4+\frac{1}{\Omega}\left(\sum_{j=1}^d a_j^2\right)^2
    \leq a_1^2\sum_{j=1}^d a_j^2+\frac{1}{a_1^2d^2}\left(\sum_{j=1}^d a_j^2\right)^2
    \leq a_1^2\sum_{j=1}^d a_j^2+\frac{1}{d}\sum_{j=1}^d a_j^2\lesssim  a_1^2\sum_{j=1}^d a_j^2.
\end{equation}
The last inequality was obtained by using \eqref{assu:cosines-gap} to conclude that $a_1^2\geq a_k^2=\omega(1/d)$.
Then, applying \eqref{eqn:cosines-ka1} to the above, yields for all $1\leq i\leq n$
\begin{equation}\label{eqn:sinusoid_variance_final}
    \Var\big(\|x_i\|_2^2\big)\lesssim\frac1k\Big(\sum_{l=1}^d a_l^2\Big)^2.
\end{equation}
Due to \eqref{eqn:cosines-norm-decomp}, we know that
$\Tr\Sigma^{\infty}=\Theta\big(\sum_{i=1}^d a_i^2\big)$. This implies that there exists a constant $c_0>0$ (not dependent on the index $i$) such that
$\E\|x_i\|_2^2=\Tr\Sigma^{\infty}\geq c_0\sum_{l=1}^d a_l^2$). Then Tchebyshev's inequality combined with \eqref{eqn:sinusoid_variance_final} gives, for some $c_1>0$
\begin{equation}\label{eqn:tchebyshev}
    q:=\P\left(\|x_i\|_2^2\geq\frac{c_0}{2}\sum_{l=1}^d a_l^2\right)
    \geq1-\frac{\Var\left(\|x_i\|_2^2\right)}{\left(\frac{c_0}{2}\sum_{l=1}^d a_l^2\right)^2}
    \geq1-\frac{c_1}{k}=1-o(1),
\end{equation}
where the last inequality was obtained by \eqref{assu:cosines-wellcond}. By independence of $T_1,\dots,T_n$, we know that $\1\big(\|x_i\|_2^2< \frac{c_0}{2} \sum_{l=1}^d a_l^2\big)\sim \text{Bern}(q)$. Thus, by a Chernoff bound, we have that
\begin{equation}\label{eqn:chernoff}
    \P\left(\sum_{i=1}^n\1\left(\|x_i\|_2^2\geq\frac{c_0}{2}\sum_{l=1}^d a_l^2\right)
    \leq(1-t)nq\right)\leq\exp\left(-\frac{t^2}{2}nq\right).
\end{equation}
\eqref{eqn:tchebyshev} gives that for large enough $n$, $q\geq 1/2$, so setting $t=\sqrt{22\log n/(nq)}$, implies that the right-hand side of \eqref{eqn:chernoff} is 
$\leq n^{-11}$, so with probability at least $1-n^{-11}$,
\[\bigg|\bigg\{i\in[n]:\|x_i\|_2^2\geq\frac{c_0}{2}\sum_{l=1}^d a_l^2\bigg\}\bigg|
    \geq(1-t)nq\geq\bigg(1-\sqrt{\frac{44\log n}{n}}\bigg)\bigg(1-\frac{c_1}{k}\bigg)n,\]
where in the last step we used \eqref{eqn:tchebyshev}, \eqref{assu:cosines-wellcond} and
$\log n=o(n)$. Taking $c=c_0/2$ concludes the proof.
\end{proof}

\begin{proof}[Proof of \eqref{eqn:norms-recovery-sinusoid-eq}]
\label{proof:sinusoid-norms}
    Throughout this proof, we will condition on the intersection of the events on which  \Cref{eqn:cosines_ex_error_frob} and \Cref{lemma:sinusoid-norms} hold,
which by a union bound, occurs with probability at least $1-O(n^{-10})$.  In particular, by \Cref{lemma:sinusoid-norms}, all but $o(n)$ of the points satisfy
$\|x_i\|_2^2\geq c\sum_{j=1}^d a_j^2$, so by
\Cref{cor:relative_recovery} with $\varepsilon=\delta$ (for simplicity) and 
$t=\big(c\sum_{j=1}^d a_j^2\big)^{1/2}$, it suffices to show that
\begin{equation}\label{eqn:cosines-ratio}
  \frac{2k\|XX^\top-\Xalg\Xalg^\top\|_\op+n\Tr\Sigma_{k+1:d}}{\sum_{j=1}^d na_j^2}=o(1).
\end{equation}
\eqref{eqn:norms-recovery-sinusoid-eq} then holds for all but $o(n)$ of the indices with
$\|x_i\|_2^2\geq c\sum_j a_j^2$, hence for all but $o(n)$ indices in $[n]$. For the first summand being $o(1)$ is an immediate consequence of \eqref{assu:sinusoid_density_amp}, \eqref{assu:cosines-wellcond} and \eqref{eqn:cosines-ka1}. As for the second summand, \eqref{eqn: tail_trace_concentrates} implies that with probability at least $1-O(n^{-11})$
\begin{equation}\label{eq:sinsuoid_tail_conc}
    \Tr\Sigma_{k+1:d}\leq \sqrt{\Tr\Sigma_{k+1:d}^\infty\Tr\Sigma^{\infty}}\sqrt{\frac{\log n}{n}} + \Tr\Sigma_{k+1:d}^\infty
\end{equation}
The Horn--Schur theorem applied to \eqref{eqn:cosines_true_covar}, together with $\Sigma^{\infty}_{jj}=\tfrac12 a_j^2\big(1+O(|\omega_j|^{-1})\big)
\lesssim a_j^2$ and $\Tr\Sigma^{\infty}=\Theta\big(\sum_{j=1}^d a_j^2\big)$ from
\eqref{eqn:sinusoid_tr_UB}, implies that $\Tr\Sigma_{k+1:d}^\infty\lesssim\sum_{j>k}a_j^2$. Plugging the former into \eqref{eq:sinsuoid_tail_conc}, and dividing by $\sum_j^d a_j^2$ gives 
\[
  \frac{\Tr\Sigma_{k+1:d}}{\sum_{j=1}^d a_j^2}
  \lesssim\frac{\sum_{j>k}a_j^2}{\sum_{j=1}^d a_j^2}
  +\sqrt{\frac{\sum_{j>k}a_j^2}{\sum_{j=1}^d a_j^2}\frac{\log n}{n}} =o(1/k)+o(1) .
\]
The last equality was obtained by \eqref{assu:cosines-gap}, and \eqref{assu:cosines-tail}. 
\end{proof}

\begin{prop}\label{prop:cosines-inner}
    Under the same assumptions as \Cref{prop:cosines}, on the same event for which \eqref{eqn:cosines_ex_error_frob} holds, and for every $t=\omega\big(n^{-1}(\cE_F(X))_\infty\big)$,
\begin{equation}
  \bigl|\bigl\{(i,j)\in\cA(t): \langle x_i^\alg,x_j^\alg\rangle
  =(1+o(1))\langle x_i,x_j\rangle\bigr\}\bigr|\geq|\cA(t)|-o(n^2),
\end{equation}
where $(\cE_F)_\infty$ denotes the right hand side of \eqref{eqn:cosines_ex_error_frob} (by substituting \eqref{eqn:cosines_ex_error} into \eqref{eqn:cosines_ex_error_frob}), and $\cA$ is defined as in \eqref{eq:good_sets_lb}.
\end{prop}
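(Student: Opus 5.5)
The plan is to reduce \Cref{prop:cosines-inner} to the Frobenius bound \eqref{eqn:cosines_ex_error_frob} by a Markov/counting argument on the entries of $XX^\top-\Xalg\Xalg^\top$, in the same way \eqref{eqn:inner_prod_recover} and \Cref{cor:relative_recovery} were derived. We work on the event where \eqref{eqn:cosines_ex_error_frob} holds. Write $(\cE_F)_\infty$ for its right-hand side, with \eqref{eqn:cosines_ex_error} substituted, so that
\[
\sum_{i\neq j}\big(\langle x_i,x_j\rangle-\langle x_i^\alg,x_j^\alg\rangle\big)^2\le \|XX^\top-\Xalg\Xalg^\top\|_F^2\le (\cE_F)_\infty^2 .
\]

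Fix $t=\omega(n^{-1}(\cE_F)_\infty)$. Set $\delta_n:=\big(n^{-1}(\cE_F)_\infty/t\big)^{1/2}$, so that $\delta_n=o(1)$, and set the threshold $s:=\delta_n t$. Counting the pairs whose entrywise error is at least $s$ (Chebyshev/Markov applied to the squared entries) gives
\[
\big|\{(i,j):i\ne j,\ |\langle x_i,x_j\rangle-\langle x_i^\alg,x_j^\alg\rangle|\ge s\}\big|\le \frac{(\cE_F)_\infty^2}{s^2}=\frac{(\cE_F)_\infty^2}{\delta_n^2t^2}=n^2\,\frac{n^{-1}(\cE_F)_\infty}{t}=n^2\delta_n^2=o(n^2).
\]
For every remaining pair in $\cA(t)$ we have $|\langle x_i,x_j\rangle|\ge t$. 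Hence the error is at most $\delta_n t\le \delta_n|\langle x_i,x_j\rangle|$, which says exactly that $\langle x_i^\alg,x_j^\alg\rangle=(1+O(\delta_n))\langle x_i,x_j\rangle=(1+o(1))\langle x_i,x_j\rangle$. Subtracting the $o(n^2)$ exceptional pairs from $|\cA(t)|$ gives the claim. This is the same as \Cref{cor:relative_recovery} with $\varepsilon=\delta^2=\delta_n^2$, with the polylogarithmic factor already absorbed into $(\cE_F)_\infty$ through the $\lessapprox$ in \eqref{eqn:cosines_ex_error_frob}.

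The main point needing care is bookkeeping rather than any new estimate. The hidden polylog in $(\cE_F)_\infty$ must be taken consistently, so that the $\omega(\cdot)$ condition on $t$ beats it; this is where I would specify that $(\cE_F)_\infty$ includes the $\log^C n$ factor. One also has to check that the exceptional count is genuinely $o(n^2)$ uniformly in the choice of $t$, which holds because the count depends only on the ratio $n^{-1}(\cE_F)_\infty/t$. No probability beyond the event of \eqref{eqn:cosines_ex_error_frob} is needed, since the argument is deterministic on that event.
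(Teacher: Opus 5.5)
Your proposal is correct and takes essentially the paper's approach. The paper also applies the Markov counting argument behind \Cref{cor:relative_recovery} to the Frobenius bound \eqref{eqn:cosines_ex_error_frob}, choosing $\varepsilon=\delta=\big(n^{-1}(\cE_F)_\infty/t\big)^{2/3}$ where you choose $\varepsilon=\delta^2=n^{-1}(\cE_F)_\infty/t$; the difference is immaterial, and your explicit treatment of the hidden polylogarithmic factor is more careful than the paper's.
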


\begin{remark}
    Note that \eqref{prop:cosines-inner} is only informative if $|\cA(t)|$ is a constant fraction of $n^2$.
\end{remark}
\begin{proof}[Proof of \Cref{prop:cosines-inner}]
We condition on the event of \Cref{prop:cosines}, and refer to the right hand side of \eqref{eqn:cosines_ex_error_frob} as $(\cE_F)_\infty$. In the setting of \Cref{cor:relative_recovery}, set $\varepsilon=\delta=((\cE_F)_\infty n^{-1}t^{-1})^{2/3}$ (which is $o(1)$ by \eqref{assu:cosines-amplitude}--\eqref{assu:cosines-freq}, \eqref{assu:cosines-wellcond}, and \eqref{assu:sinusoid_density_amp}), so that $t=(\cE_F)_\infty n^{-1}\varepsilon^{-1/2}\delta^{-1}=o(1)$ as in \eqref{eq:inner_prod_recovery_relative}-- with $\cE_F$ in place of $(\cE_F)_\infty$. Then, the proof follows from \Cref{rmk:sampling_recovery}.
\end{proof}
\appendix
\section{Parameter Estimation Proofs and Asymptotics}
\label{Appendix Section:parameter_estimation_proofs}
With the exception of the proof of \Cref{lem:p1} in \Cref{sec:p1}, the
parameter estimates below are stated and proved for the truncated (strictly rank-$k$) model,  i.e.  in terms of the truncated adjacency matrix $\tilde A$ and the associated edge density $\tilde p$. This entails no loss of generality for the numerically low-rank setting of \Cref{thm:main}. In the strictly rank-$k$ case $\Tr\Sigma_{k+1:d}=0$, so $\tilde A=A$ and $\tilde p=p$. In the numerically low-rank case the tail hypothesis $\Tr\Sigma_{k+1:d}=o\left( \min(p^3, d^{-1})\right)$ guarantees, via \Cref{lem:ptp-same}, that $p=(1+o(1))\tilde p$, so every estimate proved here for $\tilde p$ holds for $p$ up to a $(1+o(1))$ factor and the leading-order asymptotics are unchanged.

\subsection[Proof of \Cref{lemma:ai_aj_li_lj_conc}]{Proof of \Cref{lemma:ai_aj_li_lj_conc}}
\label{proof of lemma:ai_aj_li_lj_conc}

Recall the decomposition of the noisy vectors $\tilde y_i = (a_i, w_i)$ where $a_i \in \R^k$ represents the signal plus noise, and $w_i \in \R^{d_k}$ is the noise. We further decompose the signal vector as $a_i =\Pi^{(k)}_{X}x_i + N_i =\tilde x_i + N_i$, where $x_i$ is the fixed point cloud vector and $N_i \sim \mathcal{N}(0, \frac{1}{d}I_k)$. The inner product expands as:
\begin{equation}
\label{eq: aiaj inner prod}
    \ip{a_i}{a_j} = \ip{\tilde x_i}{\tilde x_j} + \ip{\tilde x_i}{N_j} + \ip{N_i}{\tilde x_j} + \ip{N_i}{N_j}.
\end{equation}
Using Bernstein's inequality (e.g. see \cite[Theorem 2.8.1]{Vershynin2018}), and the fact that for each entry of the vector $N_i\in \R^k$, we have that $\norm{(N_i)_l}_{\psi_2}=O(1/\sqrt{d})$, 
\[
\mathbb{P}\left(
{\exists (i,j)\in [n]^2, i\neq j:\langle N_i,N_j\rangle\geq t} \right)\leq e^{2\log n-c_1\min(td, t^2d^2/k)}.
\]

We require $2\log n< c_1\min(td,t^2d^2/k)$, so if $k<\log n$, set $t=C_1\frac{\log n}{d}$ , or if $k>\log n$ then $t=C_1\frac{\sqrt{k\log n}}{d}$ for some $C_1>0$. Thus, it follows that $\ip{N_i}{N_j} \leq C_1\frac{\sqrt{\max\{k,\log n\}\log n}}{d}$ with probability at least $1-n^{-12}$, for $C_1$ sufficiently large 
uniformly over all pairs $(i, j)\in [n]^2, i\neq j$. 

By \eqref{assu:no-outliers} and Cauchy-Schwartz, 
\[
|\ip{\tilde x_i}{\tilde x_j}| \leq \|\tilde x_i\|_2 \|\tilde x_j\|_2\leq \|x_i\|_2\|x_j\|_2 \leq \Tr\Sigma.
\]
When $i \neq j$, the cross-term $\ip{\tilde x_i}{N_j}$ is a mean zero, Gaussian variable with variance $\norm{\tilde x_i}^2/d \lesssim  \Tr\left(\Sigma\right)/d$. Therefore, with probability at least $1-n^{-12}$, $|\ip{\tilde x_i}{N_j}| \lesssim \sqrt{\Tr\left(\Sigma\right)\log n/d} $ and thus
\begin{equation}\label{eqn:ai_inner_product}
\ip{a_i}{a_j}=O\left(\Tr\left(\Sigma\right)+\sqrt{\Tr\left(\Sigma\right) \frac{\log n}{d}}+\frac{\sqrt{\max\{k,\log n\}
\log n}}{d}\right).
\end{equation}
While it is tempting to bound the right hand side by the simpler expression $O(\Tr \Sigma\cdot \log n) $ (using \eqref{assu:gap-gen}), in some of the arguments below we cannot afford to lose a logarithmic factor.

On the other hand, for the $i=j$ case, the expression \eqref{eq: aiaj inner prod} becomes
\[\norm{a_i}_2^2 = \norm{\tilde x_i}_2^2 + 2\langle \tilde x_i, N_i\rangle + \norm{N_i}_2^2.\]
By  \cite[Theorem 3.1.1]{Vershynin2018}, we know that with probability at least $1-n^{-12}$,
\begin{equation}\label{eqn:norm_of_Nij}
\norm{N_i}_2^2=O\left(\frac{k}{d}+ \frac{\sqrt{\max\{k,\log n\}\log n}}{d}\right).
\end{equation}
Applying the condition \eqref{assu:no-outliers} as in the case $i\neq j$, we obtain
\begin{equation}\label{eqn:ai_norm}
\norm{a_i}_2^2 = O\left(\Tr\Sigma + \sqrt{\Tr\Sigma\frac{\log n}{d}} +\frac{k}{d}+ \frac{\sqrt{\max\{k,\log n\}}}{d}\right)\leq O(\Tr\Sigma \log n).\end{equation}
For the lengths $l_i = \norm{w_i}_2$, note that $d \norm{w_i}_2^2 \sim \chi^2_{d_k}$. $\norm{\chi_{d_k}^2-d_k}_{\psi_1}=\sqrt{d_k}$, therefore by Bernstein's inequality,

\[\Prob\left(\exists i\in [n]:|l_i^2-d_k/d|>t\right)\leq e^{\log n-c\min\{\frac{t^2d^2}{d_k},\frac{td}{d_k}\}}.\]
Setting $t=C_3\sqrt{\frac{\log n}{d}}$ for $C_3>0$ large enough gives
\begin{equation}\label{eqn: li_concentration}
l_i=1+O\left(\frac{\sqrt{\log n}}{\sqrt{d}}\right).
\end{equation}
Thus, the product becomes \begin{equation}\label{eqn:li_lj_prod}
l_i l_j = 1 + O(\sqrt{\log n}/\sqrt{d}).
\end{equation}
\qed

\subsection{Proof of \Cref{lemma:conc_tau}}
\label{proof of lemma:conc_tau}

Let $\psi_X(t):= \frac1{\binom{n}{2}}\sum_{i< j} \P(\langle y_i,y_j\rangle \ge t/\sqrt{d})$, and let $\tau(p)=\tau_X(p)$ be the unique value of $\tau$ such that $p= \psi_X(\tau\sqrt{d})$.

\begin{lemma}\label{lemma:tau_gaussian_quant}
Let $\Psi(t):= \P( g\ge t)$ denote the complementary distribution function for a standard Gaussian variable on $\R$.
Under the hypotheses of \Cref{thm:main}, 
\begin{align*}
    \sqrt{d}\tau_X(p) 
&= \Psi^{-1}(p) + O\bigg( \sqrt{d}\Tr \Sigma + (\Tr \Sigma)^{1/2}\log n + \frac{\log^2n}{\sqrt{d}} 
\bigg)\\
&=\Psi^{-1}(p) + \wt O( \sqrt{d}\Tr \Sigma 
)\,.
\end{align*}
\end{lemma}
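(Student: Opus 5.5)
We compare $\psi_X$ with the pure-noise function $\Psi$ and invert. For a single pair $i<j$, condition on $z_i$. Then $\langle y_i,y_j\rangle=\langle y_i,x_j\rangle+\langle y_i,z_j\rangle$, and given $z_i$ this is Gaussian with mean $m_{ij}:=\langle y_i,x_j\rangle$ and variance $\|y_i\|_2^2/d$. Hence
\[
\P\big(\sqrt d\langle y_i,y_j\rangle\ge t\,\big|\,z_i\big)=\Psi\Big(\frac{t-\sqrt d\,m_{ij}}{\|y_i\|_2}\Big).
\]
We work on the high-probability event of \Cref{lemma:ai_aj_li_lj_conc}, which we strengthen slightly to a bound on $\|y_i\|_2$. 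Its complement contributes $O(n^{-12})$, and this is negligible against $p\ge\omega(\log^{18}n/n)$ once we divide by the Gaussian density. On this event:
\begin{itemize}
\item $\|y_i\|_2^2=1+O(\sqrt{\log n/d})+O(\Tr\Sigma)$, using $\|z_i\|^2$ concentration and \eqref{assu:no-outliers};
\item $m_{ij}=\langle x_i,x_j\rangle+\langle z_i,x_j\rangle=O(\Tr\Sigma)+O\big(\sqrt{\Tr\Sigma\,\log n/d}\big)$.
\end{itemize}
So the effective argument is $t':=(t-\sqrt d\,m_{ij})/\|y_i\|_2$, with
\[
t'-t=O\Big(\sqrt d\,\Tr\Sigma+(\Tr\Sigma)^{1/2}\sqrt{\log n}+|t|\big(\sqrt{\log n/d}+\Tr\Sigma\big)\Big).
\]

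Next we average over pairs. Set $t_0:=\Psi^{-1}(p)$, which satisfies $t_0\asymp\sqrt{\log(1/p)}\lesssim\sqrt{\log n}$ and $t_0\ge c>0$ since $p\le0.49$. For $t$ in a window $|t-t_0|\le\delta$ we get $\psi_X(t)\in[\Psi(t+\Delta),\Psi(t-\Delta)]$ up to $O(n^{-12})$, where
\[
\Delta\lesssim\sqrt d\,\Tr\Sigma+(\Tr\Sigma)^{1/2}\log n+\frac{\log^{2}n}{\sqrt d}.
\]
Here the factor $|t|\sqrt{\log n/d}\lesssim \log n/\sqrt d$ is absorbed, and $|t|\Tr\Sigma\lesssim\sqrt{\log n}\,\Tr\Sigma\le\sqrt d\,\Tr\Sigma$. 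Monotonicity of $\psi_X$ and $\Psi$ then gives
\[
\Psi(\sqrt d\,\tau+\Delta)-O(n^{-12})\le p\le\Psi(\sqrt d\,\tau-\Delta)+O(n^{-12}),
\]
and therefore $|\sqrt d\,\tau-t_0|\le\Delta+O\big(n^{-12}/\phi(t_0\pm\Delta)\big)$. The second term is tiny because $\phi(t_0)\gtrsim p\,t_0\ge n^{-1}$.

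The main obstacle is justifying this inversion. Two points need care. First, $\Delta=o(1)$: this follows from \eqref{assu:trace} and $d=\omega(\log^{16}n)$. Second, the Mills-ratio comparison $\Psi(t_0\pm\Delta)=\Psi(t_0)e^{O(t_0\Delta)}$ must stay comparable, so that the additive $n^{-12}$ error and the conditioning on the good event translate into $O(\Delta)$ in the quantile. I would handle this with the mean value theorem, using $\Psi'=-\phi$ and $\phi(s)\ge \phi(t_0)e^{-O(t_0\Delta)}\gtrsim p$ on the window.

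For the final simplification, note that every error term is at most $\wt O(\sqrt d\,\Tr\Sigma)$ under the theorem's hypotheses. Indeed, $\mu_k^2=\omega(1/d)$ gives $\Tr\Sigma\ge 1/d$, so $(\Tr\Sigma)^{1/2}\le\sqrt d\,\Tr\Sigma$ and $1/\sqrt d\le\sqrt d\,\Tr\Sigma$. This yields the second line of the statement.
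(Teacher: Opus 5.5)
Your proposal is correct and follows the same strategy as the paper's proof. Both compare each pairwise tail $\P(\sqrt d\langle y_i,y_j\rangle\ge t)$ with a Gaussian tail at a slightly perturbed argument on a high-probability event, and then invert using the monotonicity of $\psi_X$ and $\Psi$. The differences are minor refinements: you condition on $z_i$ to get the exact conditional law $N(\langle y_i,x_j\rangle,\|y_i\|_2^2/d)$, whereas the paper isolates $g=\langle z_1/\|z_1\|_2,z_2\rangle$ and bounds $\langle x_1,z_2\rangle$ separately. You also carry the polynomially small bad-event probability through the quantile inversion via $\phi(t_0)\gtrsim p$, where the paper uses $e^{-c\log^2 n}$ tail bounds over the fixed range $T\in[1/100,\log n]$.
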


Note that $\Psi^{-1}(p)= \sqrt{2\log(1/p)} ( 1+ O( \frac1{\sqrt{\log (1/p)}}))$.

\begin{proof}
    Set $\cE:=\sqrt{d}\Tr \Sigma + (\Tr \Sigma)^{1/2}\log n + \frac{\log^2n}{\sqrt{d}}$.
    It will suffice to show that for all $i\ne j$ and $T\in [\frac1{100}, \log n]$, 
    \begin{equation}
        \label{taup-goal1}
        \P( \langle y_i, y_j\rangle \ge T/\sqrt{d}) = \Psi( T + O( \cE)) + O( e^{-c\log^2n})\,.
    \end{equation}
    Indeed, we have that $\psi_X$ is strictly decreasing, and from \eqref{taup-goal1},
    \[
    \psi_X(T) = \Psi( T+ O(\cE)) 
    \]
    for $T\in[\frac1{100}, \log n]$.
    From this and the monotonicity of $\psi_X$ and $\Psi$ (noting that $\Psi(\log n)< p < \Psi(\frac1{100})$) we verify $\sqrt{d}\tau_X(p) = \Psi^{-1}(p)  + O(\cE
    )$, as desired.
    
    It remains to establish \eqref{taup-goal1}. By relabeling indices we may assume $(i,j)=(1,2)$.
    Letting $g:= \langle z_1/\|z_1\|_2,z_2\rangle$, we have
    \[
    \langle y_1,y_2\rangle = \langle x_1, x_2\rangle + \langle x_1, z_2\rangle + \langle x_2, z_1\rangle + \|z_1\|_2 g. 
    \]
    From Cauchy--Schwarz and \eqref{assu:no-outliers} it follows 
    \[
    |\langle x_1,x_2\rangle| \le \|x_1\|_2\|x_2\|_2 \ls \Tr \Sigma\,.
    \]
    Next note that $\langle x_1,z_2\rangle\sim N(0, \|x_1\|_2^2/d)$, so again from \eqref{assu:no-outliers} it follows 
    \[
    \langle x_1,z_2\rangle \ls d^{-1/2}(\Tr \Sigma)^{1/2} \log n
    \]
    with probability $1- O(e^{-c\log^2n})$. 
    The same holds for $\langle x_2, z_1\rangle$. 
    Finally, since $d\|z_1\|_2^2$ is a sum of independent uniformly sub-exponential variables of expectation 1, we find
    \[
    \|z_1\|_2 = 1+ O( \frac{\log n}{\sqrt{d}})
    \]
    with probability $1-O(e^{-c\log^2 n})$.
    Combining these estimates, we have that 
    \[
    \langle y_1,y_2\rangle = ( 1+ O( \frac{\log n}{\sqrt{d}})) g + O( \Tr \Sigma + d^{-1/2}(\Tr \Sigma)^{1/2}\log n )
    \]
    with probability $1-O(e^{-c\log^2 n})$, from which \eqref{taup-goal1} follows. 
\end{proof}

\subsection{Proof of Lemma \ref{lemma:p_tauij_conc}}

\label{proof of lemma:p_tauij_conc}

Before commencing with the proof we recall some facts related to the Gegenbauer polynomial basis.

We will use the random variable $\xi\sim \mathcal{D}_d$, to denote the distribution of $\sqrt{d}\langle u, w \rangle$ where $u$ and $w$ are independent vectors distributed uniformly on the unit sphere $\mathcal{S}^{d-1}$. The scaling factor $\sqrt{d}$ ensures that the second moment is normalized $\mathbb{E}[\xi^2] = 1$. The probability density function of $\xi$ is given by
\begin{equation}\label{eqn:spherical_density}
f_d(\xi) = \frac{\Gamma(d/2)}{\sqrt{d\pi}\Gamma((d-1)/2)} \left(1 - \frac{\xi^2}{d}\right)^{\frac{d-3}{2}}.
\end{equation}
This density function serves as the weight function for the Gegenbauer polynomials $\{q_k^{(d)}\}_{k \in \mathbb{N}}$, which form an orthonormal basis for the space $L^2([-\sqrt{d}, \sqrt{d}], \mathcal{D}_d)$ such that \[
\mathbb{E}_{\xi \sim \mathcal{D}_d}[q_k^{(d)}(\xi)q_l^{(d)}(\xi)] = \delta_{kl}.
\]
Note that they can be obtained by applying the Gram--Schmidt orthogonalization process to the monomial basis.
The Gegenbauer polynomials can be expressed as a linear combination of 
spherical harmonics,  which results in the following identity:
for $u,w,v$ independent and uniform in $ \mathcal{S}^{d-1}$,
\[
\E_v[q_l^{(d)}(\sqrt{d}\langle u,v\rangle )q_j^{(d)}(\sqrt{d}\langle v,w\rangle)]=\frac{1}{\sqrt{N_l^{(d)}}}
1_{l=j}
q_l^{(d)}(\sqrt{d}\langle u,w\rangle).
\]
Moreover, from orthonormality, we can obtain that
\[
q_l^{(d)}(\sqrt{d})=\sqrt{N_l^{(d)}}.
\]

\begin{proof}

We first estimate the size of $\tau^{ij}$ using \Cref{lemma:conc_tau}.
Recall that the shifted threshold is defined as $\tau^{ij} = (\tau - \ip{a_i}{a_j})/(l_i l_j)$. Here however, to be more accurate, we use the full estimates on $\langle a_i,a_j\rangle$ without adding extra logarithmic factors, as in the proof of \Cref{lemma:p_tauij_conc}. Plugging in the first expression \eqref{eqn:ai_inner_product}, and \eqref{eqn:li_lj_prod}.
Plugging these estimates into the expression for $\tau^{ij}$ yields
\begin{equation}\label{eqn:tau_ij}
\begin{aligned}
    \tau^{ij} &= \left(\tau - O\left(\Tr\left(\Sigma\right)+\sqrt{\Tr\left(\Sigma\right) \frac{\log n}{d}}+\frac{\sqrt{\max\{k, \log n\}
\log n}}{d}\right)\right)\left(1 + O\left(\sqrt{\frac{\log n}{d}}\right)\right)^{-1} \\
    &= \left(\tau - O\left(\Tr\left(\Sigma\right)+\sqrt{\Tr\left(\Sigma\right) \frac{\log n}{d}}+\frac{\sqrt{\max\{k, \log n\} 
\log n}}{d}\right)\right)\left(1-O\left(\sqrt{\frac{\log n}{d}}\right)\right).
\end{aligned}
\end{equation}
The last line was obtained by a Taylor expansion. Then, using \eqref{assu:trace} and \Cref{lemma:conc_tau}, the dominant term is $\tau^{ij} = \tau + O(\frac{1}{\sqrt{d}\log n})$. Note that the same computation works for $i=j$ since both $\langle a_i,a_j\rangle =O(\Tr\Sigma \log n)$ and $\norm{a_i}_2^2=O(\Tr\Sigma \log n)$ from \Cref{lemma:ai_aj_li_lj_conc}.

Recall that $p_0^{ij}$ and $p_1^{ij}$ correspond to the 0-th and 1-st Gegenbauer coefficients of the expansion of the kernel function $\1(\sqrt{d_k}\langle v_i,v_j\rangle\geq \tau^{ij})$, since $\tau^{ij}$ is of the same order of magnitude as in \cite{LiSchramm2024}, the proof for $p_0, p_1 p_0^{ij}$ and $p_1^{ij}$ follows verbatim as in their Lemma 4.9 \cite{LiSchramm2024}, which is a direct computation.
However, for the benefit of subsequent proofs, we outline explicit expressions for $p_1$ below:

\begin{align}
    p_1 &= \frac{1}{\sqrt{d_k}} \E_{\xi \sim \mathcal{D}_{d_k}}[\xi \mathbf{1}(\xi \ge \sqrt{d_k}\tau)] = \frac{1}{\sqrt{d_k}} \int_{\sqrt{d_k}\tau}^{\sqrt{d_k}} \xi f(\xi) d\xi,  \label{formula-p1}
\end{align}
where $f=f_{d_k}$ is the density function of $\xi \sim \mathcal{D}_{d_k}$ (see \eqref{eqn:spherical_density}).
Note that we can evaluate the integral for $p_1$ explicitly:
\begin{align}
    p_1&=\frac{1}{\sqrt{d_k}}\E_{\xi\sim \mathcal{D}_{d_k}}[\xi\1(\xi\geq \sqrt{d_k}\tau)]=\frac{\Gamma(d_k/2)}{d_k\sqrt{\pi}\Gamma((d_k-1)/2)}\int_{\sqrt{d_k}\tau}^{\sqrt{d_k}} \left(1 - \frac{\xi^2}{d_k}\right)^{\frac{d_k-3}{2}}\xi d\xi\notag \\
    &= \frac{\Gamma(d_k/2)}{d_k\sqrt{\pi}\Gamma((d_k-1)/2)}\frac{d_k}{d_k-1}(1-\xi^2/d_k)^{\frac{d_k-1}{2}}\Bigg|_{\sqrt{d_k}\tau}^{\sqrt{d_k}} = \frac{\Gamma(d_k/2)}{(d_k-1)\sqrt{\pi}\Gamma((d_k-1)/2)}(1-\tau^2)^{\frac{d_k-1}{2}}.\label{eqn:p1_computation}
\end{align}
\end{proof}

\subsection[Proof of Lemma \ref{lemma:scalar_estimates_whp}]{Proof of Lemma \ref{lemma:scalar_estimates_whp}}
\label{proof of lemma:scalar_estimates_whp}

\begin{proof}[ Proof of \Cref{lemma:scalar_estimates_whp}]
Throughout this proof, we will condition on the event where the estimates on $l_i, \langle a_i,a_j\rangle$ in \Cref{lemma:ai_aj_li_lj_conc} hold. Recall that these events hold with probability at least $1-n^{-12}$ over all pairs $(i,j)\in [n]^2$, including the case when $i = j$. 

We begin the proof by estimating quantities that do not change regardless of the relationship between $\Tr\Sigma$ and $\tau^2$ ({we call such estimates ``regime-free''}). We already know \Cref{lemma:ai_aj_li_lj_conc} that $l_i=1+O(\sqrt{\log(n)/d})$ with probability at least $1-n^{-12}$, thus \begin{equation}\label{eqn:Li_estimate}
L_i = l_i - 1=O(\sqrt{\log(n)/d})\end{equation} on the same event. Then using the geometric series expansion $(1+x)^{-1} = 1-x+O(x^2)$ for small $x$, the inverse length is:
\begin{equation}
\label{eq:inverse len}
    l_i^{-1}= (1+L_i)^{-1}=1 - L_i+O(L_i^2)=1- L_i + O\left(\frac{\log n}{d}\right).
\end{equation}
Next, we consider $\ip{v_i}{v_j}$ for $i \neq j$. Note that the noise vectors $v_i=w_i/l_i$ are uniformly distributed on $\mathcal{S}^{d_k-1}$. Due to rotational invariance of the spherical distribution, $\langle v_i,v_j\rangle\stackrel{dist.}{=}\langle e_1,v_j\rangle$. We want to apply Levy's Lemma \cite[Theorem 5.1.4]{Vershynin2018}, so we define $g:\cS^{d_k-1}\to\R$ as $g(v)=\langle v,e_1\rangle$. It is easy to check that $g$ is $1-$Lipschitz, and that $\E g(v_j)=0$. Thus, Levy's Lemma and rotational invariance tell us that:
\begin{equation}
    \Prob\left(|\langle v_i,v_j\rangle |\geq t\right)=\Prob\left(|\langle v_j,e_1\rangle |\geq t\right)\leq \exp(-Cd_kt^2), \quad\text{for some } C>0.
\end{equation}
Setting $t=C'\sqrt{\log n/d})=o(\log n/\sqrt{d})$ for a sufficiently large constant $C'>0$, gives us that $\langle v_i,v_j\rangle=o(\log n/\sqrt{d})$ with probability at least $1-n^{-12}$. Then taking a union bound over all the pairs $(i,j)\in [n]^2$, gives us that $\langle v_i,v_j\rangle=o(\log n/\sqrt{d})$ uniformly with probability at least $1-n^{-10}$. \\

We will now find the regime-free approximation $\Delta p_0^{ij} = -p_1 d_k \Delta \tau^{ij}(1+O(\Delta \tau^{ij}\tau d))$, and consequently derive the expression $\Delta p_1^{ij} = \tau \Delta p_0^{ij}(1+o(1))$. However, in order to do this, we first have to estimate $\Delta\tau^{ij}$. Using the definition of $\Delta \tau^{ij}$, \eqref{eqn:tau_ij}, and \eqref{eqn:Li_estimate}:
\begin{equation}\label{eqn:general_Delta_tau}
\begin{aligned}
\Delta\tau^{ij}&= -\langle a_i,a_j\rangle (1+ (L_i+L_j)) -\tau(L_i+L_j) + \tau O\left(\frac{\log n}{d}\right)\\
    &= -\langle a_i,a_j\rangle (1+o(1)) -\tau\left((L_i+L_j) + O\left(\frac{\log n}{d}\right)\right).
\end{aligned}
\end{equation}
We can always uniformly bound $\tau d |\Delta\tau^{ij}|$ by applying \eqref{eqn:ai_inner_product}, \Cref{lemma:conc_tau} and \eqref{eqn:Li_estimate} to \eqref{eqn:general_Delta_tau} to obtain the first inequality and then \eqref{assu:trace} and $k=o(\sqrt d/\log^2 (n))$ to obtain the final equality in the display below:
\begin{align}\label{eqn:tau_d_Delta_tau_small}
\tau d |\Delta\tau^{ij}| = & \sqrt{d\log(1/p)} \left( \Tr\Sigma + \sqrt{\Tr\Sigma\frac{\log n}{d}} + \frac{\sqrt{\max{k,\log n}\log n}}{d} \right) + O\left(\frac{\log^{3/2}n}{\sqrt{d}}\right) \nonumber \\ 
= & O\left(\frac{1}{\sqrt{\log n}}\right)
\end{align}
with probability at least $1-n^{-10}$. 
Moreover, note that \eqref{eqn:tau_d_Delta_tau_small} also implies that by \Cref{lemma:conc_tau}
\begin{equation}\label{eq:DeltaTau_UB}
    |\Delta\tau^{ij}|=  O\left(\frac{1}{\sqrt{\log(n)}d\tau}\right) = O\left(\frac{1}{\sqrt{d\log(n)}}\right),
\end{equation} and more generally, this implies that
\begin{equation}\label{eqn:Deltatau_little_o}
   \Delta\tau^{ij}=o(\tau), 
\end{equation}\\
invoking \Cref{lemma:conc_tau} once more.

Now we are ready to prove the estimate $\Delta p_0^{ij} = -p_1 d_k \Delta \tau^{ij}(1+O(\Delta \tau^{ij} \tau d))$ in the regime-free case which will allow us to show that $\Delta p_1^{ij} = \tau \Delta p_0^{ij}(1+o(1))$. We know by \eqref{eqn:tau_ij} that  $\tau^{ij}<\tau$ with probability at least $1-n^{-12}$ over all $(i,j)\in [n]^2$, we have that:
\begin{equation}
    \Delta p_0^{ij} = \Prob_{\xi \sim \mathcal{D}_{d_k}} \left[ \xi \in (\sqrt{d_k}\tau^{ij}, \sqrt{d_k}\tau) \big\vert \tau^{ij} \right] = \frac{\Gamma(d_k/2)}{\sqrt{d_k\pi}\Gamma((d_k-1)/2)}\int_{\sqrt{d_k}\tau^{ij}}^{\sqrt{d_k}\tau} \left(1-\frac{\xi^2}{d_k}\right)^{\frac{d_k-3}{2}} d\xi. 
\end{equation}
As in \eqref{eqn:spherical_density}, let $f=f_{d_k}$ be the density function for $\cD_{d_k}$.

For $\xi>0$, $f$ is monotonically decreasing, so
\begin{equation}\label{eqn:Delta_p0_sandwich}
    -\sqrt{d_k}\Delta\tau^{ij}f(\sqrt{d_k}\tau)\leq \int_{\sqrt{d_k}\tau^{ij}}^{\sqrt{d_k}\tau}f(\xi)d\xi=\Delta p_0^{ij}\leq -\sqrt{d_k}\Delta\tau^{ij}f(\sqrt{d_k}\tau^{ij}).
\end{equation}
Comparing the upper bounds above,
\begin{equation}\label{eqn:ratio_spherical_density}
\begin{aligned}
    \frac{f(\sqrt{d_k}\tau^{ij})}{f(\sqrt{d_k}\tau^{})}&= \left(\frac{1-(\tau^{ij})^2}{1-\tau^2}\right)^{\frac{d_k-3}{2}}\\
    &=  \left(\frac{1-\tau^2 - 2\tau \Delta \tau^{ij} - (\Delta \tau^{ij})^2}{1 - \tau^2} \right)^{\frac{d_k-3}{2}} \\
    & = \left(1+O(\tau \Delta \tau^{ij}(1+O(\tau^2)))\right)^{\frac{d_k-3}{2}}\\
    &=\left(1 + O(-\tau \Delta \tau^{ij})\right)^{\frac{d_k-3}{2}} \\
    &= 1 + O(\tau d \Delta \tau^{ij}).
\end{aligned}
\end{equation}
To get the third equality we used \eqref{eqn:Deltatau_little_o}. In the fourth equality we applied \Cref{lemma:conc_tau} (in particular, $\tau=o(1)$). Finally, to obtain the last equality, we invoked
\eqref{eqn:tau_d_Delta_tau_small} and   to the binomial expansion of the expression in the second to last line. Then by mean value theorem for integrals, we have that for some $\xi\in [\sqrt{d_k}\tau^{ij}, \sqrt{d_k}\tau]$, then $\Delta p_0^{ij}=-\sqrt{d_k}\Delta\tau^{ij} f(\xi)$. Then combining this with
monotonicity of $f$ and \eqref{eqn:ratio_spherical_density}, gives us that  $f(\xi)/f(\sqrt{d_k}\tau)-1\leq(\sqrt{d_k}\tau^{ij})/f(\sqrt{d_k}\tau)-1\leq O(\tau d \Delta \tau^{ij})$. Thus 
\begin{equation}\label{eqn:Deltap0_conc}
    \Delta p_0^{ij}=-\sqrt d_k\Delta\tau^{ij}f(\sqrt{d_k}\tau)(1+O(\tau d\Delta\tau^{ij})).
\end{equation}
We have already computed $p_1$ explicitly in \eqref{eqn:p1_computation}, so we will use this previous expression to relate $p_1$ to the formula we have derived for $\Delta p_0^{ij}$ in \eqref{eqn:Deltap0_conc}. By \eqref{eqn:p1_computation} 
we have that \begin{equation}\label{eqn:p1_expantion_w_ratio}
p_1=\sqrt{d_k}(d_k-1)^{-1}f(\sqrt{d_k}\tau)(1-\tau^2).
\end{equation}
Then, re-arranging $f(\sqrt{d_k}\tau) = \frac{p_1(d_k-1)}{\sqrt{d_k}}(1+O(\tau^2))$ and plugging this into \eqref{eqn:Deltap0_conc}, gives 
\begin{equation}\label{eqn:Deltap0_conc2}
    \Delta p_0^{ij}=-(d_k-1)\Delta \tau^{ij}p_1(1+O(\tau^2))(1+O(\tau d\Delta\tau^{ij}))=-p_1d_k\Delta\tau^{ij}(1+O(\tau d\Delta\tau^{ij}))\,,
\end{equation}
where the second equality follows from \Cref{lemma:conc_tau} and \eqref{eqn:tau_d_Delta_tau_small}.

Now we are ready to compute $\Delta p_1^{ij}$. First, notice that $\Delta p_1^{ij}$ can be evaluated explicitly, since taking the derivative $$\frac{d}{d\xi}\big(1-\xi^2/d_k\big)^{(d_k-1)/2} = -\frac{d_k-1}{d_k}\xi\big(1-\xi^2/d_k\big)^{(d_k-3)/2},$$ 
we get that 
\begin{equation}
    \begin{aligned}
        \Delta p_1^{ij} &= \frac{1}{\sqrt{d_k}} \mathbb{E}_{\xi \sim \mathcal{D}_{d_k}} \left[ \xi \1\left(\xi \in (\sqrt{d_k}\tau^{ij}, \sqrt{d_k}\tau)\right) \big\vert \tau^{ij}\right] \\
    &= \frac{1}{\sqrt{d_k}}\int_{\sqrt{d_k}\tau^{ij}}^{\sqrt{d_k}\tau} \xi f(\xi) d\xi \\
    &=\frac{\Gamma(d_k/2)}{\sqrt{d_k\pi}\Gamma((d_k-1)/2)}\frac{d_k}{d_k-1}\left((1-(\tau^{ij})^2)^{(d_k-1)/2}-(1-\tau^2)^{(d_k-1)/2}\right).
    \end{aligned}
\end{equation}
 The computation we will perform below is analogous to the one for $\Delta p_0^{ij}$ in \eqref{eqn:Deltap0_conc} but keeps track of the errors more carefully. Recalling the explicit expression for $p_1$ in \eqref{eqn:p1_computation} results in \begin{equation}\label{eqn:Delta_p1_est}
\begin{aligned}
\Delta p_1^{ij}&= p_1\left[\left(\frac{1-(\tau^{ij})^2}{1-\tau^2}\right)^{(d_k-1)/2} - 1\right]=p_1\left[\left(1-\frac{2\tau\Delta\tau^{ij}+(\Delta\tau^{ij})^2}{1-\tau^2}\right)^{(d_k-1)/2}-1\right]\\
&=p_1\left[\left(1-(2\tau\Delta\tau^{ij}+(\Delta\tau^{ij})^2)(1+o(1))\right)^{(d_k-1)/2}-1\right], \quad \text{by \Cref{lemma:conc_tau}}\\
&=p_1\left[(1-2\tau\Delta\tau^{ij}(1+o(1)))^{(d_k-1)/2}-1\right]\quad \text{by }\eqref{eqn:Deltatau_little_o}\\
&=p_1\left(-d_k\tau \Delta\tau^{ij}(1+o(1))\right)\\
&= \tau \Delta p_0^{ij}\left(1+O(\tau d\Delta^{ij}\tau)\right)^{-1}(1+o(1))\quad\text{by }\eqref{eqn:Deltap0_conc2}\\
&=\tau \Delta p_0^{ij}(1+O(\tau d\Delta^{ij}))(1+o(1))\quad \text{by Taylor approximation,}\\
&= \tau \Delta p_0^{ij}(1+o(1))\quad \text{by }\eqref{eqn:tau_d_Delta_tau_small}.
\end{aligned}
\end{equation}
This gives the desired $\Delta p_1^{ij}$ estimate.

Now we proceed to specialize the above calculations to obtain regime-specific estimates. We begin with the small trace regime ($\Tr\Sigma\leq \tau^2$). We notice that in this case, $k/d = o\left(\Tr\Sigma\right)\leq \tau^2=\Theta(\log(1/p)/d)$. This implies that we must have $k=o(\log n)$, thus we replace every instance of $\max\{k,\log n\}$ with $\log n$. 
For $\langle a_i,a_j\rangle/\tau$, we use the largest estimate from \eqref{eqn:ai_inner_product} and \eqref{eqn:ai_norm},
\[\frac{\langle a_i,a_j\rangle}{\tau}\leq O\left(\frac{\Tr\Sigma}{\tau} + \frac{\sqrt{\Tr\Sigma}}{\tau}\sqrt{\frac{\log n}{d}} + \frac{\log n}{d\tau}\right)
\ls \tau +\sqrt{\frac{\log n}{d}}+\frac{\log n}{\sqrt{d\log(1/p)}}.
\]

In particular we use the fact that $\Tr\Sigma\leq \tau^2$ for the first two terms on the right hand side, which are of order $o(\frac{\log n}{\sqrt{d}})$, while
the third term is $O(\frac{ \log n}{\sqrt{d}})$. This simplifies to 
\begin{equation}
\label{eq:small snr inner prod conc}
\frac{\langle a_i,a_j\rangle}{\tau}=O\left(\frac{\log n}{\sqrt{d}}\right).
\end{equation}
Substituting $\tau^{ij} = (\tau-\langle a_i,a_j\rangle)/(l_il_j)$ into  the general expression \eqref{eqn:general_Delta_tau}, yields
\begin{equation}
    \Delta\tau^{ij}=-\tau\Big(\frac{\langle a_i,a_j\rangle}{\tau} + L_i+L_j + o\big(\log^{3/2}n/d\big)\Big),
\end{equation}
where we group the error terms $\langle a_i,a_j\rangle\tau^{-1} o(1)+O(\log(n)/d)=o(\log^{3/2}(n)/d)$ by \eqref{eq:small snr inner prod conc}. Note that in this regime, $\Delta\tau^{ij}=O(\tau \log n/\sqrt d)$, so we can refine our regime-free estimate in \eqref{eqn:Deltap0_conc2} by plugging this quantity into the error, thus yielding $\Delta\tau^{ij}=-p_1d_k\Delta\tau^{ij}(1+O(\tau^2\sqrt d\log n))$, as desired. 

Now we move to the large trace regime ($\Tr\Sigma> \tau^2$). In this case, we may use both the forms {of $\Delta \tau^{ij}$} given in \eqref{eqn:tau_d_Delta_tau_small}, but we choose to present the more general one in the statement of the lemma. 
Moreover, we keep the regime-free expression for $\Delta p_0^{ij}$ in \eqref{eqn:Deltap0_conc2}.

\end{proof}

\subsection[Proof of \Cref{lem:p1}]{{Proof of Lemma \ref{lem:p1}}}
\label{sec:p1}

We begin by approximating the realized edge density $\hat p$, in terms of the SNGM parameter $p$.

\begin{lemma}
\label{lem:pHat_to_p}
    Let $\hat p$ be as in \Cref{sec:algorithm}. Then under the conditions of \Cref{thm:main}, with probability $1-O(n^{-11})$ we have
$
|\hat p -p|\lesssim 
\sqrt{\frac{p\log n}{n}} = o(p).
$
\end{lemma}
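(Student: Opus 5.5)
The plan is a bias–variance decomposition. Write $\hat p=\binom n2^{-1}\sum_{i<j}A_{ij}$. By the definition \eqref{def:taup} of $\tau$ we have $\E\hat p=p$ exactly, so the entire task is to show that $\hat p$ concentrates around its mean at the claimed rate.

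The difficulty is that the $A_{ij}$ are not independent: $A_{ij}$ and $A_{ik}$ share the noise vector $z_i$. I would handle this by treating $\hat p$ as a function of the $n$ independent vectors $z_1,\dots,z_n$, with $\cX$ deterministic, and applying a bounded-differences (McDiarmid/Freedman-type) argument in which the increments are sharpened by conditioning.

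\textbf{Step 1: bound the increments.} Changing $z_i$ alters at most the $n-1$ entries $A_{ij}$, $j\ne i$. A naive bound of $n-1$ on the resulting change in $\sum_{i<j}A_{ij}$ only yields the rate $\sqrt{\log n/n}$, which is too weak. The fix is to use the degree. Conditional on $z_i$, the entries $(A_{ij})_{j\ne i}$ are independent Bernoulli variables with means $\Prob(\langle y_i,y_j\rangle\ge\tau\mid z_i)$. These means are $O(p)$ uniformly on the event $\{\|y_i\|_2=1+O(\sqrt{\log n/d})\}$, by the same Gaussian computation as in the proof of \Cref{lemma:tau_gaussian_quant}. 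The reason is that the conditional threshold shift is $O(\cE/\sqrt d)$, which changes $\Psi$ only by a factor $1+o(1)$, since $\Psi^{-1}(p)\cE=o(1)$ under \eqref{assu:trace}. Bernstein's inequality then gives $\deg(i)\lesssim np+\log n$ with probability $1-O(n^{-12})$.

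\textbf{Step 2: apply a martingale inequality.} Form the Doob martingale over revealing $z_1,\dots,z_n$. Its $i$-th increment is controlled by the change in $\deg(i)$, and its conditional variance is $O(np)$. Freedman's inequality, applied on the good event above, gives
\[
\Big|\sum_{i<j}A_{ij}-\binom n2 p\Big|\lesssim \sqrt{n\cdot np\log n}+(np+\log n)\log n .
\]
Dividing by $\binom n2$ gives $|\hat p-p|\lesssim\sqrt{p\log n/n}+\log^2 n/n$. The last term is dominated because $np=\omega(\log^{18}n)$, and the assumption $np\gg\log n$ also gives $o(p)$.

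\textbf{Main obstacle.} The delicate step is the variance bookkeeping in Step 2. Each martingale increment involves the effect of $z_i$ on the conditional probabilities of all pairs $(j,k)$ with $j,k>i$ as well, not just the pairs containing $i$.

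The first thing I would try is to avoid this by a decoupling: compare $\hat p$ with its conditional expectation given each $z_i$ separately. Alternatively, I would use the exact-$U$-statistic Hoeffding decomposition $\sum_i h_1(z_i)+\sum_{i<j}h_2(z_i,z_j)$. The linear term has variance $n\cdot\Var(h_1)\lesssim n\cdot(np)^2\cE^2$, which is negligible. The degenerate term has variance $\lesssim n^2p$, and Bernstein-type tail bounds for degenerate $U$-statistics would give the $\sqrt{p\log n/n}$ rate up to logs.
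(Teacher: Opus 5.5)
Step 2 does not go through as written. The obstacle you flag is not real. Since $\cX$ is deterministic and the $z_i$ are independent, $\E[A_{jk}\mid z_1,\dots,z_i]$ does not depend on $z_i$ unless $i\in\{j,k\}$. The Doob increment is therefore exactly
\[
D_i=\sum_{j<i}\bigl(A_{ji}-q_i(y_j)\bigr)+\sum_{k>i}\bigl(q_k(y_i)-\P(A_{ik}=1)\bigr),\qquad q_k(y):=\P\bigl(\langle y,x_k+z_k\rangle\ge\tau\bigr).
\]
What fails is your variance claim. Both sums depend on $z_i$ coherently. For instance, the $O(d^{-1/2})$ fluctuation of $\|z_i\|_2$ shifts every conditional edge probability by a common relative amount of order $d^{-1/2}$, larger by a factor $\log(1/p)$ when $p$ is small. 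Hence $\Var(D_i\mid z_{<i})\gtrsim (np)^2/d$, which is much larger than $np$ whenever $np\gg d$. That regime is allowed by \eqref{assu:density} and is the one of interest.

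The bookkeeping is also inconsistent: $\sqrt{n\cdot np\log n}/\binom n2\asymp\sqrt{p\log n}/n$, not $\sqrt{p\log n/n}$. The rate you state is what a cruder but correct bound gives. Use $(\sum_{j<i}A_{ji})^2\le n\sum_{j<i}A_{ji}$ and $(\sum_k q_k(y_i))^2\le n\sum_k q_k(y_i)$. This gives $\sum_i\E[D_i^2\mid z_{<i}]\lesssim n(R+n^2p)$ with $R:=\sum_i\sum_{j\ne i}q_j(y_i)$, hence $\lesssim n^3p$ on the event $R\le 2n^2p$. Together with the trivial bound $|D_i|\le n$, Freedman's inequality gives a deviation $\sqrt{n^3p\log n}+n\log n$, i.e.\ $\sqrt{p\log n/n}$ after dividing by $n^2$, since $np\gg\log n$. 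So your route is repairable, and it needs neither the sharpened increments of Step 1 nor degenerate $U$-statistic tail bounds.

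The paper's proof uses no martingale at all. With $r_i(y):=\sum_{j\ne i}q_j(y)$ and $R=\sum_i r_i(y_i)$, it writes $n(n-1)(\hat p-p)=(R-\E R)+\sum_i\bigl(\deg(i)-r_i(y_i)\bigr)$, where $\E R=n(n-1)p$ by \eqref{def:taup}.
\begin{itemize}
\item The first term is a sum of independent variables in $[0,n]$ with total variance at most $n\E R$. Bernstein gives $|R-\E R|\lesssim\sqrt{pn^3\log n}+n\log n$.
\item For the second term, your Step 1 observation is exactly what is used: conditionally on $z_i$, the $A_{ij}$ are independent Bernoullis. This gives $|\deg(i)-r_i(y_i)|\lesssim\sqrt{r_i(y_i)\log n}+\log n$ for all $i$ simultaneously.
\item These bounds are simply summed using the triangle inequality and Cauchy--Schwarz, $\sum_i\sqrt{r_i(y_i)}\le\sqrt{nR}\lesssim n^{3/2}\sqrt p$.
\end{itemize}
No cancellation across vertices is needed, because the target rate is far above the true fluctuation scale of $\hat p$. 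Since everything is routed through $R$, no uniform $O(p)$ bound on the $q_j(y_i)$ is needed either.

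If you do want that uniform bound, the event $\{\|y_i\|_2=1+O(\sqrt{\log n/d})\}$ is not enough. You also need $\max_j|\langle z_i,x_j\rangle|\lesssim\sqrt{\Tr\Sigma\log n/d}$. Cauchy--Schwarz alone only bounds the standardized threshold shift by $\sqrt{d\Tr\Sigma}$, which can be as large as $d^{1/4}/\log n$.
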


Throughout this section, we will use the following notation for $1\leq i,j\leq n$
\begin{equation}\label{eq:deg_mean_row_sum}
    \deg(i)= \sum_{j\neq i}^nA_{ij},\qquad q_{j}(y)=\P
    \big(\langle y, x_j+z_j\rangle\geq \tau\big),
\qquad
r_i(y)=\sum_{j\neq i}^n q_j(y).
\end{equation}
Note that $q_i,r_i$ are deterministic functions of $y$ since $\cX$ is deterministic. Moreover, by independence of $z_1,\dots, z_n$, we have that for all $i\neq j$, $(A_{ij}|z_i)_{j\neq i}^n$ are independent Bernoulli random variables, with means $(q_j(y_i))_{j\neq i}^n$ respectively. In particular $\E[\deg(i)|z_i]=r_i(y_i)$. The proof will proceed in 2 steps, the first is an application of Bernstein's inequality for bounded random variables \cite[Theorem 2.8.4]{Vershynin2018} to $\deg(i)$, conditioned on $z_i$. Then, we ``uncondition'' via another application of Bernstein's inequality applied to $\sum_i^n r_i(y_i)$.

\begin{lemma}[Concentration of degrees]
    \label{lem:degree-bernstein}
With probability at least $1-2n^{-11}$, for all $i\in [n]$
\begin{equation}\label{bd:degree_conc}
\big|\deg(i) - r_i(y_i)\big| \leq 9\sqrt{r_i(y_i)\log n}+18\log n.
\end{equation}
\end{lemma}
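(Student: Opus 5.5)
We condition on the noise vector $z_i$ and apply Bernstein's inequality row by row, then take a union bound over $i\in[n]$. Fix $i$. By the remarks preceding the lemma, conditional on $z_i$ (equivalently on $y_i$), the variables $(A_{ij})_{j\ne i}$ are independent Bernoulli with means $q_j(y_i)$. This holds because $A_{ij}=\1(\langle y_i, x_j+z_j\rangle\ge\tau)$ depends only on $z_j$ once $y_i$ is fixed, and the $z_j$ are independent. Hence $\deg(i)-r_i(y_i)=\sum_{j\ne i}(A_{ij}-q_j(y_i))$ is, conditionally, a sum of independent centered random variables bounded by $1$ in absolute value. Its conditional variance is $\sum_{j\ne i}q_j(y_i)(1-q_j(y_i))\le r_i(y_i)$.

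Bernstein's inequality for bounded variables \cite[Theorem 2.8.4]{Vershynin2018} then gives, for every $t\ge0$,
\[
\P\big(|\deg(i)-r_i(y_i)|\ge t \,\big|\, z_i\big)\le 2\exp\Big(-\frac{t^2/2}{r_i(y_i)+t/3}\Big).
\]
We take $t=9\sqrt{r_i(y_i)\log n}+18\log n$ and check that the exponent is at least $12\log n$, so that the bound is at most $2n^{-12}$. Write $r=r_i(y_i)$. There are two cases.
\begin{itemize}
\item If $r\ge t/3$, then the denominator is at most $2r$. Since $t^2\ge 81 r\log n$, the exponent is at least $81r\log n/(4r)\ge 12\log n$.
\item If $r< t/3$, then the denominator is at most $2t/3$. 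The exponent is at least $3t/4\ge \tfrac34\cdot 18\log n\ge 12\log n$.
\end{itemize}
The constants $9$ and $18$ are generous enough for both cases. This conditional bound holds for every realization of $z_i$, so integrating over $z_i$ gives the same unconditional probability bound.

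Finally, a union bound over the $n$ indices $i$ gives failure probability at most $2n\cdot n^{-12}=2n^{-11}$, which is the stated bound. The only point needing care is the conditional independence: the event $\{A_{ij}=1\}$ must be measurable with respect to $(z_i,z_j)$ alone, with the $x_j$ deterministic. This is immediate from the SNGM definition, since $\tau$ is a deterministic function of $(p,n,d,\cX)$. No step here is a real obstacle; the main thing to verify is the arithmetic in the Bernstein exponent.
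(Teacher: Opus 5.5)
Your proposal is correct and follows the paper's proof: condition on $z_i$ so that $(A_{ij})_{j\ne i}$ are independent Bernoulli variables with means $q_j(y_i)$, apply Bernstein's inequality with $t_i=9\sqrt{r_i(y_i)\log n}+18\log n$ to get a conditional failure probability of at most $2n^{-12}$, and take a union bound over $i$. Your two-case check of the exponent (which gives at least $\tfrac{81}{4}\log n$ and $\tfrac{27}{2}\log n$ respectively) just makes explicit the arithmetic that the paper leaves to the reader.
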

\begin{proof}[Proof of \Cref{lem:degree-bernstein}]
    We work on an event conditioned on $z_i$, with $i\in [n]$, fixed. We seek to apply Bernstein's inequality to $\deg(i)$, which is a sum of independent Bernoulli variables $A_{ij}$ of respective conditional variances $\Var(A_{ij}|z_i) \le q_j(y_i)$. 
Then, applying \cite[Theorem 2.8.4]{Vershynin2018} with
$t_i=9\sqrt{r_i(y)\log n}+18\log n$ gives
\[
\P\big(|\deg(i)-r_i(y)|\geq t_i|z_i\big)\leq 2\exp\left(-\frac{t_i^2/2}{r_i(y_i)+t_i/3}\right)\leq 2n^{-12}.
\]
Since this bound holds for any realization of $z_i$, the claim follows from the union bound. 
\end{proof}

\begin{lemma}\label{lem:conc_row_sum}
    Let $R=\sum_{i=1}^n r_i(y_i)$. Then $\E R=n(n-1)p$, and with probability at least $1-2n^{-12}$, 
    \begin{equation}\label{eq:conc_row_sum}
        \big|R-\E R\big|\leq 9\sqrt{pn^3 \log n}+18n\log n.
    \end{equation}
In particular, on the same event we have $R\leq 2n^2 p.$
\end{lemma}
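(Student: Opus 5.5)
The plan has two parts: compute $\E R$ exactly from the definition of $\tau$, then concentrate $R$ with Bernstein's inequality, exactly as was done for the degrees in \Cref{lem:degree-bernstein}.

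\emph{Expectation.} By the tower property, $\E\, q_j(y_i)=\P(\langle y_i,y_j\rangle\ge\tau)$ for $i\neq j$, since $z_i$ and $z_j$ are independent. Summing over ordered pairs gives
\[
\E R=\sum_{i\neq j}\P(\langle y_i,y_j\rangle\ge\tau)=2\binom n2 p=n(n-1)p,
\]
by the defining relation \eqref{def:taup}.

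\emph{Concentration.} $R=\sum_i r_i(y_i)$ is a sum of independent terms, because $r_i(y_i)$ depends only on $z_i$ (the cloud $\cX$ is deterministic). Each summand satisfies $0\le r_i(y_i)\le n-1$. For the variance, the key estimate is the boundedness trick:
\[
\Var\big(r_i(y_i)\big)\le \E\big[r_i(y_i)^2\big]\le (n-1)\,\E\big[r_i(y_i)\big].
\]
Summing over $i$, the total variance is at most $(n-1)\E R\le n^3p$.

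Bernstein's inequality for bounded variables \cite[Theorem 2.8.4]{Vershynin2018} then gives
\[
\P(|R-\E R|\ge t)\le 2\exp\!\Big(-\frac{t^2/2}{n^3p+nt/3}\Big).
\]
Choose $t=9\sqrt{pn^3\log n}+18n\log n$ and split into two cases. If the variance term dominates the denominator, then $t^2/(2\cdot 2n^3p)\ge 81\log n/4\ge 12\log n$. Otherwise the second term dominates, and $t/(4n/3)\ge 18\cdot 3\log n/4\ge 12\log n$. In both cases the probability is at most $2n^{-12}$.

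\emph{Final claim.} On this event,
\[
R\le n^2p+9\sqrt{pn^3\log n}+18n\log n.
\]
Under \eqref{assu:density} we have $np=\omega(\log^{18}n)$, so both error terms are $o(n^2p)$. Hence $R\le 2n^2p$ for $n$ large.

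The only delicate point is the variance bound. A crude bound of $n^2$ per summand would not suffice, and the trick $\E[r_i^2]\le (n-1)\E[r_i]$ is what supplies the $p$-dependent variance; everything else is routine.
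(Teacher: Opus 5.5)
Your proposal is correct and follows the paper's own route. You compute $\E R$ by the tower property and \eqref{def:taup}, bound the total variance by $\sum_i \E[r_i(y_i)^2]\le n\,\E R$ using $r_i\le n$, apply Bernstein's inequality, and use \eqref{assu:density} to conclude $R\le 2n^2p$. Your explicit two-case check of the Bernstein exponent supplies the constant verification that the paper leaves to ``as in the proof of \Cref{lem:degree-bernstein}''.
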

\begin{proof}[Proof of \Cref{lem:conc_row_sum}]
By definition of $r_i, q_j$, 
\[
\E_{z_i} r_i(y_i)=\sum_{j\neq i}^n \E_{z_i} \Prob_{z_j}(\langle y_i,y_j\rangle \geq \tau)=\sum_{j\neq i}^n \Prob_{z_j, z_i}(\langle y_i,y_j\rangle \geq \tau),
\]
and hence by the definition \eqref{def:taup},
\[\E R=\sum_{i\neq j}\P\left(\langle y_i,y_j\rangle\geq \tau\right)=n(n-1)p\,.
\]
By independence of $z_1,\dots, z_n$ and the uniform bound $r_i(y_i)\le n$, 
\[
\sum_{i=1}^n\Var(r_i(y_i))\leq \sum_{i=1}^n \E[r_i(y_i)^2]\leq n\E R.
\]
The claim \eqref{eq:conc_row_sum} now follows from an application of Bernstein's inequality as in the proof of \Cref{lem:degree-bernstein}. The bound $R\le 2n^2p$ then follows from \eqref{eq:conc_row_sum} and the assumption \eqref{assu:density}.
\end{proof}

\begin{proof}[Proof of \Cref{lem:pHat_to_p}]
    We work on the intersection of the events on which \eqref{bd:degree_conc} and \eqref{eq:conc_row_sum} hold, which occurs with probability $1-O(n^{-11})$. Recalling that $\hat p=|E(G)|\binom{n}{2}^{-1}$, we have
    \begin{equation}\label{eq:pHat_diff}
        |\hat p-p|
        = \frac1{n(n-1)} \bigg|  \sum_{i=1}^n \deg(i) - \E R\bigg|
        = \frac1{n(n-1)} \bigg|(R-\E R)+  \sum_{i=1}^n (\deg(i) - r_i(y_i)) \bigg|.
    \end{equation}
From \eqref{bd:degree_conc} and Cauchy--Schwarz,
\[
\bigg|\sum_{i=1}^n\big(\deg(i)-r_i(y_i)\big)\bigg|\lesssim n\log n+
\sqrt{\log n}\sum_{i=1}^n\sqrt{r_i(y_i)}
\lesssim \sqrt{Rn\log n}+ n\log n\lesssim\sqrt{pn^3\log n}
\]
where we used the bound $R\leq 2n^2p$ from \Cref{lem:conc_row_sum}, and noted that $n\log n = o(\sqrt{pn^3\log n})$ under \eqref{assu:density}. We get the same bound for $|R-\E R|$ from \eqref{eq:conc_row_sum}, and the claim follows.  
\end{proof}

\begin{proof}[Proof of \Cref{lem:p1}]
Define 
\begin{equation}
    \fat p_1:=\frac{\Gamma((d-k)/2)}{(d-k-1)\sqrt\pi\,\Gamma((d-k-1)/2)}\big(1-\hat\tau^2\big)^{\frac{d-k-1}2}.
\end{equation}
We will proceed by a triangle inequality, 
\begin{equation}\label{eq:triangle_ineq_fatP}
    |p_1-\hat p_1|\leq |p_1-\fat p_1|+|\fat p_1 -\hat p_1|.
\end{equation}
We start  by finding the first term on the right hand-side above. By definition of $\fat p_1, p_1$, write
\begin{equation}\label{eq:p1_p1Hat_diff_integral}
\begin{aligned}
    |p_1-\fat p_1|&= \frac{1}{\sqrt{d_k}}\bigg|\int^{\sqrt{d_k}\tau}_{\sqrt{d_k}\hat \tau}\xi f(\xi)d\xi\bigg|\le |\tau -\hat \tau|\sup_{\xi \in [\sqrt{d_k}
    \tau, \sqrt{d_k}\hat \tau] }|\xi f(\xi)|\\
    &\leq |\tau-\hat \tau|\sqrt{d_k}\max(\tau,\hat \tau)f(\sqrt{d_k}\min(\tau,\hat \tau)).
\end{aligned}
\end{equation}
The last equation was obtained by \eqref{eqn:spherical_density} which says that $f$ is decreasing on $[0,\sqrt{d_k})$. We first estimate $f(\sqrt{d_k}\min(\tau,\hat \tau))$ by comparing it with $f(\sqrt{d_k}\tau)$. If $\tau\leq \hat \tau $, then the two quantities are equal. If $\tau>\hat \tau$, then
\begin{equation}\label{eq:ratio_density}
    \frac{f(\sqrt{d_k}\hat\tau)}{f(\sqrt{d_k}\tau)}=  \left(\frac{1-\hat \tau ^2}{1-\tau^2}\right)^{\frac{d_k-3}{2}}= \left(1+\frac{\tau^2-\hat \tau^2}{1-\tau^2}\right)^{\frac{d_k-3}{2}}\leq \exp\left(d_k\left(\tau^2-\hat \tau^2\right)\right)\leq \exp\left(2d_k\tau|\tau-\hat \tau|\right).
\end{equation}
The above display is bounded by $2$ if we can show that $d\tau|\tau-\hat \tau|=o(1)$. Moreover, one also needs to find $|\tau-\hat \tau|$ as it is an expression on the right-hand side of \eqref{eq:p1_p1Hat_diff_integral}. To this end, we write by definition of $\hat \tau$ and $\tau$, and using the same $\Psi$ for the complementary Gaussian density as in \Cref{lemma:tau_gaussian_quant}:
\begin{equation}\label{eq:tau_diff}
    \sqrt{d}|\tau-\hat \tau|\leq |\sqrt{d}\tau -\Psi^{-1}(p)| +|\Psi^{-1}(p)-\Psi^{-1}(\hat p)|.
\end{equation}
By \Cref{lemma:tau_gaussian_quant}, the first term is $\cE=\sqrt{d}\Tr \Sigma + (\Tr \Sigma)^{1/2}\log n + \frac{\log^2n}{\sqrt{d}}$. The second term can be estimated by the mean value theorem (MVT). For $y$, in between $p$ and $\hat p$, by the MVT and the inverse function theorem (with $-\Psi'(t)=\frac{1}{\sqrt{2\pi}}\exp(-t^2/2)$),
\begin{equation}\label{eq:MVT_IFT}
    |\Psi^{-1}(p)-\Psi^{-1}(\hat p)|\lesssim \frac{|p-\hat p|}{|\Psi'(\Psi^{-1}(y))|}.
\end{equation}
Then, note that by a Chernoff bound, $\Psi(t)\lesssim |\Psi'(t)|$, for $t\ge 0$, so \eqref{eq:MVT_IFT} is bounded above by $|p-\hat p|/y$. By \Cref{lem:pHat_to_p}, we can write $y=p(1+O(\sqrt{\tfrac{\log n}{np}}+\tfrac{\log n}{np}))$, and similarly using the same result for the numerator, we have that
\begin{equation}\label{eq:Psi_diff}
    |\Psi^{-1}(p)-\Psi^{-1}(\hat p)|\lesssim O\left(\sqrt{\tfrac{\log n}{np}}+\tfrac{\log n}{np}\right).
\end{equation}
Combining \eqref{eq:Psi_diff}, \eqref{eq:tau_diff} with \Cref{lemma:conc_tau} gives that \begin{equation}\label{eq:tau_diff_scaled}
d\tau|\tau-\hat \tau|\lesssim \sqrt{\log(1/p)}\left(\cE + \sqrt{\tfrac{\log n}{np}}+\tfrac{\log n}{np}\right)\lesssim \log^{-3/2}(n)=o(1)\end{equation} by \eqref{assu:density},\eqref{assu:trace}. This implies that the ratio in \eqref{eq:ratio_density} is at most 2. Then by \eqref{eqn:p1_expantion_w_ratio} $f(\sqrt{d_k}\tau)\leq 2\sqrt{d_k}p_1$. Finally, using the aforementioned fact combined with \eqref{eq:tau_diff}, \Cref{lemma:conc_tau} we have that \eqref{eq:p1_p1Hat_diff_integral} becomes
\begin{equation}\label{eq:triangle1_pFat_diff}
    |p_1-\fat p_1|\lesssim  |\tau-\hat \tau|\max(\tau,\hat \tau) d_kp_1 \lesssim |\tau-\hat \tau| (\tau +|\tau -\hat \tau|)d_kp_1 
    \lesssim p_1\sqrt{\log(1/p)}\left(\cE+\sqrt{\tfrac{\log n}{np}}+\tfrac{\log n}{np}\right).
\end{equation}
Now it remains to estimate $|\hat p_1-\fat p_1|$. 
Note that from the quantitative form of Stirling's approximation
\[
\Gamma(u) = e^{O(1/u)}\sqrt{\frac{2\pi}u}(u/e)^u
\]
for real $u\ge1$, we have
    \[
    \frac{\Gamma((d-k)/2)}{(d-k-1)\sqrt\pi\,\Gamma((d-k-1)/2)} = (1+O(1/d)) \frac1{\sqrt{2\pi (d-k)}}\,.
    \]
    Furthermore
    \[
    (1-\hat \tau^2)^{(d-k-1)/2} = e^{O(d\hat \tau^4)} e^{-\hat \tau^2(d-k-1)/2}
    \]
    so for $\hat \tau\lesssim d^{-1/2}\sqrt{\log (1/\hat p)}$ by \eqref{eq:tau_diff} and \Cref{lemma:conc_tau},
    \begin{align*}
    \fat p_1 =& (1+ O(\frac{\log^2(1/p)}{d})) \frac1{\sqrt{2\pi (d-k)}} e^{-\hat \tau^2(d-k)/2}.
    \end{align*}
    Then using $\fat p_1\leq 2p_1$, \Cref{lemma:scalar_estimates_whp} and \Cref{rmk:k}
    $$|\fat p_1 -\hat p_1|\lesssim  \fat p_1 \frac{k\log^2(1/p)}{d}\leq 2p_1 \frac{k\log^2(1/p)}{d}\leq p_1 \frac{\log^{2}n}{\sqrt d}\lesssim p_1\sqrt{\log(1/p)}\cE,$$
    which combined with \eqref{eq:triangle1_pFat_diff} concludes the proof of the lemma.
\end{proof}

\subsection[Proof of Lemma \ref{lemma: DeltaTau_norm_estimates}]{Proof of Lemma \ref{lemma: DeltaTau_norm_estimates}}
\label{proof of lemma: DeltaTau_norm_estimates}

In what follows, assume we are on the high probability event on which estimates of \Cref{lemma:scalar_estimates_whp} hold.

(a): Here, we want to formalize the fact that $\Delta \tau =(\text{PSD term})+ (\text{small terms})$. We will see that, 
$(\text{PSD term})= aa^\top$, so the crux of the proof will be based on the following triangle inequality,
\begin{equation}\label{eqn: DeltaTau_odot_VV_norm}
\begin{aligned}
    \norm{\Delta\tau\odot VV^\top}_\op &\leq \norm{aa^\top\odot VV^\top}_\op+ \norm{\big[aa^\top -\tau\1\1^\top\big]\odot\big(\1L^\top + L\1^\top\big) \odot VV^\top}_\op \\
    &\quad + \norm{\tau O\left(\frac{\log n}{d}\1\1^\top\right)\odot VV^\top}_\op.
\end{aligned}
\end{equation}
For the first term on the right hand side, we use \Cref{fact:hadamard_product_bound} to obtain
\begin{equation}\label{eqn: PSD_part_DeltaTau_norm}
 \|aa^\top\odot VV^\top\|_{\mathrm{op}}\le\|VV^\top\|_{\mathrm{op}}\|aa^\top\|_\infty
        \lesssim \underbrace{\frac nd}_{\text{by \Cref{fact:spherical_matrix_norm}}} \underbrace{\Tr\Sigma\log n}_{\text{by \Cref{lemma:scalar_estimates_whp}}}.
\end{equation}
For the other two terms on the right hand side of \eqref{eqn: DeltaTau_odot_VV_norm}, we can bound 
\begin{align*}
\norm{(aa^\top\odot VV^\top)\diag(L)+\diag(L)(aa^\top\odot VV^\top)}_\op&\leq 2\norm{(aa^\top\odot VV^\top)}_\op\norm{\diag(L)}_\op  
\\
&\lesssim \frac nd \Tr\Sigma\log n\sqrt{\tfrac{\log n}{d}}\qquad \text{ by \eqref{eqn: PSD_part_DeltaTau_norm}, \Cref{lemma:scalar_estimates_whp}}, 
\end{align*}
 and similarly,
 \begin{align*}
\norm{\tau\big(VV^\top\diag(L)+\diag(L)VV^\top\big)}_\op\leq 2\tau\norm{VV^\top}_\op\norm{\diag(L)}_\op\lesssim \frac nd\tau\sqrt{\frac{\log n}{d}}\lesssim \frac{n}{d}\tau^2 \log n.
 \end{align*}
The last inequality was obtained by \Cref{lemma:conc_tau}. Finally, the third term on the right hand side of \eqref{eqn: DeltaTau_odot_VV_norm}, has norm \[\norm{\tau O\left(\frac{\log n}{d}\right)\1\1^\top\odot VV^\top}_\op\leq \tau O\left(\frac{\log n}{d}\right)\norm{\1\1^\top}_\infty \norm{VV^\top}_\op\lesssim \frac nd\frac{\tau\log n}{d}
\]
by \Cref{fact:hadamard_product_bound} and \Cref{lemma:scalar_estimates_whp}.
Taking the dominant terms these estimates to the right hand side of \eqref{eqn: PSD_part_DeltaTau_norm} gives the desired result. 
(b): We split the expression of $\Delta \tau^{\odot 2}$ into 2 parts,
\begin{equation}
    \norm{\Delta\tau^{\odot 2} \odot VV^\top }_\op\leq (1+ O(\tfrac{\log n}{d}))^2\norm{aa^{\top \odot 2}\odot VV^\top}_\op + \norm{R_3' \odot VV^\top}_\op.
\end{equation}
We set the remainder term equal to \[R_3'= \Delta\tau^{\odot 2}- aa^{\top\odot 2}(1+O\left(\frac{\log n}{d}\right))^2,\]
and estimate it in \Cref{appendix:R3_prime} below. \Cref{appendix:R3_prime} gives us that 
\[\norm{R_3'\odot VV^\top}_\op\lesssim O\left(\frac{n\log^3 n}{d^{3/2}}\tau^2\right).\]
To control the first term on the right hand side, we note that $aa^{\top \odot 2}$ is PSD, so by \Cref{fact:hadamard_product_bound}, we have that 
\[\|(aa^\top)^{\odot 2}\odot VV^\top\|_{\mathrm{op}}\le\|VV^\top\|_{\mathrm{op}}\|aa^{\odot 2}\|_\infty
\lesssim \frac nd(\Tr\Sigma\log n)^2. \]
Then taking the dominant terms, gives us the desired result. \newline
(c): Here, we also split $\Delta\tau^{\odot 2}$ exactly as in \Cref{appendix:R3_prime},
\begin{align*}
  \Delta\tau^{\odot 2}
  &= (aa^\top)^{\odot 2}\big(1+O(\tfrac{\log n}{d})\big)^2 + R_3',
  \\
  R_3'& = -2\big(1+O(\tfrac{\log n}{d})\big)\,(aa^\top)\odot R_3'' + R_3''^{\odot 2},
\end{align*}
where $$R_3''=\big[aa^\top-\tau\1\1^\top\big]\odot(\1L^\top+L\1^\top)-\tau O(\tfrac{\log n}{d})\1\1^\top,$$ as in \Cref{appendix:R3_prime}. By the triangle inequality,
\begin{equation}\label{eqn:DeltaTauSq_split}
  \norm{\Delta\tau^{\odot 2}}_\op
  \le \big(1+o(1)\big)\norm{(aa^\top)^{\odot 2}}_\op + \norm{R_3'}_\op .
\end{equation}
 Since $aa^\top$ is positive semi-definite, by \Cref{fact:hadamard_product_bound}:
\begin{equation}\label{eqn:aaPSDsquared_op}
  \norm{(aa^\top)^{\odot 2}}_\op
  = \norm{(aa^\top)\odot(aa^\top)}_\op
  \le \norm{aa^\top}_\op\,\norm{aa^\top}_\infty
  \lesssim n\mu_1^2\cdot \Tr\Sigma\log n,
\end{equation}
using $\norm{aa^\top}_\op\lesssim n\mu_1^2$ and $\norm{aa^\top}_\infty\lesssim \Tr\Sigma\log n$
from \Cref{lemma:scalar_estimates_whp}.

Now we move on to bounding $\|R_3'\|_\op$. For the first summand in $R'_3$, we will see that the dominant part of $(aa^\top)\odot R_3''$ is
$\big[(aa^\top)^{\odot 2}-\tau aa^\top\big]\odot(\1L^\top+L\1^\top)$. Pulling out $\diag(L)$ and using sub-multiplicativity together with \eqref{eqn:aaPSDsquared_op},
\begin{align*}
  \norm{(aa^\top)\odot R_3''}_\op
  &\le 2\,\norm{(aa^\top)^{\odot 2}-\tau aa^\top}_\op\,\norm{\diag(L)}_\op
     + \tau O(\tfrac{\log n}{d})\norm{aa^\top}_\op\\
  &\lesssim \norm{aa^\top}_\op\big(\norm{aa^\top}_\infty+\tau\big)\sqrt{\tfrac{\log n}{d}}
   \;\lesssim\; n\mu_1^2\big(\Tr\Sigma\log n+\tau\big)\sqrt{\tfrac{\log n}{d}},
\end{align*}
which is $\lesssim n\mu_1^2\,\Tr\Sigma\log n$. Due to the fact that $\Tr\Sigma=\omega(1/d)$, this gives us that
$\tau\sqrt{\log n/d}=o(\Tr\Sigma\log n)$. 

The second summand in $R_3'$, $R_3''^{\odot 2}$, is not PSD,
so we pass to the Frobenius norm, i.e.
\[
  \norm{R_3''^{\odot 2}}_\op
  \le \norm{R_3''^{\odot 2}}_F
  \le n\,\norm{R_3''}_\infty^2
  \lesssim n(\Tr\Sigma \log n+\tau)^2\frac{\log n}{d}
  \lesssim \frac{n((\Tr \Sigma)^2 \log^2n + \tau^2)\log n}{d},
\]
using $\norm{R_3''}_\infty\lesssim(\Tr\Sigma \log n+\tau)\sqrt{\log n/d}$ from
\eqref{eqn:infty_norm_of_R3''} and $\Tr\Sigma=o(\tau)$. Hence
$\norm{R_3'}_\op\lesssim n\mu_1^2\Tr\Sigma\log n+\tfrac{n\tau^2\log n}{d}$, and substituting this
together with \eqref{eqn:aaPSDsquared_op} into \eqref{eqn:DeltaTauSq_split} gives
\[
  \norm{\Delta\tau^{\odot 2}}_\op
  \lesssim n\mu_1^2\,\Tr\Sigma\log n + \frac{n\tau^2\log^3 n}{d},
\]
as claimed.
\qed

\begin{lemma}\label{appendix:R3_prime} Following the definitions in the proof of \Cref{lemma:Bsph_Bdata_diff}, if \[R_3'= \Delta\tau^{\odot 2}- aa^{\top\odot 2}\left(1+O\left(\frac{\log n}{d}\right)\right)^2,\] then with probability at least $1-O(n^{-12})$,
\[\norm{R_3'\odot VV^\top}_\op\lesssim O\left(\frac{n\log^3 n}{d^{3/2}}\tau^2\right)\]
\end{lemma}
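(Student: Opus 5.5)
The plan is to expand $R_3'$ algebraically. By \eqref{eqn:Delta_Tau} we have $\Delta\tau = -aa^\top(1+O(\tfrac{\log n}{d})) + R_3''$, where
\[
R_3''=\big[aa^\top-\tau\1\1^\top\big]\odot(\1L^\top+L\1^\top)-\tau O(\tfrac{\log n}{d})\1\1^\top .
\]
Squaring entrywise gives
\[
R_3' = -2\big(1+O(\tfrac{\log n}{d})\big)(aa^\top)\odot R_3'' + R_3''^{\odot 2}.
\]
It then suffices to bound $\|(aa^\top\odot R_3'')\odot VV^\top\|_\op$ and $\|R_3''^{\odot2}\odot VV^\top\|_\op$ separately. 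Throughout I would work on the event of \Cref{lemma:scalar_estimates_whp}, \Cref{lemma:ai_aj_li_lj_conc} and \Cref{fact:spherical_matrix_norm}, which has probability $1-O(n^{-12})$. On this event $\|L\|_\infty\lesssim\sqrt{\log n/d}$, $\|aa^\top\|_\infty\lesssim \Tr\Sigma\log n$, and $\|VV^\top\|_\op\lesssim n/d$.

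\textbf{Entrywise bound on $R_3''$.} This is recorded as \eqref{eqn:infty_norm_of_R3''}:
\[
\|R_3''\|_\infty\lesssim(\Tr\Sigma\log n+\tau)\sqrt{\log n/d}.
\]

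\textbf{The cross term.} Write
\[
(aa^\top)\odot R_3''=\big[(aa^\top)^{\odot2}-\tau aa^\top\big]\odot(\1L^\top+L\1^\top)-\tau O(\tfrac{\log n}{d})\,aa^\top .
\]
Hadamard products with $\1L^\top+L\1^\top$ become left and right multiplication by $\diag(L)$. So, after multiplying by $VV^\top$ entrywise, this term is at most $2\|\diag(L)\|_\op$ times $\|(aa^\top)^{\odot2}\odot VV^\top\|_\op+\tau\|aa^\top\odot VV^\top\|_\op$, plus a term $\tau\frac{\log n}{d}\|aa^\top\odot VV^\top\|_\op$. The matrices $aa^\top$ and $(aa^\top)^{\odot 2}$ are PSD, so \Cref{fact:hadamard_product_bound} gives
\[
\|(aa^\top)^{\odot 2}\odot VV^\top\|_\op\lesssim \tfrac nd(\Tr\Sigma\log n)^2,\qquad \|aa^\top\odot VV^\top\|_\op\lesssim \tfrac nd\Tr\Sigma\log n .
\]
The cross term is therefore $\lesssim \frac nd\sqrt{\tfrac{\log n}{d}}\,\Tr\Sigma\log n\,(\Tr\Sigma\log n+\tau)$. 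To put this in the form $\frac{n\log^3 n}{d^{3/2}}\tau^2$, I would use $\Tr\Sigma\log n\lesssim \tau$. Here I expect the main obstacle: in the large-trace regime $\Tr\Sigma>\tau^2$ this comparison must come from \eqref{assu:trace}, which gives $\Tr\Sigma\lesssim 1/(\sqrt d\log^2 n)\lesssim \tau/\log^{2}n$ since $\tau\asymp\sqrt{\log(1/p)/d}$. This yields a bound $\frac{n\log^{3/2}n}{d^{3/2}}\tau^2$, and one must accept polylog slack here.

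\textbf{The square term.} The matrix $R_3''^{\odot2}$ is not PSD, so \Cref{fact:hadamard_product_bound} cannot be applied directly. Instead I would expand $R_3''^{\odot 2}$ into pieces of the form $M\odot(\1L^\top+L\1^\top)^{\odot 2}$. These unfold as $\diag(L)^2M+2\diag(L)M\diag(L)+M\diag(L)^2$ with $M\in\{(aa^\top)^{\odot2},\,\tau aa^\top,\,\tau^2\1\1^\top\}$, each PSD. After a Hadamard product with $VV^\top$, each piece has norm $\lesssim \frac{\log n}{d}\cdot\frac nd\,\|M\|_\infty$. The $\tau^2\1\1^\top$ piece dominates and gives $\frac{n\log n}{d^2}\tau^2$. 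The remaining pure $\1\1^\top$ term has coefficient $\tau^2\frac{\log^2 n}{d^2}$, and the mixed terms are handled the same way. Collecting everything gives at most $\frac{n\log^3 n}{d^{3/2}}\tau^2$.
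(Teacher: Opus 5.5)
Your proof is correct. The cross term $(aa^\top\odot R_3'')\odot VV^\top$ is handled exactly as in the paper: you pull out $\diag(L)$, apply \Cref{fact:hadamard_product_bound} to the PSD matrices $(aa^\top)^{\odot2}$ and $aa^\top$, and absorb $\Tr\Sigma\log n\lesssim\tau/\log n$, which follows from \eqref{assu:trace} and $\tau\gtrsim d^{-1/2}$ (since $p\le 0.49$).

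The genuine difference is the square term. The paper uses no structure there. It passes to the Frobenius norm, $\norm{R_3''^{\odot2}\odot VV^\top}_\op\le\norm{R_3''}_\infty^2\norm{VV^\top}_F$, with $\norm{R_3''}_\infty\lesssim(\Tr\Sigma\log n+\tau)\sqrt{\log n/d}$ and $\norm{VV^\top}_F\lesssim n/\sqrt d$. This crude step is exactly what produces the $\frac{n\log^3 n}{d^{3/2}}\tau^2$ in the statement. You instead expand the main part as $M\odot(\1L^\top+L\1^\top)^{\odot2}=\diag(L)^2M+2\diag(L)M\diag(L)+M\diag(L)^2$, with $M$ a signed combination of PSD matrices. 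This keeps you in operator norm with $\norm{VV^\top}_\op\lesssim n/d$ and gives $\frac{n\log n}{d^2}\tau^2$, a factor $\sqrt d$ better on that piece. Along your route, the overall bound is then dictated by the cross term rather than the square term.

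What the paper's route buys is robustness: it needs only entrywise bounds, whereas yours uses the algebraic form of every piece. In particular, ``the mixed terms are handled the same way'' implicitly treats the remainder $\tau O(\tfrac{\log n}{d})\1\1^\top$ as a scalar multiple of $\1\1^\top$. If that remainder is merely an entrywise-$O(\tau\log n/d)$ matrix, which is how the $O(\cdot)$ in \eqref{eqn:Delta_Tau} should be read, \Cref{fact:hadamard_product_bound} does not apply to it. Bounding those pieces by $\norm{\cdot}_\infty\norm{VV^\top}_F$ instead costs at most $\frac{n\log^{3/2}n}{d^{2}}\tau^2$, so nothing is lost.
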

\begin{proof}
As in the proof of \Cref{lemma: DeltaTau_norm_estimates}, we assume that we are on the high probability event on which estimates of \Cref{lemma:scalar_estimates_whp} hold. Now, expanding $\Delta \tau^{\odot2}$, we can write the difference we are seeking {to control} as
\begin{equation}\label{eqn: r3_prime_expanded}R_3'=\Delta\tau^{\odot2}-(aa^{\top})^{\odot2}\big(1+O(\frac{\log n}{d})\big)^2  = -2\big(1+O(\frac{\log n}{d})\big) (aa^{\top})\odot R_3'' + R_3''^{\odot2},\end{equation}
    where we define
    \[R_3''= \big[aa^{\top}-\tau\1\1^{\top}\big]\odot(\1L^{\top}+L\1^{\top}) - \tau O(\frac{\log n}{d})\1\1^{\top}.\]
   To bound the contribution of the first summand on the right hand side of \eqref{eqn: r3_prime_expanded} to $\| R_3' \odot VV^\top \|_\op$, we pull out $\diag(L_1,\dots, L_n)$ and apply \Cref{fact:hadamard_product_bound} to the components containing the PSD matrices $(aa^{\top })^{\odot 2}$ and $aa^{\top }$:
\begin{equation}\label{eqn:DeltaTauHadSquared1}
\begin{aligned}
    &\norm{(aa^{\top }\odot R_3'')\odot VV^{\top }}_\op =\norm{aa^\top \odot  \left(\big[aa^{\top}-\tau\1\1^{\top}\big]\odot(\1L^{\top}+L\1^{\top}) - \tau O(\frac{\log n}{d})\1\1^{\top}\right)\odot VV^\top}_\op\\
    &= \big\|\Big(\big[(aa^{\top})^{\odot2}-\tau aa^{\top}\big]\odot VV^\top\Big)\diag(L)
\\
&\qquad +\diag(L)\Big(\big[(aa^{\top})^{\odot2}-\tau aa^{\top}\big]\odot VV^\top\Big)
- \Big(aa^\top\odot \tau O\big(\tfrac{\log n}{d}\big)\1\1^\top\Big)\odot VV^\top \|_\op\\
    &\lesssim \left(\norm{aa^{\top \odot2}}_\infty  + \tau \norm{aa^\top}_\infty\right)\cdot O(\sqrt{\tfrac{\log n}{d}}) \norm{VV^\top}_\op \qquad \\
    &\qquad \qquad \text{ by \Cref{fact:hadamard_product_bound} and taking dominant terms}\\
    &\lesssim 
    \sqrt{\frac{\log n}{d}} \big[(\Tr\Sigma \log n)^2 + \tau \Tr\Sigma \log n\big] \left(\frac{n}{d}\right),
\end{aligned}
 \end{equation}
 where $\diag(L)= \diag(L_1,\dots, L_n)$ and we invoked bounds of \Cref{lemma:scalar_estimates_whp} where needed.
 
Next we bound $ \norm{R_3''^{\odot2}\odot VV^\top}_\op$. We do so by passing to the Frobenius norm, which we control by $\|R_3''\|_\infty$ (see \eqref{eqn:DeltaTauHadSquared2} below). To this end, we must estimate $\|R_3''\|_\infty$. Once again plugging in the bounds from \Cref{lemma:scalar_estimates_whp} we get that 
\begin{equation}\label{eqn:infty_norm_of_R3''}
\begin{aligned}
    \norm{R_3''}_\infty&\leq \norm{aa^{\top}\odot(\1L^{\top}+L\1^{\top})}_\infty +\norm{\tau\1\1^{\top}\odot(\1L^{\top}+L\1^{\top})}_\infty + \norm{\tau O(\frac{\log n}{d})\1\1^{\top}}_\infty\\
    &\lesssim O\left(\left(\Tr\Sigma\log n + \tau\right)\sqrt{\frac{\log n} {d}}\right) + \tau O\left(\frac{\log n}{d}\right)\\
    &\lesssim O\left((\Tr\Sigma \log n + \tau)\sqrt{\frac{\log n}{d}}\right).
\end{aligned}
\end{equation}
Therefore, 
 \begin{equation}\label{eqn:DeltaTauHadSquared2}
 \begin{aligned}
     \norm{R_3''^{\odot 2}\odot VV^\top}_\op&\leq \norm{R_3''^{\odot 2}\odot VV^\top}_F\\
     &\leq \norm{R_3''}^2_\infty\norm{VV^\top}_F\\
     &\lesssim \underbrace{\left((\Tr\Sigma +\tau)^2\frac{\log^3 n}{d}\right)}_{\eqref{eqn:infty_norm_of_R3''}}\underbrace{\left(\frac{n}{\sqrt{d}}\right)}_{\text{\Cref{fact:spherical_matrix_norm}}}
\end{aligned}
 \end{equation}

 Finally, we combine our estimates \eqref{eqn:DeltaTauHadSquared1} and \eqref{eqn:DeltaTauHadSquared2} to obtain 
 \begin{align*}
     \norm{R_3'\odot VV^\top}_\op&\leq 2(1+O\left(\frac{\log n}{d}\right))\norm{aa^\top \odot R_3''\odot VV^\top}_\op + \norm{ R_3''^{\odot 2}\odot VV^\top}_\op\\
     &\lesssim (1+O\left(\frac{\log n}{d}\right))\big[(\Tr\Sigma \log n)^2 + \tau \Tr\Sigma \log n\big]O\left(\frac{n\sqrt{\log n}}{d\sqrt{d}}\right) + \left( (\Tr\Sigma +\tau)^2\right)\left(\frac{n{\log^3 n}}{d\sqrt{d}}\right)\\
     &\lesssim \frac{n}{d^{3/2}}\tau^2\log^3n.
 \end{align*}
The last line was obtained using \Cref{lemma:conc_tau} in conjunction with \eqref{assu:trace}.
\end{proof}

\section{Table of Notation}\label{app:notation}

\renewcommand{\arraystretch}{1.3}
\begin{longtable}{lp{10.5cm}}
%\caption{Summary of frequently used notation and variables.}
\label{tab:notation} \\

\toprule
\textbf{Symbol} & \textbf{Description} \\
\midrule
\endfirsthead

\multicolumn{2}{c}{{\bfseries \tablename\ \thetable{} -- continued from previous page}} \\
\toprule
\textbf{Symbol} & \textbf{Description} \\
\midrule
\endhead

\midrule \multicolumn{2}{r}{{Continued on next page}} \\ \bottomrule
\endfoot

\bottomrule
\endlastfoot

\multicolumn{2}{l}{\textbf{Asymptotics \& Norms}} \\
\midrule
$\omega(a_n) \leq b_n$, $a_n = o(b_n)$ & $\lim_{n\rightarrow\infty}|a_n/b_n| = 0$ \\
$a_n = \Theta(b_n)$, $a_n \asymp b_n$ & Asymptotic equivalence up to constant factors (holds for large enough $n$ for $\Theta$, and holds for all $n$ for $\asymp$) \\
$\lambda_i(B)$, $\sigma_i(B)$ & For matrix $B$, the $i$-th eigenvalue (when $B$ is symmetric) and $i$-th singular value, respectively, labelled in non-increasing order. \\
$\|B\|_{op}$, $\|B\|_F$ & The spectral operator norm and Frobenius norm of matrix $B$, respectively \\
$\|\cdot\|_{\psi_1}$, $\|\cdot\|_{\psi_2}$ & Subexponential and subgaussian norms, respectively, for random variables \\
$\Pi^{(k)}_{B}$ & Orthogonal projection matrix onto the subspace spanned by the top $k$ eigenvectors of $BB^\top$. \\

\midrule
\multicolumn{2}{l}{\textbf{Data Setup \& Graph Model}} \\
\midrule
$n$, $d$& Number of samples, ambient dimension of samples, respectively. \\
$A$ & The $n \times n$ adjacency matrix of the generated graph $\mathcal{G}_{n,d}(p,\cX)$. \\
$p$ & Average edge probability in the random SNGM. \\
$\cX$ & Deterministic point cloud, $\cX = \{x_1, \dots, x_n\} \subset \mathbb{R}^d$.
\\
$X \in \mathbb{R}^{n \times d}$ & Clean data matrix, with rows $x_i$. \\
$\Sigma$ & Sample covariance matrix $\Sigma := \frac{1}{n}X^\top X$. \\
$\mu_i^2$ & The $i$-th largest eigenvalue of $\Sigma$.
\\
$Z$ & Gaussian noise matrix, with independent rows $z_i \sim \mathcal{N}(0, \frac{1}{d}I_d)$. \\
$Y \in \mathbb{R}^{n \times d}$ & Noisy data matrix, with rows $y_i = x_i + z_i$. \\
$\tau$ & Inner product threshold defining $A$: $A_{ij}=\1(\langle y_i, y_j \rangle \ge \tau$). \\
\midrule
\multicolumn{2}{l}{\textbf{Truncation to Low Rank}} \\
\midrule
$k$ & Numerical rank of $\Sigma$.\\
$\Sigma_k$ & Projection onto the top $k$ eigenvalues of $\Sigma$, $\Sigma_k =\Pi^{(k)}_\Sigma\Sigma$ \\
$\Sigma_{k+1:d}$ & Projection onto the tail eigenvalues of $\Sigma$, $\Sigma_{k+1:d} = \Sigma-\Sigma_k$. \\

$\tilde p$ & Average edge probability for the truncated (strictly low-rank) adjacency matrix $\tilde A$. \\

\midrule
\multicolumn{2}{l}{\textbf{Spectral Approximation Variables}} \\
\midrule
$\Xalg$ & The $n \times k$ embedding matrix constructed from the top $k$ eigenvectors of $A$. \\
$\hat X$ & Rescaled embedding $\hat X=\mathbf W_k\Lambda_k^{1/2}$ \eqref{def:hatX}, i.e.\ $\Xalg$ with $\hat p_1$ replaced by $p_1$. \\
$\{\eta_i, w_i\}_{i=0}^{n-1}$ &  Eigenpairs of the adjacency matrix $A$, with $\eta_0\ge\cdots\ge\eta_{n-1}$. \\
$X_k$ & Clean data projected onto its top $k$ eigenspaces ($X_k = \Pi^{(k)}_{X}X$). \\
$\tilde A$ & Adjacency matrix induced by noisy projected data, $\tilde A_{ij} = \1(\langle \Pi^{(k)}_{X}x_i + z_i, \Pi^{(k)}_{X}x_j + z_j \rangle \ge \tau)$. \\
$\tilde Y$ & Matrix of low-rank signal plus noise, $\tilde Y = X_k + Z$, with rows $\tilde y_i=\Pi^{(k)}_{X}x_i + z_i$. \\
$\hat p$ & Empirical edge density $\hat p=\binom n2^{-1}|E(G)|$, an estimator of $p$. \\
$\Psi(t)$ & The complementary distribution function for a standard Gaussian variable $g$ on $\R$, i.e. $\Psi(t)=\P( g\ge t)$.\\
$\alpha(q)$ & Scaling function $\alpha(q)=\sqrt{2\pi/(d-k)}\exp\!\big(\tfrac{d-k}{2d}\Psi^{-1}(q)^2\big)$ \eqref{def:scaling}; satisfies $\alpha(\hat p)=1/(d_k\hat p_1)$. \\
$Y_k$ & Projection of the noisy data $Y$ onto the top $k$ eigenspaces of $YY^\top$, $Y_k = \Pi^{(k)}_{Y}(X + Z)$. \\
$\tilde Y_k$ & Projection of $\tilde Y$ onto the top $k$ eigenspaces of $YY^\top$, $\tilde Y_k = \Pi^{(k)}_{\tilde Y}(X_k + Z)$. \\
$a_i$, $w_i$ & Decomposed components of $\tilde y_i$, where $a_i \in \mathbb{R}^k$ contains signal+noise, and $w_i \sim \mathcal{N}(0, \frac{1}{d}I_{d-k})$. \\
$l_i$, $v_i$ & Magnitude ($l_i = \|w_i\|_2$) and direction ($v_i = w_i/\|w_i\|_2$) of the orthogonal noise component $w_i$. \\
$d_k$ & Dimension of the orthogonal noise component ($d_k = d - k$). \\

\midrule
\multicolumn{2}{l}{\textbf{Notation from \Cref{subsec:random_sampling}}} \\
\midrule
$\nu$ & Compactly supported measure on $R^d$.\\
$\Sigma^{\infty}$ & The $d\times d$ covariance matrix induced by the measure $\nu$: $\Sigma^{\infty} =\E_{\bfx\sim \nu}[\bfx\bfx^\top]$.\\
$\lambda_i$& The $i$-th largest eigenvalue of $\Sigma^{\infty}$.
\end{longtable}

\quad

\subsection*{Acknowledgements}  

The authors gratefully acknowledge the support of the National Science Foundation. Early stages of this work were supported under NSF grant DMS-1928930 while the second author was in residence at the Simons Laufer Mathematical Sciences Institute in Berkeley, California, during the Spring 2025 semester program \emph{Probability and Statistics of Discrete Structures}.
In later stages of this work, N.A.C.\ and S.P.\ were supported by NSF grant DMS-2553253.

\bibliographystyle{alphaurl}

\bibliography{refs.bib}

@misc{LiSchramm2024,
  author = {Li, Shuangping and Schramm, Tselil},
  title = {Spectral clustering in the Gaussian mixture block model},
  year = {2023},
  eprint = {2305.00979},
  archivePrefix = {arXiv},
  primaryClass = {math.ST}
}

@book{Vershynin2018,
    AUTHOR = {Vershynin, Roman},
     TITLE = {High-dimensional probability},
    SERIES = {Cambridge Series in Statistical and Probabilistic Mathematics},
    VOLUME = {47},
 PUBLISHER = {Cambridge University Press, Cambridge},
      YEAR = {2018},
     PAGES = {xiv+284},
      ISBN = {978-1-108-41519-4},
   MRCLASS = {60-01 (60B05 60B20 60E15 60Fxx 62H25)},
  MRNUMBER = {3837109},
MRREVIEWER = {Sasha\ Sodin},
       DOI = {10.1017/9781108231596},
       URL = {https://doi.org/10.1017/9781108231596},
}

@article{elkaroui2010information,
  title={On information plus noise kernel random matrices},
  author={El Karoui, Noureddine},
  journal={The Annals of Statistics},
  volume={38},
  number={5},
  pages={3191--3216},
  year={2010},
  publisher={Institute of Mathematical Statistics}
}

@article{cheng2013spectrum,
  title={The spectrum of random inner-product kernel matrices},
  author={Cheng, Xiuyuan and Singer, Amit},
  journal={Random Matrices: Theory and Applications},
  volume={2},
  number={04},
  pages={1350010},
  year={2013},
  publisher={World Scientific}
}

@article{lu2022equivalence,
  title={An equivalence principle for the spectrum of random inner-product kernel matrices},
  author={Lu, Yue M and Yau, Horng-Tzer},
  journal={Probability Theory and Related Fields},
  volume={184},
  number={1-2},
  pages={257--302},
  year={2022},
  publisher={Springer}
}

@unpublished{guionnet2023spectral,
  title={Spectral Phase Transitions in Non-Linear Wigner Spiked Models},
  author={Guionnet, Alice and Ko, Justin and Krzakala, Florent and Mergny, Pierre and Zdeborov{\'a}, Lenka},
  year={2023},
  eprint={2310.14055},
  archivePrefix={arXiv},
}

@article{ding2021learning,
    AUTHOR = {Ding, Xiucai and Ma, Rong},
     TITLE = {Learning low-dimensional nonlinear structures from
              high-dimensional noisy data: an integral operator approach},
   JOURNAL = {Ann. Statist.},
  FJOURNAL = {The Annals of Statistics},
    VOLUME = {51},
      YEAR = {2023},
    NUMBER = {4},
     PAGES = {1744--1769},
      ISSN = {0090-5364,2168-8966},
   MRCLASS = {62R07 (47G10 62R30 65J15 68T09)},
  MRNUMBER = {4658575},
MRREVIEWER = {Nicolas\ Keriven},
       DOI = {10.1214/23-aos2306},
       URL = {https://doi.org/10.1214/23-aos2306},
}

@book{dai2013approximation,
  title     = {Approximation theory and harmonic analysis on spheres and balls},
  author    = {Dai, Feng},
  year      = {2013},
  publisher = {Springer}
}

@book{horn2012matrix,
  title={Matrix Analysis},
  author={Horn, Roger A. and Johnson, Charles R.},
  year={2012},
  publisher={Cambridge University Press},
  edition={2nd}
}

@article{Baiketal2005,
  author  = {Baik, Jinho and Ben Arous, Gérard and Péché, Sandrine},
  title   = {Phase transition of the largest eigenvalue for nonnull complex sample covariance matrices},
  journal = {The Annals of Probability},
  year    = {2005},
  volume  = {33},
  doi     = {10.1214/009117905000000233}
}

@article{Abbe2018,
  author  = {Abbe, Emmanuel},
  title   = {Community Detection and Stochastic Block Models},
  journal = {Foundations and Trends{\textregistered} in Communications and Information Theory},
  year    = {2018},
  volume  = {14},
  pages   = {1--162},
  doi     = {10.1561/0100000067}
}

@article{yu2015useful,
  title={A useful variant of the {D}avis--{K}ahan theorem for statisticians},
  author={Yu, Yi and Wang, Tengyao and Samworth, Richard J},
  journal={Biometrika},
  volume={102},
  number={2},
  pages={315--323},
  year={2015},
  publisher={Oxford University Press},
  doi={10.1093/biomet/asv008}
}

@book{Hochstadt,
    AUTHOR = {Hochstadt, Harry},
     TITLE = {The functions of mathematical physics},
   EDITION = {Second},
      NOTE = {With a foreword by Wilhelm Magnus},
 PUBLISHER = {Dover Publications, Inc., New York},
      YEAR = {1986},
     PAGES = {xii+322},
      ISBN = {0-486-65214-9},
   MRCLASS = {33-02},
  MRNUMBER = {883962},
}

@book{EfFr,
    AUTHOR = {Efthimiou, Costas and Frye, Christopher},
     TITLE = {Spherical harmonics in {$p$} dimensions},
 PUBLISHER = {World Scientific Publishing Co. Pte. Ltd., Hackensack, NJ},
      YEAR = {2014},
     PAGES = {xii+143},
      ISBN = {978-981-4596-69-5},
   MRCLASS = {33C55},
  MRNUMBER = {3290046},
       DOI = {10.1142/9134},
       URL = {https://doi.org/10.1142/9134},
}

@unpublished{fernandez2026spectral,
  title={Spectral Concentration and Recovery in Sparse High-Dimensional Random Geometric Graphs},
  author={Fernandez V, Manuel and Zhu, Yizhe},
  year={2026},
  eprint={2607.14304},
  archivePrefix={arXiv},
  primaryClass={stat.ML},
}

@article{Sin06, author={Amit Singer},
  title={From graph to manifold {L}aplacian: the convergence rate},
  journal={Appl. Comput. Harmon. Anal.}, volume={21}, number={1}, pages={128--134}, year={2006}}

@incollection{GK06, author={Evarist Gin\'e and Vladimir Koltchinskii},
  title={Empirical graph {L}aplacian approximation of {L}aplace--{B}eltrami operators: large sample results},
  booktitle={High Dimensional Probability}, series={IMS Lecture Notes Monogr. Ser.},
  volume={51}, pages={238--259}, publisher={Inst. Math. Statist.}, year={2006}}

@article{DWW21, author={David B. Dunson and Hau-Tieng Wu and Nan Wu},
  title={Spectral convergence of graph {L}aplacian and heat kernel reconstruction in $L^\infty$ from random samples},
  journal={Appl. Comput. Harmon. Anal.}, volume={55}, pages={282--336}, year={2021}}

@article{EKW16, author={Noureddine El Karoui and Hau-Tieng Wu},
  title={Graph connection {L}aplacian methods can be made robust to noise},
  journal={Ann. Statist.}, volume={44}, number={1}, pages={346--372}, year={2016}}

@article{DW23, author={Xiucai Ding and Hau-Tieng Wu},
  title={Impact of signal-to-noise ratio and bandwidth on graph {L}aplacian spectrum from high-dimensional noisy point cloud},
  journal={IEEE Trans. Inform. Theory}, volume={69}, number={3}, pages={1899--1931}, year={2023}}

@article{DLMY23, author={Sofiia Dubova and Yue M. Lu and Benjamin McKenna and Horng-Tzer Yau},
  title={Universality for the global spectrum of random inner-product kernel matrices in the polynomial regime},
  journal={arXiv preprint arXiv:2310.18280}, year={2023}}

@article{BDER,
    AUTHOR = {Bubeck, S\'ebastien and Ding, Jian and Eldan, Ronen and
              R\'acz, Mikl\'os Z.},
     TITLE = {Testing for high-dimensional geometry in random graphs},
   JOURNAL = {Random Structures Algorithms},
  FJOURNAL = {Random Structures \& Algorithms},
    VOLUME = {49},
      YEAR = {2016},
    NUMBER = {3},
     PAGES = {503--532},
      ISSN = {1042-9832,1098-2418},
   MRCLASS = {05C80 (05C62 62H15)},
  MRNUMBER = {3545825},
MRREVIEWER = {Wojciech\ Kordecki},
       DOI = {10.1002/rsa.20633},
       URL = {https://doi.org/10.1002/rsa.20633},
}

@inproceedings{LMSY,
    AUTHOR = {Liu, Siqi and Mohanty, Sidhanth and Schramm, Tselil and Yang,
              Elizabeth},
     TITLE = {Testing thresholds for high-dimensional sparse random geometric graphs},
    BOOKTITLE = {S{TOC} '22---{P}roceedings of the 54th {A}nnual {ACM} {SIGACT}
              {S}ymposium on {T}heory of {C}omputing},
     PAGES = {672--677},
 PUBLISHER = {ACM, New York},
      YEAR = {2022},
      ISBN = {978-1-4503-9264-8},
   MRCLASS = {68Q87 (68W20)},
  MRNUMBER = {4490031},
       DOI = {10.1145/3519935.3519989},
       URL = {https://doi.org/10.1145/3519935.3519989}
}

@unpublished{DMSWX,
    author = {Du, Hang and Mao, Cheng and Sun, Nike and Wu, Yihong and Xu, Jiaming},
    title = {Resolution of the detection threshold conjecture for random geometric graphs in the $d>n$ regime},
    year={2026},
    eprint={2607.02013},
    archivePrefix={arXiv},
}

@book{Penrose-RGG,
    AUTHOR = {Penrose, Mathew},
     TITLE = {Random geometric graphs},
    SERIES = {Oxford Studies in Probability},
    VOLUME = {5},
 PUBLISHER = {Oxford University Press, Oxford},
      YEAR = {2003},
     PAGES = {xiv+330},
      ISBN = {0-19-850626-0},
   MRCLASS = {60-02 (05C80 60D05)},
  MRNUMBER = {1986198},
MRREVIEWER = {Ilya\ S.\ Molchanov},
       DOI = {10.1093/acprof:oso/9780198506263.001.0001},
       URL = {https://doi.org/10.1093/acprof:oso/9780198506263.001.0001}
}

@unpublished{MaMu26,
  title={Recovery of latent inner products from an anisotropic {G}aussian random geometric graph},
  author={Cheng Mao and Vidya Muthukumar},
  year={2026},
  eprint={2607.23723},
  archivePrefix={arXiv},
  primaryClass={stat.ML},
}

@unpublished{Araya20,
  title={Random geometric graphs on {E}uclidean balls},
  author={Araya Valdivia, Ernesto},
  year={2020},
  eprint={2010.13734},
  archivePrefix={arXiv},
  primaryClass={stat.ML},
}

@unpublished{MWX26,
  title={Random geometric graphs with smooth
kernels: sharp detection threshold and a spectral conjecture},
  author={Mao, Cheng and Wu, Yihong and Xu, Jiaming},
  year={2026},
  eprint={2602.14998},
  archivePrefix={arXiv},
  primaryClass={stat.ML},
}

@article{EMP22,
    AUTHOR = {Eldan, Ronen and Mikulincer, Dan and Pieters, Hester},
     TITLE = {Community detection and percolation of information in a
              geometric setting},
   JOURNAL = {Combin. Probab. Comput.},
  FJOURNAL = {Combinatorics, Probability and Computing},
    VOLUME = {31},
      YEAR = {2022},
    NUMBER = {6},
     PAGES = {1048--1069},
      ISSN = {0963-5483,1469-2163},
   MRCLASS = {62H30 (47N30 60J80)},
  MRNUMBER = {4496021},
       DOI = {10.1017/s0963548322000098},
       url = {https://doi.org/10.1017/s0963548322000098},
}
\end{document}